\definecolor{shadecolor}{rgb}{1, 0, 0}
\newcommand{\noun}[1]{\textsc{#1}}
  \theoremstyle{remark}
  \newtheorem*{acknowledgement*}{\protect\acknowledgementname}
\theoremstyle{plain}
\newtheorem{thm}{\protect\theoremname}[section]
  \theoremstyle{definition}
  \newtheorem{defn}[thm]{\protect\definitionname}
  \theoremstyle{remark}
  \newtheorem{rem}[thm]{\protect\remarkname}
  \theoremstyle{plain}
  \newtheorem{prop}[thm]{\protect\propositionname}
  \theoremstyle{plain}
  \newtheorem{cor}[thm]{\protect\corollaryname}
  \theoremstyle{plain}
  \newtheorem{lem}[thm]{\protect\lemmaname}
  \theoremstyle{plain}
  \newtheorem{assumption}[thm]{\protect\assumptionname}
  \theoremstyle{plain}
  \newtheorem{conjecture}[thm]{\protect\conjecturename}
  \providecommand{\acknowledgementname}{Acknowledgement}
  \providecommand{\assumptionname}{Assumption}
  \providecommand{\conjecturename}{Conjecture}
  \providecommand{\corollaryname}{Corollary}
  \providecommand{\definitionname}{Definition}
  \providecommand{\lemmaname}{Lemma}
  \providecommand{\propositionname}{Proposition}
  \providecommand{\remarkname}{Remark}
\providecommand{\theoremname}{Theorem}
\begin{document}

\title{Global normal form and asymptotic spectral gap for open partially
expanding maps}

\date{10 august 2017}

\author{\href{https://www-fourier.ujf-grenoble.fr/~faure/}{Frédéric Faure}\textit{}\thanks{Institut Fourier, UMR 5582, 100 rue des Maths, BP74 38402 St Martin
d'Hères, Université Grenoble Alpes. {\small{}\protect\href{mailto:frederic.faure@univ-grenoble-alpes.fr}{frederic.faure@univ-grenoble-alpes.fr}}}\textit{,} \href{https://www2.math.uni-paderborn.de/people/tobias-weich.html}{Tobias Weich}\thanks{Institut für Mathematik, Universität Paderborn, Warburger Str. 100,
33098 Paderborn. \protect\href{mailto:weich@math.upb.de}{weich@math.upb.de}}}
\maketitle
\begin{abstract}
We consider a \noun{$\mathbb{R}$}-extension of one dimensional uniformly
expanding open dynamical systems and prove a new explicit estimate
for the asymptotic spectral gap. To get these results, we use a new
application of a ``\textbf{global normal form}'' for the dynamical
system, a ``\textbf{semiclassical expression beyond the Ehrenfest
time}'' that expresses the transfer operator at large time as a sum
over rank one operators (each is associated to one orbit). In this
paper we establish the validity of the so-called ``\textbf{diagonal
approximation'' up to twice the local Ehrenfest time}.
\end{abstract}
\footnote{2010 Mathematics Subject Classification:37D20 Uniformly hyperbolic
systems (expanding, Anosov, Axiom A, etc.) 37D35 Thermodynamic formalism,
variational principles, equilibrium states 37C30 Zeta functions, (Ruelle-Frobenius)
transfer operators, and other functional analytic techniques in dynamical
systems 81Q20 Semiclassical techniques, including WKB and Maslov methods
81Q50 Quantum chaos

Keywords: Transfer operator; Ruelle resonances; decay of correlations;
Semi-classical analysis. }

\tableofcontents{}

\section{Introduction}

In this paper we consider a \noun{$\mathbb{R}$}-extension of one
dimensional uniformly expanding open dynamical systems, so called
iterated function systems (IFS) . The dynamical properties of these
IFS are on the one hand interesting, because of relations to the spectral
theory on Riemann surfaces and questions in number theory. On the
other hand the $\mathbb{R}$-extension adds a neutral direction to
the dynamics and our model can also be considered as a toy model for
more complicated dynamical systems such as Anosov or Axiom A flows
\cite{katok_hasselblatt}. The main object of study in this paper
is the asymptotic spectral gap for the family of transfer operators
associated to these specific open partially expanding maps. In appendix
\ref{sec:Discussion-about-} we propose a discussion for motivating
the study of the ``asymptotic spectral gap'' $\gamma_{\mathrm{asympt.}}$
from the general point of view of hyperbolic flows (in both classical
and quantum mechanics). For related models we review known results
on $\gamma_{\mathrm{asympt.}}$ and we also discuss the conjecture
for $\gamma_{\mathrm{asympt.}}$ that generically $\gamma_{\mathrm{asympt.}}=\gamma_{conj}:=\frac{1}{2}\mathrm{Pr}\left(2\left(V-J\right)\right)$.
This appendix may be consulted first by readers who are interested
by more detailed motivations. It is however not mandatory for understanding
the main results.

In Section \ref{sec:The-model} we define the model under study. The
transfer operator $\mathcal{L}_{\nu}$ acting on functions $u$ is
$\mathcal{L}_{\nu}u:=e^{i\nu\tau+V}u\circ\phi^{-1}$ depending on
a parameter $\nu\in\mathbb{R}$, smooth functions $\tau,V$ and $\phi^{-1}$
an expanding map on intervals.

In Section \ref{sec:The-main-result} we define the ``asymptotic
spectral gap'' $\gamma_{\mathrm{asympt.}}:=\limsup_{\nu\rightarrow\infty}\log\left(r_{s}\left(\mathcal{L}_{\nu}\right)\right)$
(where $r_{s}\left(\mathcal{L}_{\nu}\right)$ stands for the spectral
radius) and give the main results of this paper: in Theorem \ref{thm:main_result}
we show that $\gamma_{\mathrm{asympt.}}\leq\gamma_{\mathrm{up}}:=\frac{1}{2}\mathrm{Pr}\left(2\left(V-J\right)\right)+\frac{1}{4}\left\langle J\right\rangle $
where $\mathrm{Pr}\left(.\right)$ is the topological pressure, $J=\log\left|\left(\phi^{-1}\right)'\right|>0$
is the expansion rate, $\left\langle J\right\rangle $ is an averaged
expansion rate given in Eq.(\ref{eq:<J>}). In Theorem \ref{thm:main_result-resolvent}
we also get an upper bound for the norm of the resolvent of the transfer
operator. We discuss their consequences in terms of decay of correlations.
In Section \ref{subsec:Other-interesting-results:} we discuss other
interesting results obtained in this paper which may be extended to
more general hyperbolic dynamics: a global normal form and an asymptotic
expansion of the transfer operator. In Section \ref{subsec:Sketch-of-the}
we provide a (very short) sketch of proof of the main results.

From Section \ref{sec:The-canonical-map} to \ref{subsec:Proof-of-main_Theorem}
we provide the proof of the main Theorems and develop tools for this.

Under non local integrability (NLI) hypothesis it has been shown by
D. Dolgopyat \cite{dolgopyat_02} that $\exists\epsilon>0,$ $\gamma_{\mathrm{asympt.}}\leq\gamma_{\mathrm{Gibbs}}-\epsilon$
with $\gamma_{\mathrm{Gibbs}}=\mathrm{Pr}\left(V-J\right)$. Using
semiclassical analysis and some hypothesis it is also known \cite{faure_arnoldi_tobias_13}
that $\gamma_{\mathrm{asympt.}}\leq\gamma_{\mathrm{sc}}=\mathrm{tsup}\left(V-\frac{1}{2}J\right)$
where $\mathrm{tsup}$ means supremum after time average (see (\ref{eq:def_gamma_sc})). 

In Appendix \ref{sec:Examples} we consider examples based on linear
maps and the Gauss map and compare our bounds with numerical results
for the Ruelle spectrum. We also show that the new bound $\gamma_{\mathrm{up}}$
improves the previous bounds $\gamma_{\mathrm{Gibbs}}$ and $\gamma_{\mathrm{sc}}$
in some range of parameters. On the web site of the first author \cite{faure_animations_expanding_maps}
we propose movies and additional multimedia contents that illustrate
these models.
\begin{acknowledgement*}
We would like to thank Masato Tsujii, Michiro Hirayama for discussions
and Mark Pollicott and Richard Sharp for discussions and for the explanation
of the formula in Appendix \ref{sec:A-formula-by} which can be derived
by their previous work. We thank the referees for very precise reading
and valuable comments. This work has been supported by ANR-13-BS01-0007-01.
T.W. acknowledges financial support by DFG HI 412 12-1.
\end{acknowledgement*}

\section{The model\label{sec:The-model}}

In this Section we introduce the model which we study in this paper.
This model has already been studied\footnote{Compared to the previous paper \cite{faure_arnoldi_tobias_13}, we
have changed the notation of the transfer operator from $\hat{F}$
to $\mathcal{L}$ and of its associated symplectic map from $F$ to
$\tilde{\phi}$. We have also replaced $\hbar$ by $\nu=1/\hbar$. } in \cite{faure_arnoldi_tobias_13} and we refer to this paper for
more comments, examples or details.

\subsection{Iterated function system}

See Figure \ref{fig:op_F_hat} for an illustration.

\begin{center}{\color{red}\fbox{\color{black}\parbox{16cm}{
\begin{defn}
\label{def:IFSAn-iterated-function}``\textbf{An iterated function
system (I.F.S.)''. }Let $I_{1},\ldots I_{N}\subset\mathbb{R}$ be
a finite collection of \textbf{disjoint bounded and closed} intervals
with $N\geq1$. Let $A\in\left\{ 0,1\right\} ^{N\times N}$ called
an adjacency matrix and assume that the matrix $A$ is primitive,
i.e. there is $T\geq0$ such that $\forall i,j,\left(A^{T}\right)_{i,j}>0$.
We will note $i\rightsquigarrow j$ if $A_{i,j}=1.$ Assume that for
each pair $i,j\in\left\{ 1,\ldots,N\right\} $ such that $i\rightsquigarrow j$,
we have a smooth invertible map $\phi_{i,j}:I_{i}\rightarrow\phi_{i,j}\left(I_{i}\right)\subset\mbox{Int}\left(I_{j}\right)$.
Assume that the map $\phi_{i,j}$ is a \textbf{strict contraction},
i.e. there exists $0<\theta<1$ such that for every $x\in I_{i}$,
\begin{equation}
0<\phi_{i,j}'\left(x\right)\leq\theta.\label{eq:theta_contraction}
\end{equation}
We suppose that different images of the maps $\phi_{i,j}$ do not
intersect (this is the ``strong separation condition'' in \cite[p.35]{Falconer_97}):
\begin{equation}
\left(i,j\right)\neq\left(k,l\right)\quad\Rightarrow\quad\phi_{i,j}\left(I_{i}\right)\cap\phi_{k,l}\left(I_{k}\right)=\emptyset.\label{eq:hyp_non_intersect}
\end{equation}
\end{defn}

}}}\end{center}
\begin{rem}
We have assumed for simplicity that the map $\phi_{i,j}$ preserves
orientation, i.e. $0<\phi_{i,j}'\left(x\right)\leq\theta$. The results
of this paper also hold if we only suppose that $0<\left|\phi_{i,j}'\left(x\right)\right|\leq\theta$.
To treat this case, we can define $\sigma_{i,j}=\mathrm{sign}\left(\phi_{i,j}'\right)\in\left\{ -1,1\right\} $
and replace in every formula of this paper, the term $e^{J_{i,j}\left(x\right)}$
by $\sigma_{i,j}e^{J_{i,j}\left(x\right)}$. For example the truncated
Gauss model presented in Section \ref{subsec:Truncated-Gauss-map}
has negative derivatives $\phi_{i,j}'\left(x\right)<0$.
\end{rem}

\subsection{The trapped set $K$}

We define
\begin{equation}
I:=\bigcup_{i=1}^{N}I_{i}.\label{eq:def_I}
\end{equation}
The multivalued map: 
\[
\phi:I\rightarrow I,\qquad\phi:=\left(\phi_{i,j}\right)_{i,j}
\]
can be iterated and generates a multivalued map $\phi^{n}:I\rightarrow I$
for $n\geq1$. From Condition (\ref{eq:hyp_non_intersect}) the inverse
map
\[
\phi^{-1}:\phi\left(I\right)\rightarrow I
\]
is uni-valued. Let 
\begin{equation}
K_{n}:=\phi^{n}\left(I\right)\label{eq:def_Kn}
\end{equation}
and $K_{0}=I$. We have $K_{n+1}\subset K_{n}$ so we can define the
limit set
\begin{equation}
K:=\bigcap_{n\in\mathbb{N}}K_{n}\label{eq:trapped_set_K_def1}
\end{equation}
called the \textbf{trapped set}. The map 
\begin{equation}
\phi^{-1}:K\rightarrow K\label{eq:phi_-1_on_K}
\end{equation}
 is well defined and uni-valued.

\subsection{The transfer operator $\mathcal{L}$}

\paragraph{Notations:}

We denote $C_{0}^{\infty}\left(\mathbb{R}\right)$ the space of smooth
functions on $\mathbb{R}$ with compact support. If $B\subset\mathbb{R}$
is a finite union of closed intervals, we denote by $C_{0}^{\infty}\left(B\right)\subset C_{0}^{\infty}\left(\mathbb{R}\right)$
the space of smooth functions on $\mathbb{R}$ with support included
in $B$. We denote by $C^{\infty}\left(B;\mathbb{R}\right)$ and $C^{\infty}\left(B;\mathbb{C}\right)$
the space of real (respect. complex) valued smooth functions on $B$.

\begin{center}{\color{red}\fbox{\color{black}\parbox{16cm}{
\begin{defn}
\label{def_Transfer_op}Let $\tau\in C^{\infty}\left(\phi\left(I\right);\mathbb{R}\right)$
and $V\in C^{\infty}\left(\phi\left(I\right);\mathbb{R}\right)$ be
smooth functions called respectively \textbf{roof function} and \textbf{potential}
function. Let $\nu>0$. We define the \textbf{transfer operator}:
\begin{equation}
\mathcal{L}_{\nu}:\begin{cases}
C_{0}^{\infty}\left(I\right) & \rightarrow C_{0}^{\infty}\left(I\right)\\
\varphi=\left(\varphi_{i}\right)_{i} & \rightarrow\left(\sum_{i=1}^{N}\mathcal{L}_{i,j}\varphi_{i}\right)_{j}
\end{cases}\label{eq:def_transfert_op_F}
\end{equation}
 with 
\begin{equation}
\mathcal{L}_{i,j}:\begin{cases}
C_{0}^{\infty}\left(I_{i}\right) & \rightarrow C_{0}^{\infty}\left(I_{j}\right)\\
\varphi_{i} & \rightarrow\left(\mathcal{L}_{i,j}\varphi_{i}\right)\left(x\right)=\begin{cases}
e^{i\nu\tau\left(x\right)+V\left(x\right)}\varphi_{i}\left(\phi_{i,j}^{-1}\left(x\right)\right) & \quad\mbox{if }i\rightsquigarrow j\mbox{ and }x\in\phi_{i,j}\left(I_{i}\right)\\
0 & \quad\mbox{otherwise.}
\end{cases}
\end{cases}\label{e:def_F_op_ij}
\end{equation}
\end{defn}

}}}\end{center}

See Figure \ref{fig:op_F_hat}.
\begin{rem}
\label{rem:2.4}~
\end{rem}

\begin{enumerate}
\item Eq.(\ref{eq:def_transfert_op_F}) is a family of transfer operators
depending on the parameter $\nu\in\mathbb{R}$. We will be interested
in the spectrum of these operators in the ``semiclassical limit''
$\nu\rightarrow+\infty$.
\item From assumption (\ref{eq:hyp_non_intersect}), for any $x\in I$,
the sum $\sum_{i=1}^{N}\left(\mathcal{L}_{i,j}\varphi_{i}\right)\left(x\right)$
which appears on the right hand side of (\ref{eq:def_transfert_op_F})
contains at most one non vanishing term.
\item For any $\varphi\in C_{0}^{\infty}\left(I\right)$, $n\geq0$ we have
\begin{equation}
\mbox{supp}\left(\mathcal{L}_{\nu}^{n}\varphi\right)\subset K_{n}\label{eq:supp_F_phi}
\end{equation}
 with $K_{n}$ defined in (\ref{eq:def_Kn}).
\item The family of operators $\left(\mathcal{L}_{\nu}\right)_{\nu\in\mathbb{R}}$
can naturally be obtained from a dynamical system (\ref{eq:def_extended_map_f})
that is a $\mathbb{R}$-extension of the IFS and take the Fourier
component with frequency $\nu$ in the neutral direction (see Section
\ref{subsec:Partially-expanding-maps} or \cite[Sec.2.2]{faure_arnoldi_tobias_13}
for a detailed explanation). The limit $\nu\rightarrow+\infty$ corresponds
to the limit of high Fourier modes. In this sense studying the spectral
properties of the whole family of operators $\left(\mathcal{L}_{\nu}\right)_{\nu}$
corresponds to studying the spectral properties of this $\mathbb{R}$-extension
of the IFS, i.e. a dynamical system with a neutral direction.
\end{enumerate}
\begin{figure}[h]
\begin{centering}
\input{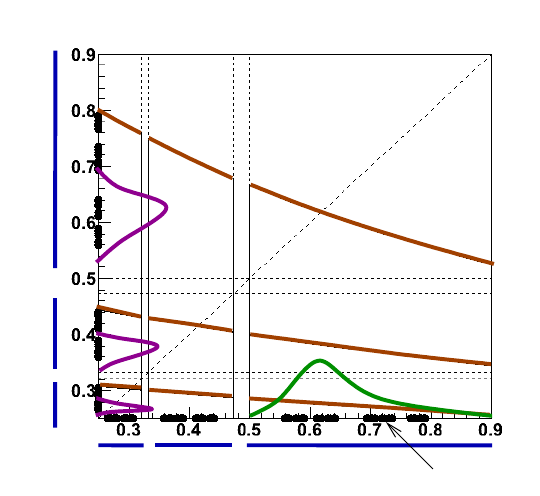tex_t}
\par\end{centering}
\caption{\label{fig:op_F_hat}Action of the transfer operator $\mathcal{L}_{\nu}$
on a function $\varphi$ as defined in (\ref{eq:def_transfert_op_F})
for the dynamics of truncated Gauss map with $N=3$ intervals $\left(I_{j}\right)_{j=1\ldots N}$,
defined in Section \ref{subsec:Truncated-Gauss-map}. In this picture
$\varphi$ is supported on $I_{1}$ and $\mathcal{L}_{\nu}\varphi$
is supported on the three intervals $I_{1}\cup I_{2}\cup I_{3}$.
The maps $\phi$: $\phi_{i,j}:I_{i}\rightarrow I_{j}$, $i,j=1\ldots N$
are contracting and given by $\phi_{i,j}\left(x\right)=\frac{1}{x+j}$.
The trapped set $K$ defined in (\ref{eq:trapped_set_K_def1}) is
a $N$-adic Cantor set. It is obtained as the limit of the sets $K_{0}=\left(I_{1}\cup I_{2}\ldots\cup I_{N}\right)\supset K_{1}=\phi\left(K_{0}\right)\supset K_{2}=\phi\left(K_{1}\right)\supset\ldots\supset K$.
In this example, $\mathrm{dim}_{H}\left(K\right)=0.705\ldots$. In
this schematic figure we have $\tau=0,V=0$. In general the factor
$e^{V\left(x\right)}$ changes the amplitude of $\mathcal{L}_{\nu}\varphi$
and $e^{i\nu\tau\left(x\right)}$ creates some fast oscillations if
$\nu\gg1$.}
\end{figure}

\subsubsection{\label{subsec:Extension}Extension of the transfer operator to distributions}

In \cite[Sec.3.1]{faure_arnoldi_tobias_13} it is explained how the
transfer operator $\mathcal{L}$, initially defined on smooth functions
$C_{0}^{\infty}\left(I\right)$, can be extended to the space of distributions.
For completeness we recall this construction. We first introduce a
cut-off function $\chi\in C_{0}^{\infty}\left(K_{a}\right)$ such
that $0<\chi\left(x\right)$ for $x\in\mbox{Int}\left(K_{a}\right)$
where $a\in\mathbb{N}$ and $K_{a}$ is defined in (\ref{eq:def_Kn})
and $\chi\left(x\right)=0$ for $x\in\partial K_{a}$. Let us remark
that in the proof of Lemma \ref{lem:Separation-of-xw_zetaw}, we will
need to fix the value of $a$ according to (\ref{eq:Ka_w}). We denote
$\hat{\chi}$ the multiplication operator by the function $\chi$.
We define\footnote{The conjugation by $\chi$ is necessary to extend the operators to
distributions and thus, later in Section \ref{subsec:Escape-function},
to Sobolev spaces. This is due to the fact that the dynamics is ``open''.
The spectral properties are, however, independent of the choice of
$\chi$ (cf. Theorem \ref{thm:main_result}).}
\begin{equation}
\mathcal{L}_{i,j,\chi}:=\hat{\chi}^{-1}\mathcal{L}_{i,j}\hat{\chi}:C_{0}^{\infty}\left(\mathbb{R}\right)\rightarrow C_{0}^{\infty}\left(I_{j}\right),\qquad\mathcal{L}_{\nu,\chi}:=\hat{\chi}^{-1}\mathcal{L}_{\nu}\hat{\chi}\label{eq:def_F_Chi_Chim-1}
\end{equation}
which is well defined since $\mbox{supp}\left(\mathcal{L}_{i,j}\hat{\chi}\varphi\right)\subset\mbox{Int}\left(K_{a}\right)$
where $\chi$ does not vanish, although $\hat{\chi}^{-1}$ is not
defined by itself. The formal $L^{2}$-adjoint operator $\mathcal{L}_{i,j,\chi}^{*}:C_{0}^{\infty}\left(\mathbb{R}\right)\rightarrow C_{0}^{\infty}\left(I_{i}\right)$
is defined by 
\begin{equation}
\langle\varphi_{i},\mathcal{L}_{i,j,\chi}^{*}\psi_{j}\rangle_{L^{2}}=\langle\mathcal{L}_{i,j,\chi}\varphi_{i},\psi_{j}\rangle_{L^{2}},\qquad\forall\varphi_{i}\in C_{0}^{\infty}\left(\mathbb{R}\right),\psi_{j}\in C_{0}^{\infty}\left(\mathbb{R}\right),\label{eq:def_F*}
\end{equation}
with the $L^{2}$-scalar product\footnote{We will omit the index $L^{2}$ sometimes.}
\begin{equation}
\langle u,v\rangle_{L^{2}}:=\int\overline{u}\left(x\right)v\left(x\right)dx.\label{eq:scalar_prodcut_L2}
\end{equation}
The $L^{2}$-adjoint operator $\mathcal{L}_{\nu,\chi}^{*}:C_{0}^{\infty}\left(\mathbb{R}\right)\rightarrow C_{0}^{\infty}\left(I\right)$
is defined by
\[
\psi=\left(\psi_{j}\right)_{j}\rightarrow\left(\mathcal{L}_{\nu,\chi}^{*}\psi\right)_{i}\left(y\right)=\sum_{j\mbox{ s.t. }i\rightsquigarrow j}\left(\mathcal{L}_{i,j,\chi}^{*}\psi_{j}\right)\left(y\right)
\]
whose components are given by \cite[Lemma 3.1]{faure_arnoldi_tobias_13}
\begin{equation}
\left(\mathcal{L}_{i,j,\chi}^{*}\psi_{j}\right)\left(y\right)=\frac{\chi\left(y\right)}{\chi\left(\phi_{i,j}\left(y\right)\right)}\left|\phi'_{i,j}\left(y\right)\right|e^{V\left(\phi_{i,j}\left(y\right)\right)}e^{-i\nu\tau\left(\phi_{i,j}\left(y\right)\right)}\psi_{j}\left(\phi_{i,j}\left(y\right)\right).\label{eq:express_dual_1}
\end{equation}

\begin{center}{\color{blue}\fbox{\color{black}\parbox{16cm}{
\begin{prop}
\cite[Sec.3.2]{faure_arnoldi_tobias_13} By duality the transfer operators
$\mathcal{L}_{\nu,\chi}$ and $\mathcal{L}_{\nu,\chi}^{*}$ extend
to distributions:
\begin{equation}
\mathcal{L}_{\nu,\chi}:\mathcal{D}'\left(\mathbb{R}\right)\rightarrow\mathcal{D}'\left(\mathbb{R}\right)\label{eq:F_Chi_Distributions}
\end{equation}
\[
\mathcal{L}_{\nu,\chi}^{*}:\mathcal{D}'\left(\mathbb{R}\right)\rightarrow\mathcal{D}'\left(\mathbb{R}\right)
\]
\end{prop}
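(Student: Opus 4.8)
The plan is the standard duality (transposition) argument: one defines the extension of $\mathcal{L}_{\hbar,\chi}$ to $\mathcal{D}'\left(\mathbb{R}\right)$ as the transpose of $\mathcal{L}_{\hbar,\chi}^{*}$ acting on test functions, and symmetrically the extension of $\mathcal{L}_{\hbar,\chi}^{*}$ as the transpose of $\mathcal{L}_{\hbar,\chi}$. The only analytic input needed is that both operators are \emph{continuous} self-maps of $C_{0}^{\infty}\left(\mathbb{R}\right)$ for its strict inductive-limit topology.

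First I would establish this continuity. For $\mathcal{L}_{\hbar,\chi}^{*}$ it is read off the explicit formula (\ref{eq:express_dual_1}): each component $\mathcal{L}_{i,j,\chi}^{*}$ is the composition of the pullback $\psi_{j}\mapsto\psi_{j}\circ\phi_{i,j}$ by the smooth map $\phi_{i,j}$ with multiplication by the \emph{fixed} function $y\mapsto\chi\left(y\right)\left|\phi'_{i,j}\left(y\right)\right|e^{\overline{V\left(\phi_{i,j}\left(y\right)\right)}}e^{-\frac{i}{\hbar}\tau\left(\phi_{i,j}\left(y\right)\right)}$, which belongs to $C_{0}^{\infty}$ since $\chi\in C_{0}^{\infty}\left(K_{a}\right)$; hence $\mathcal{L}_{i,j,\chi}^{*}$ maps $C_{0}^{\infty}\left(\mathbb{R}\right)$ into $C_{0}^{\infty}\left(\mathrm{supp}\left(\chi\right)\right)$, with a fixed compact support independent of the input, and on each piece $C_{0}^{\infty}\left(B\right)$ its $C^{k}$-seminorms are bounded by finitely many derivatives of the input (by the chain and Leibniz rules). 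This gives continuity on each $C_{0}^{\infty}\left(B\right)$ and, the target support being fixed, continuity for the inductive limit; summing over the finitely many pairs $i\rightsquigarrow j$ yields continuity of $\mathcal{L}_{\hbar,\chi}^{*}:C_{0}^{\infty}\left(\mathbb{R}\right)\to C_{0}^{\infty}\left(I\right)$. The same reasoning applies to $\mathcal{L}_{\hbar,\chi}=\hat{\chi}^{-1}\mathcal{L}_{\hbar}\hat{\chi}$ from (\ref{e:def_F_op_ij}); the one extra point is the factor $\chi^{-1}$ in $\hat{\chi}^{-1}$, which is harmless because, as noted just before (\ref{eq:express_dual_1}), $\mathrm{supp}\left(\mathcal{L}_{i,j}\hat{\chi}\varphi\right)\subset\mathrm{Int}\left(K_{a}\right)$ where $\chi>0$, so $\mathcal{L}_{i,j,\chi}$ is again a pullback by the smooth map $\phi_{i,j}^{-1}$ times multiplication by a fixed $C_{0}^{\infty}$ function, with image support in a fixed compact subset of $\mathrm{Int}\left(K_{a}\right)\cap I_{j}$.

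Then I would define, for $u\in\mathcal{D}'\left(\mathbb{R}\right)$ and $\psi\in C_{0}^{\infty}\left(\mathbb{R}\right)$,
\[
\left\langle \mathcal{L}_{\hbar,\chi}u,\psi\right\rangle :=\left\langle u,\mathcal{L}_{\hbar,\chi}^{*}\psi\right\rangle ,\qquad\left\langle \mathcal{L}_{\hbar,\chi}^{*}u,\psi\right\rangle :=\left\langle u,\mathcal{L}_{\hbar,\chi}\psi\right\rangle ,
\]
interpreted with the conjugation convention compatible with the sesquilinear pairing $\langle u,v\rangle=\int\bar{u}v$. By the previous step these are continuous linear functionals of $\psi$, hence elements of $\mathcal{D}'\left(\mathbb{R}\right)$, and $u\mapsto\mathcal{L}_{\hbar,\chi}u$, $u\mapsto\mathcal{L}_{\hbar,\chi}^{*}u$ are continuous for the weak-$*$ topology on $\mathcal{D}'\left(\mathbb{R}\right)$, being transposes of continuous maps. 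Consistency with the original definition is immediate: if $u=\varphi\in C_{0}^{\infty}\left(I\right)$, then by the formal-adjoint identity (\ref{eq:def_F*}) one has $\left\langle \mathcal{L}_{\hbar,\chi}\varphi,\psi\right\rangle =\left\langle \varphi,\mathcal{L}_{\hbar,\chi}^{*}\psi\right\rangle $, which is precisely the pairing of $\psi$ with the smooth function $\mathcal{L}_{\hbar,\chi}\varphi$ of (\ref{e:def_F_op_ij}); likewise for $\mathcal{L}_{\hbar,\chi}^{*}$.

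There is no genuine obstacle here; the only points demanding a little care are (i) the continuity statement in the strict inductive-limit topology of $C_{0}^{\infty}\left(\mathbb{R}\right)$, which as above reduces to the uniform compactness of the image supports, and (ii) keeping track of the complex-conjugation convention so that the two extensions remain formal adjoints of one another on the appropriate subspaces. Everything else follows from the observation that, in each local chart, each of $\mathcal{L}_{\hbar,\chi}$ and $\mathcal{L}_{\hbar,\chi}^{*}$ is a smooth change of variables followed by multiplication by a compactly supported smooth function.
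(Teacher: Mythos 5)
Your proof is correct and is exactly the duality argument that the proposition's phrase ``by duality'' points to; note that the paper itself does not include a proof but defers to \cite[Sec.3.2]{faure_arnoldi_tobias_13}. Your two key observations --- that each of $\mathcal{L}_{\hbar,\chi}$ and $\mathcal{L}_{\hbar,\chi}^{*}$ is, component by component, a smooth change of variables followed by multiplication by a fixed $C_0^{\infty}$ function with image support in a fixed compact set (so each is continuous on $C_0^{\infty}(\mathbb{R})$ with its inductive-limit topology), and then defining the distributional extensions by transposition, checking consistency via (\ref{eq:def_F*}) --- constitute the standard and correct route.
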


}}}\end{center}

\subsection{\label{subsec:Escape-function}Escape function}

In this section we want to introduce Hilbert spaces in which the transfer
operator has discrete spectrum. We therefore consider the following
``escape\footnote{The name ``escape function''  will be justified by Remark \ref{rem:escape}
which shows that $A_{m}$ decays along the dynamics of $\tilde{\phi}$
for $\left|\xi\right|\geq C$.} function'' on the cotangent space $T^{*}\mathbb{R}=\mathbb{R}^{2}$
with coordinates $\left(x,\xi\right)$. Let $m>0$ and let $A_{m}\in C^{\infty}\left(\mathbb{R}^{2};\mathbb{R}\right)$
be the ``symbol'' given by\footnote{in fact $A_{m}\left(x,\xi\right)$ is independent on $x$.}
\begin{equation}
A_{m}\left(x,\xi\right):=\left\langle \xi\right\rangle ^{-m}\label{eq:def_A}
\end{equation}
with $\left\langle \xi\right\rangle :=\left(1+\xi^{2}\right)^{1/2}$.
We will use the $L^{2}$-unitary $\nu-$Fourier transform $\mathcal{F}_{\nu}:L^{2}\left(\mathbb{R}_{x}\right)\rightarrow L^{2}\left(\mathbb{R}_{\xi}\right)$
and its inverse:
\begin{equation}
\left(\mathcal{F}_{\nu}\varphi\right)\left(\xi\right):=\frac{1}{\sqrt{2\pi/\nu}}\int_{\mathbb{R}}e^{-i\nu\xi.x}\varphi\left(x\right)dx,\quad\left(\mathcal{F}_{\nu}^{-1}\psi\right)\left(x\right):=\frac{1}{\sqrt{2\pi/\nu}}\int_{\mathbb{R}}e^{i\nu\xi.x}\psi\left(\xi\right)d\xi.\label{eq:FT}
\end{equation}
Notice that the parameter $\nu$ is just a scaling in $\xi$. Let
$\hat{A}_{m}:=\mathrm{Op}_{\nu}\left(A_{m}\right):\mathcal{S}\left(\mathbb{R}\right)\rightarrow\mathcal{S}\left(\mathbb{R}\right)$,
be the linear operator defined as the semiclassical quantization of
$A_{m}$ \cite{zworski_book_2012}. In this case this is simple. For
$\varphi\in\mathcal{S}\left(\mathbb{R}\right)$, 
\begin{eqnarray}
\left(\hat{A}_{m}\varphi\right)\left(x\right): & = & \frac{1}{2\pi/\nu}\int A_{m}\left(x,\xi\right)e^{i\nu\left(x-y\right)\xi}\varphi\left(y\right)dyd\xi\label{eq:quantiz_rule_n}\\
 & = & \mathcal{F}_{\nu}^{-1}\left(\left\langle \xi\right\rangle ^{-m}\left(\mathcal{F}_{\nu}\varphi\right)\right)\left(x\right)\nonumber 
\end{eqnarray}
where in the last line $\left\langle \xi\right\rangle ^{-m}$ denotes
the multiplication operator. By duality $\hat{A}_{m}$ is extended
to\footnote{$\mathcal{S}'\left(\mathbb{R}\right)$ is the space of tempered distributions,
see \cite[p.204]{taylor_tome1}.} $\hat{A}_{m}:\mathcal{S}'\left(\mathbb{R}\right)\rightarrow\mathcal{S}'\left(\mathbb{R}\right)$.
For $m\in\mathbb{R}$, the $\nu$-Sobolev space of order $m$ is defined
by 
\begin{equation}
H_{\nu}^{-m}\left(\mathbb{R}\right):=\hat{A}_{m}^{-1}\left(L^{2}\left(\mathbb{R}\right)\right)\label{eq:def_Hm}
\end{equation}
and the norm of $\varphi\in H_{\nu}^{-m}\left(\mathbb{R}\right)$
is defined by
\begin{equation}
\left\Vert \varphi\right\Vert _{H_{\nu}^{-m}\left(\mathbb{R}\right)}:=\left\Vert \hat{A}_{m}\varphi\right\Vert _{L^{2}\left(\mathbb{R}\right)}.\label{eq:def_norm_Hm}
\end{equation}

Let
\[
\hat{Q}_{i,j}:=\hat{A}_{m}\mathcal{L}_{i,j,\chi}\hat{A}_{m}^{-1}\quad:L^{2}\left(\mathbb{R}\right)\rightarrow L^{2}\left(\mathbb{R}\right).
\]
and
\begin{equation}
\hat{Q}:=\hat{A}_{m}\hat{\chi}^{-1}\mathcal{L}_{\nu}\hat{\chi}\hat{A}_{m}^{-1}=\hat{A}_{m}\mathcal{L}_{\nu,\chi}\hat{A}_{m}^{-1}.\label{eq:def_Q}
\end{equation}
Equivalently we have the following commutative diagram
\begin{equation}
\begin{CD}L^{2}\left(\mathbb{R}\right)@>{\hat{Q}}>>L^{2}\left(\mathbb{R}\right)\\
@V{\hat{A}_{m}^{-1}}VV@V{\hat{A}_{m}^{-1}}VV\\
H_{\nu}^{-m}\left(\mathbb{R}\right)@>{\mathcal{L}_{\nu,\chi}}>>H_{\nu}^{-m}\left(\mathbb{R}\right)
\end{CD}.\label{eq:comm_diag}
\end{equation}

\begin{center}{\color{blue}\fbox{\color{black}\parbox{16cm}{
\begin{thm}
\label{thm:-Discrete-spectrum.}\cite[th.2.6]{faure_arnoldi_tobias_13}
\textbf{``Discrete spectrum''}. For any $r>0$, there is $m_{0}>0$
such that for all $m>m_{0}$ and for all $\nu\in\mathbb{R}$,
\[
\mathcal{L}_{\nu,\chi}:H_{\nu}^{-m}\left(\mathbb{R}\right)\rightarrow H_{\nu}^{-m}\left(\mathbb{R}\right)
\]
has purely discrete spectrum $\left(\lambda_{j}\left(\nu\right)\right)_{j\in\mathbb{N}}$
on the spectral domain $\left\{ \lambda\in\mathbb{C},\left|\lambda\right|>r\right\} $.
The eigenvalues $\left(\lambda_{j}\left(\nu\right)\right)_{j\in\mathbb{N}}$
in this domain are independent on $m$ and $\chi$ and are called
the \textbf{Ruelle-Pollicott resonances} of the transfer operator
$\mathcal{L}_{\nu}$.
\end{thm}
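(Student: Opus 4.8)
This is \cite[Theorem~2.6]{faure_arnoldi_tobias_13}; here is the strategy I would follow.

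\emph{Reduction and microlocal picture.} By the commutative diagram \eqref{eq:comm_diag}, $\mathcal{L}_{\hbar,\chi}$ on $H_{\hbar}^{-m}(\mathbb{R})$ is conjugate to $\hat{Q}=\hat{A}_{m}\mathcal{L}_{\hbar,\chi}\hat{A}_{m}^{-1}$ on $L^{2}(\mathbb{R})$, so I work with $\hat{Q}$. Writing $\mathcal{L}_{\hbar}$ as the finite sum over branches $i\rightsquigarrow j$ of $(\text{multiplication by }e^{i\tau/\hbar})\circ(\text{multiplication by }e^{V})\circ(\text{composition with }\phi_{i,j}^{-1})$, each branch is an $\hbar$-Fourier integral operator with canonical transformation $\tilde{\phi}_{i,j}:(x,\xi)\mapsto(\phi_{i,j}(x),\,\xi/\phi_{i,j}'(x)+\tau'(\phi_{i,j}(x)))$. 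By \eqref{eq:theta_contraction} this \emph{expands} the cotangent fibre by a factor $\geq1/\theta>1$, so every fibre direction $\xi\neq0$ escapes to $|\xi|=\infty$ and $A_{m}(x,\xi)=\langle\xi\rangle^{-m}$ strictly decreases along the dynamics for $|\xi|$ large: it is an escape function, and this is the conceptual reason the theorem holds. In particular $\hat{Q}$ is bounded on $L^{2}$ uniformly in $\hbar$ (the only $\hbar$-dependent factor $e^{i\tau/\hbar}$ is unitary and the induced momentum shift $\tau'$ is $O(1)$), and conjugating by $\hat{A}_{m}$ should make its high-frequency part small once $m$ is large.

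\emph{Small $+$ compact splitting.} Fix $\psi_{\pm}\in C^{\infty}(\mathbb{R})$ with $\psi_{+}+\psi_{-}=1$, $\mathrm{supp}\,\psi_{-}\subset\{|\xi|\leq2R\}$, $\mathrm{supp}\,\psi_{+}\subset\{|\xi|\geq R\}$, and write $\hat{Q}=\hat{Q}\,\mathrm{Op}_{\hbar}(\psi_{+})+\hat{Q}\,\mathrm{Op}_{\hbar}(\psi_{-})=:\hat{Q}_{>}+\hat{Q}_{<}$. For $\hat{Q}_{>}$ I would prove
\[
\|\hat{Q}_{>}u\|_{L^{2}}=\|\mathcal{L}_{\hbar,\chi}(\hat{A}_{m}^{-1}\mathrm{Op}_{\hbar}(\psi_{+})u)\|_{H_{\hbar}^{-m}}\leq C\,e^{\sup\mathrm{Re}\,V}\,\rho^{m}\,\|u\|_{L^{2}},\qquad\rho=\rho(R,\theta)\in(0,1),
\]
via a change of variables after a partition of unity reducing each $\phi_{i,j}$ to an affine model on small cells: the factor $e^{V}$ and the $L^{2}$-contraction of $\mathcal{L}_{\hbar,\chi}$ by $\theta^{1/2}$ combine with the fact that $\mathcal{L}_{\hbar,\chi}$ moves frequency $\xi$ up to $\xi/\phi_{i,j}'$, so the weight $\langle\xi\rangle^{-m}$ built into $\|\cdot\|_{H_{\hbar}^{-m}}$ yields a gain $\rho^{m}$ on the incoming region $\{|\xi|\geq R\}$, the non-affine corrections and the shift $\tau'$ being lower order (a Cotlar--Stein / almost-orthogonality argument, using that the branch images are disjoint by \eqref{eq:hyp_non_intersect}); all constants are independent of $\hbar$. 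For $\hat{Q}_{<}=\hat{A}_{m}\,\mathcal{L}_{\hbar,\chi}\,\mathrm{Op}_{\hbar}(\langle\xi\rangle^{m}\psi_{-}(\xi))$, the symbol $\langle\xi\rangle^{m}\psi_{-}$ has compact $\xi$-support, so $\mathrm{Op}_{\hbar}(\langle\xi\rangle^{m}\psi_{-})$ maps $L^{2}$ boundedly into every $H^{N}(\mathbb{R})$, while $\mathcal{L}_{\hbar,\chi}$ preserves each $H^{N}$ and has range in functions supported in a fixed compact subset of $\{\chi>0\}$; hence $\mathcal{L}_{\hbar,\chi}\,\mathrm{Op}_{\hbar}(\langle\xi\rangle^{m}\psi_{-})$ is compact on $L^{2}$ by Rellich's theorem, and so is $\hat{Q}_{<}=\hat{A}_{m}\circ(\text{compact})$.

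\emph{Fredholm conclusion and independence of $m,\chi$.} Given $r>0$, pick $m_{0}$ with $C\,e^{\sup\mathrm{Re}\,V}\rho^{m}<r$ for $m>m_{0}$. Then for $|z|>r$, $z-\hat{Q}_{>}$ is invertible by a Neumann series, hence $z-\hat{Q}=(z-\hat{Q}_{>})(1-(z-\hat{Q}_{>})^{-1}\hat{Q}_{<})$ with $(z-\hat{Q}_{>})^{-1}\hat{Q}_{<}$ compact, holomorphic in $z$ on the connected set $\{|z|>r\}$, and of norm $<1$ for $|z|$ large; the analytic Fredholm theorem then makes $z\mapsto(z-\hat{Q})^{-1}$ meromorphic there with finite-rank residues, i.e.\ $\hat{Q}$ — hence $\mathcal{L}_{\hbar,\chi}$ — has purely discrete spectrum in $\{|z|>r\}$. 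For independence of $m$: if $m_{0}<m<m'$ then $H_{\hbar}^{-m}\subset H_{\hbar}^{-m'}$, and for $|z|$ large both resolvents $(z-\mathcal{L}_{\hbar,\chi})^{-1}$ act on $C_{0}^{\infty}(\mathbb{R})$ through the same Neumann series $\sum_{k\geq0}z^{-k-1}\mathcal{L}_{\hbar,\chi}^{k}$; by uniqueness of meromorphic continuation their matrix elements between functions of $C_{0}^{\infty}(\mathbb{R})$ agree throughout $\{|z|>r\}$, so the poles (the Ruelle--Pollicott resonances) and their finite-rank residues, and thus the multiplicities, do not depend on $m$. Independence of $\chi$ is proved as in \cite{faure_arnoldi_tobias_13}: different admissible cut-offs yield operators that are conjugate by multiplications which are smooth and invertible on a neighbourhood of the trapped set $K$, and the Ruelle--Pollicott resolvent depends only on $\mathcal{L}_{\hbar}$ near $K$.

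\emph{Main obstacle.} The crux is the $\hbar$-uniform smallness of $\hat{Q}_{>}$: one has to control a genuinely oscillatory Fourier integral operator, follow the cotangent-fibre expansion through the nonlinear branches $\phi_{i,j}$ and the $O(1)$ momentum shift produced by $e^{i\tau/\hbar}$, and confirm that conjugation by $\hat{A}_{m}=\mathrm{Op}_{\hbar}(\langle\xi\rangle^{-m})$ truly purchases the decay $\rho^{m}$ with $\rho<1$ and a constant independent of $\hbar$ and $m$. This is the heart of the anisotropic-space construction and needs careful semiclassical estimates — Calder\'on--Vaillancourt uniformly in $\hbar$, non-stationary phase away from $\{\xi=0\}$, dyadic localisation in $\xi$; by comparison the conjugation by the non-everywhere-positive cut-off $\hat{\chi}$ is a minor technicality, handled by observing that $\mathcal{L}_{\hbar}\hat{\chi}$ has range in a compact subset of $\{\chi>0\}$.
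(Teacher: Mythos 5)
This theorem is not reproved in the present paper; it is quoted verbatim from \cite{faure_arnoldi_tobias_13} and the proof is delegated to that reference. Your reconstruction --- conjugate by the escape function $\hat{A}_{m}$ to work on $L^{2}$, split $\hat{Q}$ into a high-frequency part of norm $O(\rho^{m})$ (exploiting the fibre expansion of $\tilde{\phi}_{i,j}$ against the weight $\langle\xi\rangle^{-m}$) plus a compact low-frequency part, then apply analytic Fredholm theory and identify resonances via meromorphic continuation of the resolvent on $C_{0}^{\infty}$ --- is precisely the strategy of that reference and is correct in outline.
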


}}}\end{center}
\begin{rem}
From the commutative diagram (\ref{eq:comm_diag}), the spectral properties
of $\hat{Q}:L^{2}\left(\mathbb{R}\right)\rightarrow L^{2}\left(\mathbb{R}\right)$
are equivalent to those of $\mathcal{L}_{\nu,\chi}:H_{\nu}^{-m}\left(\mathbb{R}\right)\rightarrow H_{\nu}^{-m}\left(\mathbb{R}\right)$.
In practice (in the proofs) we will work with $\hat{Q}$ on $L^{2}\left(\mathbb{R}\right)$.
\end{rem}

\section{\label{sec:The-main-result}The main results}

Let $r_{s}\left(\mathcal{L}_{\nu,\chi}\right)=\sup_{j\in\mathbb{N}}\left\{ \left|\lambda_{j}\left(\nu\right)\right|\right\} $
be the spectral radius of the operator $\mathcal{L}_{\nu,\chi}:H_{\nu}^{-m}\left(\mathbb{R}\right)\rightarrow H_{\nu}^{-m}\left(\mathbb{R}\right)$
with $m$ large enough so that $r_{s}\left(\mathcal{L}_{\nu,\chi}\right)$
does not depend on $m$ nor on $\chi$ (for this we need that the
Ruelle spectrum is non empty, otherwise we put $r_{s}\left(\mathcal{L}_{\nu,\chi}\right):=0$).
We are interested in the asymptotic value

\noindent\fcolorbox{red}{white}{\begin{minipage}[t]{1\columnwidth - 2\fboxsep - 2\fboxrule}%
\begin{equation}
\gamma_{\mathrm{asympt.}}:=\limsup_{\nu\rightarrow+\infty}\left(\log\left(r_{s}\left(\mathcal{L}_{\nu,\chi}\right)\right)\right).\label{eq:def_gamma_asympt-1}
\end{equation}
\end{minipage}}

To express the main results below we need to introduce the topological
pressure. It can be defined from the periodic points as follows. A
\textbf{periodic point} of period $n\geq1$ is $x\in K$ such that
$x=\phi^{-n}\left(x\right)$.

\begin{center}{\color{red}\fbox{\color{black}\parbox{16cm}{
\begin{defn}
\cite[p.72]{Falconer_97} The \textbf{topological pressure} of a Lipschitz
function $\varphi\in U\rightarrow\mathbb{R}$ with $U$ a neighborhood
of the trapped set $K$, is
\begin{equation}
\mathrm{Pr}\left(\varphi\right):=\lim_{n\rightarrow\infty}\frac{1}{n}\log\left(\sum_{x=\phi^{-n}\left(x\right)}e^{\varphi_{n}\left(x\right)}\right)\label{eq:def_Pr}
\end{equation}
where
\[
\varphi_{n}\left(x\right):=\sum_{k=0}^{n-1}\varphi\left(\phi^{-k}\left(x\right)\right)
\]
is the Birkhoff sum of $\varphi$ along the periodic orbit.
\end{defn}

}}}\end{center}

We define the \textbf{``Jacobian function''} 
\begin{equation}
J\left(x\right):=\log\frac{d\phi^{-1}}{dx}\left(x\right)>0.\label{eq:def_J}
\end{equation}
and
\begin{align}
J_{max}: & =\mathrm{tsup}\left(J\right):=\lim_{n\rightarrow\infty}\sup_{x\in K}\left(\frac{1}{n}\sum_{k=0}^{n-1}J\left(\phi^{-k}\left(x\right)\right)\right),\label{eq:Jmax}\\
J_{min}: & =\mathrm{tinf}\left(J\right):=\lim_{n\rightarrow\infty}\inf_{x\in K}\left(\frac{1}{n}\sum_{k=0}^{n-1}J\left(\phi^{-k}\left(x\right)\right)\right).\label{eq:Jmin}
\end{align}
\begin{rem}
The limits on the right hand sides of (\ref{eq:Jmax}), (\ref{eq:Jmin})
exist because the sequences 
\[
a_{n}:=\inf_{x\in K}\left(\sum_{k=0}^{n-1}J\left(\phi^{-k}\left(x\right)\right)\right),\quad b_{n}:=\sup_{x\in K}\left(\sum_{k=0}^{n-1}J\left(\phi^{-k}\left(x\right)\right)\right)
\]
are superadditive (i.e. $a_{n}+a_{m}\leq a_{n+m}$) and subadditive
(i.e. $b_{n}+b_{m}\geq b_{n+m}$) respectively and Fekete's Lemma
guaranties existence of the limits $J_{min}=\lim_{n\rightarrow\infty}a_{n}/n$
and $J_{max}=\lim_{n\rightarrow\infty}b_{n}/n$.
\end{rem}

\subsection{\label{subsec:Theorems}Theorems}

\begin{center}{\color{blue}\fbox{\color{black}\parbox{16cm}{
\begin{thm}
\label{thm:main_result}''\textbf{Bound of the spectral radius}''.
Let $\beta>0$ be defined by
\[
\mathrm{Pr}\left(2\left(V-J\right)+\beta J\right)=2\mathrm{Pr}\left(V-J\right)
\]
and
\begin{equation}
\left\langle J\right\rangle :=\frac{2\mathrm{Pr}\left(V-J\right)-\mathrm{Pr}\left(2\left(V-J\right)\right)}{\beta}\in\left[J_{min},J_{max}\right].\label{eq:<J>}
\end{equation}
Under the assumption \ref{hyp:minimal_capt} of minimal captivity
defined below, if $\beta\geq\frac{1}{2}$ and $\left\langle J\right\rangle <2J_{min}$
then
\begin{equation}
\gamma_{\mathrm{asympt.}}\leq\gamma_{\mathrm{up}}:=\frac{1}{2}\mathrm{Pr}\left(2\left(V-J\right)\right)+\frac{1}{4}\left\langle J\right\rangle \label{eq:def_gamma_up}
\end{equation}
otherwise
\begin{equation}
\gamma_{\mathrm{asympt.}}\leq\gamma_{\mathrm{Gibbs}}:=\mathrm{Pr}\left(V-J\right).\label{eq:def_gamma_Gibbs}
\end{equation}
\end{thm}

}}}\end{center}
\begin{rem}
The assumption \ref{hyp:minimal_capt} of minimal captivity will be
explained later but can be summarized as follows. If $\tilde{\phi}$
is the symplectic map on $T^{*}\mathbb{R}$ associated to the transfer
operator $\mathcal{L}_{\nu}$ and $\mathcal{K}\subset T^{*}\mathbb{R}$
is its trapped set (it is a Cantor set) then the ``minimal captivity
assumption'' is that $\tilde{\phi}$ is univalued on a small neighborhood
of $\mathcal{K}$.
\end{rem}

~
\begin{rem}
It is remarkable that the bound $\gamma_{\mathrm{up}}$ in (\ref{eq:def_gamma_up})
does not depend on the roof function $\tau$, however beware that
$\tau$ will appear in the expression of $\tilde{\phi}$ (see (\ref{eq:def_symplectic_map_Fij-1}))
and therefore the assumption \ref{hyp:minimal_capt} needed to get
(\ref{eq:def_gamma_up}) depends on $\tau$.
\end{rem}

\paragraph{Previous known results about $\gamma_{\mathrm{asympt.}}$:}
\begin{itemize}
\item the bound (\ref{eq:def_gamma_Gibbs}) is already well known and holds
without any assumption \cite{ruelle_89}.
\item D. Dolgopyat \cite{dolgopyat_02} has shown under a generic condition
that $\exists\epsilon>0,$ 
\[
\gamma_{\mathrm{asympt.}}\leq\gamma_{\mathrm{Gibbs}}-\epsilon,\qquad\gamma_{\mathrm{Gibbs}}:=\mathrm{Pr}\left(V-J\right).
\]
\item in \cite{faure_arnoldi_tobias_13} it has been shown under the assumption
of ``minimal captivity'' that we have
\begin{equation}
\gamma_{\mathrm{asympt.}}\leq\gamma_{\mathrm{sc}}:=\mathrm{tsup}\left(D\right):=\lim_{n\rightarrow\infty}\sup_{x\in K}\left(\frac{1}{n}\sum_{k=0}^{n-1}D\left(\phi^{-k}\left(x\right)\right)\right),\qquad D:=V-\frac{1}{2}J,\label{eq:def_gamma_sc}
\end{equation}
(here $\gamma_{\mathrm{sc}}$ stands for ``$\gamma_{semi-classical}$''
since we used semiclassical analysis to obtain it and $D=V-\frac{1}{2}J$
is called the ``effective damping function'' from \cite{faure-tsujii_prequantum_maps_12}).
\end{itemize}
The next Theorem gives an upper bound for the norm of the resolvent
of the transfer operator in $H_{\nu}^{-m}\left(\mathbb{R}\right)$
outside the radius $e^{\gamma_{\mathrm{up}}}$. This is useful to
control the asymptotic decay of correlation functions for the corresponding
dynamical system (see Corollary \ref{thm:correl_funct}). For an operator
$O:\mathcal{H}\rightarrow\mathcal{H}$ we will use the notation $\left\Vert O\right\Vert _{\mathcal{H}}:=\left\Vert O\right\Vert _{\mathcal{H}\rightarrow\mathcal{H}}$.

\begin{center}{\color{blue}\fbox{\color{black}\parbox{16cm}{
\begin{thm}
\label{thm:main_result-resolvent}''\textbf{bound of the resolvent''
}With $\gamma_{\mathrm{up}}$ and $\left\langle J\right\rangle $
given in Theorem \ref{thm:main_result}, and $\gamma_{\mathrm{sc}}:=\mathrm{tsup}\left(D\right)$
defined in (\ref{eq:def_gamma_sc}), let us suppose that $\gamma_{\mathrm{up}}<\gamma_{\mathrm{sc}}$.
Then for any $\epsilon>0$, there exists $\nu_{\epsilon}>0$, $C_{\epsilon}>0$,
such that for any $\nu>\nu_{\epsilon}$ we have for any $\left|z\right|>e^{\left(\gamma_{\mathrm{up}}+\epsilon\right)}$,
\end{thm}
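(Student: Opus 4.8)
The plan is to follow the same scheme that produces the spectral radius bound in Theorem \ref{thm:main_result}, but to keep track of the actual operator $\mathcal{L}_{\hbar,\chi}^n$ rather than only its trace, so that one obtains a norm bound on $(z-\mathcal{L}_{\hbar,\chi})^{-1}$ from the Neumann-type identity
\[
(z-\mathcal{L}_{\hbar,\chi})^{-1}=\sum_{n=0}^{N_0-1}z^{-(n+1)}\mathcal{L}_{\hbar,\chi}^{n}+z^{-N_0}(z-\mathcal{L}_{\hbar,\chi})^{-1}\mathcal{L}_{\hbar,\chi}^{N_0}.
\]
For $|z|>e^{\gamma_{up}+\epsilon}$ it suffices to bound $\|\mathcal{L}_{\hbar,\chi}^{N_0}\|_{H_\hbar^{-m}}$ for a suitable $N_0=N_0(\hbar)$ and to control the finitely many low powers. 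Working, as announced in the final remark of Section \ref{sub:Escape-function}, with the conjugated operator $\hat Q=\hat A_m\mathcal{L}_{\hbar,\chi}\hat A_m^{-1}$ on $L^2(\mathbb{R})$, I would first invoke Theorem \ref{thm:6.6} to write $\hat Q^{n}$ (up to a small remainder coming from the normal-form and beyond-Ehrenfest expansion) as the sum $\sum_{w}e^{i\tau_w/\hbar+V_w-J_w}\Pi_w$ over separated words $w$ of length $n$, with $\Pi_w=|\mathcal{U}_w\rangle\langle\mathcal{S}_w|$.

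The key step is an $L^2\to L^2$ operator-norm estimate for this sum. Because of the separation statement, Proposition \ref{Prop:Separation-of-orbits}, for $n$ up to twice the local Ehrenfest time the states $\mathcal{U}_w$ (resp. $\mathcal{S}_w$) are almost orthogonal, so one has a Cotlar--Stein / almost-orthogonality bound
\[
\Bigl\|\sum_{w}e^{i\tau_w/\hbar+V_w-J_w}\Pi_w\Bigr\|_{L^2\to L^2}\ \lesssim\ \max_{w}\Bigl(e^{V_w-J_w}\|\mathcal{U}_w\|\,\|\mathcal{S}_w\|\Bigr)\ \cdot\ (\text{mild combinatorial factor}),
\]
and the normalizations give $\|\mathcal{U}_w\|\,\|\mathcal{S}_w\|\lesssim \hbar^{-1/2}\cdot\hbar^{-1/2}$-type contributions only through the same $\mathrm{Tr}(\Pi_w^*\Pi_w)\le C/\hbar$ that appears in (\ref{eq:final_step}); more precisely one obtains $\|\hat Q^n\|\lesssim \hbar^{-1/2}\bigl(\sum_w e^{2(V-J)_w}\bigr)^{1/2}$, i.e. essentially $e^{n(\frac12\mathrm{Pr}(2(V-J))+o(1))}\hbar^{-1/2}$, after using $\mathrm{Pr}$ and the large-deviation count of words of length $n$ (Appendix \ref{sec:A-formula-by}). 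Choosing $n=N_0=2(1-\epsilon')\log(1/\hbar)/\langle J\rangle$ as in the proof of Theorem \ref{thm:main_result} converts the $\hbar^{-1/2}$ into $e^{N_0(\frac14\langle J\rangle+o(1))}$, so $\|\mathcal{L}_{\hbar,\chi}^{N_0}\|_{H_\hbar^{-m}}\le e^{N_0(\gamma_{up}+\epsilon/2)}$ for $\hbar$ small. The hypothesis $\gamma_{up}<\gamma_{sc}$ enters to guarantee, via the bound (\ref{eq:def_gamma_sc}) of \cite{faure_arnoldi_tobias_13}, that the intermediate powers $\mathcal{L}_{\hbar,\chi}^{n}$ for $n<N_0$ are themselves bounded by $e^{n(\gamma_{sc}+o(1))}$, which is harmless after division by $|z|^{n+1}$ with $|z|>e^{\gamma_{up}+\epsilon}$; summing the geometric-type series then yields $\|(z-\mathcal{L}_{\hbar,\chi})^{-1}\|_{H_\hbar^{-m}}\le C_\epsilon$ (or, in a sharper formulation, $\le C_\epsilon\hbar^{-1}$, depending on how crudely one estimates the low terms).

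I expect the main obstacle to be upgrading the diagonal-approximation from the trace level (where it is used in Theorem \ref{thm:main_result}) to the operator-norm level: one must show that the off-diagonal inner products $\langle\mathcal{S}_w|\mathcal{S}_{w'}\rangle$, $\langle\mathcal{U}_{w'}|\mathcal{U}_w\rangle$ decay fast enough in the *word metric* to run an almost-orthogonality argument uniformly in $\hbar$ up to time $N_0$, and that the non-separated pairs (those not covered by Proposition \ref{Prop:Separation-of-orbits}) contribute a negligible operator norm. This is exactly where minimal captivity (Assumption \ref{hyp:minimal_capt}) and the $o(n)$ large-deviation error are needed, and where the restriction to $n$ below *twice* the Ehrenfest time — hence the factor $\frac14\langle J\rangle$ rather than something smaller — is unavoidable with the present method. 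The remaining ingredients (the normal-form remainder estimates, the action of $\hat A_m$ on the WKB states $\mathcal{U}_w,\mathcal{S}_w$, and the control of $\|z^{-N_0}(z-\mathcal{L}_{\hbar,\chi})^{-1}\|$ for the genuinely discrete part of the spectrum outside $|z|=e^{\gamma_{up}}$, which follows from Theorem \ref{thm:main_result} itself) are routine once that estimate is in place.
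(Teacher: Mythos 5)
Your overall scheme — estimate $\|\mathcal{L}_{\hbar,\chi}^{n}\|$ at the Ehrenfest time $n\sim\frac{2}{\langle J\rangle}\log\frac1\hbar$, control the intermediate powers $\|\mathcal{L}_{\hbar,\chi}^{r}\|\leq C_{0}e^{r(\gamma_{sc}+\epsilon)}$ via the minimal-captivity result of \cite{faure_arnoldi_tobias_13}, and combine these through a Neumann expansion of the resolvent — is exactly the paper's route, and the place where $\gamma_{up}<\gamma_{sc}$ enters is identified correctly.

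However, the ``key step'' you propose is unnecessary, and the ``main obstacle'' you foresee is not there. Once one works with $\hat Q=\hat A_{m}\mathcal{L}_{\hbar,\chi}\hat A_{m}^{-1}$ on $L^{2}$, the elementary Hilbert-space identity $\|\hat Q^{n}\|^{2}=\|(\hat Q^{n})^{*}\hat Q^{n}\|=\|P_{n}\|$ converts the operator-norm estimate you want directly into an estimate on $P_{n}$, and $\|P_{n}\|_{L^{2}}\leq\|S_{n}^{*}S_{n}\|_{\mathrm{Tr}}+\|R_{n}'\|$ was already established (via the trace) in Theorem \ref{thm:7.1}. There is thus no ``upgrade from trace level to operator-norm level'' to perform, and no almost-orthogonality / Cotlar--Stein argument is needed: the trace bound on the positive operator $P_{n}$ is already an operator-norm bound, and the paper simply takes its square root. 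Moreover, the Cotlar--Stein heuristic as you write it — $\|\sum_{w}\dots\|\lesssim\max_{w}(\cdots)$ — would, if it worked, give something of order $\max_{w}e^{(V-J)_{w}}\|\Pi_{w}\|$, which is a $\gamma_{sc}$-type quantity, not a $\gamma_{up}$-type one; it does not by itself produce the $\bigl(\sum_{w}e^{2(V-J)_{w}}\bigr)^{1/2}$ that you then write down, so there is an internal inconsistency in that paragraph.

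The second, more substantive, gap is in the conclusion. You claim the final resolvent bound is $C_{\epsilon}$ or, ``more crudely'', $C_{\epsilon}\hbar^{-1}$. That is not what comes out, nor what the theorem asserts: the intermediate powers contribute, in the worst case $r\approx n\approx\frac{2}{\langle J\rangle+\epsilon}\log\frac1\hbar$, a factor $e^{r(\gamma_{sc}-\gamma_{up})}\leq\hbar^{-\frac{2}{\langle J\rangle+\epsilon}(\gamma_{sc}-\gamma_{up})}$, and this $\hbar$-dependent loss survives the geometric summation and appears in (\ref{eq:bound_resvol}). The exponent $\frac{2}{\langle J\rangle+\epsilon}(\gamma_{sc}-\gamma_{up})$ is not $1$ in general, and the bound is not uniform in $\hbar$ (the paper comments in Remark \ref{rem:3.3} that a uniform bound could be hoped for with a better escape function, but that is not what is proved). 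You should carry out the $t=Nn+r$ decomposition explicitly, as the paper does, to see precisely where the $\hbar$-power arises.
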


\begin{equation}
\left\Vert \left(z-\mathcal{L}_{\nu,\chi}\right)^{-1}\right\Vert _{H_{\nu}^{-m}\left(\mathbb{R}\right)}\leq C_{\epsilon}\nu^{\frac{2}{\left\langle J\right\rangle +\epsilon}\left(\gamma_{\mathrm{sc}}-\gamma_{\mathrm{up}}\right)},\label{eq:bound_resvol}
\end{equation}

}}}\end{center}
\begin{rem}
\label{rem:3.3}~

\begin{enumerate}
\item If $\left|z\right|>e^{\left(\gamma_{\mathrm{sc}}+\epsilon\right)}$
then a bound of the resolvent norm independent on $\nu$ has been
obtained in \cite[Thm 2.9]{faure_arnoldi_tobias_13}.
\item The positive power of $\nu$ in (\ref{eq:bound_resvol}) (that diverges
for $\nu\rightarrow\infty$), is related to our choice of the escape
function that defines the norm in Sobolev space: the norm $\left\Vert \mathcal{L}_{\nu,\chi}^{n}\right\Vert $
is controlled by $\gamma_{\mathrm{up}}$ only for long time $n$ of
order $\frac{2}{\left\langle J\right\rangle }\log\nu$. This time
is the required time for a wave packet starting on the trapped set
to reach the region where there is an effective damping by the escape
function. Using cutoff functions in some exotic symbol classes that
allows a sharper cutoff in $\xi$, one should be able to improve this
term. 
\end{enumerate}
\end{rem}

\subsection{\label{subsec:Partially-expanding-maps}Expansion of correlation
functions for partially expanding maps}

Theorem \ref{thm:main_result-resolvent} has a direct application
for decay of correlation functions for a related partially expanding
dynamical system (see Remark \ref{rem:2.4}(3)). One obtains exactly
the same result as in \cite[Theorem 2.9]{faure_arnoldi_tobias_13}
with the only change that we take any $\rho>e^{\gamma_{\mathrm{up}}}$
and initial functions $u\in H^{-m}\left(I\right)\otimes H^{\sigma}\left(S^{1}\right)$,
$v\in H^{m}\left(I\right)\otimes H^{\sigma}\left(S^{1}\right)$ should
have regularity of positive order $\sigma=\frac{2}{\left\langle J\right\rangle +\epsilon}\left(\gamma_{\mathrm{sc}}-\gamma_{\mathrm{up}}\right)$
in the neutral direction. Here is the precise statement.

Let $\phi$ be an iterated function system as defined in Definition
\ref{def:IFSAn-iterated-function}. Recall that the map $\phi^{-1}:\phi\left(I\right)\rightarrow I$
is univalued and expanding. Let $\tau\in C^{\infty}\left(\phi\left(I\right);\mathbb{R}\right)$
as in Definition \ref{def_Transfer_op}. We define the map
\begin{equation}
f:\begin{cases}
\phi\left(I\right)\times S^{1} & \rightarrow I\times S^{1}\\
\left(x,y\right) & \rightarrow\left(\phi^{-1}\left(x\right),y+\tau\left(x\right)\right)
\end{cases}\label{eq:def_extended_map_f}
\end{equation}
with $S^{1}:=\mathbb{R}/\left(2\pi\mathbb{Z}\right)$. Notice that
the map $f$ is expanding in the $x$ variable whereas it is neutral
in the $y$ variable in the sense that $\frac{\partial f}{\partial y}=\left(0,1\right)$.
This is called a partially expanding map and may serve as a very simple
model for the general study of partially open hyperbolic dynamics
\cite{pesin_04} such as Axiom A flows. Let $V\in C^{\infty}\left(\phi(I);\mathbb{R}\right)$.

\begin{center}{\color{red}\fbox{\color{black}\parbox{16cm}{
\begin{defn}
The transfer operator of the map $f$ with potential $V$ is
\begin{equation}
\tilde{\mathcal{L}}:\begin{cases}
C_{0}^{\infty}\left(I\times S^{1}\right) & \rightarrow C_{0}^{\infty}\left(\phi\left(I\right)\times S^{1}\right)\\
\psi\left(x,y\right) & \mapsto e^{V\left(x\right)}\psi\left(f\left(x,y\right)\right)
\end{cases}.\label{eq:def_extended_transfer_op}
\end{equation}
\end{defn}

}}}\end{center}

A function $\psi\in C_{0}^{\infty}\left(I\times S^{1}\right)$ can
be decomposed in Fourier modes in the $y$ variable:

\begin{equation}
\psi\left(x,y\right)=\frac{1}{\sqrt{2\pi}}\sum_{\nu\in\mathbb{Z}}e^{i\nu y}\varphi_{\nu}\left(x\right)\label{eq:Fourier components}
\end{equation}
then
\[
\left(\tilde{\mathcal{L}}\psi\right)\left(x,y\right)=\frac{1}{\sqrt{2\pi}}\sum_{\nu\in\mathbb{Z}}e^{i\nu y}\left(\mathcal{L}_{\nu}\varphi_{\nu}\right)\left(x\right)
\]
with $\mathcal{L}_{\nu}$ given by (\ref{eq:def_transfert_op_F}).

We introduce some notation: for a given $\nu\in\mathbb{Z}$, we have
 seen in Theorem \ref{thm:-Discrete-spectrum.} that the transfer
operator $\mathcal{L}_{\nu}$ has a discrete spectrum of resonances.
Let $\rho>0$ such that there is no eigenvalue on the circle $\left|z\right|=\rho$
for any $\nu\in\mathbb{Z}$ and denote by $\Pi_{\rho,\nu}$ the spectral
projector of the operator $\mathcal{L}_{\nu}$ on the domain $\left\{ z\in\mathbb{C},\left|z\right|>\rho\right\} $.
These projection operators have obviously finite rank and each commutes
with $\mathcal{L}_{\nu}$. Theorem \ref{thm:main_result-resolvent}
has the following corollary.

\noindent\fcolorbox{blue}{white}{\begin{minipage}[t]{1\columnwidth - 2\fboxsep - 2\fboxrule}%
\begin{cor}
\label{thm:correl_funct}''\textbf{Expansion of correlations}''.
For $m$ large enough (such that $r<e^{\gamma_{\mathrm{up}}}$ in
Th. \ref{thm:-Discrete-spectrum.}), for any $\epsilon>0$, there
exists $\nu_{\epsilon}\in\mathbb{N}$ and $C_{\epsilon}>0$ such that
if we put $\sigma_{\epsilon}=\frac{2}{\left\langle J\right\rangle +\epsilon}\left(\gamma_{\mathrm{sc}}-\gamma_{\mathrm{up}}\right)$,
$\rho_{\epsilon}=e^{\gamma_{\mathrm{up}}+\epsilon}$, we have for
any $u\in H^{-m}\left(I\right)\otimes H^{\sigma_{\epsilon}}\left(S^{1}\right)$,
$v\in H^{m}\left(I\right)\otimes H^{\sigma_{\epsilon}}\left(S^{1}\right)$,
any $n\in\mathbb{N}$,
\begin{equation}
\left|\langle v|\tilde{\mathcal{L}}^{n}u\rangle-\sum_{\left|\nu\right|\leq\nu_{\epsilon}}\langle v_{\nu}|\left(\mathcal{L}_{\nu,\chi}\Pi_{\rho_{\epsilon},\nu}\right)^{n}u_{\text{\ensuremath{\nu}}}\rangle\right|\leq C_{\epsilon}\rho_{\epsilon}^{n}\left\Vert u\right\Vert _{H^{-m}\left(I\right)\otimes H^{\sigma_{\epsilon}}\left(S^{1}\right)}^{2}\left\Vert v\right\Vert _{H^{m}\left(I\right)\otimes H^{\sigma_{\epsilon}}\left(S^{1}\right)}^{2}.\label{eq:correlations_external_band}
\end{equation}
 Here $u_{\nu}\in H^{-m}\left(I\right)$, $v_{\nu}\in H^{m}\left(I\right)$
stand for the Fourier components in $S^{1}$ direction of $u,v$ defined
as in (\ref{eq:Fourier components}) and $\langle v|u\rangle:=\intop\overline{v}(x)u(x)dx$
(extended to distributions).
\end{cor}

\end{minipage}}
\begin{rem}
The second term in Eq.(\ref{eq:correlations_external_band}) is a
finite sum and each operator $\mathcal{L}_{\nu,\chi}\Pi_{\rho,\nu}$
has finite rank hence $\left(\mathcal{L}_{\nu,\chi}\Pi_{\rho_{\epsilon},\nu}\right)^{n}$
can be expended over individual eigenvalues. Using the spectral decomposition
of $\mathcal{L}_{\nu,\chi}$ we get an expansion of the correlation
function $\langle v|\tilde{\mathcal{L}}^{n}u\rangle$ with a finite
number of terms which involve the leading Ruelle resonances (i.e.
those with modulus greater than $\rho$) and an error term that is
$O\left(\rho^{n}\right)$.
\end{rem}

~
\begin{rem}
In (\ref{eq:def_extended_map_f}) we could consider $\left(x,y\right)\in\phi\left(I\right)\times\mathbb{R}$
instead which would give a Fourier decomposition $\psi\left(x,y\right)=\frac{1}{\sqrt{2\pi}}\int_{\mathbb{R}}e^{i\nu y}\varphi_{\nu}\left(x\right)d\nu$
with $\nu\in\mathbb{R}$. Then expansion of correlations would manifest
some ``diffusive behavior'' governed by the range $\left|\nu\right|\leq\nu_{0}$. 
\end{rem}

\begin{proof}
The proof of Corollary \ref{thm:correl_funct} is similar to the proof
of \cite[Theorem 2.9]{faure_arnoldi_tobias_13}.
\end{proof}

\subsection{\label{subsec:Other-interesting-results:}Other interesting results:
global normal form and asymptotic expansion.}

To get the result (\ref{eq:def_gamma_up}) we establish a ``\textbf{global
normal form}'' for the transfer operator. The term ``global'' means
here that the normal form is not specific to an individual fixed point
or a periodic orbit as it is usually done \cite{arnold-ed2} but concerns
the global dynamics in its whole. ``Global normal forms'' have already
been considered for hyperbolic dynamics \cite{delatte_92,delatte_95,fred-trace-06}
under the name ``non stationary normal form''. In this paper the
use of global normal form shows that the transfer operator is conjugated
to a simple dilation operator in a vicinity of any point and that
the conjugation is Hölder continuous with respect to the point considered.
This is particularly useful because dilation operators can be easily
composed and this helps to study the dynamics for ``long times''
$n$: this is Theorem \ref{thm:Global-normal-form.} that can be considered
as an interesting result of this paper by itself. Then we use an expansion
for large time\footnote{The symbol $n\gg\log\nu$ means here the right hand side of (\ref{eq:expansion_of_Fn})
get relatively small.} $n\gg\log\nu$, and obtain in Theorem \ref{thm:6.6} an asymptotic
expression for the transfer operator $\mathcal{L}_{\nu}^{n}$ as a
sum of rank one operators $\Pi_{w}$ that can be written\footnote{We don't give here a precise statement of the result. We just write
the main terms and ignore the remainders. We refer to Theorem \ref{thm:6.6}
for a precise statement.}:
\begin{equation}
\mathcal{L}_{\nu}^{n}\sim\sum_{w}e^{i\nu\tau_{w}+V_{w}-J_{w}}\Pi_{w},\label{eq:expansion}
\end{equation}
where the sum is over symbolic words $w$ of length $n$ that represent
orbits (explained in Section \ref{subsec:Symbolic-dynamics}), $V_{w}$
is the Birkhoff sum of the function $V$ along the orbit $w$ (similarly
for $\tau_{w}$ and $J_{w}$) and $\Pi_{w}$ is a rank one operator
of the form
\begin{equation}
\Pi_{w}=|\mathcal{U}_{w}\rangle\langle\mathcal{S}_{w}|\quad:\begin{cases}
H_{\nu}^{-m}\left(\mathbb{R}\right) & \rightarrow H_{\nu}^{-m}\left(\mathbb{R}\right)\\
u & \rightarrow\mathcal{U}_{w}\,\langle\mathcal{S}_{w}|u\rangle_{H_{\nu}^{-m}}
\end{cases}\label{eq:def_Pi_w-1}
\end{equation}
where $\mathcal{U}_{w},\mathcal{S}_{w}\in H_{\nu}^{-m}\left(\mathbb{R}\right)$
are distributions associated respectively to the unstable/stable manifolds
of the orbit $w$ as shown on figure \ref{fig:The-canonical-map}
(more precisely $\mathcal{U}_{w},\mathcal{S}_{w}$ are Lagrangian
WBK states, they will be precisely defined in Theorem \ref{thm:6.6}).

\subsection{\label{subsec:Sketch-of-the}Sketch of the proof}

Let us shortly outline the principal mechanism in the proof of the
new asymptotic gap bound (\ref{eq:def_gamma_up}) without discussing
the technical difficulties. If the operator $\mathcal{L}_{\nu}$ would
be trace class in $H_{\nu}^{-m}\left(\mathbb{R}\right)$ then $r_{s}\left(\mathcal{L}_{\nu}\right)\leq\left|\mathrm{Tr}\left(\left(\mathcal{L}_{\nu}^{n}\right)^{\dagger}\mathcal{L}_{\nu}^{n}\right)\right|^{1/\left(2n\right)}$
for any $n\geq1$, where $\dagger$ stands for adjointness in the
specific Hilbert space $H_{\nu}^{-m}\left(\mathbb{R}\right)$. In
order to obtain good bounds on the trace norm we develop in a first
step a global normal form (Section \ref{sec:Global-normal-form})
as well as an asymptotic expansion (Section \ref{sec:Asymptotic-expansion}).
This leads to the expansion (\ref{eq:expansion}) for $\mathcal{L}_{\nu}^{n}$
and similarly for its adjoint $\left(\mathcal{L}_{\nu}^{n}\right)^{\dagger}\sim\sum_{w'}e^{-i\nu\tau_{w'}+V_{w'}-J_{w'}}\Pi_{w'}^{\dagger}$.
Using the definition of the asymptotic spectral gap (\ref{eq:def_gamma_asympt-1})
we get for $\nu\rightarrow\infty$,
\begin{equation}
\gamma_{\mathrm{asympt.}}\leq\log\left(r_{s}\left(\mathcal{L}_{\nu}\right)\right)\leq\frac{1}{2n}\log\sum_{w,w'}e^{\left(V-J\right)_{w}+\left(V-J\right)_{w'}+i\nu\left(\tau_{w}-\tau_{w'}\right)}\mathrm{Tr}\left(\Pi_{w'}^{\dagger}\Pi_{w}\right).\label{eq:bound}
\end{equation}
We have $\mathrm{Tr}\left(\Pi_{w'}^{\dagger}\Pi_{w}\right)\underset{(\ref{eq:def_Pi_w-1})}{=}\langle\mathcal{S}_{w}|\mathcal{S}_{w'}\rangle\langle\mathcal{U}_{w'}|\mathcal{U}_{w}\rangle$.
In Proposition \ref{Prop:Separation-of-orbits} we will show that
for time $n$ less than twice the Ehrenfest time i.e. such that $n<2\frac{\log\nu}{\left\langle J\right\rangle }$
(or $e^{n\left\langle J\right\rangle }<\nu^{2}$) then the unstable/stable
manifolds $\mathcal{S}_{w},\mathcal{U}_{w}$ are well separated\footnote{This is up to some few pairs $\left(w,w'\right)$ that give negligible
contributions. To control these terms we use large deviation techniques
explained in Appendix \ref{sec:A-formula-by}.} in the sense that $w\neq w'\Rightarrow\mathrm{Tr}\left(\Pi_{w'}^{\dagger}\Pi_{w}\right)\simeq0$.
Applying this separation to the double sum (\ref{eq:bound}) means
that the non diagonal terms can be neglected and we call this the
``diagonal approximation''. For the diagonal terms that remain we
compute that $\mathrm{Tr}\left(\Pi_{w}^{\dagger}\Pi_{w}\right)\leq\nu C$.
This gives
\begin{eqnarray}
\gamma_{\mathrm{asympt.}} & \underset{(\ref{eq:bound})}{\leq} & \frac{1}{2n}\log\left(\sum_{w}e^{2\left(V-J\right)_{w}}C\nu\right)\label{eq:final_step}\\
 & \underset{(\ref{eq:def_pressure-1})}{=} & \frac{1}{2}\mathrm{Pr}\left(2\left(V-J\right)\right)+\frac{1}{2n}\log C+\frac{1}{2n}\log\nu+o\left(1\right)\nonumber 
\end{eqnarray}
Then we take the time $n=2\frac{\log\nu}{\left\langle J\right\rangle }\left(1-\epsilon\right)$
with any $\epsilon>0$ and $\nu\rightarrow+\infty$ and get the result
(\ref{eq:def_gamma_up}) that $\gamma_{\mathrm{asympt.}}\leq\gamma_{\mathrm{up}}:=\frac{1}{2}\mathrm{Pr}\left(2\left(V-J\right)\right)+\frac{1}{4}\left\langle J\right\rangle $.

\section{\label{sec:The-canonical-map}The canonical map $\tilde{\phi}$ and
its trapped set $\mathcal{K}$ in phase space}

According to \cite[Lemma 4.2]{faure_arnoldi_tobias_13}, the transfer
operator $\mathcal{L}_{i,j}$ in (\ref{e:def_F_op_ij}) is a $\nu$-semiclassical
Fourier integral operator (FIO)\footnote{The reader does not need to be familiar with the theory of global
Fourier integral operators for the rest of the article. For a discussion
of FIOs in the context of IFS-transfer operators we refer to \cite[Section 4]{faure_arnoldi_tobias_13}.
For a more general introduction we refer to \cite[Chap.10]{zworski_book_2012}. }. It is a general fact in semiclassical analysis that various properties
of Fourier integral operators are obtained from the properties of
their associated symplectic map (or canonical map) which are maps
on the cotangent space. Here we use coordinates $x\in I_{i}$ and
$\xi\in T_{x}^{*}I_{i}$ . The canonical map associated to $\mathcal{L}_{i,j}$
is defined by \cite[Lemma 4.2]{faure_arnoldi_tobias_13}

\begin{equation}
\tilde{\phi}_{i,j}:\begin{cases}
T^{*}I_{i} & \rightarrow T^{*}I_{j}\\
\left(x,\xi\right) & \rightarrow\begin{cases}
x' & =\phi_{i,j}\left(x\right)\\
\xi' & =\frac{1}{\phi_{i,j}'\left(x\right)}\xi+\tau'\left(x'\right).
\end{cases}
\end{cases}\label{eq:def_symplectic_map_Fij-1}
\end{equation}
This gives a multi-valued canonical map $\tilde{\phi}:T^{*}I\rightarrow T^{*}I$
on the phase space $T^{*}I\cong I\times\mathbb{R}$, given by:
\begin{equation}
\tilde{\phi}:\begin{cases}
T^{*}I & \rightarrow T^{*}I\\
\left(x,\xi\right) & \rightarrow\left\{ \tilde{\phi}_{i,j}\left(x,\xi\right)\quad\quad\mbox{with }i,j\mbox{ s.t. }x\in I_{i},\,i\rightsquigarrow j\right\} .
\end{cases}\label{eq:canonical_map_Fij}
\end{equation}

We have the following property of ``escape at infinity outside a
compact'' for the dynamics defined by $\tilde{\phi}:T^{*}I\rightarrow T^{*}I$:

\vspace{0.cm}\begin{center}{\color{blue}\fbox{\color{black}\parbox{16cm}{
\begin{lem}
\label{lem:escape_F}\cite[Lemma 4.4]{faure_arnoldi_tobias_13}. For
any $1<\kappa<e^{J_{\mathrm{min}}}$, there exists $C\geq0$ such
that $\forall x\in I_{i}$, $\forall\xi\in\mathbb{R}$, $\forall j\text{ s.t. }i\rightsquigarrow j$,
\begin{equation}
\left(x',\xi'\right)=\tilde{\phi}_{i,j}\left(x,\xi\right)\text{ and }\left|\xi\right|>C\quad\Rightarrow\quad\left|\xi'\right|>\kappa\left|\xi\right|.\label{eq:expand_nu}
\end{equation}
\end{lem}

}}}\end{center}\vspace{0.cm}
\begin{rem}
\label{rem:escape}At this stage we observe from (\ref{eq:expand_nu})
that the function $A_{m}\left(x,\xi\right)=\left\langle \xi\right\rangle ^{-m}$
defined in (\ref{eq:def_A}) satisfies $\left|\xi\right|>C\Rightarrow\frac{A_{m}\left(\tilde{\phi}_{i,j}\left(x,\xi\right)\right)}{A_{m}\left(x,\xi\right)}\leq c^{m}$
with $c=\sqrt{\frac{C^{2}+1}{\left(C\kappa\right)^{2}+1}}<1$, i.e.
$A_{m}$ decreases strictly with the dynamics. This explains why we
call $A_{m}$ an ``escape function''.
\end{rem}

\subsection{The trapped set $\mathcal{K}$}

We define the trapped set $\mathcal{K}$ for the dynamics of the canonical
map $\tilde{\phi}$ in (\ref{eq:canonical_map_Fij}), as the points
which do not escape ``totally'' neither in the past nor the future:

\begin{center}{\color{red}\fbox{\color{black}\parbox{16cm}{
\begin{defn}
The \textbf{trapped set} in phase space $T^{*}I\equiv I\times\mathbb{R}$
is defined as
\begin{eqnarray*}
\mathcal{K} & = & \left\{ \left(x,\xi\right)\in I\times\mathbb{R},\quad\exists\mathcal{D}\Subset I\times\mathbb{R}\mbox{ compact},\right.\\
 &  & \quad s.t.\left.\forall n\in\mathbb{Z}\quad\mbox{ }\tilde{\phi}^{n}\left(x,\xi\right)\cap\mathcal{D}\neq\emptyset\right\} 
\end{eqnarray*}
\end{defn}

}}}\end{center}
\begin{rem}
Since the map $\tilde{\phi}$ is a lift of the map $\phi:I\rightarrow I$
we conclude that $\mathcal{K}\subset\left(K\times\mathbb{R}\right)$
with $K$ the trapped set of $\phi$ defined in (\ref{eq:trapped_set_K_def1}).
Using any value of $C$ given from Lemma \ref{lem:escape_F} one can
make this precise and obtain that
\[
\mathcal{K}\subset\left(K\times\left\{ \xi\in\mathbb{R},\left|\xi\right|\leq C\right\} \right).
\]
\end{rem}

For $\varepsilon>0$, let $\mathcal{K}_{\varepsilon}$ denote a closed
$\varepsilon-$neighborhood of the trapped set $\mathcal{K}$, namely
\[
\mathcal{K}_{\varepsilon}:=\left\{ \left(x,\xi\right)\in T^{*}I,\quad\exists\left(x_{0},\xi_{0}\right)\in\mathcal{K},\quad\max\left(\left|x-x_{0}\right|,\left|\xi-\xi_{0}\right|\right)\leq\varepsilon\right\} .
\]
From now on we will make the following hypothesis on the multi-valued
map $\tilde{\phi}$.

\vspace{0.cm}\begin{center}{\color{red}\fbox{\color{black}\parbox{16cm}{
\begin{assumption}
\label{hyp:minimal_capt}We assume the following property called ``\textbf{minimal
captivity'':}
\begin{equation}
\exists\varepsilon>0,\quad\forall\left(x,\xi\right)\in\mathcal{K}_{\varepsilon},\quad\sharp\left\{ \tilde{\phi}\left(x,\xi\right)\bigcap\mathcal{K}_{\varepsilon}\right\} \leq1.\label{eq:hyp_minimal_captivity_epsilon}
\end{equation}
This means that the dynamics of $\tilde{\phi}$ is univalued on a
neighborhood of the trapped set $\mathcal{K}$.
\end{assumption}

}}}\end{center}\vspace{0.cm}
\begin{rem}
\label{rem:min_captive}~

\begin{enumerate}
\item The minimal captivity assumption has been introduced in \cite{faure_arnoldi_tobias_13}
and it allows an easy description of the trapped set $\mathcal{K}$.
We refer to \cite[Prop. 4.1]{faure_arnoldi_tobias_13} for further
discussions and alternative equivalent formulations.
\item The minimal captivity assumption is a stronger assumption than the
Dolgopyat non-local integrability condition, \cite{dolgopyat_98},\cite[Definition 2.1]{naud_expanding_2005}.
We refer to \cite[Section 4.3]{faure_arnoldi_tobias_13} for a comparison.
\item In \cite[Prop. 7.3]{faure_arnoldi_tobias_13} an explicit procedure
to verify the minimal captivity assumption is discussed and minimal
captivity assumption is proven for the examples of Bowen Series map
and the truncated Gauss maps.
\item Given the minimal captivity assumption \ref{hyp:minimal_capt}, let
$U\subset\mathbb{R}^{2}$ open such that $\mathcal{K\subset}U\subset(\mathcal{K_{\varepsilon}\cap\tilde{\phi}}^{-1}(\mathcal{K_{\varepsilon}}))$.
We can extend the univalued map $\tilde{\phi}$ on $\mathcal{K}$
to an embedding $\tilde{\phi}:U\to\mathbb{R}^{2}$. With this point
of view $\mathcal{K}$ is simply the maximal invariant hyperbolic
set of the diffeomorphism $\tilde{\phi}$ (cf. \cite[Section 1.b, Section 2.h]{hasselblatt_02b}).
This observation will be useful in the proof of Lemma \ref{lem:9}
to use regularity estimates on the stable foliation of $\tilde{\phi}$
.
\end{enumerate}
\end{rem}

\subsection{\label{subsec:Symbolic-dynamics}Symbolic dynamics on the trapped
set $K\subset I$}

In the following two sub-sections we introduce the symbolic dynamics
on the trapped set $K\subset I$ and the trapped set $\mathcal{K}\subset T^{*}I$
in phase space. Note that the symbolic dynamics is not necessary for
the definition of Ruelle-Pollicott resonances neither for the statement
of the results. It is a useful tool to keep track of orbits and is
natural in the context of I.F.S. dynamics that is defined from a set
of intervals $\left(I_{j}\right)_{j=1\ldots N}$.

We first consider the dynamics of $\phi:I\rightarrow I$ on the ``base
space'' $I$. Let

\begin{equation}
\mathcal{W}_{-}:=\left\{ \left(\ldots,w_{-2},w_{-1},w_{0}\right)\in\left\{ 1,\ldots,N\right\} ^{-\mathbb{N}},w_{l-1}\rightsquigarrow w_{l},\forall l\leq0\right\} \label{eq:def_W_-}
\end{equation}
be the set of admissible left semi-infinite sequences. In other words,
$\mathcal{W}_{-}$ is a subshift of finite type \cite[p.56]{brin-02}.
For $w_{-}\in\mathcal{W}_{-}$ and $i<j\leq0$ we write 
\begin{equation}
w_{i,j}:=\left(w_{i},w_{i+1},\ldots w_{j}\right)\label{eq:def_i,j}
\end{equation}
for an extracted sequence. For simplicity we will use the following
notation for the composition of maps: 
\begin{equation}
\phi_{w_{i,j}}:=\phi_{w_{j-1},w_{j}}\circ\phi_{w_{j-2},w_{j-1}}\circ\ldots\circ\phi_{w_{i},w_{i+1}}:I_{w_{i}}\rightarrow I_{w_{j}}.\label{eq:def_phi_w_ij}
\end{equation}
For $n\geq0$, let
\begin{equation}
I_{w_{-n,0}}:=\phi_{w_{-n,0}}\left(I_{w_{-n}}\right)\subset I_{w_{0}}.\label{eq:def_Iwn}
\end{equation}
See figure \ref{fig:symbolic}.

\begin{figure}[h]
\begin{centering}
\input{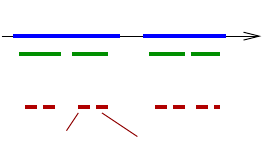tex_t}
\par\end{centering}
\caption{\label{fig:symbolic} This picture illustrates the definition of intervals
$I_{w_{-n,0}}$ defined in (\ref{eq:def_Iwn}) from a word $w_{-n,0}=\left(w_{-n},\ldots w_{-1},w_{0}\right)$
and the contracting maps $\phi_{i,j}:I_{i}\rightarrow I_{j}$. For
example here $I_{\left(2,2,1\right)}:=\phi_{\left(2,2,1\right)}\left(I_{2}\right):=\phi_{2,1}\circ\phi_{2,2}\left(I_{2}\right)=\phi_{2,1}\left(I_{2,2}\right)\subset I_{1}.$}
\end{figure}
For any $0<m<n$ we have the strict inclusions $I_{w_{-n,0}}\subset I_{w_{-m,0}}\subset I_{w_{0}}$
and from (\ref{eq:theta_contraction}), the size of $I_{w_{-n,0}}$
is bounded by $\left|I_{w_{-n,0}}\right|\leq\theta^{n}\left|I_{w_{0}}\right|,$
hence the sequence of sets $\left(I_{w_{-n,0}}\right)_{n\geq1}$ is
a sequence of non empty and decreasing closed intervals and $\bigcap_{n=1}^{\infty}I_{w_{-n,0}}$
is a point in the trapped set $K$, Eq.(\ref{eq:trapped_set_K_def1}).
So we can define

\begin{center}{\color{red}\fbox{\color{black}\parbox{16cm}{
\begin{defn}
\label{def:The-symbolic-coding_S}The ``\textbf{symbolic coding map}
of $K$'' is 
\begin{equation}
S:\begin{cases}
\mathcal{W}_{-} & \rightarrow K\\
w_{-} & \mapsto S\left(w_{-}\right):=\bigcap_{n=1}^{\infty}I_{w_{-n,0}}
\end{cases}\label{eq:def_coding_S}
\end{equation}
\end{defn}

}}}\end{center}

Let us introduce the \textbf{left shift $L$,} a multivalued map,
defined by
\begin{equation}
L:\begin{cases}
\mathcal{W}_{-} & \rightarrow\mathcal{W}_{-}\\
\left(\ldots,w_{-2},w_{-1},w_{0}\right) & \rightarrow\left(\ldots,w_{-2},w_{-1},w_{0},w_{1}\right)
\end{cases}\label{eq:def_L}
\end{equation}
with $w_{1}\in\left\{ 1,\ldots,N\right\} $ such that $w_{0}\rightsquigarrow w_{1}$
and let the \textbf{right shift} $R$ be the univalued map defined
by
\begin{equation}
R:\begin{cases}
\mathcal{W}_{-} & \rightarrow\mathcal{W}_{-}\\
\left(\ldots,w_{-2},w_{-1},w_{0}\right) & \rightarrow\left(\ldots,w_{-2},w_{-1}\right)
\end{cases}.\label{eq:def_R}
\end{equation}
\begin{framed}%
\begin{prop}
\label{prop:symbolic dynamics}\cite[Prop. 4.12]{faure_arnoldi_tobias_13}
The following diagram is commutative
\begin{eqnarray}
\mathcal{W}_{-} & \overset{S}{\longrightarrow} & \qquad K\label{eq:diagram_S}\\
R\uparrow\downarrow L &  & \phi^{-1}\uparrow\downarrow\phi\nonumber \\
\mathcal{W}_{-} & \overset{S}{\longrightarrow} & \qquad K\nonumber 
\end{eqnarray}

and the map $S:\mathcal{W}_{-}\rightarrow K$ is one to one.
\end{prop}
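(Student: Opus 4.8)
The plan is to check the two squares of the diagram by a direct computation with the cylinder sets $I_{w_{-n,0}}$ of (\ref{eq:def_Iwn}), and then to read the injectivity of $S$ off the strong separation condition (\ref{eq:hyp_non_intersect}). Start with the downward square $S\circ L=\phi\circ S$. Fix $w_{-}\in\mathcal{W}_{-}$ and an admissible successor $w_{1}$ with $w_{0}\rightsquigarrow w_{1}$; the extracted pieces of the shifted word are $(Lw)_{-n,0}=(w_{-(n-1)},\ldots,w_{0},w_{1})$, so by the definition (\ref{eq:def_phi_w_ij}) of the composed maps, $I_{(Lw)_{-n,0}}=\phi_{w_{0},w_{1}}(I_{w_{-(n-1),0}})$. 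Since $\phi_{w_{0},w_{1}}$ is injective it commutes with the decreasing intersection, whence $S(L(w_{-}))=\phi_{w_{0},w_{1}}(S(w_{-}))$. Letting $w_{1}$ range over all admissible successors of $w_{0}$, the right-hand side ranges over exactly the multivalued image $\phi(S(w_{-}))$, which is the desired identity.

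For the upward square $S\circ R=\phi^{-1}\circ S$ both legs are univalued. The same computation read backwards gives $\phi_{w_{-1},w_{0}}(S(R(w_{-})))=S(w_{-})$. Thus $S(R(w_{-}))\in I_{w_{-1}}\subset I$ is a preimage of $S(w_{-})\in K$ under the branch $\phi_{w_{-1},w_{0}}$ of $\phi$; since $\phi^{-1}$ restricted to $K$ is univalued by (\ref{eq:phi_-1_on_K}), this forces $S(R(w_{-}))=\phi^{-1}(S(w_{-}))$, closing the diagram.

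For injectivity, suppose $S(w_{-})=S(v_{-})=x$ with $w_{-}\neq v_{-}$, and let $k\leq 0$ be the largest index with $w_{k}\neq v_{k}$. If $k=0$ then $x\in I_{w_{0}}\cap I_{v_{0}}=\emptyset$ since the $I_{i}$ are disjoint, a contradiction. If $k\leq -1$, then for $n\geq -k$ one has, splitting the composition (\ref{eq:def_phi_w_ij}) at index $k$ and using $w_{j}=v_{j}$ for $j>k$, $I_{w_{-n,0}}\subset\phi_{w_{k+1,0}}(\phi_{w_{k},w_{k+1}}(I_{w_{k}}))$ and $I_{v_{-n,0}}\subset\phi_{w_{k+1,0}}(\phi_{v_{k},w_{k+1}}(I_{v_{k}}))$. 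The two inner sets are disjoint by (\ref{eq:hyp_non_intersect}) applied to the distinct pairs $(w_{k},w_{k+1})$ and $(v_{k},w_{k+1})$, and $\phi_{w_{k+1,0}}$ is injective, so $I_{w_{-n,0}}\cap I_{v_{-n,0}}=\emptyset$, contradicting $x\in I_{w_{-n,0}}\cap I_{v_{-n,0}}$. Hence $S$ is injective. (If surjectivity onto $K$ is also wanted, it follows from $K=\bigcap_{n}K_{n}=\bigcap_{n}\phi^{n}(I)$: every $x\in K$ lies in a unique cylinder $I_{w_{-n,0}}$ for each $n$, and these are nested, hence they define a single $w_{-}\in\mathcal{W}_{-}$ with $x=S(w_{-})$.)

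All of this is elementary, and I do not expect a genuine obstacle: the only place requiring care is the index bookkeeping in passing between the shifts $L,R$, the extracted words (\ref{eq:def_i,j}) and the composed maps (\ref{eq:def_phi_w_ij}), together with applying the strong separation condition one symbol at a time and then pushing the resulting disjointness forward by an injection, rather than attempting to invoke (\ref{eq:hyp_non_intersect}) globally.
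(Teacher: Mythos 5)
The paper does not supply a proof of this proposition here; it simply cites it as Proposition~4.12 of \cite{faure_arnoldi_tobias_13}. Your proof is correct and is the standard argument. One small check worth making explicit: the identity $S(L(w_-))=\phi_{w_0,w_1}(S(w_-))$ uses $\phi_{w_0,w_1}\bigl(\bigcap_n A_n\bigr)=\bigcap_n\phi_{w_0,w_1}(A_n)$, and you justify this correctly by injectivity alone (for injective $\phi$ one always has $\bigcap\phi(A_n)\subset\phi(\bigcap A_n)$: any $y$ in the left side has a unique preimage, which must lie in every $A_n$). The upward square deduction from the downward one, via the univaluedness of $\phi^{-1}$ on $K$ established in (\ref{eq:phi_-1_on_K}), is clean. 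For injectivity, splitting $\phi_{w_{k,0}}=\phi_{w_{k+1,0}}\circ\phi_{w_k,w_{k+1}}$ at the largest discrepancy index $k$ and then invoking the strong separation condition (\ref{eq:hyp_non_intersect}) for the two distinct pairs $(w_k,w_{k+1})$, $(v_k,w_{k+1})$, followed by pushing forward by the injection $\phi_{w_{k+1,0}}$, is exactly the right bookkeeping; and the $k=0$ case is handled separately by disjointness of the base intervals, as it must be, since (\ref{eq:hyp_non_intersect}) says nothing about the $I_i$ themselves. The closing remark on surjectivity is a correct bonus: the level-$n$ cylinders $I_{w_{-n,0}}$ are pairwise disjoint (again by (\ref{eq:hyp_non_intersect}) plus disjointness of the $I_i$), so each $x\in K$ selects a unique nested chain and hence a unique $w_-$.
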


\end{framed}

\subsection{Symbolic dynamics on the trapped set $\mathcal{K}\subset T^{*}I$}

We consider now the dynamics of the canonical map $\tilde{\phi}:I\times\mathbb{R}\rightarrow I\times\mathbb{R}$
on the phase space. Let

\[
\mathcal{W}_{+}:=\left\{ \left(w_{0},w_{1},w_{2}\ldots\right)\in\left\{ 1,\ldots,N\right\} ^{\mathbb{N}},\quad w_{l}\rightsquigarrow w_{l+1},\forall l\geq0\right\} 
\]
be the set of admissible right semi-infinite sequences. For any $n\geq0$
let
\begin{equation}
\tilde{I}_{w_{0,n}}:=\tilde{\phi}^{-n}\left(I_{w_{0,n}}\times\left[-C,C\right]\right)\label{eq:def_set_I_tilde}
\end{equation}
be the image of the rectangle under the univalued map $\tilde{\phi}^{-n}$,
where $I_{w_{0,n}}$ is given in (\ref{eq:def_Iwn}) and $C$ is given
by Lemma \ref{lem:escape_F}. See figure \ref{fig:trapped_set} (a).
Notice that $\pi\left(\tilde{I}_{w_{0,n}}\right)=I_{w_{0}}$ where
$\pi\left(x,\xi\right)=x$ is the canonical projection map. Since
the map $\tilde{\phi}^{-1}$ contracts strictly in the variable $\xi$
by the factor $\theta<1$ the sequence $\left(\tilde{I}_{w_{0,n}}\right)_{n\in\mathbb{N}}$
is strictly decreasing: $\tilde{I}_{w_{0,n+1}}\subset\tilde{I}_{w_{0,n}}$
and we can define the limit

\begin{equation}
\tilde{S}:\begin{cases}
\mathcal{W}_{+} & \rightarrow I\times\mathbb{R}\\
w_{+} & \mapsto\tilde{S}\left(w_{+}\right):=\bigcap_{n\geq0}\tilde{I}_{w_{0,n}}
\end{cases}.\label{eq:def_S_tilde}
\end{equation}

\begin{center}{\color{blue}\fbox{\color{black}\parbox{16cm}{
\begin{prop}
\cite[Prop. 4.13]{faure_arnoldi_tobias_13}For every $w_{+}\in\mathcal{W}_{+}$,
the set $\tilde{S}\left(w_{+}\right)\subset T^{*}I_{w_{0}}$ is a
smooth curve given by
\begin{equation}
\tilde{S}\left(w_{+}\right)=\left\{ \left(x,\zeta_{w_{+}}\left(x\right)\right),\quad x\in I_{w_{0}}\right\} \label{eq:def_S_tilde_w+}
\end{equation}
with
\begin{equation}
\zeta_{w_{+}}\left(x\right):=-\sum_{k\geq1}e^{-J_{w_{0,k}}\left(x\right)}\tau'\left(\phi_{w_{0,k}}(x)\right),\label{eq:expression_zeta_w}
\end{equation}
 and 
\begin{equation}
J_{w_{0,n}}\left(x\right):=\sum_{k=1}^{n}J_{w_{k},w_{k+1}}\left(\phi_{w_{0,k}}\left(x\right)\right)\label{eq:def_Birkhoff_sum}
\end{equation}
 is the Birkhoff sum of the ``Jacobian function'' defined in (\ref{eq:def_J})
\begin{equation}
J_{i,j}\left(x\right)=-\log\left(\phi'_{i,j}\left(x\right)\right)>0.\label{eq:def_J_i,j}
\end{equation}

We have an estimate of regularity, uniform in $w$: $\forall\alpha\in\mathbb{N}$,
$\exists C_{\alpha}>0$, $\forall w_{+}\in\mathcal{W}_{+}$, $\forall x\in I_{w_{0}}$,
\begin{equation}
\left|\left(\partial_{x}^{\alpha}\zeta_{w_{+}}\right)\left(x\right)\right|\leq C_{\alpha}.\label{eq:estimate}
\end{equation}

Moreover, with the hypothesis \ref{hyp:minimal_capt} of minimal captivity
with a neighborhood $\mathcal{K}_{\varepsilon}$ of $\mathcal{K}$,
there exists $a\geq1$ and $K_{a}$ defined in (\ref{eq:def_Kn})
such that
\begin{equation}
\forall x\in K_{a},\forall w_{+}\in\mathcal{W}_{+},\quad\left(x,\zeta_{w_{+}}\left(x\right)\right)\in\mathcal{K}_{\varepsilon}.\label{eq:Ka_w}
\end{equation}
\end{prop}

}}}\end{center}
\begin{proof}
For (\ref{eq:expression_zeta_w}) and (\ref{eq:estimate}) see \cite[Prop. 4.13]{faure_arnoldi_tobias_13}.
For (\ref{eq:Ka_w}) see \cite[Prop. 4.10 (1)]{faure_arnoldi_tobias_13}.
\end{proof}
Let

\[
\mathcal{W}:=\left\{ \left(\ldots w_{-2},w_{-1},w_{0},w_{1},\ldots\right)\in\left\{ 1,\ldots,N\right\} ^{\mathbb{Z}},\quad w_{l}\rightsquigarrow w_{l+1},\forall l\in\mathbb{Z}\right\} 
\]
be the set of bi-infinite admissible sequences. For a given $w\in\mathcal{W}$
and $a,b\in\mathbb{N}$, let
\begin{equation}
\mathcal{I}_{w_{-a,b}}:=\left(\pi^{-1}\left(I_{w_{-a,0}}\right)\cap\tilde{I}_{w_{0,b}}\right).\label{eq:def_I_tilde_Iab}
\end{equation}

See Figure \ref{fig:trapped_set} (a).

\begin{figure}[h]
\begin{centering}
\input{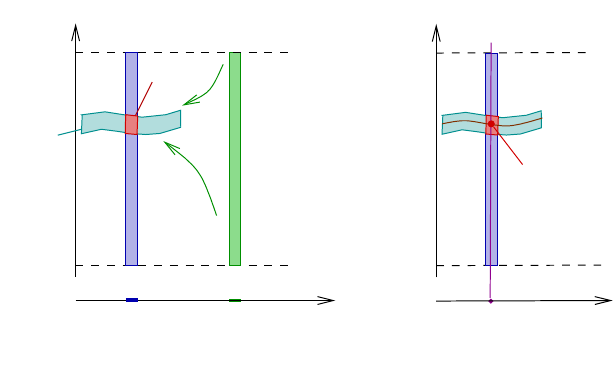tex_t}
\par\end{centering}
\caption{\label{fig:trapped_set}Figure (a) illustrates the construction of
$\tilde{I}_{w_{0,b}}$ given in (\ref{eq:def_set_I_tilde}), the construction
of $\mathcal{I}_{w_{-a,b}}$ given in (\ref{eq:def_I_tilde_Iab}).
Figure (b) illustrates the limit of these sets for semi-infinite words,
i.e. $a,b\rightarrow\infty$. This gives the smooth curve $\tilde{S}\left(w_{+}\right)$
given in (\ref{eq:def_S_tilde_w+}), the point $x_{w_{-}}=S\left(w_{-}\right)\in K$
given in (\ref{eq:def_coding_S}), the vertical line $\pi^{-1}\left(S\left(w_{-}\right)\right)$
and finally the intersection $\mathcal{S}\left(w\right):=\left(\pi^{-1}\left(S\left(w_{-}\right)\right)\cap\tilde{S}\left(w_{+}\right)\right)\in\mathcal{K}$
given in (\ref{eq:def_S_trapped}), that depends on a bi-infinite
word $w\equiv\left(w_{-},w_{+}\right)\in\mathcal{W}$.}
\end{figure}

\begin{center}{\color{red}\fbox{\color{black}\parbox{16cm}{
\begin{defn}
The \textbf{symbolic coding map} of $\mathcal{K}$ is
\begin{equation}
\mathcal{S}:\begin{cases}
\mathcal{W} & \rightarrow\mathcal{K}\\
w & \mapsto\mathcal{S}\left(w\right):=\bigcap_{n=1}^{\infty}\mathcal{I}_{w_{-n,n}}=\left(\pi^{-1}\left(S\left(w_{-}\right)\right)\cap\tilde{S}\left(w_{+}\right)\right)
\end{cases}\label{eq:def_S_trapped}
\end{equation}
with $w_{-}=\left(\ldots w_{-1},w_{0}\right)\in\mathcal{W}_{-}$,
$w_{+}=\left(w_{0},w_{1},\ldots\right)\in\mathcal{W}_{+}$ (with the
same extreme letter $w_{0}$). 
\end{defn}

}}}\end{center}

See figure \ref{fig:trapped_set} (b). More precisely we can express
the point $\mathcal{S}\left(w\right)\in\mathcal{K}$ as
\begin{equation}
\mathcal{S}\left(w\right)=\left(x_{w},\xi_{w}\right),\quad x_{w}=S\left(w_{-}\right)\in K,\quad\xi_{w}=\zeta_{w_{+}}\left(x_{w}\right)\in T_{x_{w}}^{*}I,\label{eq:point_in_trapped_K}
\end{equation}
with $S\left(w_{-}\right)$ given in (\ref{eq:def_coding_S}) and
$\zeta_{w_{+}}$ given in (\ref{eq:expression_zeta_w}).

Let $L,R$ denote the full left/right shift on $\mathcal{W}$ defined
similarly to (\ref{eq:def_L}) and (\ref{eq:def_R}) by $\left(Lw\right)_{j}=w_{j+1}$
and $\left(Rw\right)_{j}=w_{j-1}$.

\begin{center}{\color{blue}\fbox{\color{black}\parbox{16cm}{
\begin{prop}
\label{prop:S_tilde_bijective}\cite[Prop. 4.15]{faure_arnoldi_tobias_13}
The following diagram is commutative
\begin{eqnarray}
\mathcal{W} & \overset{\mathcal{S}}{\longrightarrow} & \qquad\mathcal{K}\label{eq:diagram_S-1}\\
R\uparrow\downarrow L &  & \tilde{\phi}^{-1}\uparrow\downarrow\tilde{\phi}\nonumber \\
\mathcal{W} & \overset{\mathcal{S}}{\longrightarrow} & \qquad\mathcal{K}\nonumber 
\end{eqnarray}
If assumption \ref{hyp:minimal_capt} of minimal captivity holds true
then the map $\mathcal{S}:\mathcal{W}\rightarrow\mathcal{K}$ is one
to one. This means that the univalued dynamics of points on the trapped
set $\mathcal{K}$ under the maps $\tilde{\phi}^{-1},\tilde{\phi}$
is equivalent to the symbolic dynamics of the full shift maps $R,L$
on the set of words $\mathcal{W}$.
\end{prop}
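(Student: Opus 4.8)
The plan is to prove the two assertions separately: first the commutativity of diagram (\ref{eq:diagram_S-1}), and then, using Assumption \ref{hyp:minimal_capt} at that point only, the injectivity of $\mathcal{S}$.

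\emph{Commutativity.} I would work with the description $\mathcal{S}(w)=\pi^{-1}\bigl(S(w_-)\bigr)\cap\tilde S(w_+)=\bigl(x_w,\zeta_{w_+}(x_w)\bigr)$ coming from (\ref{eq:def_S_trapped})--(\ref{eq:point_in_trapped_K}). Fix $w\in\mathcal{W}$ together with the letter $w_1$, so that we look at the branch $\tilde\phi_{w_0,w_1}$ of $\tilde\phi$ and the corresponding branch of the left shift $L$. The base diagram of Proposition \ref{prop:symbolic dynamics} already gives $\phi_{w_0,w_1}\bigl(S(w_-)\bigr)=S\bigl((Lw)_-\bigr)$, so the first coordinates of $\tilde\phi_{w_0,w_1}(\mathcal{S}(w))$ and of $\mathcal{S}(Lw)$ agree. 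For the second coordinate I would establish the one-step identity
\[
\zeta_{(Lw)_+}\bigl(\phi_{w_0,w_1}(x)\bigr)=\frac{1}{\phi'_{w_0,w_1}(x)}\,\zeta_{w_+}(x)+\tau'\bigl(\phi_{w_0,w_1}(x)\bigr),\qquad x\in I_{w_0},
\]
which is precisely the statement that $\tilde\phi_{w_0,w_1}$, read off from (\ref{eq:def_symplectic_map_Fij-1}), carries the graph $\tilde S(w_+)$ into the graph $\tilde S((Lw)_+)$. This identity follows in one line from the geometric-series formula (\ref{eq:expression_zeta_w}) together with $1/\phi'_{i,j}=e^{J_{i,j}}$ from (\ref{eq:def_J_i,j}); alternatively — and this is the bookkeeping version I would actually write — from the nested definition (\ref{eq:def_set_I_tilde})--(\ref{eq:def_S_tilde}): since $\tilde\phi_{w_0,w_1}\circ\tilde\phi^{-n}=\tilde\phi^{-(n-1)}$ along the itinerary of $w$ and $I_{w_{0,n}}\subset I_{(w_1,\dots,w_n)}$, one gets $\tilde\phi_{w_0,w_1}(\tilde I_{w_{0,n}})\subset\tilde I_{(Lw)_{0,n-1}}$ for every $n$, and intersecting over $n$ (using injectivity of $\tilde\phi_{w_0,w_1}$) yields $\tilde\phi_{w_0,w_1}(\tilde S(w_+))\subset\tilde S((Lw)_+)$. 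Combining the two coordinates gives $\tilde\phi_{w_0,w_1}(\mathcal{S}(w))=\mathcal{S}(Lw)$, and the companion identity $\tilde\phi^{-1}\circ\mathcal{S}=\mathcal{S}\circ R$ follows symmetrically (or simply by inverting, both $\tilde\phi^{-1}$ and $R$ being single-valued).

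\emph{Injectivity.} Here Assumption \ref{hyp:minimal_capt} enters. Suppose $\mathcal{S}(w)=\mathcal{S}(w')=:p\in\mathcal{K}$. Since $\pi(p)=S(w_-)=S(w'_-)$ and $S$ is injective by Proposition \ref{prop:symbolic dynamics}, we get $w_-=w'_-$, in particular $w_0=w'_0$. To see $w_+=w'_+$, I would argue by induction that $\mathcal{S}(L^nw)=\mathcal{S}(L^nw')$ for all $n\ge0$. The base case is the hypothesis. For the step: by the commutativity just proved, $\mathcal{S}(L^{n+1}w)\in\tilde\phi\bigl(\mathcal{S}(L^nw)\bigr)$ and likewise for $w'$; both points lie in $\mathcal{K}\subset\mathcal{K}_\varepsilon$, and $\mathcal{S}(L^nw)=\mathcal{S}(L^nw')\in\mathcal{K}_\varepsilon$, so by minimal captivity $\sharp\{\tilde\phi(\mathcal{S}(L^nw))\cap\mathcal{K}_\varepsilon\}\le1$ forces $\mathcal{S}(L^{n+1}w)=\mathcal{S}(L^{n+1}w')$. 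Finally $\pi\bigl(\mathcal{S}(L^nw)\bigr)=S\bigl((L^nw)_-\bigr)\in I_{w_n}$ by (\ref{eq:def_coding_S}) and similarly $\in I_{w'_n}$, and since $I_1,\dots,I_N$ are pairwise disjoint this gives $w_n=w'_n$ for all $n\ge0$. Hence $w=w'$.

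The step I expect to be the real obstacle is the injectivity argument, precisely the passage from ``the graphs $\tilde S(w_+)$ and $\tilde S(w'_+)$ share the common point $p$'' to ``$w_+=w'_+$'': without minimal captivity, distinct forward itineraries can produce graphs $\tilde S(w_+)$ that genuinely cross over the same base point, and it is exactly Assumption \ref{hyp:minimal_capt} — equivalently, that $\tilde\phi$ is univalued near $\mathcal{K}$, cf.\ Remark \ref{rem:min_captive}(4) — that excludes this and lets the forward orbit of $p$ encode the whole itinerary. The commutativity, by contrast, is a routine unwinding of the definitions of $S$, $\tilde S$ and $\tilde\phi$.
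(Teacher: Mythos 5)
The paper does not reproduce a proof of this Proposition; it is stated as a direct citation of \cite[Prop. 4.15]{faure_arnoldi_tobias_13}, so there is no in-text argument to compare against. Your proof is correct and is the natural one; in particular, you correctly isolate the role of Assumption \ref{hyp:minimal_capt}, which enters only in the injectivity step (the commutativity being definitional bookkeeping), and the one-sided inclusion $\tilde\phi_{w_0,w_1}\bigl(\tilde S(w_+)\bigr)\subset\tilde S\bigl((Lw)_+\bigr)$ indeed suffices, since $\tilde S\bigl((Lw)_+\bigr)$ is a graph over $I_{w_1}$ and the base coordinate $\phi_{w_0,w_1}(x_w)=x_{Lw}$ is already fixed by Proposition \ref{prop:symbolic dynamics}. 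One minor remark: injectivity of $\tilde\phi_{w_0,w_1}$ is not actually needed for $\tilde\phi_{w_0,w_1}\bigl(\bigcap_n\tilde I_{w_{0,n}}\bigr)\subset\bigcap_n\tilde\phi_{w_0,w_1}\bigl(\tilde I_{w_{0,n}}\bigr)$, which holds for an arbitrary map; it would be needed only for the reverse inclusion, which you do not use.
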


}}}\end{center}
\begin{rem}
\label{rem:invariant_manifolds}Considering the trapped set $\mathcal{K}$
as the hyperbolic set with a local product structure of $\tilde{\phi}$
(cf. Remark \ref{rem:min_captive}(4)), the curve $\tilde{S}_{w_{+}}=(x,\zeta_{w}(x))$
is precisely the stable manifold \cite[Section 2.e]{hasselblatt_02b}
through the point $(x_{w},\zeta_{w^{+}}(x_{w}))\in\mathcal{K}$. The
unstable manifold is the vertical line $\pi^{-1}\left(S\left(w_{-}\right)\right)=\left\{ \left(x_{w},\xi\right),\xi\in\mathbb{R}\right\} $
(cf. Figure \ref{fig:The-canonical-map} or Figure \ref{fig:trapped_set}
(b)). 
\end{rem}

\section{Global normal form\label{sec:Global-normal-form}}

Normal forms are usually constructed for individual fixed points or
individual periodic orbits \cite{arnold-ed2}. In few papers, normal
forms have already been considered globally for a hyperbolic dynamics
\cite{delatte_92,delatte_95,fred-trace-06}. We present here the global
normal form for the transfer operator $\mathcal{L}_{\nu}$ considered
in this paper. This is Theorem \ref{thm:Global-normal-form.} below.
We will need the following elementary (Fourier integral) operators
on $C_{0}^{\infty}\left(\mathbb{R}\right)$ and their associated symplectic
(or canonical) maps on $T^{*}\mathbb{R}\equiv\mathbb{R}_{x,\xi}^{2}$
\cite[chap.10]{zworski_book_2012}. Let $\varphi\in C_{0}^{\infty}\left(\mathbb{R}\right)$.
\begin{itemize}
\item For $\lambda\in\mathbb{R}$, the \textbf{dilation operator} is
\begin{equation}
\left(\hat{D}_{\lambda}\varphi\right)\left(y\right):=\varphi\left(e^{\lambda}y\right)\label{eq:def_D_lambda}
\end{equation}
whose canonical map is $D_{\lambda}:\left(x,\xi\right)\rightarrow\left(x'=e^{-\lambda}x,\xi'=e^{\lambda}\xi\right)$.
\item For $x\in\mathbb{R}$, the \textbf{translation operator} is
\[
\left(\hat{T}_{x}\varphi\right)\left(y\right):=\varphi\left(y-x\right)
\]
whose canonical map is $T_{x}:\left(y,\xi\right)\rightarrow\left(y'=y+x,\xi'=\xi\right)$.
\item For a smooth diffeomorphism $f:\mathbb{R}\rightarrow\mathbb{R}$ ,
the \textbf{composition operator} is
\[
\left(\hat{\mathcal{L}}_{f}\varphi\right)\left(y\right):=\varphi\left(f^{-1}\left(y\right)\right)
\]
whose canonical map is $\mathcal{L}_{f}:\left(x,\xi\right)\rightarrow\left(x'=f\left(x\right),\xi'=\left(f'\left(x\right)\right)^{-1}\cdot\xi\right)$.
\item For two smooth functions $\varUpsilon^{\left(0\right)},\varUpsilon^{\left(1\right)}\in C^{\infty}(\mathbb{R};\mathbb{R})$
and $\nu>0$, let $\varUpsilon=\varUpsilon^{\left(0\right)}+\frac{i}{\nu}\varUpsilon^{\left(1\right)}$
and
\[
\left(\hat{\Theta}_{\varUpsilon}\varphi\right)\left(y\right):=e^{i\nu\varUpsilon\left(y\right)}\varphi\left(y\right)
\]
whose canonical map is $\Theta_{\varUpsilon}:\left(x,\xi\right)\rightarrow\left(x'=x,\xi'=\xi+\frac{d\varUpsilon^{\left(0\right)}}{dx}\left(x\right)\right)$.
\end{itemize}
\begin{rem}
The dilation operator is a special case of a composition operator:
$\hat{D}_{\lambda}=\hat{\mathcal{L}}_{f}$ with $f\left(y\right)=e^{-\lambda}y$.
Similarly for the translation operator: $\hat{T}_{x}=\hat{\mathcal{L}}_{f}$
with $f\left(y\right)=y-x$. For any two diffeomorphism $f,g$ one
has $\hat{\mathcal{L}}_{f}\circ\hat{\mathcal{L}}_{g}=\hat{\mathcal{L}}_{f\circ g}$.
\end{rem}

The next theorem shows that using a combination of these previous
simple operators, the transfer operator $\mathcal{L}_{\nu}:C_{0}^{\infty}\left(I\right)\rightarrow C_{0}^{\infty}\left(I\right)$
defined in (\ref{eq:def_transfert_op_F}) is ``globally conjugated''
to a simple dilation operator. This is illustrated in Figure \ref{fig:The-canonical-map}.

\begin{center}{\color{blue}\fbox{\color{black}\parbox{16cm}{
\begin{thm}
\textbf{\label{thm:Global-normal-form.}``Global normal form''.
}For any word $w=\left(\ldots w_{-1},w_{0},w_{1},\ldots\right)\in\mathcal{W}$
there exist functions $\varUpsilon_{w}^{\left(0\right)},\varUpsilon_{w}^{\left(1\right)}\in C^{\infty}(I_{w_{0}};\mathbb{R})$,
$\varUpsilon_{w}=\varUpsilon_{w}^{\left(0\right)}+\frac{i}{\nu}\varUpsilon_{w}^{\left(1\right)}\in C^{\infty}\left(I_{w_{0}};\mathbb{C}\right)$
as well as a map $H_{w}:\mathcal{J}\to\mathbb{R}$ defined on an neighborhood
$\mathcal{J}\subset\mathbb{R}$ of $0$, which is independent of the
word $w\in\mathcal{W}$. $H_{w}$ is a $C^{\infty}$ diffeomorphism
onto its image and the following points hold.

\begin{enumerate}
\item There exists a neighborhood $U$ of $x_{w}$ such that the transfer
operator in (\ref{e:def_F_op_ij}) acting on $C_{0}^{\infty}\left(U\right)$
can be expressed as
\begin{equation}
\mathcal{L}_{w_{0},w_{1}}=e^{i\nu\tau\left(x_{L\left(w\right)}\right)+V\left(x_{L\left(w\right)}\right)}\cdot\widehat{\mathcal{T}}_{L\left(w\right)}\circ\hat{D}_{J_{w_{0},w_{1}}\left(x_{w}\right)}\circ\widehat{\mathcal{T}}_{w}^{-1}\label{eq:conj_F01}
\end{equation}
with 
\begin{equation}
\widehat{\mathcal{T}}_{w}:=\hat{\Theta}_{\varUpsilon_{w}}\circ\hat{T}_{x_{w}}\circ\hat{\mathcal{L}}_{H_{w}},\label{eq:def_Tau_w}
\end{equation}
$x_{w}\in I_{w_{0}}$ defined in (\ref{eq:point_in_trapped_K}) and
$J_{w_{0},w_{1}}$ defined in (\ref{eq:def_J_i,j}). Eq.(\ref{eq:conj_F01})
means that the components of the transfer operator (\ref{e:def_F_op_ij})
are conjugated to some dilation operator multiplied by a constant.
The operator $\widehat{\mathcal{T}}_{w}$ is a FIO whose canonical
map $\mathcal{T}_{w}$ sends $\left(0,0\right)$ to the point $\mathcal{S}\left(w\right)=\left(x_{w},\xi_{w}\right)\in\mathcal{K}$.
\item $H_{w}\left(0\right)=0$, $H'_{w}\left(0\right)=1$, $\varUpsilon_{w}\left(x_{w}\right)=0$
and 
\[
\frac{d\varUpsilon_{w}^{\left(0\right)}}{dy}\left(y\right)=\zeta_{w}\left(y\right)
\]
 with $\zeta_{w}\left(y\right):=\zeta_{w_{+}}\left(y\right)$ given
in (\ref{eq:expression_zeta_w}).
\item For any $\alpha\in\mathbb{N}$ there exists $C_{\alpha}$ such that
for any $w\in\mathcal{W}$,
\begin{equation}
\left|\partial^{\alpha}H_{w}\right|_{L^{\infty}}<C_{\alpha},\left|\partial^{\alpha}H_{w}^{-1}\right|_{L^{\infty}}<C_{\alpha},\quad\left|\partial^{\alpha}\varUpsilon_{w}\right|_{L^{\infty}}<C_{\alpha}.\label{eq:estimate_H_Upsilon}
\end{equation}
that express some regularity of the functions $H_{w},\varUpsilon_{w}$.
\end{enumerate}
\end{thm}

}}}\end{center}
\begin{rem}
Note that for a given operator $\mathcal{L}_{w_{0},w_{1}}$, the word
$w\in\mathcal{W}$ appearing in the conjugation $\widehat{\mathcal{T}}_{w}$
can be an arbitrary extension of $\left(w_{0},w_{1}\right)$. This
freedom for the choice of extension will be used from time to time
in the sequel. When necessary, we will check that the choice does
give bounded or negligible corrections, see e.g. Lemma \ref{lem:bounded_variation}.
The right hand side in (\ref{eq:conj_F01}) is acting on functions
with support in a neighborhood of $x_{w}$ that contains $K\cap I_{w_{0}}$.
This is enough for us.
\end{rem}

\begin{center}
\begin{figure}[h]
\begin{centering}
\input{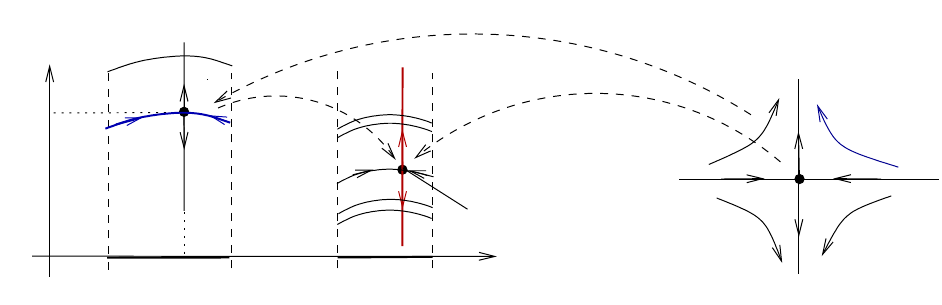tex_t}
\par\end{centering}
\caption{\label{fig:The-canonical-map}According to (\ref{eq:diagram_S-1})
and (\ref{eq:point_in_trapped_K}), a word $w\in\mathcal{W}$ is associated
to a point $\mathcal{S}\left(w\right)=\left(x_{w},\xi_{w}\right)\in\mathcal{K}$.
By the canonical map $\tilde{\phi}$, this point is sent to $\tilde{\phi}\left(\mathcal{S}\left(w\right)\right)=\mathcal{S}\left(L\left(w\right)\right)$.
In fact, in a vicinity of $\mathcal{S}\left(w\right)$ the map $\tilde{\phi}$
is conjugated to the dilation map: $\tilde{\phi}_{w_{0},w_{1}}=\mathcal{T}_{L\left(w\right)}\circ D_{J_{w_{0},w_{1}}\left(x_{w}\right)}\circ\mathcal{T}_{w}^{-1}$.
Eq.(\ref{eq:conj_F01}) shows that this conjugation is also true for
the component $\mathcal{L}_{w_{0},w_{1}}$ of the transfer operator.
The stable (respectively unstable) manifold of the point $\mathcal{S}\left(w\right)$
(in blue, respect. in red) supports a Lagrangian state $\mathcal{S}_{w}^{\left(k\right)}$
(respect. $\mathcal{U}_{w}^{\left(k\right)}$) that are defined in
Theorem \ref{thm:6.6} and used to express the transfer operator $\mathcal{L}_{\nu}^{n}$
for large time, as an asymptotic expansion.}
\end{figure}
\par\end{center}
\begin{rem}
\label{rem:5.3}A consequence of (\ref{eq:estimate_H_Upsilon}) is
that for any $\chi\in C_{0}^{\infty}\left(I\right)$, $m\in\mathbb{R}$,
\begin{eqnarray}
\widehat{\mathcal{T}}_{w}\hat{\chi} & :H_{\nu}^{-m}\left(\mathbb{R}\right) & \rightarrow H_{\nu}^{-m}\left(\mathbb{R}\right),\label{eq:T_bounded}\\
\widehat{\mathcal{T}}_{w}^{-1}\hat{\chi} & H_{\nu}^{-m}\left(\mathbb{R}\right) & \rightarrow H_{\nu}^{-m}\left(\mathbb{R}\right),\nonumber 
\end{eqnarray}
 are bounded uniformly with respect to $w\in\mathcal{W}$ and $\nu\in\mathbb{R}$.
\end{rem}

The next Corollary uses Theorem \ref{thm:Global-normal-form.} and
iterations of it to express long time evolution.

\begin{center}{\color{blue}\fbox{\color{black}\parbox{16cm}{
\begin{cor}
\label{cor:5.4}For any $n\geq1$,
\begin{equation}
\mathcal{L}_{\nu}^{n}=\sum_{w_{0,n}}\mathcal{L}_{w_{0,n}}.\label{eq:cor_F^n}
\end{equation}
For each term $\mathcal{L}_{w_{0,n}}$, let $w\in\mathcal{W}$ be
an arbitrary extension of $w_{0,n}$. We can write
\begin{equation}
\mathcal{L}_{w_{0,n}}=e^{i\nu\tau_{w_{0,n}}\left(x_{w}\right)+V_{w_{0,n}}\left(x_{w}\right)}\widehat{\mathcal{T}}_{L^{n}\left(w\right)}\hat{D}_{J_{w_{0,n}}\left(x_{w}\right)}\widehat{\mathcal{T}}_{w}^{-1},\qquad w\in\mathcal{W}\label{eq:conjug_2}
\end{equation}
with $V_{w_{0,n}},\tau_{w_{0,n}}$ and $J_{w_{0,n}}$ being Birkhoff
sums defined as in (\ref{eq:def_Birkhoff_sum}).
\end{cor}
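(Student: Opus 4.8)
The plan is to iterate the single-step normal form \eqref{eq:conj_F01} from Theorem \ref{thm:Global-normal-form.} and exploit the telescoping of the conjugating FIOs $\widehat{\mathcal{T}}_{w}$. First I would establish the decomposition \eqref{eq:cor_F^n}: since $\mathcal{L}_{\hbar}=\sum_{i,j}\mathcal{L}_{i,j}$ (the off-diagonal sum being governed by the adjacency matrix, with at most one nonzero term at each point by \eqref{eq:hyp_non_intersect}), composing $n$ copies and using $\mathcal{L}_{i,j}\circ\mathcal{L}_{k,l}=0$ unless $l=i$ gives $\mathcal{L}_{\hbar}^{n}=\sum_{w_{0,n}}\mathcal{L}_{w_{0,n}}$ where $\mathcal{L}_{w_{0,n}}:=\mathcal{L}_{w_{n-1},w_{n}}\circ\cdots\circ\mathcal{L}_{w_{0},w_{1}}$ and the sum runs over admissible paths $w_{0,n}=(w_0,\dots,w_n)$ with $w_l\rightsquigarrow w_{l+1}$; this is just the standard composition law for transfer operators of an IFS and follows from Definition \ref{def_Transfer_op}.

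For the second formula \eqref{eq:conjug_2}, I would fix an admissible path $w_{0,n}$, choose any bi-infinite extension $w\in\mathcal{W}$, and observe that its iterated left shifts $L^k(w)$ for $k=0,\dots,n$ are again legitimate words whose first two symbols are $(w_k,w_{k+1})$. Applying \eqref{eq:conj_F01} to each factor $\mathcal{L}_{w_k,w_{k+1}}$ with the particular extension $L^k(w)$ yields
\begin{equation}
\mathcal{L}_{w_k,w_{k+1}}=e^{\frac{i}{\hbar}\mathcal{V}\left(x_{L^{k+1}(w)}\right)}\,\widehat{\mathcal{T}}_{L^{k+1}(w)}\circ\hat{D}_{J_{w_k,w_{k+1}}\left(x_{L^k(w)}\right)}\circ\widehat{\mathcal{T}}_{L^k(w)}^{-1}.
\end{equation}
Composing these for $k=n-1$ down to $k=0$, the interior factors collapse telescopically: each $\widehat{\mathcal{T}}_{L^{k}(w)}^{-1}$ coming from the $k$-th operator cancels the $\widehat{\mathcal{T}}_{L^{k}(w)}$ produced by the $(k-1)$-th one, leaving only $\widehat{\mathcal{T}}_{L^{n}(w)}$ on the far left and $\widehat{\mathcal{T}}_{w}^{-1}$ on the far right. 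The surviving prefactor is $\exp\!\big(\frac{i}{\hbar}\sum_{k=0}^{n-1}\mathcal{V}(x_{L^{k+1}(w)})\big)$, which by the definition of the Birkhoff sum and the dynamical relation $x_{L(w)}=\phi(x_w)$ (from the commutative diagram \eqref{eq:diagram_S-1}) equals $e^{\frac{i}{\hbar}\mathcal{V}_{w_{0,n}}(x_w)}$ — here I should be slightly careful about which endpoint of the orbit the Birkhoff sum is anchored at, matching the convention in \eqref{eq:def_Birkhoff_sum}. The only genuinely nontrivial point is the composition of the dilation operators: $\hat{D}_{\lambda_1}\circ\cdots\circ\hat{D}_{\lambda_n}=\hat{D}_{\lambda_1+\cdots+\lambda_n}$ by \eqref{eq:def_D_lambda}, together with the fact that the successive dilation parameters $J_{w_k,w_{k+1}}(x_{L^k(w)})=J_{w_k,w_{k+1}}(\phi_{w_{0,k}}(x_w))$ are exactly the terms in the Birkhoff sum $J_{w_{0,n}}(x_w)$ of \eqref{eq:def_Birkhoff_sum}; one must check the chain of identifications $x_{L^k(w)}=\phi^{k}(x_w)=\phi_{w_{0,k}}(x_w)$ lands the evaluation point correctly in each factor, which again is immediate from \eqref{eq:diagram_S-1}.

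The main obstacle — really a bookkeeping obstacle rather than a conceptual one — is keeping the indices and the base points of the Birkhoff sums consistent throughout the telescoping, since the normal form at step $k$ is stated relative to the shifted word $L^k(w)$ and the conjugating operators $\widehat{\mathcal{T}}_{L^k(w)}$ must line up exactly for the cancellation to go through; I would handle this by an explicit induction on $n$, with the base case $n=1$ being precisely \eqref{eq:conj_F01} and the inductive step writing $\mathcal{L}_{w_{0,n+1}}=\mathcal{L}_{w_n,w_{n+1}}\circ\mathcal{L}_{w_{0,n}}$, inserting the normal form for the new leading factor (with extension $L^n(w)$), and absorbing $\widehat{\mathcal{T}}_{L^n(w)}^{-1}\widehat{\mathcal{T}}_{L^n(w)}=\mathrm{Id}$. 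The uniform boundedness statements of Remark \ref{rem:5.3} guarantee all these compositions make sense on $H_{\hbar}^{-m}$, so no domain issues arise.
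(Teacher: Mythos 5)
Your proof is correct, and the paper does not actually supply a separate proof of this Corollary (it passes directly to the proof of Theorem~\ref{thm:Global-normal-form.}); the iterated application of \eqref{eq:conj_F01} with the extensions $L^{k}(w)$ and the telescoping cancellation $\widehat{\mathcal{T}}_{L^{k}(w)}^{-1}\widehat{\mathcal{T}}_{L^{k}(w)}=\mathrm{Id}$, together with $\hat{D}_{\lambda}\hat{D}_{\mu}=\hat{D}_{\lambda+\mu}$ and the identification $x_{L^{k}(w)}=\phi_{w_{0,k}}(x_{w})$ from \eqref{eq:diagram_S-1}, is precisely the intended argument. Your caution about matching the anchoring convention of the Birkhoff sum in \eqref{eq:def_Birkhoff_sum} is warranted but resolves exactly as you indicate, and the composition law $\mathcal{L}_{i,j}\circ\mathcal{L}_{k,l}=0$ for $l\neq i$ (disjointness of the $I_{i}$) correctly reduces $\mathcal{L}_{\hbar}^{n}$ to the sum over admissible words.
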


}}}\end{center}
\begin{proof}
of Theorem \ref{thm:Global-normal-form.}. For simplicity we define
\begin{equation}
\mathcal{V}\left(x\right):=\tau\left(x\right)-\frac{i}{\nu}V\left(x\right).\label{eq:def_cal_V}
\end{equation}
Let us denote $\hat{y}:\varphi\left(y\right)\rightarrow y\varphi\left(y\right)$
the multiplication operator by $y$. The operator $\mathcal{L}_{w_{0},w_{1}}$
in (\ref{e:def_F_op_ij}) can be written:
\begin{equation}
\mathcal{L}_{w_{0},w_{1}}=e^{i\nu\mathcal{V}\left(\hat{y}\right)}\mathcal{L}_{\phi_{w_{0},w_{1}}}\label{eq:expression_F}
\end{equation}
with $\mathcal{L}_{\phi_{w_{0},w_{1}}}\varphi:=\varphi\circ\phi_{w_{0},w_{1}}^{-1}$.
The aim is to transform progressively (\ref{eq:expression_F}) into
the expression (\ref{eq:conj_F01}). For the first step we write
\begin{eqnarray*}
\mathcal{L}_{w_{0},w_{1}} & = & e^{i\nu\mathcal{V}\left(x_{L\left(w\right)}\right)}e^{i\nu\left(\mathcal{V}\left(\hat{y}\right)-\mathcal{V}\left(x_{L\left(w\right)}\right)\right)}\mathcal{L}_{\phi_{w_{0},w_{1}}}\\
 & = & e^{i\nu\mathcal{V}\left(x_{L\left(w\right)}\right)}\mathcal{L}_{\phi_{w_{0},w_{1}}}e^{i\nu\left(\mathcal{V}\left(\phi_{w_{0},w_{1}}\left(\hat{y}\right)\right)-\mathcal{V}\left(x_{L\left(w\right)}\right)\right)}
\end{eqnarray*}
For any $y\in I_{w_{0}}$ and $k\geq1$ we have $\left|\phi{}_{w_{0,k}}\left(y\right)-x_{L^{k}\left(w\right)}\right|\leq C.e^{-kJ_{\mathrm{min}}}$
with some $C>0$ independent on $w$ and $y$. Hence 
\begin{equation}
\varUpsilon_{w}\left(y\right):=-\sum_{k\geq0}\left(\mathcal{V}\left(\phi{}_{w_{0,k+1}}\left(y\right)\right)-\mathcal{V}\left(x_{L^{k+1}\left(w\right)}\right)\right)\label{eq:def_Sx,w}
\end{equation}
defines a smooth complex valued function with regularity estimate
given in (\ref{eq:estimate_H_Upsilon}). We have also $\varUpsilon_{w}\left(x_{w}\right)\underset{(\ref{eq:def_Sx,w})}{=}0$
and 
\[
\frac{d}{dy}\varUpsilon_{w}\left(y\right)\underset{(\ref{eq:def_Sx,w})}{=}-\sum_{k\geq0}\mathcal{V}'\left(\phi{}_{w_{0,k+1}}\left(y\right)\right)\cdot\phi'{}_{w_{0,k+1}}\left(y\right)
\]
so $\frac{d}{dy}\varUpsilon_{w}^{\left(0\right)}\left(y\right)\underset{(\ref{eq:expression_zeta_w})}{=}\zeta_{w_{+}}\left(y\right)$.
The family of functions $\varUpsilon_{w}$ solves the homological
equation
\begin{eqnarray*}
\varUpsilon_{L\left(w\right)}\left(\phi_{w_{0},w_{1}}\left(y\right)\right) & = & -\sum_{k\geq0}\left(\mathcal{V}\left(\phi{}_{w_{0,k+2}}\left(y\right)\right)-\mathcal{V}\left(x_{L^{k+2}\left(w\right)}\right)\right)\\
 & = & -\sum_{k\geq1}\left(\mathcal{V}\left(\phi{}_{w_{0,k+1}}\left(y\right)\right)-\mathcal{V}\left(x_{L^{k+1}\left(w\right)}\right)\right)\\
 & = & \varUpsilon_{w}\left(y\right)+\left(\mathcal{V}\left(\phi_{w_{0},w_{1}}\left(y\right)\right)-\mathcal{V}\left(x_{L\left(w\right)}\right)\right).
\end{eqnarray*}
Therefore we get
\begin{eqnarray*}
\mathcal{L}_{w_{0},w_{1}} & = & e^{i\nu\mathcal{V}\left(x_{L\left(w\right)}\right)}\mathcal{L}_{\phi_{w_{0},w_{1}}}e^{i\nu\left(\mathcal{V}\left(\phi_{w_{0},w_{1}}\left(\hat{y}\right)\right)-\mathcal{V}\left(x_{L\left(w\right)}\right)\right)}\\
 & = & e^{i\nu\mathcal{V}\left(x_{L\left(w\right)}\right)}\mathcal{L}_{\phi_{w_{0},w_{1}}}e^{i\nu\left(\varUpsilon_{L\left(w\right)}\left(\phi_{w_{0},w_{1}}\left(\hat{y}\right)\right)-\varUpsilon_{w}\left(\hat{y}\right)\right)}\\
 & = & e^{i\nu\mathcal{V}\left(x_{L\left(w\right)}\right)}e^{+i\nu\varUpsilon_{L\left(w\right)}\left(\hat{y}\right)}\mathcal{L}_{\phi_{w_{0},w_{1}}}e^{-i\nu\varUpsilon_{w}\left(\hat{y}\right)}.
\end{eqnarray*}
For the second step we write
\begin{eqnarray}
\mathcal{L}_{w_{0},w_{1}} & = & e^{i\nu\mathcal{V}\left(x_{L\left(w\right)}\right)}e^{+i\nu\varUpsilon_{L\left(w\right)}\left(\hat{y}\right)}\hat{T}_{x_{L\left(w\right)}}\mathcal{L}_{f_{0,1}}\hat{T}_{-x_{w}}e^{-i\nu\varUpsilon_{w}\left(\hat{y}\right)}\label{eq:express_F_w0w1}
\end{eqnarray}
with\footnote{Beware that $f_{0,1}$ depends on the full word $w$.}
\[
f_{0,1}\left(z\right):=\phi_{w_{0},w_{1}}\left(z+x_{w}\right)-x_{L\left(w\right)},
\]
satisfying $f_{0,1}\left(0\right)=0$ and $f'_{0,1}\left(0\right)=\phi'_{w_{0},w_{1}}\left(x_{w}\right)=e^{-J_{w_{0},w_{1}}\left(x_{w}\right)}$
with $J_{w_{0},w_{1}}\left(x_{w}\right):=-\log\phi'_{w_{0},w_{1}}\left(x_{w}\right)$.
For the third step, as shown in \cite[th7,p.45]{nelson_69}, there
exists a family of smooth functions $H_{w}:\mathcal{J}\rightarrow\mathbb{R}$
defined on $\mathcal{J}\subset\mathbb{R}$ a sufficiently small neighborhood
of the origin\footnote{This is possible because the points $x_{w}$ are bounded away from
the boundary of $I$, uniformly with respect to the words $w\in\mathcal{W}$. } and satisfying $H_{w}\left(0\right)=0$, $H'_{w}\left(0\right)=1$
and 
\begin{equation}
\forall z\in\mathcal{J},\quad H_{L\left(w\right)}\left(e^{-J_{w_{0},w_{1}}\left(x_{w}\right)}z\right)=f_{0,1}\left(H_{w}\left(z\right)\right)\label{eq:conj_with_H}
\end{equation}
which gives
\begin{equation}
\mathcal{L}_{f_{0,1}}=\mathcal{L}_{H_{L\left(w\right)}}\circ\hat{D}_{J_{w_{0},w_{1}}\left(x_{w}\right)}\circ\mathcal{L}_{H_{w}}^{-1}.\label{eq:conj_L_Hw}
\end{equation}
In other words, the contracting map $f_{0,1}$ is ``globally conjugated''
to the linear contracting map $z\rightarrow e^{-J_{w_{0},w_{1}}\left(x_{w}\right)}z$.
The functions $H_{w}$ can be constructed by the ''scattering process''\footnote{The term ``scattering process'' comes from \cite{nelson_69}. Here
in the ``non interacting region'' is $z\rightarrow0$ whereas in
the usual theory of scattering of waves, the non interacting region
is the infinity, far from the action of the potential.} as follows. For $n\geq1$ let $f_{0,n}:=f_{n-1,n}\circ f_{n-2,n-1}\ldots\circ f_{0,1}$.
For $z\in\mathbb{R}$ (closed enough to $0$) let
\[
H_{w}^{\left(n\right)}\left(z\right):=f_{0,n}^{-1}\left(e^{-J_{w_{0,n}}\left(x_{w}\right)}z\right).
\]
As $n\rightarrow\infty$, the uniform convergence of $H_{w}^{\left(n\right)}$
and its derivatives can be obtained using bounded distortion estimates
\cite[prop 4.2]{Falconer_97}. This gives the existence of the limit
\[
H_{w}\left(z\right):=\lim_{n\rightarrow\infty}H_{w}^{\left(n\right)}\left(z\right).
\]
We also get (\ref{eq:estimate_H_Upsilon}) from bounded distortion
estimates. Then

\begin{eqnarray*}
H_{L\left(w\right)}^{\left(n-1\right)}\left(e^{-J_{w_{0},w_{1}}\left(x_{w}\right)}.z\right) & = & f_{1,n}^{-1}\left(e^{-J_{w_{0,n}}\left(x_{w}\right)}z\right)\\
 & = & f_{0,1}\left(f_{0,n}^{-1}\left(e^{-J_{w_{0,n}}\left(x_{w}\right)}z\right)\right)\\
 & = & f_{0,1}\left(H_{w}^{\left(n\right)}\left(z\right)\right).
\end{eqnarray*}
Taking the limit $n\rightarrow\infty$ and noting $L\left(w\right)=\left(w_{1},w_{2},\ldots\right)$,
we get (\ref{eq:conj_with_H}). With (\ref{eq:express_F_w0w1}) and
(\ref{eq:conj_L_Hw}) we have obtained that:
\begin{eqnarray*}
\mathcal{L}_{w_{0},w_{1}} & = & e^{i\nu\mathcal{V}\left(x_{L\left(w\right)}\right)}e^{+i\nu\varUpsilon_{L\left(w\right)}\left(\hat{y}\right)}\hat{T}_{x_{L\left(w\right)}}\mathcal{L}_{H_{L\left(w\right)}}\circ\hat{D}_{J_{w_{0},w_{1}}\left(x_{w}\right)}\circ\mathcal{L}_{H_{w}}^{-1}\hat{T}_{-x_{w}}e^{-i\nu\varUpsilon_{w}\left(\hat{y}\right)}\\
 & = & e^{i\nu\mathcal{V}\left(x_{L\left(w\right)}\right)}\widehat{\mathcal{T}}_{L\left(w\right)}\hat{D}_{J_{w_{0},w_{1}}\left(x_{w}\right)}\widehat{\mathcal{T}}_{w}^{-1}
\end{eqnarray*}
This is (\ref{eq:conj_F01}).
\end{proof}

\section{Asymptotic expansion\label{sec:Asymptotic-expansion}}

In this section we first give a simple but useful expansion for the
dilation operator defined in (\ref{eq:def_D_lambda}) in terms of
rank one operators in Theorem \ref{thm:Dilation_expansion}. Then
we use this expansion and the global normal form (\ref{eq:conjug_2})
to deduce an expansion for the transfer operator $\mathcal{L}_{\nu}^{n}$
for large time $n$ in Theorem \ref{thm:6.6}.

\subsection{\label{subsec:Asymptotic-expansion-for}Asymptotic expansion for
the dilation operator}

Fix $\lambda_{0}>0$ and let $\lambda\geq\lambda_{0}$, let $y_{0}>0$,
$\varphi\in C_{0}^{\infty}\left(]-y_{0},y_{0}[\right)$. Recall from
(\ref{eq:def_D_lambda}) that we defined $\left(\hat{D}_{\lambda}\varphi\right)\left(y\right):=\varphi\left(e^{\lambda}y\right)$.
Let $\chi_{0}\in C_{0}^{\infty}\left(\mathbb{R}\right)$ such that
$\mathrm{supp}\chi_{0}\subset\left[-y_{0},y_{0}\right]$ and $\chi_{0}\left(x\right)=1$
for $x\in\left[-e^{-\lambda_{0}}y_{0},e^{-\lambda_{0}}y_{0}\right]$
so that $\chi_{0}\equiv1$ on $\mathrm{supp}\left(\hat{D}_{\lambda}\chi_{0}\right)$.
Hence we have $\hat{\chi}_{0}^{-1}\circ\hat{D}_{\lambda}\circ\hat{\chi}_{0}=\hat{D}_{\lambda}\circ\hat{\chi}_{0}$
where $\hat{\chi}_{0}$ is the multiplication operator by $\chi_{0}$.

For $k\geq0$ let us denote $\delta^{\left(k\right)}$ for the $k$-th
derivative of the Dirac distribution. Let $\langle x^{k},\hat{\chi}_{0}\psi\rangle:=\int_{\mathbb{R}}x^{k}\chi_{0}\left(x\right)\psi\left(x\right)dx$.
We introduce the rank one operator\footnote{$\mathcal{S}\left(\mathbb{R}\right)$ is the Schwartz space \cite[p.197]{Taylor_book_PDO}.}
\begin{equation}
\Pi_{k}:\psi\in\mathcal{S}\left(\mathbb{R}\right)\rightarrow\frac{1}{k!}\langle x^{k},\hat{\chi}_{0}\psi\rangle\delta^{\left(k\right)}\in\mathcal{S}'\left(\mathbb{R}\right)\label{eq:def_Pi_k}
\end{equation}
We will use the Dirac notations of physics and write
\[
\langle x^{k}|:\psi\in C_{0}^{\infty}\left(\mathbb{R};\mathbb{C}\right)\rightarrow\langle x^{k},\psi\rangle=\int_{\mathbb{R}}x^{k}\psi\left(x\right)dx\in\mathbb{C}
\]

\begin{equation}
\Pi_{k}=|\frac{1}{k!}\delta^{\left(k\right)}\rangle\langle x^{k}|\hat{\chi}_{0}\label{eq:Pi_k_dirac_notation}
\end{equation}

\noindent\fcolorbox{blue}{white}{\begin{minipage}[t]{1\columnwidth - 2\fboxsep - 2\fboxrule}%
\begin{lem}
If $k<m-\frac{1}{2}$ then $\Pi_{k}:H_{\nu}^{-m}\left(\mathbb{R}\right)\rightarrow H_{\nu}^{-m}\left(\mathbb{R}\right)$
is a bounded operator and
\begin{equation}
\exists C>0,\forall\nu>0,\quad\left\Vert \Pi_{k}\right\Vert _{H_{\nu}^{-m}}\leq C\nu^{k+1/2}.\label{eq:norm_Pi_k}
\end{equation}
Furthermore 
\begin{equation}
\exists C>0,\forall m,\nu,\qquad\left\Vert \hat{\chi}_{0}\right\Vert {}_{H_{\nu}^{-m}}\leq C.\label{eq:norm_Chi_0}
\end{equation}
\end{lem}

\end{minipage}}
\begin{proof}
In order to prove (\ref{eq:norm_Chi_0}) we use $\nu$-semiclassical
calculus: we have $\hat{\chi}_{0}=\mathrm{Op}_{\nu}\left(\chi_{0}\right)$
and using composition of PDO as well as $L^{2}$-continuity \cite{zworski_book_2012},
\[
\left\Vert \hat{\chi}_{0}\right\Vert {}_{H_{\nu}^{-m}}=\left\Vert \mathrm{Op}_{\nu}\left(A_{m}\right)\mathrm{Op}_{\nu}\left(\chi_{0}\right)\mathrm{Op}_{\nu}\left(A_{m}^{-1}\right)\right\Vert _{L^{2}\left(\mathbb{R}\right)}=\left\Vert \mathrm{Op}_{\nu}\left(\chi_{0}\right)\right\Vert _{L^{2}\left(\mathbb{R}\right)}+O\left(\nu^{-1}\right)\leq C.
\]
We will use (\ref{eq:norm_Chi_0}) later in (\ref{eq:norm_sum}).

One has for $2\left(k-m\right)<-1$,
\begin{eqnarray}
\left\Vert \delta^{(k)}\right\Vert _{H_{\nu}^{-m}} & = & \left\Vert \langle\xi\rangle^{-m}\mathcal{F}_{\nu}[\delta^{(k)}]\right\Vert _{L^{2}}=\left\Vert \langle\xi\rangle^{-m}(2\pi/\nu)^{-1/2}\left(i\nu\xi\right)^{k}\right\Vert _{L^{2}}\nonumber \\
 & \leq & C\nu^{k+1/2}\label{eq:bound_Dirac_k}
\end{eqnarray}
and 
\[
\left\Vert x^{k}\chi_{0}(x)\right\Vert _{H_{\nu}^{m}}=\nu^{1/2}\left\Vert \langle\xi\rangle^{m}\mathcal{F}_{1}[x^{k}\chi_{0}(x)]\left(\nu\xi\right)\right\Vert _{L^{2}}=\|\underbrace{\langle\xi/\nu\rangle^{m}}_{\leq\langle\xi\rangle^{m}}\mathcal{F}_{1}[x^{k}\chi_{0}(x)]\left(\xi\right)\|_{L^{2}}\leq C
\]

This gives 
\begin{equation}
\left\Vert \Pi_{k}\right\Vert _{H_{\nu}^{-m}}=\left\Vert |\frac{1}{k!}\delta^{(k)}\rangle\langle\chi_{0}(x)x^{k}|\right\Vert _{H_{\nu}^{-m}}\leq C\nu^{k+1/2}\label{eq:bound_Pi_k}
\end{equation}
\end{proof}
\begin{rem}
We have
\[
\Pi_{k}\circ\Pi_{k'}\underset{(\ref{eq:Pi_k_dirac_notation})}{=}\delta_{k,k'}\Pi_{k},\qquad\left[\hat{D}_{\lambda}\circ\hat{\chi}_{0},\Pi_{k}\right]=0,\quad\hat{D}_{\lambda}\circ\hat{\chi}_{0}\circ\Pi_{k}=e^{-\left(k+1\right)\lambda}\Pi_{k},
\]
i.e. the operators $\left(\Pi_{k}\right)_{k}$ form a set of commuting
spectral projection operators for the operator $\hat{D}_{\lambda}\circ\hat{\chi}_{0}$.
The next Theorem shows that they are complete in the sense that they
give a spectral decomposition of $\hat{D}_{\lambda}\circ\hat{\chi}_{0}$
up to some remainder with small norm.
\end{rem}

\begin{center}{\color{blue}\fbox{\color{black}\parbox{16cm}{
\begin{thm}
\label{thm:Dilation_expansion}Fix $\lambda_{0}>0$ and $\nu_{0}>0.$
Let $m>\frac{1}{2}$, $\nu>\nu_{0}$, $\lambda\geq\lambda_{0}$ and
$\chi_{0}$ as above. The operator $\hat{D}_{\lambda}\circ\hat{\chi}_{0}:H_{\nu}^{-m}\rightarrow H_{\nu}^{-m}$
is bounded and for any $d<m-\frac{3}{2}$, and we have
\begin{equation}
\left\Vert \hat{D}_{\lambda}\circ\hat{\chi}_{0}-\sum_{k=0}^{d}e^{-\left(k+1\right)\lambda}\Pi_{k}\right\Vert _{H_{\nu}^{-m}\left(\mathbb{R}\right)}\leq Ce^{-\lambda/2}\left(e^{-\lambda}\nu\right)^{\left(d+1\right)+\frac{1}{2}}\label{eq:expansion_D}
\end{equation}
where the constant $C$ in the remainder does not depend on $\lambda,\nu$
but depends on $d$,$\chi_{0}$ and $m$.
\end{thm}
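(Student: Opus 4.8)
The plan is to set $\hat R_d:=\hat D_\lambda\circ\hat\chi_0-\sum_{k=0}^de^{-(k+1)\lambda}\Pi_k$ and estimate its operator norm on $H_\hbar^{-m}$. Boundedness of $\hat D_\lambda\circ\hat\chi_0$ itself is the easy preliminary: from $\mathcal{F}_\hbar(\hat D_\lambda\varphi)(\xi)=e^{-\lambda}(\mathcal{F}_\hbar\varphi)(e^{-\lambda}\xi)$ and $\langle e^\lambda\eta\rangle\ge\langle\eta\rangle$ one gets $\|\hat D_\lambda\|_{H_\hbar^{-m}\to H_\hbar^{-m}}\le e^{-\lambda/2}$, and multiplication by $\chi_0$ is uniformly bounded on $H_\hbar^{-m}$. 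Since each $\Pi_k$ is bounded for $k<m-\tfrac12$, it suffices to prove the estimate for $\psi$ in the dense subspace $C_0^\infty(\mathbb{R})$.

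Fix $\psi\in C_0^\infty(\mathbb{R})$, put $u:=\chi_0\psi$, $v:=\mathcal{F}_\hbar u\in\mathcal{S}(\mathbb{R})$ and $h:=e^{-\lambda}\le1$. First I would carry out a Taylor expansion at the origin: using $\mathcal{F}_\hbar(\hat D_\lambda\hat\chi_0\psi)(\xi)=h\,v(h\xi)$, $\mathcal{F}_\hbar\delta^{(k)}(\xi)\propto\xi^k$ and $\langle x^k,u\rangle\propto v^{(k)}(0)$, one identifies $\sum_ke^{-(k+1)\lambda}\Pi_k\psi$ as the distribution whose $\hbar$-Fourier transform is $h$ times the degree-$d$ Taylor polynomial of $v$ at $0$; hence
\[
\mathcal{F}_\hbar(\hat R_d\psi)(\xi)=h\,r_d(h\xi),\qquad r_d(\eta):=v(\eta)-\sum_{k=0}^d\frac{v^{(k)}(0)}{k!}\,\eta^k ,
\]
and changing variables $\eta=h\xi$ reduces the theorem to
\[
\|\hat R_d\psi\|_{H_\hbar^{-m}}^2=h\int_{\mathbb{R}}\bigl(1+\eta^2/h^2\bigr)^{-m}|r_d(\eta)|^2\,d\eta\ \le\ C\,h^{2d+4}\hbar^{-(2d+3)}\|\psi\|_{H_\hbar^{-m}}^2 .
\]

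I would split the integral at $|\eta|=1$. On $\{|\eta|\le1\}$ I use the Taylor-remainder bound $|r_d(\eta)|\le\frac{|\eta|^{d+1}}{(d+1)!}\sup_{|\zeta|\le1}|v^{(d+1)}(\zeta)|$ together with $\int_{\mathbb{R}}(1+\eta^2/h^2)^{-m}|\eta|^{2(d+1)}d\eta=h^{2d+3}\int_{\mathbb{R}}(1+t^2)^{-m}|t|^{2(d+1)}dt$; the last integral is finite exactly because $d<m-\tfrac32$, which is where the hypothesis enters. On $\{|\eta|>1\}$, where $(1+\eta^2/h^2)^{-m}\le2^mh^{2m}\langle\eta\rangle^{-2m}$, I split $|r_d|^2\lesssim|v|^2+|T_dv|^2$: the first term contributes $\lesssim h^{2m}\|\chi_0\psi\|_{H_\hbar^{-m}}^2\lesssim h^{2m}\|\psi\|^2$, and the polynomial term reduces to $\sum_{k\le d}|v^{(k)}(0)|^2\int_{|\eta|>1}\langle\eta\rangle^{-2m}|\eta|^{2k}\,d\eta$ with convergent weight integrals (since $k\le d<m-\tfrac12$). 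The key analytic input, and the step I expect to be the main obstacle, is the estimate
\[
\sup_{|\zeta|\le1}\bigl|v^{(j)}(\zeta)\bigr|\ \le\ C_j\,\hbar^{-(j+\frac12)}\,\|\psi\|_{H_\hbar^{-m}},\qquad 0\le j\le d+1 ,
\]
which I would obtain from $v^{(j)}=(-i/\hbar)^j\mathcal{F}_\hbar(x^j\chi_0\psi)$, the scaling $\mathcal{F}_\hbar G(\zeta)=\hbar^{-1/2}\widehat G(\zeta/\hbar)$, the duality $|\widehat G(\zeta/\hbar)|\le(2\pi)^{-1/2}\|x^j\chi_0\,e^{i\zeta\cdot/\hbar}\|_{H_\hbar^m}\|\psi\|_{H_\hbar^{-m}}$, and the Fourier-translation/Peetre bound $\|x^j\chi_0\,e^{i\zeta\cdot/\hbar}\|_{H_\hbar^m}\le2^{m/2}\langle\zeta\rangle^m\|x^j\chi_0\|_{H^m}$, which is $\lesssim\|x^j\chi_0\|_{H^m}$ for $|\zeta|\le1$; the naive bound via $\|x^j\chi_0\psi\|_{L^1}$ is false for genuine distributions $\psi\in H_\hbar^{-m}$, so it is essential to route through the $H_\hbar^m$--$H_\hbar^{-m}$ duality with the modulated test function, exploiting that only rescaled frequencies $|\zeta|\le1$ are probed. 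Feeding this in, the $\{|\eta|\le1\}$ piece is of order $h^{2d+3}\hbar^{-(2d+3)}\|\psi\|^2$ exactly, while the $\{|\eta|>1\}$ piece is $\lesssim h^{2m}\hbar^{-(2d+1)}\|\psi\|^2=(h^{2m-2d-3}\hbar^2)\,h^{2d+3}\hbar^{-(2d+3)}\|\psi\|^2$, of the same or smaller order because $m>d+\tfrac32$ and $h,\hbar$ are bounded. Multiplying by the prefactor $h$ and taking square roots yields $\|\hat R_d\psi\|_{H_\hbar^{-m}}\le Ce^{-\lambda/2}(e^{-\lambda}/\hbar)^{d+3/2}\|\psi\|_{H_\hbar^{-m}}$ with $C=C(d,m,\chi_0)$ (and the fixed $\lambda_0,\hbar_0$).
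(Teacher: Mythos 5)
Your proposal is correct and follows essentially the same route as the paper: after passing to the Fourier side (where $\hat R_d$ acts as a rescaled Taylor remainder), split the weighted $L^2$ integral at the threshold $|\eta|=1$ (i.e.\ $|\xi|=e^\lambda$), control $\sup_{|\zeta|\le1}|v^{(j)}(\zeta)|$ by the $H_\hbar^{m}$--$H_\hbar^{-m}$ duality against the modulated test function $e^{i\zeta\cdot/\hbar}x^j\chi_0$ plus Peetre's inequality, and absorb the tail using $d<m-\tfrac32$ and $h,\hbar$ bounded. The only cosmetic deviation is that the paper handles the outer region by invoking a separate operator-norm lemma for $\hat\chi_0-\sum_k\Pi_k$ rather than your direct $|v|^2+|T_dv|^2$ split, but these are interchangeable applications of the same underlying estimates.
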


}}}\end{center}
\begin{rem}
In higher dimensions, there is a similar result in \cite[Prop. 4.19]{faure-tsujii_prequantum_maps_12}.
Formula (\ref{eq:expansion_D}) can be considered as a Taylor expansion
of the dilation operator. From (\ref{eq:norm_Pi_k}) we have $e^{-\left(k+1\right)\lambda}\left\Vert \Pi_{k}\right\Vert _{H_{\nu}^{-m}\left(\mathbb{R}\right)}=O\left(e^{-\lambda/2}\left(e^{-\lambda}\nu\right)^{k+1/2}\right)$,
therefore if $e^{\lambda}>\nu$, (\ref{eq:expansion_D}) can be interpreted
as an asymptotic expansion in powers of $e^{-\lambda}\nu$.
\end{rem}

\begin{proof}
We directly see that 
\begin{equation}
\hat{D}_{\lambda}\left(\hat{\chi}_{0}-\sum\limits _{k=0}^{d}\Pi_{k}\right)=\left(\hat{D}_{\lambda}\hat{\chi}_{0}-\sum\limits _{k=0}^{d}e^{-\left(k+1\right)\lambda}\Pi_{k}\right)\label{eq:RedToLambdaEq1}
\end{equation}
and first prove the following Lemma.

\noindent\fcolorbox{blue}{white}{\begin{minipage}[t]{1\columnwidth - 2\fboxsep - 2\fboxrule}%
\begin{lem}
\label{lem:FTOfExp} Let $\phi\in C_{0}^{\infty}$ then 
\begin{equation}
\mathcal{F}_{\nu}\left[\left(\hat{\chi}_{0}-\sum\limits _{k=0}^{d}\Pi_{k}\right)\phi\right](\xi)=\sqrt{\nu}\left(\tilde{\Psi}\left(\nu\xi\right)-\sum\limits _{k=0}^{d}\frac{1}{k!}\left(\nu\xi\right)^{k}\tilde{\Psi}^{(k)}(0)\right)\label{eq:FTOfExp}
\end{equation}
where $\tilde{\Psi}:=\mathcal{F}_{1}\left[\chi_{0}\phi\right]$.
\end{lem}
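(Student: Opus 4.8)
The plan is to apply $\mathcal{F}_{\hbar}$ to the finite sum $\left(\hat{\chi}_{0}-\sum_{k=0}^{d}\Pi_{k}\right)\phi$ term by term. By the definition (\ref{eq:def_Pi_k}) of $\Pi_{k}$, this object equals $\chi_{0}\phi-\sum_{k=0}^{d}\frac{1}{k!}\langle x^{k},\chi_{0}\phi\rangle\,\delta^{(k)}$, i.e.\ a linear combination of the compactly supported smooth function $\chi_{0}\phi$ and finitely many derivatives of the Dirac mass at the origin. Since $\phi,\chi_{0}\in C_{0}^{\infty}$ we have $\chi_{0}\phi\in\mathcal{S}(\mathbb{R})$ and each $\delta^{(k)}\in\mathcal{S}'(\mathbb{R})$, so $\mathcal{F}_{\hbar}$ is well defined on every summand (by duality on the $\delta^{(k)}$) and commutes with the finite sum; it therefore suffices to compute the Fourier transform of the two kinds of summands and then recombine.

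\emph{The smooth summand.} A single change of variables in the integral defining $\mathcal{F}_{\hbar}$ gives the elementary semiclassical scaling identity $\mathcal{F}_{\hbar}[g](\xi)=\hbar^{-1/2}\,\mathcal{F}_{1}[g]\left(\xi/\hbar\right)$, valid for $g\in\mathcal{S}(\mathbb{R})$. Applied to $g=\chi_{0}\phi$ and using the definition $\tilde{\Psi}=\mathcal{F}_{1}[\chi_{0}\phi]$, this produces $\mathcal{F}_{\hbar}[\chi_{0}\phi](\xi)=\hbar^{-1/2}\,\tilde{\Psi}(\xi/\hbar)$, which is exactly the leading term on the right-hand side of (\ref{eq:FTOfExp}).

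\emph{The Dirac summands.} For these I would use two standard facts. First, the intertwining relation $\mathcal{F}_{\hbar}[\partial_{x}u](\xi)=(i\xi/\hbar)\,\mathcal{F}_{\hbar}[u](\xi)$ (integration by parts) applied iteratively to $\mathcal{F}_{\hbar}[\delta]=(2\pi\hbar)^{-1/2}$ gives $\mathcal{F}_{\hbar}[\delta^{(k)}](\xi)=(2\pi\hbar)^{-1/2}\,(i\xi/\hbar)^{k}$. Second, differentiating $\tilde{\Psi}(\xi)=(2\pi)^{-1/2}\int e^{-i\xi x}\chi_{0}(x)\phi(x)\,dx$ under the integral sign $k$ times and evaluating at $\xi=0$ identifies the moment $\langle x^{k},\chi_{0}\phi\rangle=\int x^{k}\chi_{0}(x)\phi(x)\,dx$ with a multiple of the Taylor coefficient $\tilde{\Psi}^{(k)}(0)$. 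Multiplying these and collecting the prefactors (the factors $2\pi$, the semiclassical $\hbar^{-1/2}$, and the powers of $i$), one finds that $\mathcal{F}_{\hbar}[\Pi_{k}\phi](\xi)=\frac{1}{k!}\langle x^{k},\chi_{0}\phi\rangle\,\mathcal{F}_{\hbar}[\delta^{(k)}](\xi)$ is precisely $\hbar^{-1/2}\,\frac{1}{k!}(\xi/\hbar)^{k}\,\tilde{\Psi}^{(k)}(0)$. Summing over $0\le k\le d$ and subtracting from the smooth term yields (\ref{eq:FTOfExp}).

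There is no genuine obstacle here: the statement is a bookkeeping identity, and since every object in sight is a tempered distribution (or a Schwartz function) the manipulations --- applying $\mathcal{F}_{\hbar}$ to each term, exchanging it with the finite sum, differentiating $\tilde{\Psi}$ under the integral --- are all automatically justified. The only point demanding any care is the consistent handling of normalization constants: the semiclassical factor $\hbar^{-1/2}$ coming from the rescaling $\xi\mapsto\xi/\hbar$ in frequency, and the matching of the factors of $i$ between $\mathcal{F}_{\hbar}[\delta^{(k)}]$ and the $k$-th derivative of $\tilde{\Psi}$ at the origin, so that the sum on the right of (\ref{eq:FTOfExp}) comes out exactly as the (scaled) Taylor polynomial of $\tilde{\Psi}$ at $0$.
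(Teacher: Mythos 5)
Your proof is structured exactly like the paper's: compute $\mathcal{F}_{\hbar}[\chi_{0}\phi]$, compute $\mathcal{F}_{\hbar}[\delta^{(k)}]$, relate the moment $\langle x^{k},\chi_{0}\phi\rangle$ to $\tilde{\Psi}^{(k)}(0)$, and assemble. The scaling identity for the smooth part is correct, and so is your formula $\mathcal{F}_{\hbar}[\delta^{(k)}](\xi)=(2\pi\hbar)^{-1/2}(i\xi/\hbar)^{k}$ (the paper's own Lemma~\ref{lem:ContDevelOp}, Eq.~(\ref{eq:bound_Dirac_k}), uses exactly this sign, and it also follows from $\mathcal{F}_{\hbar}[\partial_{x}u]=(i\xi/\hbar)\mathcal{F}_{\hbar}[u]$ as you say).

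The gap is the claim that, after ``collecting the powers of $i$,'' $\mathcal{F}_{\hbar}[\Pi_{k}\phi](\xi)$ comes out to $\hbar^{-1/2}\tfrac{1}{k!}(\xi/\hbar)^{k}\tilde{\Psi}^{(k)}(0)$. Differentiating $\tilde{\Psi}$ gives $\tilde{\Psi}^{(k)}(0)=(2\pi)^{-1/2}(-i)^{k}\langle x^{k},\chi_{0}\phi\rangle$, i.e.\ $\langle x^{k},\chi_{0}\phi\rangle=\sqrt{2\pi}\,i^{k}\,\tilde{\Psi}^{(k)}(0)$. Multiplying by your (correct) formula for $\mathcal{F}_{\hbar}[\delta^{(k)}]$ produces
\[
\mathcal{F}_{\hbar}[\Pi_{k}\phi](\xi)=\frac{1}{k!}\,\sqrt{2\pi}\,i^{k}\,\tilde{\Psi}^{(k)}(0)\cdot\frac{(i\xi/\hbar)^{k}}{\sqrt{2\pi\hbar}}=\frac{(-1)^{k}}{k!\sqrt{\hbar}}\Big(\frac{\xi}{\hbar}\Big)^{k}\tilde{\Psi}^{(k)}(0),
\]
since $i^{k}\cdot i^{k}=(-1)^{k}$. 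So there is a residual $(-1)^{k}$ that your bookkeeping misses, and with (\ref{eq:def_Pi_k}) taken literally, Eq.~(\ref{eq:FTOfExp}) as stated does not follow. You can see the same thing by checking that the paper's $\Pi_{k}$ is not actually idempotent as written: with (\ref{eq:def_Pi_k}), $\Pi_{k}\delta^{(k)}=(-1)^{k}\delta^{(k)}$, so $\Pi_{k}^{2}=(-1)^{k}\Pi_{k}$, contradicting the assertion $\Pi_{k}\circ\Pi_{k'}=\delta_{k,k'}\Pi_{k}$ just after (\ref{eq:Pi_k_dirac_notation}). The upshot is that (\ref{eq:def_Pi_k}) should carry an extra factor $(-1)^{k}$, and then both (\ref{eq:FTOfExp}) and your computation come out right. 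The paper's own proof ``works'' only because of a compensating sign typo in its Eq.~(\ref{eq:lem2}), which writes $(-i\xi/\hbar)^{k}$ where the correct sign (also used in (\ref{eq:bound_Dirac_k})) is $(i\xi/\hbar)^{k}$. Since $(-1)^{k}$ has modulus one, none of this affects the norm estimates downstream, but as written your final step is not a correct derivation of the stated identity, and you should either flag the needed $(-1)^{k}$ in the definition of $\Pi_{k}$ or exhibit the full power-of-$i$ arithmetic so the discrepancy is visible.
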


\end{minipage}}
\begin{proof}
First we consider 
\[
\tilde{\Psi}^{(k)}(\xi)=\left(\frac{d}{d\xi}\right)^{k}\frac{1}{\sqrt{2\pi}}\int e^{-ix\xi}\chi_{0}(x)\phi(x)dx=\frac{1}{\sqrt{2\pi}}\int e^{ix\xi}(-ix)^{k}\chi_{0}(x)\phi(x)dx
\]
and conclude that 
\begin{equation}
\tilde{\Psi}^{(k)}(0)=\frac{(-i)^{k}}{\sqrt{2\pi}}\langle x^{k},\hat{\chi}_{0}\phi\rangle.\label{eq:lem1}
\end{equation}
Furthermore we calculate 
\begin{equation}
\mathcal{F}_{\nu}\left[\delta^{(k)}\right](\xi)=(2\pi/\nu)^{-1/2}\left(-i\nu\xi\right)^{k}\label{eq:lem2}
\end{equation}
and 
\begin{equation}
\mathcal{F}_{\nu}\left[\hat{\chi}_{0}\phi\right](\xi)=\nu^{1/2}\tilde{\Psi}\left(\nu\xi\right).\label{eq:lem3}
\end{equation}
Finally (\ref{eq:Pi_k_dirac_notation}) with (\ref{eq:lem1}), (\ref{eq:lem2})
and (\ref{eq:lem3}) give (\ref{eq:FTOfExp}) which finishes the proof
of Lemma \ref{lem:FTOfExp}.
\end{proof}
We can now return to the proof of Theorem \ref{thm:Dilation_expansion}.
We have $\mathcal{F}_{\nu}[\hat{D}_{\lambda}\Psi](\xi)=e^{-\lambda}\mathcal{F}_{\nu}[\Psi]\left(e^{-\lambda}\xi\right)$
which follows directly from variable substitution. Using Taylor's
Theorem we have
\begin{eqnarray}
\mathcal{F}_{\nu}\left[\left(\hat{D}_{\lambda}\hat{\chi}_{0}-\sum\limits _{k=0}^{d}e^{-\left(k+1\right)\lambda}\Pi_{k}\right)\phi\right](\xi) & \underset{(\ref{eq:RedToLambdaEq1})}{=} & e^{-\lambda}\mathcal{F}_{\nu}\left[\left(\hat{\chi}_{0}-\sum\limits _{k=0}^{d}\Pi_{k}\right)\phi\right]\left(e^{-\lambda}\xi\right)\nonumber \\
\underset{(\ref{eq:FTOfExp})}{=}\sqrt{\nu}e^{-\lambda} &  & \left(\tilde{\Psi}\left(\nu e^{-\lambda}\xi\right)-\sum\limits _{k=0}^{d}\frac{1}{k!}\left(\nu e^{-\lambda}\xi\right)^{k}\tilde{\Psi}^{(k)}(0)\right)\nonumber \\
=\sqrt{\nu}e^{-\lambda} & \frac{1}{(d+1)!} & \left(\nu e^{-\lambda}\xi\right)^{d+1}\tilde{\Psi}^{(d+1)}\left(\theta_{\nu e^{-\lambda}\xi}\nu e^{-\lambda}\xi\right)\label{eq:FTOfExpDl}
\end{eqnarray}
with $|\theta_{\nu e^{-\lambda}\xi}|\leq1$. By definition of $H_{\nu}^{-m}$
in (\ref{eq:def_norm_Hm}) we have
\begin{eqnarray*}
\left(*\right): & = & \left\Vert \left(D_{\lambda}\hat{\chi}_{0}-\sum\limits _{k=0}^{d}e^{-\left(k+1\right)\lambda}\Pi_{k}\right)\phi\right\Vert _{H_{\nu}^{-m}}\\
 & = & \left\Vert \langle\xi\rangle^{-m}\mathcal{F}_{\nu}\left[\left(D_{\lambda}\hat{\chi}_{0}-\sum\limits _{k=0}^{d}e^{-\left(k+1\right)\lambda}\Pi_{k}\right)\phi\right](\xi)\right\Vert _{L^{2}}\\
 & \underset{(\ref{eq:FTOfExpDl})}{=} & \left\Vert \langle\xi\rangle^{-m}\sqrt{\nu}e^{-\lambda}\frac{1}{(d+1)!}\left(\nu e^{-\lambda}\xi\right)^{d+1}\tilde{\Psi}^{(d+1)}\left(\theta_{\nu e^{-\lambda}\xi}\nu e^{-\lambda}\xi\right)\right\Vert _{L^{2}}\\
 & = & e^{-\lambda/2}\left(\nu e^{-\lambda}\right)^{d+3/2}\frac{1}{(d+1)!}\sqrt{\int_{\mathbb{R}}\langle\xi\rangle^{-2m}\xi^{2(d+1)}\left|\tilde{\Psi}^{(d+1)}\left(\theta_{\nu e^{-\lambda}\xi}\nu e^{-\lambda}\xi\right)\right|^{2}d\xi}
\end{eqnarray*}

To finish the proof of Theorem \ref{thm:Dilation_expansion} we have
to show that 
\[
(*)\leq C\|\phi\|_{H_{\nu}^{-m}}e^{-\lambda/2}\left(\nu e^{-\lambda}\right)^{d+3/2}
\]
with $C$ independent of $\phi$, $\lambda$, $\nu$. We decompose
the integral over $\mathbb{R}$ under the square root above into 
\[
\int\limits _{\mathbb{R}}\dots d\xi=\underbrace{\int\limits _{[-e^{\lambda},e^{\lambda}]}\dots d\xi}_{(A)}+\underbrace{\int\limits _{\mathbb{R}\setminus[-e^{\lambda},e^{\lambda}]}\dots d\xi}_{(B)}
\]
and treat them separately. We have: 
\begin{eqnarray*}
(A) & \leq & \left(\int\limits _{\mathbb{R}}\langle\xi\rangle^{-2m}\xi^{2(d+1)}d\xi\right)\max\limits _{\xi\in[-e^{\lambda},e^{\lambda}]}\left|\tilde{\Psi}^{(d+1)}\left(\theta_{\nu e^{-\lambda}\xi}\nu e^{-\lambda}\xi\right)\right|^{2}\\
 & \leq & \left(\int\limits _{\mathbb{R}}\langle\xi\rangle^{-2m}\xi^{2(d+1)}d\xi\right)\max\limits _{\xi\in[-1,1]}\left|\tilde{\Psi}^{(d+1)}\left(\theta_{\nu\xi}\nu\xi\right)\right|^{2}\\
 & \underset{(\ref{eq:ineq-1})}{\leq} & C\|\phi\|_{H_{\nu}^{-m}}^{2}.
\end{eqnarray*}
Recall that we have assumed that 
\begin{equation}
d<m-\frac{3}{2}\label{eq:d_m_3/2}
\end{equation}
 for the convergence of the integral. For the second term $\left(B\right)$
we first observe that 
\[
\exists C>0,\forall\xi\in\mathbb{R}\setminus\left[-1,1\right],\frac{\left(e^{-\lambda}\left\langle \tfrac{\xi}{e^{-\lambda}}\right\rangle \right)^{-2m}}{\langle\xi\rangle^{-2m}}\leq C,
\]
with $C$ that depends on $m$ but is independent of $\lambda>0$.
Also taking $\lambda\rightarrow0$ in (\ref{eq:FTOfExpDl}) we have
\begin{equation}
\left|\mathcal{F}_{\nu}\left[\left(\hat{\chi}_{0}-\sum\limits _{k=0}^{d}\Pi_{k}\right)\phi\right]\left(\xi\right)\right|=\frac{\sqrt{\nu}}{\left(d+1\right)!}\left(\nu\xi\right)^{d+1}\left|\tilde{\Psi}^{(d+1)}\left(\theta_{\nu\xi}\nu\xi\right)\right|.\label{eq:F_hh}
\end{equation}
If $d<m-1/2$ then
\begin{equation}
\exists C>0,\forall\nu>0,\quad\left\Vert \left(\hat{\chi}_{0}-\sum\limits _{k=0}^{d}\Pi_{k}\right)\right\Vert _{H_{\nu}^{-m}}\underset{(\ref{eq:norm_Pi_k}),(\ref{eq:norm_Chi_0})}{\leq}C\nu^{d+1/2}.\label{eq:norm_sum}
\end{equation}

Hence
\begin{eqnarray*}
(B) & = & e^{\lambda}\int\limits _{\mathbb{R}\setminus[-1,1]}\langle e^{\lambda}\xi\rangle^{-2m}(e^{\lambda}\xi)^{2(d+1)}\left|\tilde{\Psi}^{(d+1)}\left(\theta_{\nu\xi}\nu\xi\right)\right|^{2}d\xi\\
 & = & e^{-2(m-d-3/2)\lambda}\nu^{-2(d+1)}\\
 &  & \int\limits _{\mathbb{R}\setminus[-1,1]}\frac{\left(\langle e^{\lambda}\xi\rangle e^{-\lambda}\right)^{-2m}}{\langle\xi\rangle^{-2m}}\langle\xi\rangle^{-2m}\left(\nu\xi\right)^{2\left(d+1\right)}\left|\tilde{\Psi}^{(d+1)}\left(\theta_{\nu\xi}\nu\xi\right)\right|^{2}d\xi\\
 & \underset{(\ref{eq:F_hh})}{\leq} & Ce^{-2(m-d-3/2)\lambda}\nu^{-2(d+1)}\left(\nu^{-1/2}\left(d+1\right)!\right)^{2}\left\Vert \left(\hat{\chi}_{0}-\sum\limits _{k=0}^{d}\Pi_{k}\right)\phi\right\Vert _{H_{\nu}^{-m}}^{2}\\
 & \underset{(\ref{eq:d_m_3/2}),(\ref{eq:norm_sum})}{\leq} & C\nu^{-2}\|\phi\|_{H_{\nu}^{-m}}^{2}
\end{eqnarray*}

Thus taking (A) and (B) together we obtain 
\[
(*)\leq C\|\phi\|_{H_{\nu}^{-m}}e^{-\lambda/2}\left(\nu e^{-\lambda}\right)^{d+3/2}
\]
which finishes the proof of Theorem \ref{thm:Dilation_expansion}.
\end{proof}
The next Lemma has be used in the previous proof.

\noindent\fcolorbox{blue}{white}{\begin{minipage}[t]{1\columnwidth - 2\fboxsep - 2\fboxrule}%
\begin{lem}
\label{lem:maxPsi}For every $\chi_{0}\in C_{0}^{\infty}\left(\mathbb{R}\right)$,
$d,m\geq0$ there exists $C>0$ such that for any $\phi\in C_{0}^{\infty}\left(\mathbb{R}\right)$,
\begin{equation}
\max\limits _{\xi\in[-1,1]}\left|\tilde{\Psi}^{(d+1)}\left(\nu\xi\right)\right|^{2}\leq C\|\phi\|_{H_{\nu}^{-m}}^{2}\label{eq:ineq-1}
\end{equation}
with $\tilde{\Psi}:=\mathcal{F}_{1}\left[\chi_{0}\phi\right]$ and
where $C$ depends only on $\chi_{0}$,$d$ and $m$.
\end{lem}

\end{minipage}}
\begin{proof}
By definition of $\tilde{\Psi}$ we have 
\[
\tilde{\Psi}^{(d+1)}\left(\xi\right)=\frac{1}{\sqrt{2\pi}}\int e^{-ix\xi}(-ix)^{d+1}\chi_{0}(x)\phi(x)dx
\]
thus 
\begin{eqnarray*}
\tilde{\Psi}^{(d+1)}\left(\nu\xi\right) & = & \frac{1}{\sqrt{2\pi}}\left\langle e^{ix\nu\xi}(ix)^{d+1}\chi_{0}(x),\phi\right\rangle _{L^{2}}\\
\left|\tilde{\Psi}^{(d+1)}\left(\nu\xi\right)\right| & \leq & \frac{1}{\sqrt{2\pi}}\left\Vert e^{i\nu x\xi}(ix)^{d+1}\chi_{0}(x)\right\Vert _{H_{\nu}^{m}}\left\Vert \phi\right\Vert _{H_{\nu}^{-m}}
\end{eqnarray*}
Thus we have to find for any $\xi_{0}\in[-1,1]$ an estimate of 
\begin{eqnarray*}
\left\Vert e^{i\nu x\xi_{0}}(ix)^{d+1}\chi_{0}(x)\right\Vert _{H_{\nu}^{m}} & = & \left\Vert \langle\xi\rangle^{m}\mathcal{F}_{\nu}[(ix)^{d+1}\chi_{0}(x)](\xi-\xi_{0})\right\Vert _{L^{2}}\\
 & \leq & \nu^{1/2}\left\Vert \langle\xi\rangle^{m}\mathcal{F}_{1}[(ix)^{d+1}\chi_{0}(x)]\left(\nu\left(\xi-\xi_{0}\right)\right)\right\Vert _{L^{2}}\\
 & \leq & \left\Vert \langle\xi/\nu+\xi_{0}\rangle^{m}\mathcal{F}_{1}[(ix)^{d+1}\chi_{0}(x)]\left(\xi\right)\right\Vert _{L^{2}}\\
 & \leq & C_{m}\left\Vert \langle\xi\rangle^{m}\mathcal{F}_{1}[(ix)^{d+1}\chi_{0}(x)]\left(\xi\right)\right\Vert _{L^{2}}\leq C
\end{eqnarray*}
where $C$ depends on $m$, $d$ and $\chi_{0}$.
\end{proof}

\subsection{Asymptotic expansion for the transfer operator}

In Corollary \ref{cor:5.4} we have shown that $\mathcal{L}_{\nu}^{n}$
is a sum of operators $\mathcal{L}_{w_{0,n}}$ and each of these operators
is conjugated to a dilation operator. For the next Theorem we additionally
use the asymptotic expansion in Theorem \ref{thm:Dilation_expansion}
for the dilation operator to deduce an asymptotic expansion for $\mathcal{L}_{\nu}^{n}$.
 In order to simplify the notation we will write $J_{w_{0,n}}=J_{w_{0,n}}\left(x_{w}\right)$
and $\tau_{w_{0,n}}=\tau_{w_{0,n}}\left(x_{w}\right)$ , $V_{w_{0,n}}=V_{w_{0,n}}\left(x_{w}\right)$
for the Birkhoff sums defined in (\ref{eq:def_Birkhoff_sum}) and
where $w$ is an arbitrary extension of $w_{0,n}$ as explained in
Corollary \ref{cor:5.4}. In the limit of large $n$ the bounded distortion
principle implies that the impact of the arbitrary extension becomes
small, anyway, see Lemma \ref{lem:bounded_variation}.

\noindent\fcolorbox{blue}{white}{\begin{minipage}[t]{1\columnwidth - 2\fboxsep - 2\fboxrule}%
\begin{thm}
\label{thm:6.6}For any $0\leq d<m-\frac{3}{2}$, there exists $C>0$
such that for any $n\geq1$, any $\nu>0$,
\begin{equation}
\left\Vert \mathcal{L}_{\nu,\chi}^{n}-\sum_{w_{0,n}}e^{i\nu\tau_{w_{0,n}}+V_{w_{0,n}}}\sum_{k=0}^{d}e^{-\left(k+1\right)J_{w_{0,n}}}\Pi_{k,w,n}\right\Vert _{H_{\nu}^{-m}}\leq C\nu^{\left(d+\frac{3}{2}\right)}e^{n\left(\mathrm{Pr}\left(V-\left(d+2\right)J\right)+R\left(n\right)\right)},\label{eq:expansion_of_Fn}
\end{equation}
with some function $R\left(n\right)\underset{n\rightarrow\infty}{\longrightarrow0}$
and with the rank one operators
\begin{eqnarray}
\Pi_{k,w,n}: & =|\mathcal{U}_{L^{n}\left(w\right)}^{\left(k\right)}\rangle\langle\mathcal{S}{}_{w}^{\left(k\right)}| & \quad:H_{\nu}^{-m}\left(\mathbb{R}\right)\rightarrow H_{\nu}^{-m}\left(\mathbb{R}\right),\label{eq:def_Pi_w}
\end{eqnarray}
where $w$ is an arbitrary extension of $w_{0,n}$ and where we used
the Dirac notation of Section \ref{subsec:Asymptotic-expansion-for}
for the following distributions (cf. Figure \ref{fig:The-canonical-map})
\[
|\mathcal{U}_{L^{n}\left(w\right)}^{\left(k\right)}\rangle:=\widehat{\mathcal{T}}_{L^{n}\left(w\right)}|\frac{1}{k!}\delta^{\left(k\right)}\rangle\in H_{\nu}^{-m}\left(\mathbb{R}\right),
\]
\[
\langle\mathcal{S}{}_{w}^{\left(k\right)}|:=\langle x^{k}|\widehat{\mathcal{T}}_{w}^{-1}\hat{\chi}\in H_{\nu}^{+m}\left(\mathbb{R}\right).
\]
\end{thm}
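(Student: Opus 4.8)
The plan is to combine the global-normal-form factorisation of Corollary \ref{cor:5.4} with the dilation expansion of Theorem \ref{thm:Dilation_expansion}, applied term by term to the sum over words, and then to control the accumulated remainder using bounded-distortion estimates for the I.F.S.\ and the periodic-orbit definition (\ref{eq:def_Pr}) of the topological pressure.

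First I would write $\mathcal{L}_{\hbar,\chi}^{n}=\hat{\chi}^{-1}\mathcal{L}_{\hbar}^{n}\hat{\chi}=\sum_{w_{0,n}}\hat{\chi}^{-1}\mathcal{L}_{w_{0,n}}\hat{\chi}$ and, for each $w_{0,n}$, choose an arbitrary extension $w\in\mathcal{W}$ and insert (\ref{eq:conjug_2}), getting $\hat{\chi}^{-1}\mathcal{L}_{w_{0,n}}\hat{\chi}=e^{\frac{i}{\hbar}\mathcal{V}_{w_{0,n}}}(\hat{\chi}^{-1}\widehat{\mathcal{T}}_{L^{n}(w)})\,\hat{D}_{J_{w_{0,n}}}\,(\widehat{\mathcal{T}}_{w}^{-1}\hat{\chi})$. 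Then I would localise the dilation: from (\ref{eq:def_Tau_w}), the uniform estimates (\ref{eq:estimate_H_Upsilon}) and $x_w\in I_{w_0}\subset I$ bounded, the range of $\widehat{\mathcal{T}}_{w}^{-1}\hat{\chi}$ consists of functions supported in a fixed compact interval independent of $w$, and $J_{w_{0,n}}\geq n|\log\theta|\geq\lambda_0:=|\log\theta|>0$ by (\ref{eq:theta_contraction}); choosing the cut-off $\chi_0$ of Section \ref{sub:Asymptotic-expansion-for} with $\chi_0\equiv1$ on that interval yields $\hat{D}_{J_{w_{0,n}}}\widehat{\mathcal{T}}_{w}^{-1}\hat{\chi}=\hat{D}_{J_{w_{0,n}}}\hat{\chi}_0\widehat{\mathcal{T}}_{w}^{-1}\hat{\chi}$ and $\hat{\chi}_0\widehat{\mathcal{T}}_{w}^{-1}\hat{\chi}=\widehat{\mathcal{T}}_{w}^{-1}\hat{\chi}$. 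After normalising $\chi\equiv1$ on a neighbourhood of $K$ (harmless, since the resonances and the relevant norms are $\chi$-independent, c.f. Theorem \ref{thm:main_result}) one also has $\hat{\chi}^{-1}\widehat{\mathcal{T}}_{L^{n}(w)}|\frac{1}{k!}\delta^{(k)}\rangle=|\mathcal{U}^{(k)}_{L^{n}(w)}\rangle$, since the right-hand side is supported at $x_{L^{n}(w)}\in K$.

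Next I would apply Theorem \ref{thm:Dilation_expansion} with $\lambda=J_{w_{0,n}}\geq\lambda_0$ and the given $d$, and conjugate the bound by $\hat{\chi}^{-1}\widehat{\mathcal{T}}_{L^{n}(w)}$ on the left and $\widehat{\mathcal{T}}_{w}^{-1}\hat{\chi}$ on the right, both bounded on $H_{\hbar}^{-m}$ uniformly in $w$ by Remark \ref{rem:5.3}. The $k$-th main term becomes $e^{\frac{i}{\hbar}\mathcal{V}_{w_{0,n}}}e^{-(k+1)J_{w_{0,n}}}\Pi_{k,w,n}$ with $\Pi_{k,w,n}$ as in (\ref{eq:def_Pi_w}) (using $\langle x^{k}|\hat{\chi}_0\widehat{\mathcal{T}}_{w}^{-1}\hat{\chi}=\langle\mathcal{S}^{(k)}_{w}|$), while, since $\mathcal{V}=\tau-i\hbar V$ gives $|e^{\frac{i}{\hbar}\mathcal{V}_{w_{0,n}}}|=e^{\mathrm{Re}(V)_{w_{0,n}}}$, the remainder attached to the word $w_{0,n}$ is at most $C\,e^{\mathrm{Re}(V)_{w_{0,n}}}e^{-J_{w_{0,n}}/2}(e^{-J_{w_{0,n}}}/\hbar)^{d+3/2}=C\hbar^{-(d+3/2)}e^{\mathrm{Re}(V)_{w_{0,n}}-(d+2)J_{w_{0,n}}}$, with $C$ independent of $w$ and $\hbar$. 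Summing over $w_{0,n}$ by the triangle inequality gives a total remainder $\leq C\hbar^{-(d+3/2)}\sum_{w_{0,n}}e^{\mathrm{Re}(V)_{w_{0,n}}-(d+2)J_{w_{0,n}}}$, and by bounded distortion this cylinder sum has the same exponential growth rate as the periodic-orbit sum in (\ref{eq:def_Pr}) for $\varphi=\mathrm{Re}(V)-(d+2)J$, so it equals $e^{n(\mathrm{Pr}(V-(d+2)J)+R(n))}$ with $R(n)\to0$. This is (\ref{eq:expansion_of_Fn}).

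The points that need care are, first, the uniformity in $w\in\mathcal{W}$ of all constants — this rests on the $w$-uniform estimates (\ref{eq:estimate_H_Upsilon}), on Remark \ref{rem:5.3}, and on the $w$-uniform lower bound $J_{w_{0,n}}\geq\lambda_0$, which is exactly what allows Theorem \ref{thm:Dilation_expansion} to be applied with a single constant $C$ — and, second, the bookkeeping of the outer cut-offs $\hat{\chi}^{\pm1}$ surrounding $\widehat{\mathcal{T}}_{L^{n}(w)}$ and $\widehat{\mathcal{T}}_{w}$ against the internal $\hat{\chi}_0$ coming from Theorem \ref{thm:Dilation_expansion}. The main obstacle, however, is the last step: passing from the cylinder sum $\sum_{w_{0,n}}e^{\varphi_n(x_w)}$ to $e^{n(\mathrm{Pr}(\varphi)+R(n))}$ with $R(n)\to0$, that is, producing the subexponential factor against the periodic-orbit definition of $\mathrm{Pr}$; this is where bounded distortion and the strong separation condition (\ref{eq:hyp_non_intersect}) enter in an essential way.
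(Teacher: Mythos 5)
Your proposal is correct and follows essentially the same route as the paper's proof: decompose $\mathcal{L}_{\hbar,\chi}^n$ via Corollary~\ref{cor:5.4}, insert the dilation cut-off $\hat{\chi}_0$ so that Theorem~\ref{thm:Dilation_expansion} applies with $\lambda=J_{w_{0,n}}\geq\lambda_0$ uniformly in $w$, conjugate by the $w$-uniformly bounded operators from Remark~\ref{rem:5.3}, and sum the per-word remainders $C\hbar^{-(d+3/2)}e^{(V-(d+2)J)_{w_{0,n}}}$ using the cylinder-sum form (\ref{eq:sum_with_pressure-1}) of the pressure. The only cosmetic difference is that the paper first conjugates by $\hat{A}_m$ and proves the equivalent bound (\ref{eq:expansion_of_Qn}) for $\hat{Q}^n$ on $L^2$, whereas you work directly with $\mathcal{L}_{\hbar,\chi}^n$ on $H_{\hbar}^{-m}$; these are interchangeable via the diagram (\ref{eq:comm_diag}).
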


\end{minipage}}
\begin{rem}
$\mathcal{U}_{w}^{\left(k\right)}$ and $\mathcal{S}{}_{w}^{\left(k\right)}$
are WKB Lagrangian states \cite{weinstein_97}. This is a geometric
but non necessary remark.
\end{rem}

~
\begin{rem}
For $k<m-1/2$ we conclude from Remark \ref{rem:5.3} and Eq.(\ref{eq:bound_Pi_k})
that $\Pi_{k,w,n}$ is bounded by
\begin{equation}
\left\Vert \Pi_{k,w,n}\right\Vert _{H_{\nu}^{-m}}\leq C\nu^{k+\frac{1}{2}}.\label{eq:bound_Pi_tilde_k}
\end{equation}
with $C$ independent of $\nu$, $w$ and $n$.
\end{rem}

\begin{proof}
Recall from Section \ref{subsec:Escape-function} that 
\begin{equation}
\hat{Q}:=\hat{A}_{m}\hat{\chi}^{-1}\mathcal{L}_{\nu}\hat{\chi}\hat{A}_{m}^{-1}\quad:L^{2}\left(\mathbb{R}\right)\rightarrow L^{2}\left(\mathbb{R}\right).\label{eq:Q_F}
\end{equation}
is bounded. For further use, let
\begin{equation}
\langle\tilde{\mathcal{S}}_{w}^{\left(k\right)}|:=\langle\mathcal{S}{}_{w}^{\left(k\right)}|\hat{A}_{m}^{-1}=\langle x^{k}|\widehat{\mathcal{T}}_{w}^{-1}\hat{\chi}\hat{A}_{m}^{-1}\in L^{2}\left(\mathbb{R}\right),\label{eq:def_S_k_w}
\end{equation}
\begin{equation}
|\tilde{\mathcal{U}}_{w}^{\left(k\right)}\rangle:=\hat{A}_{m}|\mathcal{U}_{L^{n}\left(w\right)}^{\left(k\right)}\rangle=\hat{A}_{m}\hat{\chi}^{-1}\widehat{\mathcal{T}}_{w}|\frac{1}{k!}\delta^{\left(k\right)}\rangle\in L^{2}\left(\mathbb{R}\right),\label{eq:def_U_k_w}
\end{equation}
\begin{equation}
\tilde{\Pi}_{k,w,n}=\hat{A}_{m}\Pi_{k,w,n}\hat{A}_{m}^{-1}=\hat{A}_{m}\hat{\chi}^{-1}\widehat{\mathcal{T}}_{L^{n}\left(w\right)}\Pi_{k}\widehat{\mathcal{T}}_{w}^{-1}\hat{\chi}\hat{A}_{m}^{-1}=|\tilde{\mathcal{U}}_{L^{n}\left(w\right)}^{\left(k\right)}\rangle\langle\tilde{\mathcal{S}}_{w}^{\left(k\right)}|.\label{eq:Pi_tilde_U_S}
\end{equation}
Proving (\ref{eq:expansion_of_Fn} ) is equivalent to proving an expansion
for $\hat{Q}^{n}$ in $L^{2}\left(\mathbb{R}\right)$:
\begin{equation}
\left\Vert \hat{Q}^{n}-\sum_{w_{0,n}}e^{i\nu\tau_{w_{0,n}}+V_{w_{0,n}}}\sum_{k=0}^{d}e^{-\left(k+1\right)J_{w_{0,n}}}\tilde{\Pi}_{k,w,n}\right\Vert _{L^{2}}\leq C\nu^{\left(d+\frac{3}{2}\right)}e^{n\left(\mathrm{Pr}\left(V-\left(d+2\right)J\right)+R\left(n\right)\right)}.\label{eq:expansion_of_Qn}
\end{equation}
For any $n\geq1$, we have from (\ref{eq:cor_F^n})
\begin{eqnarray}
\hat{Q}^{n} & = & \hat{A}_{m}\hat{\chi}^{-1}\mathcal{L}^{n}\hat{\chi}\hat{A}_{m}^{-1}\nonumber \\
 & = & \sum_{w_{0,n}}\hat{Q}_{w_{0,n}}\label{eq:sum_Q}
\end{eqnarray}
with individual terms
\[
\hat{Q}_{w_{0,n}}=\hat{A}_{m}\hat{\chi}^{-1}\mathcal{L}_{w_{0,n}}^{n}\hat{\chi}\hat{A}_{m}^{-1}.
\]
Using (\ref{eq:conjug_2}) we get
\[
\hat{Q}_{w_{0,n}}=e^{i\nu\tau_{w_{0,n}}+V_{w_{0,n}}}\hat{A}_{m}\hat{\chi}^{-1}\widehat{\mathcal{T}}_{L^{n}\left(w\right)}\hat{D}_{J_{w_{0,n}}}\widehat{\mathcal{T}}_{w}^{-1}\hat{\chi}\hat{A}_{m}^{-1}.
\]
In order to use the expansion (\ref{eq:expansion_D}) for $\hat{D}_{J_{w_{0,n}}}\hat{\chi}_{0}$,
let us choose $\chi_{0}\in C_{0}^{\infty}\left(\mathbb{R}\right)$
as in Theorem \ref{thm:Dilation_expansion} such that $\chi_{0}\left(y\right)=1$
for every $w\in\mathcal{W}$ and $y\in\mathrm{supp}\left(\widehat{\mathcal{T}}_{w}^{-1}\hat{\chi}\right)$
. This choice of $\chi_{0}$ is possible uniformly with respect to
$w$. Then $\widehat{\mathcal{T}}_{w}^{-1}\hat{\chi}=\hat{\chi}_{0}\widehat{\mathcal{T}}_{w}^{-1}\hat{\chi}$.
So (\ref{eq:expansion_D}) gives that for a given $d\geq1$, and using
notation (\ref{eq:def_Pi_w}), 
\begin{eqnarray}
\hat{Q}_{w_{0,n}} & = & e^{i\nu\tau_{w_{0,n}}+V_{w_{0,n}}}\left(\sum_{k=0}^{d}e^{-\left(k+1\right)J_{w_{0,n}}}\hat{A}_{m}\hat{\chi}^{-1}\widehat{\mathcal{T}}_{L^{n}\left(w\right)}\Pi_{k}\widehat{\mathcal{T}}_{w}^{-1}\hat{\chi}\hat{A}_{m}^{-1}\right)+R_{w_{0,n}}\label{eq:expansion_Qn}\\
 & = & e^{i\nu\tau_{w_{0,n}}+V_{w_{0,n}}}\left(\sum_{k=0}^{d}e^{-\left(k+1\right)J_{w_{0,n}}}\tilde{\Pi}_{k,w,n}\right)+R_{w_{0,n}},\nonumber 
\end{eqnarray}
with a remainder $R_{w_{0,n}}$ given by
\[
R_{w_{0,n}}=e^{i\nu\tau_{w_{0,n}}+V_{w_{0,n}}}\hat{A}_{m}\hat{\chi}^{-1}\widehat{\mathcal{T}}_{L^{n}\left(w\right)}\left(\hat{D}_{J_{w_{0,n}}}\hat{\chi}_{0}-\sum_{k=0}^{d}e^{-\left(k+1\right)J_{w_{0,n}}}\Pi_{k}\right)\widehat{\mathcal{T}}_{w}^{-1}\hat{\chi}\hat{A}_{m}^{-1}.
\]
From (\ref{eq:expansion_D}) and (\ref{eq:T_bounded}) its norm is
bounded by:

\begin{eqnarray}
\left\Vert R_{w_{0,n}}\right\Vert _{L^{2}\left(\mathbb{R}\right)} & \leq & C\left|e^{i\nu\tau_{w_{0,n}}+V_{w_{0,n}}}\right|e^{-\frac{1}{2}J_{w_{0,n}}}\left(\nu e^{-J_{w_{0,n}}}\right)^{d+3/2}\nonumber \\
 & \leq & C\nu^{\left(d+\frac{3}{2}\right)}e^{\left(V-\left(d+2\right)J\right)_{w_{0,n}}}\label{eq:remain_R}
\end{eqnarray}
with some constant $C>0$ independent of $\nu$, $n,$ and $w$. Using
(\ref{eq:sum_with_pressure-1}) from the Appendix, the sum of these
remainders is bounded by
\begin{eqnarray*}
\sum_{w_{0,n}}\left\Vert R_{w_{0,n}}\right\Vert _{L^{2}\left(\mathbb{R}\right)} & \leq & C\nu^{\left(d+\frac{3}{2}\right)}\sum_{w_{0,n}}e^{\left(V-\left(d+2\right)J\right)_{w_{0,n}}}\\
 & \leq & C\nu^{\left(d+\frac{3}{2}\right)}e^{n\left(\mathrm{Pr}\left(V-\left(d+2\right)J\right)+R\left(n\right)\right)},\quad\mbox{with }R\left(n\right)\underset{n\rightarrow\infty}{\longrightarrow0}
\end{eqnarray*}
From (\ref{eq:sum_Q}) and (\ref{eq:expansion_Qn}) we deduce (\ref{eq:expansion_of_Fn}).
\end{proof}

\section{\label{sec:Diagonal-approximation}Diagonal approximation}

We have defined the bounded operator $\hat{Q}:L^{2}\left(\mathbb{R}\right)\rightarrow L^{2}\left(\mathbb{R}\right)$
in (\ref{eq:def_Q}). For $n\geq1$, let
\begin{equation}
P_{n}:=\left(\hat{Q}^{n}\right)^{*}\hat{Q}^{n}\label{eq:def_Pn}
\end{equation}
which is a positive bounded self-adjoint operator on $L^{2}\left(\mathbb{R}\right)$.
In the following Theorem we bound the norm of $P_{n}$ and this will
be used in Section \ref{subsec:Proof-of-main_Theorem} to deduce a
bound on the spectral radius of the transfer operator (the main result
of this paper). $\left[x\right]$ will denote the approximation of
$x\in\mathbb{R}$ by the closest smaller integer.

\noindent\fcolorbox{blue}{white}{\begin{minipage}[t]{1\columnwidth - 2\fboxsep - 2\fboxrule}%
\begin{prop}
\label{thm:7.1}We make the assumption \ref{hyp:minimal_capt} of
minimal captivity. Let $\epsilon>0$, $0\leq J_{c}<2J_{min}-\epsilon$
and $0<\beta<1$. There exists $C>0$, such that for any $\nu>0$
and $n$ given by 
\begin{equation}
n:=\left[\frac{2}{J_{c}+\epsilon}\log\nu\right]>\frac{1}{J_{min}}\log\nu.\label{eq:n_erhenfest}
\end{equation}
We have 
\begin{equation}
\left\Vert P_{n}\right\Vert _{L^{2}}\leq C\left(\nu\sum_{w_{0,n}}e^{2\left(V-J\right)_{w_{0,N^{*}}}}e^{\left(V-J\right)_{w_{N^{*},n}}}\sum_{w'_{N^{*},n}}e^{\left(V-J\right)_{w'_{N^{*},n}}}\right)+C\beta^{n}\label{eq:expansion_P_n}
\end{equation}
with 
\begin{equation}
N^{*}:=N^{*}\left(w,J_{c}\right):=\mathrm{max}\left\{ k\leq n,\quad\mbox{s.t. }J_{w_{0,k}}<nJ_{c}\right\} .\label{eq:def_N*}
\end{equation}
\end{prop}

\end{minipage}}
\begin{rem}
Note that in the inequality (\ref{eq:expansion_P_n}), the Birkhoff
sums of $(V-J)$ can be calculated with any possible extension of
the word fragments. 
\end{rem}

~
\begin{rem}
In semiclassical analysis, the time $\frac{1}{J_{min}}\log\nu$ in
(\ref{eq:n_erhenfest}) is called the maximal local Ehrenfest time.
The definition of $N^{*}$ in (\ref{eq:def_N*}) can be written as
\[
e^{J_{w_{0,N^{*}}}}\simeq\nu^{2}
\]
and $N^{*}$ can be called \textbf{``twice the standard local Ehrenfest
time''}. It is known in specific situations that this particular
time is important; see discussions and results in \cite{fred-steph-02,fred-trace-06}.
For example there are curious phenomena of ``quantum revival'' or
``quantum period'' at that time for the quantum cat map as explained
in \cite{debievre-00,fred-steph-02}. 
\end{rem}

The rest of Section \ref{sec:Diagonal-approximation} is devoted to
the proof of Proposition \ref{thm:7.1}. At some point of the proof,
i.e. Lemma \ref{Prop:Separation-of-orbits}, we will use the hypothesis
of minimal captivity and obtain that the orbits of length $n$ are
``well distributed and separated'' on phase space (inside the trapped
set $\mathcal{K}$) so that they do not ``interfere'' with each
over, provided the time $n$ is not too long. Using this, the double
sum over the orbits that appears in Lemma \ref{lem:7.3}, can be reduced
to a much smaller sum, which is basically a sum over the diagonal.
This step can thus be considered as some kind of ``diagonal approximation''.

In a first step we use the asymptotic expansion for the transfer operator,
Theorem \ref{thm:6.6}, in order to write $\left\Vert P_{n}\right\Vert _{L^{2}}$
as a double sum over orbits. The next Lemma shows that this is possible
provided we consider time $n$ long enough w.r.t. $\nu$.

\begin{center}{\color{blue}\fbox{\color{black}\parbox{16cm}{
\begin{lem}
\label{lem:7.3}Let $0<\alpha<J_{min}$ and $0<\beta<1$. There exist
$d\in\mathbb{N}$ and $C>0$ such that for any $\nu>0$ and
\begin{equation}
n:=\left[\frac{1}{\alpha}\log\nu\right]\label{eq:choice_n_hbar}
\end{equation}
we have 
\begin{equation}
\left\Vert P_{n}\right\Vert _{L^{2}}\leq\mathbb{S}+C\beta^{n}.\label{eq:first_bound_Pn}
\end{equation}
with
\begin{equation}
\mathbb{S}:=\sum_{w'_{0,n},w_{0,n}}e^{V_{w'_{0,n}}+V_{w_{0,n}}}\sum_{k',k=0}^{d}e^{-\left(k'+1\right)J_{w'_{0,n}}-\left(k+1\right)J_{w_{0,n}}}\left|\mathrm{Tr}\left(\tilde{\Pi}_{k',w',n}^{*}\tilde{\Pi}_{k,w,n}\right)\right|\label{eq:def_double_sum}
\end{equation}
\end{lem}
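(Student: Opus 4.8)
The plan is to substitute the asymptotic expansion of Theorem~\ref{thm:6.6} (in the form (\ref{eq:expansion_of_Qn})) into $P_{n}=(\hat{Q}^{n})^{*}\hat{Q}^{n}$ and reduce $\left\Vert P_{n}\right\Vert _{L^{2}}$ to the Hilbert--Schmidt norm of the leading term. Write $\hat{Q}^{n}=\hat{A}_{n}+\tilde{R}_{n}$ where $\hat{A}_{n}:=\sum_{w_{0,n}}e^{i\frac{1}{\hbar}\mathcal{V}_{w_{0,n}}}\sum_{k=0}^{d}e^{-(k+1)J_{w_{0,n}}}\tilde{\Pi}_{k,w,n}$ is the finite-rank main term and, by Theorem~\ref{thm:6.6}, $\left\Vert \tilde{R}_{n}\right\Vert _{L^{2}}\leq C\hbar^{-(d+3/2)}e^{n(\mathrm{Pr}(V-(d+2)J)+R(n))}$ with $R(n)\to0$. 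I would fix $m$ large enough that $m>d+3/2$ (legitimate, since the spectral radius and all quantities under study are independent of $m$ for $m$ large). Expanding, $P_{n}=\hat{A}_{n}^{*}\hat{A}_{n}+\hat{A}_{n}^{*}\tilde{R}_{n}+\tilde{R}_{n}^{*}\hat{A}_{n}+\tilde{R}_{n}^{*}\tilde{R}_{n}$, and it suffices to show $\left\Vert \hat{A}_{n}^{*}\hat{A}_{n}\right\Vert _{L^{2}}\leq\mathbb{S}$ and that the three remaining terms are $O(\beta^{n})$ at the time (\ref{eq:choice_n_hbar}).

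\emph{Leading term.} Since $\hat{A}_{n}$ is a finite sum of rank-one operators it is trace class, so $\hat{A}_{n}^{*}\hat{A}_{n}$ is positive and trace class and $\left\Vert \hat{A}_{n}^{*}\hat{A}_{n}\right\Vert _{L^{2}}\leq\mathrm{Tr}(\hat{A}_{n}^{*}\hat{A}_{n})$. Expanding the product and using linearity of the trace over the finite double sum, the fact that $J$ is real by (\ref{eq:def_J}), the identity $\left|e^{i\frac{1}{\hbar}\mathcal{V}_{w_{0,n}}}\right|=e^{V_{w_{0,n}}}$ from (\ref{eq:def_cal_V}), and then bounding the double sum by the sum of moduli of its terms, yields exactly $\mathrm{Tr}(\hat{A}_{n}^{*}\hat{A}_{n})\leq\mathbb{S}$ as in (\ref{eq:def_double_sum}), with $\mathrm{Tr}(\tilde{\Pi}_{k',w',n}^{*}\tilde{\Pi}_{k,w,n})$ appearing because each $\tilde{\Pi}_{k,w,n}$ is rank one, cf.~(\ref{eq:Pi_tilde_U_S}).

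\emph{Remainder terms.} Here I would write $\left\Vert \hat{A}_{n}\right\Vert _{L^{2}}\leq\left\Vert \hat{Q}^{n}\right\Vert _{L^{2}}+\left\Vert \tilde{R}_{n}\right\Vert _{L^{2}}\leq C_{0}^{n}+\left\Vert \tilde{R}_{n}\right\Vert _{L^{2}}$, where $C_{0}:=\sup_{\hbar>0}\left\Vert \hat{Q}\right\Vert _{L^{2}}<\infty$ is the uniform-in-$\hbar$ bound for the transfer operator on the Sobolev spaces provided by the framework of \cite{faure_arnoldi_tobias_13} (one may assume $C_{0}\geq1$). Then $\left\Vert \hat{A}_{n}^{*}\tilde{R}_{n}\right\Vert _{L^{2}}+\left\Vert \tilde{R}_{n}^{*}\hat{A}_{n}\right\Vert _{L^{2}}+\left\Vert \tilde{R}_{n}^{*}\tilde{R}_{n}\right\Vert _{L^{2}}\leq C(C_{0}^{n}+\left\Vert \tilde{R}_{n}\right\Vert _{L^{2}})\left\Vert \tilde{R}_{n}\right\Vert _{L^{2}}$. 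Inserting $n=[\frac{1}{\alpha}\log\frac{1}{\hbar}]$ gives $\hbar^{-(d+3/2)}=e^{(d+3/2)\alpha n}$ up to a bounded factor, so $\left\Vert \tilde{R}_{n}\right\Vert _{L^{2}}\leq Ce^{n((d+3/2)\alpha+\mathrm{Pr}(V-(d+2)J)+R(n))}$ and the three terms are $\leq Ce^{n(\log C_{0}+(d+3/2)\alpha+\mathrm{Pr}(V-(d+2)J)+R(n))}$. By the variational principle for the topological pressure together with $J_{\min}=\inf_{\mu}\int J\,d\mu$ (infimum over invariant measures), $\mathrm{Pr}(V-(d+2)J)\leq\mathrm{Pr}(V)-(d+2)J_{\min}$, so the exponent is at most $n(\mathrm{const}+d(\alpha-J_{\min})+R(n))$ with a constant independent of $d,\hbar,n$. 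Since $\alpha<J_{\min}$, the quantity $\mathrm{const}+d(\alpha-J_{\min})$ tends to $-\infty$; I would fix $d$ (depending only on $\alpha,\beta$, and then take $m>d+3/2$) so that it is $<\log\beta-\delta$ for some $\delta>0$. For all $\hbar$ small enough that $R(n)<\delta$ the three terms are then $\leq Ce^{n\log\beta}=C\beta^{n}$; for the finitely many remaining values of $\hbar$, i.e. bounded $n$, the trivial bound $\left\Vert P_{n}\right\Vert _{L^{2}}=\left\Vert \hat{Q}^{n}\right\Vert _{L^{2}}^{2}\leq C_{0}^{2n}$ is absorbed into $C\beta^{n}$ after enlarging $C$. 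Combining the four estimates gives $\left\Vert P_{n}\right\Vert _{L^{2}}\leq\mathbb{S}+C\beta^{n}$, which is (\ref{eq:first_bound_Pn}).

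The step I expect to be the main obstacle is the choice of $d$: one needs a \emph{single} integer $d$, depending only on $\alpha$ and $\beta$, that makes Theorem~\ref{thm:6.6}'s remainder negligible at the Ehrenfest time. This is a competition between the factor $\hbar^{-(d+3/2)}=e^{(d+3/2)\alpha n}$ --- the cost of the negative power of $\hbar$ in the expansion, evaluated at $n\sim\frac{1}{\alpha}\log\frac{1}{\hbar}$ --- and the decay of $\mathrm{Pr}(V-(d+2)J)$, which is at least linear in $d$ with leading slope $-J_{\min}$; the hypothesis $\alpha<J_{\min}$ is exactly what makes the decay prevail. The rest is routine bookkeeping with finite sums and operator-norm estimates.
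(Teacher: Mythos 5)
Your overall strategy --- decomposing $\hat{Q}^n=\hat{A}_n+\tilde R_n$ using Theorem \ref{thm:6.6}, bounding the cross terms in operator norm at the Ehrenfest-time scale $n\sim\frac1\alpha\log\frac1\hbar$, passing to the trace norm on $\hat A_n^*\hat A_n$ to get $\mathbb S$, and exploiting $\alpha<J_{\min}$ through the pressure derivative bound to choose $d$ --- is exactly the paper's. The leading-term estimate $\|\hat A_n^*\hat A_n\|_{L^2}\le\mathrm{Tr}(\hat A_n^*\hat A_n)\le\mathbb S$ and the cleanup of the $R(n)\to0$ error and of bounded $n$ are also handled correctly (though "finitely many remaining $\hbar$" should read "finitely many remaining $n$"; the argument still works since $\|\hat Q^n\|$ is then bounded).

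The one step where you genuinely diverge from the paper is the bound on $\|\hat A_n\|_{L^2}$. You estimate $\|\hat A_n\|\le\|\hat Q^n\|+\|\tilde R_n\|\le C_0^n+\|\tilde R_n\|$ with $C_0:=\sup_\hbar\|\hat Q\|_{L^2}$, which pushes $\log C_0$ into the exponent you must dominate by choosing $d$. But $C_0$ is the $L^2$-norm of the conjugated operator $\hat A_m\mathcal{L}_{\hbar,\chi}\hat A_m^{-1}$, which depends on $m$; and to invoke Theorem \ref{thm:6.6} with parameter $d$ you need $m>d+\tfrac32$. So in your scheme $d$ must be chosen before $m$, yet the constant $\log C_0(m)$ that $d$ is supposed to beat is only fixed after $m$ is chosen. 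Without a uniform-in-$m$ bound on $C_0(m)$ (or at least a bound whose growth in $m$ is slower than $J_{\min}-\alpha$), the inequality "$\mathrm{const}+d(\alpha-J_{\min})<\log\beta-\delta$" is circular, since the "$\mathrm{const}$" secretly depends on $d$ via $m$. The paper avoids this entirely by estimating $\|\hat S_n\|$ \emph{directly} from the explicit expansion: using $\|\tilde\Pi_{k,w,n}\|\le C\hbar^{-k-\frac12}$ from (\ref{eq:bound_Pi_tilde_k}) and the pressure asymptotics (\ref{eq:sum_with_pressure-1}) they get $\|\hat S_n\|\le C\sum_{k=0}^d\beta_k^n\le C\beta_0^n$ where $\beta_k=e^{(k+\frac12)(\alpha+\epsilon)+\mathrm{Pr}(V-(k+1)J)}$, which is $m$-independent; then $\|R'_n\|\le C\beta_{d+1}^n\beta_0^n$ and one chooses $d$ so that $\beta_{d+1}\beta_0\le\beta$. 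The easiest repair of your version is to replace the submultiplicative bound $\|\hat Q^n\|\le C_0^n$ by the $n$-step bound $\|\hat Q^n\|\le C_0 e^{n(\gamma_{sc}+\epsilon)}$ (which is what \cite[Thm 2.9]{faure_arnoldi_tobias_13} actually provides, and is used in the proof of Theorem \ref{thm:main_result-resolvent}): this puts the $m$-dependent constant $C_0$ \emph{outside} the exponent, where it is harmlessly absorbed into the final $C$, and leaves a growth rate $\gamma_{sc}+\epsilon$ that is independent of $m$ and $d$. With that fix your argument closes.
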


}}}\end{center}
\begin{rem}
Later we will provide an upper bound for $\mathbb{S}$ keeping only
the terms $k=k'=0$.
\end{rem}

\begin{proof}
We use Theorem \ref{thm:6.6} with its formulation (\ref{eq:expansion_of_Qn})
and write
\[
\hat{Q}^{n}=S_{n}+R_{n}
\]
with
\begin{equation}
S_{n}:=\sum_{w_{0,n}}e^{i\nu\tau_{w_{0,n}}+V_{w_{0,n}}}\sum_{k=0}^{d}e^{-\left(k+1\right)J_{w_{0,n}}}\tilde{\Pi}_{k,w,n}\label{eq:Sn}
\end{equation}
which is a finite rank operator and
\begin{equation}
\left\Vert R_{n}\right\Vert _{L^{2}}\underset{(\ref{eq:expansion_of_Fn})}{\leq}C\nu^{\left(d+\frac{3}{2}\right)}e^{n\left(\mathrm{Pr}\left(V-\left(d+2\right)J\right)+R\left(n\right)\right)}.\label{eq:bound_Rn}
\end{equation}
Then
\begin{equation}
P_{n}\underset{(\ref{eq:def_Pn})}{=}\left(\hat{Q}^{n}\right)^{*}\hat{Q}^{n}=S_{n}^{*}S_{n}+\underbrace{\left(R_{n}^{*}S_{n}+S_{n}^{*}R_{n}+R_{n}^{*}R_{n}\right)}_{R'_{n}},\label{eq:expr_Pn}
\end{equation}
gives
\begin{equation}
\left\Vert P_{n}\right\Vert _{L^{2}}\leq\left\Vert S_{n}^{*}S_{n}\right\Vert _{L^{2}}+\left\Vert R'_{n}\right\Vert _{L^{2}}.\label{eq:expre_norm_Pn}
\end{equation}
In order to bound the remainder $\left\Vert R'_{n}\right\Vert _{L^{2}}$
we have to bound $\left\Vert R_{n}\right\Vert _{L^{2}}$ and $\left\Vert S_{n}\right\Vert _{L^{2}}$.
Notice that
\begin{equation}
\nu^{\left(k+\frac{1}{2}\right)}\underset{(\ref{eq:choice_n_hbar})}{\leq}Ce^{n\left(k+\frac{1}{2}\right)\alpha}=Ce^{n\left(k+\frac{1}{2}\right)\left(\alpha+\epsilon\right)}e^{-\epsilon n\left(k+\frac{1}{2}\right)}\label{eq:bound_hbar_}
\end{equation}
Let
\begin{equation}
\beta_{k}:=e^{(k+\frac{1}{2})\left(\alpha+\epsilon\right)+\mathrm{Pr}(V-(k+1)J)},\label{eq:def_beta_k}
\end{equation}
with $\epsilon>0$ chosen later. So

\[
\left\Vert R_{n}\right\Vert _{L^{2}}\underset{(\ref{eq:bound_Rn})}{\leq}C\nu^{\left(d+\frac{3}{2}\right)}e^{n\left(\mathrm{Pr}\left(V-\left(d+2\right)J\right)+R\left(n\right)\right)}\underset{(\ref{eq:def_beta_k}),(\ref{eq:bound_hbar_})}{\leq}C'\beta_{d+1}^{n}e^{n\left(R\left(n\right)-\epsilon\left(d+\frac{3}{2}\right)\right)}\leq C\beta_{d+1}^{n},
\]
and 
\begin{eqnarray}
\left\Vert S_{n}\right\Vert _{L^{2}} & \underset{(\ref{eq:Sn}),(\ref{eq:bound_Pi_tilde_k})}{\leq} & C\sum_{k=0}^{d}\nu^{\left(k+\frac{1}{2}\right)}\sum_{w_{0,n}}e^{\left(V-\left(k+1\right)J\right)_{w_{0,n}}}\label{eq:norme_Sn}\\
 & \underset{(\ref{eq:sum_with_pressure-1})}{\leq} & C\sum_{k=0}^{d}\nu^{\left(k+\frac{1}{2}\right)}e^{n\left(\mathrm{Pr}\left(V-\left(k+1\right)J\right)+R\left(n\right)\right)},\quad\mbox{with }R\left(n\right)\underset{n\rightarrow\infty}{\longrightarrow0}\nonumber \\
 & \underset{(\ref{eq:def_beta_k}),(\ref{eq:bound_hbar_})}{\leq} & C\sum_{k=0}^{d}\beta_{k}^{n}\nonumber 
\end{eqnarray}
Notice that
\[
\frac{\beta{}_{k+1}}{\beta_{k}}\underset{(\ref{eq:def_beta_k})}{=}e^{\alpha+\epsilon-\delta_{k}}
\]
with
\[
\delta_{k}:=\mathrm{Pr}\left(V-\left(k+1\right)J\right)-\mathrm{Pr}\left(V-\left(k+2\right)J\right).
\]

From Proposition \ref{prop:pressure_derivatives-1} we have $\forall r,\left(\frac{\partial}{\partial r}\mathrm{Pr}\left(V-rJ\right)\right)\left(r\right)\leq-J_{min}$.
Consequently for $\alpha<J_{min}$ we have $k\alpha+\mathrm{Pr}\left(V-kJ\right)\underset{k\rightarrow\infty}{\rightarrow}-\infty$.
Hence, if $\epsilon>0$ is such that $\alpha+\epsilon<J_{min}$ then
\begin{equation}
\beta_{k}\underset{k\rightarrow\infty}{\rightarrow}0.\label{eq:lim_beta_k}
\end{equation}
Also $\delta_{k}\geq\delta_{k+1}\geq J_{min}>\alpha+\epsilon$ for
any $k$ hence 
\begin{equation}
\frac{\beta{}_{k+1}}{\beta_{k}}<1.\label{eq:beta_k_decreases}
\end{equation}
In particular $\left\Vert S_{n}\right\Vert _{L^{2}}\underset{(\ref{eq:norme_Sn})}{\leq}C\beta_{0}^{n}$
and we record for later use that
\begin{equation}
\sum_{k=0}^{d}\nu^{\left(k+\frac{1}{2}\right)}\sum_{w_{0,n}}e^{\left(V-\left(k+1\right)J\right)_{w_{0,n}}}\leq C\beta_{0}^{n}.\label{eq:for_later}
\end{equation}
We conclude that
\begin{equation}
\left\Vert R'_{n}\right\Vert _{L^{2}}\leq2\left\Vert R_{n}\right\Vert \left\Vert S_{n}\right\Vert +\left\Vert R_{n}\right\Vert ^{2}=\left\Vert R_{n}\right\Vert \left(2\left\Vert S_{n}\right\Vert +\left\Vert R_{n}\right\Vert \right)\leq C\beta_{d+1}^{n}\beta_{0}^{n}.\label{eq:bound_Rp_n}
\end{equation}

We now consider the term $S_{n}^{*}S_{n}$ in (\ref{eq:expr_Pn})
given by

\begin{equation}
S_{n}^{*}S_{n}\underset{(\ref{eq:Sn})}{=}\sum_{w'_{0,n},w_{0,n}}e^{\overline{i\nu\tau_{w'_{0,n}}+V_{w'_{0,n}}}}e^{i\nu\tau_{w_{0,n}}+V_{w_{0,n}}}\sum_{k',k=0}^{d}e^{-\left(k'+1\right)J_{w'_{0,n}}-\left(k+1\right)J_{w_{0,n}}}\tilde{\Pi}_{k',w',n}^{*}\tilde{\Pi}_{k,w,n}.\label{eq:SnSn_nondiagonal_expansion}
\end{equation}
$S_{n}^{*}S_{n}$ is a finite rank positive self-adjoint operator,
hence we have the bound
\begin{eqnarray}
\left\Vert S_{n}^{*}S_{n}\right\Vert _{L^{2}} & \leq & \left\Vert S_{n}^{*}S_{n}\right\Vert _{\mathrm{Tr}}=\mathrm{Tr}\left(S_{n}^{*}S_{n}\right)\label{eq:loss_inequality}\\
 & \leq & \sum_{w'_{0,n},w_{0,n}}e^{V_{w'_{0,n}}+V_{w_{0,n}}}\label{eq:normSS}\\
 &  & \qquad\sum_{k',k=0}^{d}e^{-\left(k'+1\right)J_{w'_{0,n}}-\left(k+1\right)J_{w_{0,n}}}\left|\mathrm{Tr}\left(\tilde{\Pi}_{k',w',n}^{*}\tilde{\Pi}_{k,w,n}\right)\right|\\
 & \underset{(\ref{eq:def_double_sum})}{=} & \mathbb{S}\nonumber 
\end{eqnarray}
and

\[
\left\Vert P_{n}\right\Vert _{L^{2}}\underset{(\ref{eq:expre_norm_Pn}),(\ref{eq:bound_Rp_n}),(\ref{eq:normSS})}{\leq}\mathbb{S}+C\left(\beta_{d+1}\beta_{0}\right)^{n}.
\]
Let $0<\beta<1$. Using (\ref{eq:lim_beta_k}), we can choose $d$
large enough so that $\beta_{d+1}\beta_{0}\leq\beta$. We have obtained
(\ref{eq:first_bound_Pn}).
\end{proof}
\begin{rem}
In the inequality of (\ref{eq:loss_inequality}) we have bounded the
$L^{2}$ norm by a trace norm. This is a crucial step in the paper.
This is obviously not an optimal bound. However it makes appear the
terms $\left|\mathrm{Tr}\left(\tilde{\Pi}_{k',w',n}^{*}\tilde{\Pi}_{k,w,n}\right)\right|$
and in the next Proposition we will see that these terms can be neglected
for many pairs of trajectories $w_{0,n},w'_{0,n}$.
\end{rem}

We first introduce the following notations. For $w,w'\in\mathcal{W}$
and $n\geq1$, suppose that $w_{0,n}\neq w_{0,n}'$ and let

\begin{equation}
n_{1}\left(w_{0,n},w'_{0,n}\right):=\min\left\{ 0\leq k\leq n,\quad w_{k}\neq w'_{k}\right\} \label{eq:def_n1}
\end{equation}
\[
n_{2}\left(w_{0,n},w'_{0,n}\right):=\min\left\{ 0\leq k\leq n,\quad w_{n-k}\neq w'_{n-k}\right\} .
\]
In other words this means that the words $w_{0,n}$ and $w'_{0,n}$
have equal letters at extremities $w_{i}=w_{i}'$ for $i\in[0,n_{1}[\cup]n-n_{2},n]$
and differ for letters: $w_{n_{1}}\neq w'_{n_{1}}$, $w_{n-n_{2}}\neq w'_{n-n_{2}}$. 

Notice that $J_{i,j}\left(x\right)>0$ hence the Birkhoff sum $J_{w_{0,k}}$
(defined in (\ref{eq:def_Birkhoff_sum})) is an increasing function
of $k$. For some given $w\in\mathcal{W}$, $n\geq1$, $J_{c}>0$,
let

\begin{eqnarray*}
N_{1}\left(w,n,J_{c}\right): & = & \mathrm{max}\left\{ 0\leq k\leq n,\quad\mbox{s.t. }J_{w_{0,k-1}}<\frac{nJ_{c}}{2}\right\} 
\end{eqnarray*}
\begin{eqnarray*}
N_{2}\left(w,n,J_{c}\right): & = & \mathrm{max}\left\{ 0\leq k\leq n,\quad\mbox{s.t. }J_{w_{n-k+1,n}}<\frac{nJ_{c}}{2}\right\} .
\end{eqnarray*}
Let us introduce the following Definition.

\begin{center}{\color{red}\fbox{\color{black}\parbox{16cm}{
\begin{defn}
For a given $J_{c}>0$, we call a pair of orbits $\left(w'_{0,n},w_{0,n}\right)$
\textbf{separable }if $w'_{0,n}\neq w_{0,n}$ and $n_{1}\left(w_{0,n},w'_{0,n}\right)\leq N_{1}\left(w,n,J_{c}\right)$
or $n_{2}\left(w_{0,n},w'_{0,n}\right)\leq N_{2}\left(w,n,J_{c}\right)$.
Otherwise we call the pair $\left(w'_{0,n},w_{0,n}\right)$ \textbf{non-separable}.
\end{defn}

}}}\end{center}

\begin{center}{\color{blue}\fbox{\color{black}\parbox{16cm}{
\begin{prop}
\label{Prop:Separation-of-orbits}\textbf{``Orbit separation''.}
We make the assumption \ref{hyp:minimal_capt} of minimal captivity.
Let $\varepsilon>0$ and $J_{c}>0$. Then for any $M\geq0$ there
exists $C_{M}>0$, such that for any $\nu>0$, $n:=\left[\frac{2}{J_{c}+\varepsilon}\log\nu\right]$,
any $w,w'\in\mathcal{W}$, if the pair of orbits $\left(w'_{0,n},w_{0,n}\right)$
is  separable\textbf{ }then
\begin{equation}
\nu^{-(k+k'+1)}\left|\mathrm{Tr}\left(\tilde{\Pi}_{k',w',n}^{*}\tilde{\Pi}_{k,w,n}\right)\right|\leq C_{M}e^{-\frac{\varepsilon}{2}nM}\label{eq:CMM}
\end{equation}
\end{prop}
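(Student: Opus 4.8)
The starting point is the elementary identity, valid since $\tilde{\Pi}_{k,w,n}=|\tilde{\mathcal{U}}_{L^{n}(w)}^{(k)}\rangle\langle\tilde{\mathcal{S}}_{w}^{(k)}|$ and $\mathrm{Tr}(|a\rangle\langle b|)=\langle b,a\rangle$,
\[
\mathrm{Tr}\bigl(\tilde{\Pi}_{k',w',n}^{*}\tilde{\Pi}_{k,w,n}\bigr)=\langle\tilde{\mathcal{U}}_{L^{n}(w')}^{(k')},\tilde{\mathcal{U}}_{L^{n}(w)}^{(k)}\rangle\cdot\langle\tilde{\mathcal{S}}_{w}^{(k)},\tilde{\mathcal{S}}_{w'}^{(k')}\rangle .
\]
The plan is to estimate \emph{one} of the two overlap factors according to which half of the separability hypothesis holds, while bounding the other factor trivially by Cauchy--Schwarz, using $\|\tilde{\mathcal{U}}_{L^{n}(w)}^{(k)}\|_{L^{2}}\leq C\hbar^{-(k+1/2)}$ (this follows from (\ref{eq:bound_Pi_tilde_k}) together with $\|\tilde{\mathcal{S}}_{w}^{(k)}\|_{L^{2}}\leq C$). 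Write $\Delta:=e^{-nJ_{c}/2}$ and $\eta:=\varepsilon/(J_{c}+\varepsilon)>0$; since $n=[\frac{2}{J_{c}+\varepsilon}\log\frac1\hbar]$ one has $\hbar/\Delta\asymp\hbar^{\eta}$ and $\hbar^{N\eta}=Ce^{-N\frac{\varepsilon}{2}n}$ for every $N$, so in each case it suffices to gain a factor $C_{N}\hbar^{N\eta}$ for arbitrary $N$ and then take $N=M$.

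Next I would record the structure of the two families of states, which is read off from Theorem \ref{thm:Global-normal-form.}. First, $\tilde{\mathcal{U}}_{L^{n}(w)}^{(k)}=\hat A_{m}\hat\chi^{-1}\widehat{\mathcal{T}}_{L^{n}(w)}|\tfrac1{k!}\delta^{(k)}\rangle$: since the canonical map of $\widehat{\mathcal{T}}_{L^n(w)}$ sends $(0,0)$ to $\mathcal S(L^n(w))$ and the unstable manifold there is the vertical line, $\widehat{\mathcal{T}}_{L^n(w)}|\tfrac1{k!}\delta^{(k)}\rangle$ is a combination of Dirac masses \emph{supported at the single point} $x_{L^{n}(w)}=\phi_{w_{0,n}}(x_{w})\in I_{w_{0,n}}$, and applying $\hat A_{m}=\mathrm{Op}_{\hbar}(\langle\xi\rangle^{-m})$ smooths it to a bump concentrated in an $\hbar$‑neighbourhood of $x_{L^{n}(w)}$ with tails decaying faster than any power of $\hbar/(\,\cdot\,)$ (because $\langle\xi\rangle^{-m}$ and $\xi^{j}\langle\xi\rangle^{-m}$ are smooth with rapidly decreasing derivatives — in fact analytic in a strip), with uniform control in $w,n$ by (\ref{eq:estimate_H_Upsilon}). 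Second, $\tilde{\mathcal{S}}_{w}^{(k)}=\hat A_{m}^{-1}\mathcal{S}_{w}^{(k)}$ is, by the symbol calculus for $\mathrm{Op}_\hbar(\langle\xi\rangle^{\pm m})$ and since $\zeta_{w}$ is bounded, a WKB Lagrangian state $e^{i\varUpsilon_{w}^{(0)}(z)/\hbar}\tilde b_{w}^{(k)}(z)$ on $\mathrm{supp}\chi\subset K_{a}$ with $\tfrac{d}{dz}\varUpsilon_{w}^{(0)}=\zeta_{w}$ (Theorem \ref{thm:Global-normal-form.}(2)) and amplitude $\tilde b_{w}^{(k)}$ bounded in $C^{\infty}$ uniformly in $w,\hbar$.

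\textbf{Case $n_{2}:=n_{2}(w_{0,n},w'_{0,n})\leq N_{2}$: separation of the unstable points.} Here $w_{n-j}=w'_{n-j}$ for $j<n_{2}$ and $w_{n-n_{2}}\neq w'_{n-n_{2}}$. Factoring $\phi_{w_{0,n}}=\phi_{w_{n-n_{2}+1,n}}\circ\phi_{w_{n-n_{2}},w_{n-n_{2}+1}}\circ\phi_{w_{0,n-n_{2}}}$ (and likewise for $w'$, the last $n_{2}-1$ maps being identical), the strong separation condition (\ref{eq:hyp_non_intersect}) gives $\mathrm{dist}\bigl(\phi_{w_{n-n_{2}},w_{n-n_{2}+1}}(I_{w_{n-n_{2}}}),\phi_{w'_{n-n_{2}},w_{n-n_{2}+1}}(I_{w'_{n-n_{2}}})\bigr)\geq\delta_{0}>0$; applying the common contraction $\phi_{w_{n-n_{2}+1,n}}$, whose derivative is $\geq c\,e^{-J_{w_{n-n_{2}+1,n}}}$ by bounded distortion, yields $|x_{L^{n}(w)}-x_{L^{n}(w')}|\geq c\delta_{0}\,e^{-J_{w_{n-n_{2}+1,n}}}$. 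Since $n_{2}\leq N_{2}$ we have $J_{w_{n-n_{2}+1,n}}<nJ_{c}/2$, so this distance is $\geq c'\Delta\gg\hbar$. The rapid decay of the tails of the two bumps then gives $|\langle\tilde{\mathcal{U}}_{L^{n}(w')}^{(k')},\tilde{\mathcal{U}}_{L^{n}(w)}^{(k)}\rangle|\leq C_{N}\hbar^{-(k+k'+1)}(\hbar/\Delta)^{N}$ for every $N$; multiplying by $\hbar^{k+k'+1}$ and by $|\langle\tilde{\mathcal{S}}_{w}^{(k)},\tilde{\mathcal{S}}_{w'}^{(k')}\rangle|\leq C$ gives (\ref{eq:CMM}).

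\textbf{Case $n_{1}:=n_{1}(w_{0,n},w'_{0,n})\leq N_{1}$: separation of the stable Lagrangians.} If $w_{0}\neq w'_{0}$ the domains $I_{w_0},I_{w'_0}$ are disjoint and the stable overlap is $O(\hbar^{\infty})$ directly, so assume $w_{0}=w'_{0}$, whence $n_1\geq1$, $w_{k}=w'_{k}$ for $k<n_{1}$ and $w_{n_{1}}\neq w'_{n_{1}}$. Cancelling the first $n_{1}-1$ terms in (\ref{eq:expression_zeta_w}) gives, with $m(z):=\phi'_{w_{0,n_{1}-1}}(z)=e^{-J_{w_{0,n_{1}-1}}(z)}$, $y:=\phi_{w_{0,n_{1}-1}}(z)\in K_{a}$, and $u_{+}=(w_{n_{1}-1},w_{n_{1}},w_{n_{1}+1},\dots)$, $u'_{+}=(w_{n_{1}-1},w'_{n_{1}},w'_{n_{1}+1},\dots)$,
\[
\zeta_{w}(z)-\zeta_{w'}(z)=m(z)\bigl(\zeta_{u_{+}}(y)-\zeta_{u'_{+}}(y)\bigr).
\]
The words $u_{+},u'_{+}$ coincide in the first symbol and differ in the second, so $\tilde S(u_{+})$ and $\tilde S(u'_{+})$ are distinct stable leaves; the key point — which I expect to be the main obstacle, and which is where minimal captivity enters essentially — is the uniform lower bound $|\zeta_{u_{+}}(y)-\zeta_{u'_{+}}(y)|\geq c_{0}>0$ for $y\in K_{a}$, obtained from the hyperbolicity of $\mathcal K$ under minimal captivity (Remark \ref{rem:min_captive}(4)), the regularity of the stable foliation, the fact that $(y,\zeta_{u_+}(y))\in\mathcal K_\varepsilon$ on $K_a$ by (\ref{eq:Ka_w}), and compactness (finitely many admissible pairs of immediate children, together with the uniform $C^\infty$ bounds (\ref{eq:estimate}), (\ref{eq:estimate_H_Upsilon})). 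Combined with bounded distortion ($m(z)\asymp m_{0}:=e^{-J_{w_{0,n_{1}-1}}(x_{w})}$ and $|m^{(j)}|\leq C_{j}m$ on $K_{a}$), the phase $\psi:=\varUpsilon_{w'}^{(0)}-\varUpsilon_{w}^{(0)}$ satisfies $|\psi'|\asymp m_{0}$ and $|\psi^{(j)}|\leq C_{j}m_{0}$ for all $j\geq1$; since $n_{1}\leq N_{1}$ gives $m_{0}>e^{-nJ_{c}/2}=\Delta$. Writing $\psi=m_{0}\tilde\psi$ with $\tilde\psi$ of bounded $C^{\infty}$ norm and $|\tilde\psi'|$ bounded below, repeated non‑stationary phase (integrating by parts $N$ times against the uniformly bounded amplitude $\overline{\tilde b_{w}^{(k)}}\tilde b_{w'}^{(k')}$, with large parameter $m_{0}/\hbar\geq\Delta/\hbar$) gives $|\langle\tilde{\mathcal{S}}_{w}^{(k)},\tilde{\mathcal{S}}_{w'}^{(k')}\rangle|\leq C_{N}(\hbar/\Delta)^{N}$ for every $N$; multiplying by $\hbar^{k+k'+1}$ and by $|\langle\tilde{\mathcal{U}}_{L^{n}(w')}^{(k')},\tilde{\mathcal{U}}_{L^{n}(w)}^{(k)}\rangle|\leq C\hbar^{-(k+k'+1)}$ gives (\ref{eq:CMM}). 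Since a separable pair satisfies $w_{0,n}\neq w'_{0,n}$ and ($n_{1}\leq N_{1}$ or $n_{2}\leq N_{2}$), one of the two cases always applies.
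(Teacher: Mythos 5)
Your proof is correct and follows the paper's architecture exactly: factor the trace as $\langle\tilde{\mathcal{U}}_{L^n(w')}^{(k')},\tilde{\mathcal{U}}_{L^n(w)}^{(k)}\rangle\cdot\langle\tilde{\mathcal{S}}_w^{(k)},\tilde{\mathcal{S}}_{w'}^{(k')}\rangle$, bound one factor trivially, bound the other by rapid decay, and split into the two cases $n_2\leq N_2$ (base-point separation from (\ref{eq:hyp_non_intersect}) plus contraction) and $n_1\leq N_1$ (stable-leaf separation from minimal captivity). These are the contents of the paper's Lemmas \ref{lem:9} and \ref{lem:Separation-of-xw_zetaw}. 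The one place where you diverge productively is the regularity input to the non-stationary phase argument: to get $(\hbar/\Delta)^{M}$ (rather than $(\hbar^{1/2}/\Delta)^{M}$) decay from repeated integration by parts one needs the Lipschitz-type control $|\psi^{(j)}|\lesssim|\psi'|$ uniformly, not just a lower bound on $|\psi'|$. The paper imports this from abstract hyperbolic-dynamics theory (H\"older/Lipschitz regularity of the stable foliation of $\mathcal{K}$, via Hasselblatt's bunching estimate), whereas you derive it elementarily from the factorization $\zeta_w(z)-\zeta_{w'}(z)=m(z)\bigl(\zeta_{u_+}(y)-\zeta_{u'_+}(y)\bigr)$ with $y=\phi_{w_{0,n_1-1}}(z)$, $m=\phi'_{w_{0,n_1-1}}$, plus bounded distortion ($|m^{(j)}|\lesssim m$) and the uniform $C^\infty$ bound (\ref{eq:estimate}). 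That route also delivers the lower bound $|\psi'|\gtrsim m_0$ in the same stroke, once the $O(1)$ separation $|\zeta_{u_+}(y)-\zeta_{u'_+}(y)|\geq c_0$ is extracted from minimal captivity exactly as in Lemma \ref{lem:Separation-of-xw_zetaw}; the paper instead establishes the lower bound and the regularity by two independent arguments. One small slip at the start: deducing $\|\tilde{\mathcal{U}}_{L^n(w)}^{(k)}\|_{L^2}\leq C\hbar^{-(k+1/2)}$ from $\|\tilde{\Pi}_{k,w,n}\|=\|\tilde{\mathcal{U}}^{(k)}\|\cdot\|\tilde{\mathcal{S}}^{(k)}\|$ and (\ref{eq:bound_Pi_tilde_k}) requires a \emph{lower} bound on $\|\tilde{\mathcal{S}}_w^{(k)}\|$, not the upper bound you quote; the estimate itself is true and is proved directly in the paper's computation leading to (\ref{eq:gen_b1}).
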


}}}\end{center}
\begin{rem}
In other words, Proposition \ref{Prop:Separation-of-orbits} says
that for a separable pair of orbits $\left(w'_{0,n},w_{0,n}\right)$,
the term $\left|\mathrm{Tr}\left(\tilde{\Pi}_{k',w',n}^{*}\tilde{\Pi}_{k,w,n}\right)\right|$
will be ``negligible''. We will see later in Lemma \ref{lem:Separation-of-xw_zetaw}
that this is because a ``separable pair of orbits'' is indeed ``separated''
in phase space so that their Lagrangian states do not overlap.
\end{rem}

\begin{proof}[Proof of Proposition \ref{thm:7.1}.]
The proof of Proposition \ref{Prop:Separation-of-orbits} will be
given in Section \ref{subsec:Proof-of-Proposition}. For now, we continue
the proof of Proposition \ref{thm:7.1}. Let $J_{c}<2J_{min}$. Let
$\epsilon>0$, $\nu>0$ and $n:=\left[\frac{2}{J_{c}+\varepsilon}\log\nu\right]=\left[\frac{1}{\alpha}\log\nu\right]$
with $\alpha=\frac{1}{2}J_{c}+\frac{1}{2}\epsilon$. We suppose that
$\epsilon>0$ is small enough so that $J_{c}+\epsilon<2J_{min}$.
Hence $\alpha<J_{min}$ and we can apply Lemma \ref{lem:7.3}. We
decompose the double sum (\ref{eq:normSS}) over $w'_{0,n},w_{0,n}$
into separable and non-separable pairs: 
\[
\mathbb{S}=\mathbb{S}_{\textup{separable}}+\mathbb{S}_{\textup{non-separable}}.
\]
We first show that $\mathbb{S}_{\textup{separable}}$ is ``negligible''.
We have
\begin{eqnarray*}
\mathbb{S}_{\textup{separable}} & = & \sum_{w_{0,n},w'_{0,n}\textup{ sep.}}e^{V_{w'_{0,n}}+V_{w_{0,n}}}\sum_{k',k=0}^{d}e^{-\left(k'+1\right)J_{w'_{0,n}}-\left(k+1\right)J_{w_{0,n}}}\left|\mathrm{Tr}\left(\tilde{\Pi}_{k',w',n}^{*}\tilde{\Pi}_{k,w,n}\right)\right|\\
 & \underset{(\ref{eq:CMM})}{\leq} & C_{M}e^{-\frac{\varepsilon}{2}nM}\left(\sum_{k=0}^{d}\nu^{\left(k+\frac{1}{2}\right)}\sum_{w_{0,n}}e^{V_{w_{0,n}}-\left(k+1\right)J_{w_{0,n}}}\right)^{2}\\
 & \underset{(\ref{eq:for_later})}{\leq} & Ce^{-\frac{\varepsilon}{2}nM}\beta_{0}^{2n}=C\left(e^{-\frac{\varepsilon}{2}M}\beta_{0}^{2}\right)^{n}
\end{eqnarray*}
We deduce that for any given $0<\beta<1$ we can choose $M$ large
enough so that $e^{-\frac{\varepsilon}{2}M}\beta_{0}^{2}\leq\beta$
hence 
\[
\mathbb{S}_{\textup{separable}}\leq C_{M}\beta^{n},
\]
which means that $\mathbb{S}_{\textup{separable}}$ is ``negligible''.
We have now to bound from above the ``non separable trajectories''
for which $n_{1}>N_{1}$ and $n_{2}>N_{2}$. By the definition of
$J_{c}$ and $n$ there exists some $\tilde{\epsilon}>0$ such $J_{c}+\epsilon+\tilde{\epsilon}<2J_{min}$
which implies 
\[
n\geq\frac{1}{J_{min}-\tilde{\epsilon}}\log\left(\nu\right).
\]
For every word $w_{0,n}$, we have $J_{w_{0,n}}\geq nJ_{min}$ hence
\begin{equation}
\nu e^{-J_{w_{0,n}}}\leq\nu e^{-nJ_{min}}\leq\left(e^{-\tilde{\epsilon}}\right)^{n}.\label{eq:hyp_on_n_Jmin}
\end{equation}
We write (with a constant $C$ that is independent of $n$ but whose
actual value might change from line to line) 
\begin{eqnarray*}
\mathbb{S}_{\textup{non-separable}}: & = & \sum_{w_{0,n},w'_{0,n},\textup{non-sep}}e^{V_{w'_{0,n}}+V_{w_{0,n}}}\sum_{k',k=0}^{d}e^{-\left(k'+1\right)J_{w'_{0,n}}-\left(k+1\right)J_{w_{0,n}}}\left|\mathrm{Tr}\left(\tilde{\Pi}_{k',w',n}^{*}\tilde{\Pi}_{k,w,n}\right)\right|\\
 & \underset{(\ref{eq:general_bound})}{\leq} & C\nu\sum_{w_{0,n},w'_{0,n},\textup{non-sep}}e^{\left(V-J\right)_{w'_{0,n}}+\left(V-J\right)_{w_{0,n}}}\sum_{k',k=0}^{d}\left(\nu e^{-J_{w'_{0,n}}}\right)^{k'}\left(\nu e^{-J_{w_{0,n}}}\right)^{k}\\
 & \underset{(\ref{eq:hyp_on_n_Jmin})}{\leq} & C\nu\sum_{w_{0,n,}w'_{0,n},\textup{non-sep}}e^{\left(V-J\right)_{w'_{0,n}}+\left(V-J\right)_{w_{0,n}}}\sum_{k',k=0}^{d}e^{-n\tilde{\epsilon}\left(k+k'\right)}\\
 & \leq & C\nu\sum_{w_{0,n,}w'_{0,n},\textup{non-sep}}e^{\left(V-J\right)_{w'_{0,n}}+\left(V-J\right)_{w_{0,n}}}
\end{eqnarray*}
We will now use the fact that we only sum over non-separable pairs
of words. Recall that this requires, that the word $w'_{0,n}$ is
equal to $w_{0,n}$ for their first $N_{1}\left(w,n,J_{c}\right)$
and their last $N_{2}\left(w,n,J_{c}\right)$ symbols. Accordingly
we can write the last expression as
\begin{align*}
\mathbb{S}_{\textup{non-separable}} & \leq C\nu\sum_{w_{0,n}}\left(e^{\left(V-J\right)_{w_{0,n}}}e^{\left(V-J\right)_{w_{0,N_{1}}}+\left(V-J\right)_{w_{n-N_{2},n}}}\right.\\
 & \qquad\qquad\left.\sum_{w'_{N_{1},n-N_{2},}\textup{s.t. }w'_{N_{1}}=w_{N_{1}},w'_{n-N_{2}}=w_{n-N_{2}}}e^{\left(V-J\right)_{w'_{N_{1},n-N_{2}}}}\right).
\end{align*}
Note that the last transformation can be done in an exact way (with
the same constant $C$): One can choose appropriate extensions of
the words $w'_{0,n}$, $w'_{0,N_{1}}=w_{0,N_{1}},w'_{N_{1},n-N_{2}}$
and $w'_{n-N_{2},n}=w{}_{n-N_{2},n}$ such that one has $(V-J)_{w'_{0,n}}=(V-J)_{w_{0,N_{1}}}+(V-J)_{w'_{N_{1},n-N_{2}}}+(V-J)_{w_{n-N_{2},n}}$
. Note furthermore, that, since we are only interested in an upper
bound, we can remove the restrictions on the initial and last symbol
in the second sum and obtain
\[
\mathbb{S}_{\textup{non-separable}}\leq C\nu\sum_{w_{0,n}}e^{\left(V-J\right)_{w_{0,n}}}e^{\left(V-J\right)_{w_{0,N_{1}}}+\left(V-J\right)_{w_{n-N_{2},n}}}\sum_{w'_{N_{1},n-N_{2}}}e^{\left(V-J\right)_{w'_{N_{1},n-N_{2}}}}.
\]
Let us finally explain, how to pass from this expression to (\ref{eq:expansion_P_n})
which involves $N^{*}$ instead of $N_{1}$ and $N_{2}$: Let us first
hypothetically assume that the symbolic dynamic is complete and that
$(V-J)_{w_{0,n}}$ would only depend on the fragment $w_{1,n}$ and
not on the extension and the we have $(V-J)_{w_{0,n}}=(V-J)_{w_{0,a}}+(V-J)_{w_{a,n}}$
for any $0<a<n$. Then we could can rearrange each word $w_{0,n}$
by putting the fragments $w_{0,N_{1}}$ and $w_{n-N_{2},n}$ at the
beginning of the new word $\tilde{w}_{0,n}$ such that $N^{*}(\tilde{w}_{0,n},J_{c})=N_{1}\left(w,n,J_{c}\right)+N_{2}\left(w,n,J_{c}\right)$
and rewrite the last expression by rearranging the combinatorial sum
over the words, as
\[
\mathbb{S}_{\textup{non-separable}}\leq C\nu\sum_{w_{0,n}}e^{\left(V-J\right)_{w_{0,n}}}e^{\left(V-J\right)_{w_{0,N^{*}}}}\sum_{w'_{N^{*},n}}e^{\left(V-J\right)_{w'_{N^{*},n}}},
\]
without having to modify the constant $C.$ Now both above assumptions
are in general not true in the framework in which we are working.
Nevertheless we can obtain the above bound by modifying the constant
$C$. This is justified for the following reasons: Firstly the expressions
$(V-J)_{w_{0,n}}$ depend on the extension of the word $w_{0,n}$
only in a controlled way (see Lemma \ref{lem:bounded_variation})
thus we always have $(V-J)_{w_{0,n}}\leq(V-J)_{w_{0,a}}+(V-J)_{w_{a,n}}+c_{0}$
and the $n$ independent constant $c_{0}$ can always be absorbed
in multiplicative constant $C$. The second problem concerns non complete
symbolic dynamic: In the sum over $w_{0,n}$ there might occur word
fragments $w_{0,N_{1}}$ and $w_{n-N_{2},n}$ that do not occur as
the leading and the last letters in some $w_{0,N^{*}}$. However,
as we demanded a transitive symbolic dynamic we can assure, that the
word fragments $w_{0,N_{1}}$ and $w_{n-N_{2},n}$ appear as disjoint
fragments of some $w_{0,N^{*}+T}$ where $T$ is the maximal transition
time between two letters. Thus we can bound 
\[
\mathbb{S}_{\textup{non-separable}}\leq C\nu\sum_{w_{0,n}}e^{\left(V-J\right)_{w_{0,n}}}e^{\left(V-J\right)_{w_{0,N^{*}+T}}}\sum_{w'_{N^{*},n}}e^{\left(V-J\right)_{w'_{N^{*},n}}},
\]
where we absorb the impact of the additional letters that are needed
to concatenate $w_{0,N_{1}}$ and $w_{n-N_{2},n}$ in a modified constant
$C$. Finally we can also absorb the last $T$ terms in the Birkhoff
sum $\left(V-J\right)_{w_{0,N^{*}+T}}$ in the constant $C$ and obtain

\[
\mathbb{S}_{\textup{non-separable}}\leq C\nu\sum_{w_{0,n}}e^{\left(V-J\right)_{w_{0,n}}}e^{\left(V-J\right)_{w_{0,N^{*}}}}\sum_{w'_{N^{*},n}}e^{\left(V-J\right)_{w'_{N^{*},n}}}.
\]
 Finally we get
\begin{eqnarray*}
\mathbb{S} & = & \mathbb{S}_{\textup{separable}}+\mathbb{S}_{\textup{non-separable}}\\
 & \leq & C_{M}\beta^{n}+C\nu\sum_{w_{0,n}}e^{2\left(V-J\right)_{w_{0,N^{*}}}}e^{\left(V-J\right)_{w_{N^{*},n}}}\sum_{w'_{N^{*},n}}e^{\left(V-J\right)_{w'_{N^{*},n}}}.
\end{eqnarray*}
Together with (\ref{eq:first_bound_Pn}) we have finished the proof
of Proposition \ref{thm:7.1}.
\end{proof}

\subsection{\label{subsec:Proof-of-Proposition}Proof of Proposition \ref{Prop:Separation-of-orbits}
about separation of orbits}

The following Lemma gives bounds for the quantities $\left|x_{L^{n}(w)}-x_{L^{n}(w')}\right|$
and $\min_{x\in K_{a}}\left(\left|\zeta_{w}\left(x\right)-\zeta_{w'}\left(x\right)\right|\right)$
that will appear later in Lemma \ref{lem:9}, Eq.(\ref{eq:Tr_proj_stat_phase_bound}).

\begin{center}{\color{blue}\fbox{\color{black}\parbox{16cm}{
\begin{lem}
\label{lem:Separation-of-xw_zetaw}We make the assumption \ref{hyp:minimal_capt}
of minimal captivity. Let $w,w'\in\mathcal{W}$, $n\geq1$ and suppose
that $w_{0,n}\neq w_{0,n}'$. Furthermore as in (\ref{eq:def_n1}),
let $n_{1},n_{2}\in\mathbb{N}$ be such that $w_{i}=w_{i}'$ if $i<n_{1}$
or $i>n-n_{2}$ and $w_{n_{1}}\neq w'_{n_{1}}$, $w_{n-n_{2}}\neq w'_{n-n_{2}}$.
Then we have

\begin{equation}
Ce^{-J_{w_{n-n_{2}+1,n}}}\leq\left|x_{L^{n}(w)}-x_{L^{n}(w')}\right|,\label{eq:separation_in_x}
\end{equation}
and for any $x\in K_{a}\cap I_{w_{0}}$,
\begin{equation}
Ce^{-J_{w_{0,n_{1}-1}}}\leq\left|\zeta_{w}(x)-\zeta_{w'}(x)\right|\leq C'e^{-J_{w_{0,n_{1}-1}}},\label{eq:estimate-1}
\end{equation}
with $C,C'>0$ independent of $w,w',n,x$.
\end{lem}

}}}\end{center}
\begin{proof}
We have
\[
x_{L^{n-\left(n_{2}-1\right)}(w)}\in\phi_{w_{n-n_{2}},w_{n-\left(n_{2}-1\right)}}\left(I_{w_{n-n_{2}}}\right)\textup{ and }x_{L^{n-\left(n_{2}-1\right)}(w')}\in\phi_{w'_{n-n_{2}},w'_{n-\left(n_{2}-1\right)}}\left(I_{w'_{n-n_{2}}}\right).
\]
As we have $w_{n-n_{2}}\neq w'_{n-n_{2}}$ we conclude from the strong
separation condition (\ref{eq:hyp_non_intersect}) we have that 
\begin{equation}
|x_{L^{n-\left(n_{2}-1\right)}(w)}-x_{L^{n-\left(n_{2}-1\right)}(w')}|\geq C\label{eq:x_separation_n2-1}
\end{equation}
with $C>0$ which is the minimal distance between the intervals $\phi_{i,j}\left(I_{i}\right)$.
As $w'_{n-\left(n_{2}-1\right),n}=w{}_{n-\left(n_{2}-1\right),n}$
we obtain 
\[
x_{L^{n}(w)}=\phi_{w_{n-\left(n_{2}-1\right),n}}\left(x_{L^{n-\left(n_{2}-1\right)}(w)}\right)\textup{ and }x_{L^{n}(w')}=\phi_{w_{n-\left(n_{2}-1\right),n}}\left(x_{L^{n-\left(n_{2}-1\right)}(w')}\right).
\]
From (\ref{eq:x_separation_n2-1}) and the fact, that $\phi_{w_{n_{2}+1},n}$
is a diffeomorphism we get 
\[
|x_{L^{n}(w)}-x_{L^{n}(w')}|\geq|x_{L^{n-\left(n_{2}-1\right)}(w)}-x_{L^{n-\left(n_{2}-1\right)}(w')}|\cdot\min_{x\in I_{w_{n-\left(n_{2}-1\right)}}}|\phi'_{w_{n-\left(n_{2}-1\right),n}}(x)|\geq Ce^{-J_{w_{n-n_{2}+1,n}}}.
\]
We have obtained the first inequality in (\ref{eq:separation_in_x}).

Now we prove the second inequality of (\ref{eq:separation_in_x})
which uses Assumption \ref{hyp:minimal_capt} of minimal captivity.
The minimal captivity assumption is equivalent to the following property:
Let $\mathcal{K}_{\varepsilon}$ be a closed neighborhood of the trapped
set as in (\ref{eq:hyp_minimal_captivity_epsilon}). For any $i\rightsquigarrow j$
and $i\rightsquigarrow k$ with $j\neq k$ we have that
\[
\tilde{\phi}_{i,j}^{-1}\left(\mathcal{K}_{\varepsilon}\cap\pi^{-1}\left(I_{j}\right)\right)\bigcap\tilde{\phi}_{i,k}^{-1}\left(\mathcal{K}_{\varepsilon}\cap\pi^{-1}\left(I_{k}\right)\right)=\emptyset,
\]
because otherwise the dynamics of $\tilde{\phi}$ restricted to $\mathcal{K}_{\varepsilon}$
is not univalued. 

From this we deduce that there exists $C_{\mathrm{mini-capt}}>0$,
such that for any any $i\rightsquigarrow j$ and $i\rightsquigarrow k$
with $j\neq k$, if $\tilde{x}\in I_{i}$, $\left(\tilde{x},\xi\right),\left(\tilde{x},\xi'\right)\in\mathcal{K}_{\varepsilon}$,
$\tilde{\phi}_{i,j}\left(\tilde{x},\xi\right),\tilde{\phi}_{i,k}\left(\tilde{x},\xi'\right)\in\mathcal{K}_{\varepsilon}$
then $\left|\xi-\xi'\right|\geq C_{\mathrm{mini-capt}}$.

Let $x\in I{}_{w_{0}}\cap K_{a}$ and for $m\leq n$ define $x_{m}:=\phi_{w_{0,m}}(x)$,
$\xi_{m}:=\zeta_{L^{m}(w)}\left(x_{m}\right)$, $x'_{m}:=\phi_{w'_{0,m}}(x)$
and $\xi'_{m}:=\zeta_{L^{m}(w')}\left(x'_{m}\right)$. From Proposition
\ref{prop:S_tilde_bijective} one has $\tilde{\phi}_{w_{m,m+1}}(x_{m},\xi_{m})=(x_{m+1},\xi_{m+1})$.
As $x_{m}\in K_{a}$ with $a$ chosen large enough according to (\ref{eq:Ka_w}),
we have $\left(x_{m},\xi_{m}\right),\left(x'_{m},\xi'_{m}\right)\in\mathcal{K}_{\varepsilon}$.
We have $x_{n_{1}-1}=x'_{n_{1}-1}$ from definition of $n_{1}$ and
we have $\left(x_{n_{1}},\xi_{n_{1}}\right)=\tilde{\phi}_{w_{n_{1}-1,n_{1}}}\left(x_{n_{1}-1},\xi_{n_{1}-1}\right)$,
$\left(x'_{n_{1}},\xi'_{n_{1}}\right)=\tilde{\phi}_{w'_{n_{1}-1,n_{1}}}\left(x'_{n_{1}-1},\xi'{}_{n_{1}-1}\right)$
and $w_{n_{1}}\neq w'_{n_{1}}$ so from above we deduce that $\left|\xi_{n_{1}-1}-\xi'_{n_{1}-1}\right|\geq C_{\mathrm{mini-capt}}$.
Furthermore by Lemma \ref{lem:escape_F} we know that $\left|\xi_{n_{1}-1}-\xi'_{n_{1}-1}\right|<\tilde{C}$.
Using the definition of the canonical map $\tilde{\phi}$ in (\ref{eq:canonical_map_Fij})
we compute
\[
\left|\xi_{n_{1}-1}-\xi'_{n_{1}-1}\right|=\left|\left(\tilde{\phi}_{w_{0,n}}\left(x,\xi_{0}\right)\right)_{\xi}-\left(\tilde{\phi}_{w'_{0,n}}\left(x,\xi'_{0}\right)\right)_{\xi}\right|=e^{J_{w_{0,n}}\left(x\right)}\left|\xi_{0}-\xi'_{0}\right|.
\]
Now using the bounds for $\left|\xi_{n_{1}-1}-\xi'_{n_{1}-1}\right|$
from above as well as the bounded variation estimate from Lemma \ref{lem:bounded_variation}
we obtain (\ref{eq:estimate-1}).
\end{proof}
\begin{center}{\color{blue}\fbox{\color{black}\parbox{16cm}{
\begin{lem}
\label{lem:9}For any $m>0$, there exists $C>0$, such that for any
$0\leq k,k'<m-\frac{3}{2}$, $w,w'\in\mathcal{W}$, $n\geq1$, $\nu>0$,
we have
\begin{equation}
\nu^{-\left(k+k'+1\right)}\left|\mathrm{Tr}\left(\tilde{\Pi}_{k',w',n}^{*}\tilde{\Pi}_{k,w,n}\right)\right|\leq C.\label{eq:general_bound}
\end{equation}
Moreover for any $M_{1},M_{2}\geq0$, there exists $C_{M_{1},M_{2}}$,
such that for any $w,w'\in\mathcal{W}$ with $x_{L^{n}\left(w\right)}\neq x_{L^{n}\left(w'\right)}$
and $\min_{x\in K_{a}}\left(\left|\zeta_{w}\left(x\right)-\zeta_{w'}\left(x\right)\right|\right)\neq0$,
with $K_{a}$ given in (\ref{eq:Ka_w}), we have 
\begin{eqnarray}
\nu^{-\left(k+k'+1\right)}\left|\mathrm{Tr}\left(\tilde{\Pi}_{k',w',n}^{*}\tilde{\Pi}_{k,w,n}\right)\right| & \leq & C_{M_{1},M_{2}}\left(\frac{\nu^{-1}}{\left|x_{L^{n}(w)}-x_{L^{n}(w')}\right|}\right)^{M_{1}}\label{eq:Tr_proj_stat_phase_bound}\\
 &  & \qquad\left(\frac{\nu^{-1}}{\min_{x\in K_{a}}\left(\left|\zeta_{w}\left(x\right)-\zeta_{w'}\left(x\right)\right|\right)}\right)^{M_{2}}.\nonumber 
\end{eqnarray}
\end{lem}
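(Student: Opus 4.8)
The plan is to write the trace $\mathrm{Tr}\bigl(\tilde\Pi_{k',w',n}^{*}\tilde\Pi_{k,w,n}\bigr)$ explicitly as an oscillatory integral and then estimate it, first crudely to get (\ref{eq:general_bound}) and then by (non)stationary phase to get (\ref{eq:Tr_proj_stat_phase_bound}). Recall from (\ref{eq:Pi_tilde_U_S}) that $\tilde\Pi_{k,w,n}=|\tilde{\mathcal U}_{L^n(w)}^{(k)}\rangle\langle\tilde{\mathcal S}_w^{(k)}|$, so
\[
\mathrm{Tr}\bigl(\tilde\Pi_{k',w',n}^{*}\tilde\Pi_{k,w,n}\bigr)
=\langle\tilde{\mathcal S}_{w'}^{(k')}\,|\,\tilde{\mathcal S}_w^{(k)}\rangle\;\langle\tilde{\mathcal U}_{L^n(w)}^{(k)}\,|\,\tilde{\mathcal U}_{L^n(w')}^{(k')}\rangle,
\]
a product of two inner products: one comparing the stable WKB states at the starting points $x_w,x_{w'}$, one comparing the unstable WKB states at the endpoints $x_{L^n(w)},x_{L^n(w')}$. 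First I would unwind the definition (\ref{eq:def_S_k_w})--(\ref{eq:def_U_k_w}) of these states, i.e. $\langle\tilde{\mathcal S}_w^{(k)}|=\langle x^k|\hat\Theta_{\varUpsilon_w}\hat T_{x_w}\hat{\mathcal L}_{H_w}$ up to $\hat\chi\hat A_m^{-1}$, and likewise for the $\mathcal U$ states. Because $\hat A_m$ is $\langle\xi\rangle^{-m}$ in the $\hbar$-Fourier variables and $\hat\chi,\hat\chi_0$ are smooth compactly supported multipliers, each inner product becomes an integral of the form $\int a(x)\,e^{\frac i\hbar(\varUpsilon_w^{(0)}(x)-\varUpsilon_{w'}^{(0)}(x))}\,x^{k}\bar x^{k'}\,dx$ against a smooth amplitude $a$, possibly after one extra $\hbar$-Fourier layer coming from $\hat A_m$; the phase derivative is exactly $\zeta_w(x)-\zeta_{w'}(x)$ by point (2) of Theorem \ref{thm:Global-normal-form.}. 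The $\hbar^{k+k'+1}$ prefactor in the statement is precisely what is needed to absorb the $\hbar^{-k-1/2},\hbar^{-k'-1/2}$ blow-ups of the Dirac-type factors (cf. (\ref{eq:bound_Dirac_k})--(\ref{eq:bound_Pi_k})) plus the $\hbar^{\pm1/2}$ from the two Fourier normalizations, leaving an $O(1)$ integral for (\ref{eq:general_bound}): here I would just bound the oscillatory integral by the $L^1$ norm of the amplitude, using the uniform bounds (\ref{eq:estimate_H_Upsilon}) and (\ref{eq:estimate}) so that $C$ is independent of $w,w',n,\hbar$.

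For the refined bound (\ref{eq:Tr_proj_stat_phase_bound}), I would do non-stationary phase separately on each of the two inner products. For $\langle\tilde{\mathcal S}_{w'}^{(k')}|\tilde{\mathcal S}_w^{(k)}\rangle$: after the $\hat A_m$-conjugation this is (a smoothed, $\hbar$-scaled version of) $\int a(x)\,e^{\frac i\hbar(\varUpsilon_w^{(0)}-\varUpsilon_{w'}^{(0)})(x)}\,dx$ where the phase gradient is $\zeta_w(x)-\zeta_{w'}(x)$ and its higher derivatives are uniformly bounded by (\ref{eq:estimate}). If $\min_{x\in K_a}|\zeta_w(x)-\zeta_{w'}(x)|=:\delta_\zeta\neq0$, repeated integration by parts ($M_2$ times) against the operator $\frac{\hbar}{i(\varUpsilon_w^{(0)}-\varUpsilon_{w'}^{(0)})'}\partial_x$ gains a factor $(\hbar/\delta_\zeta)^{M_2}$, provided the derivatives of $1/(\zeta_w-\zeta_{w'})$ and of $a$ stay controlled — this is where I would need that $\zeta_w-\zeta_{w'}$ has bounded logarithmic derivatives, i.e. that the stable slopes $\zeta_w$ depend on $w$ in a controlled ($C^\infty$, uniformly, via (\ref{eq:estimate})) way and that their difference does not oscillate wildly. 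Symmetrically, $\langle\tilde{\mathcal U}_{L^n(w)}^{(k)}|\tilde{\mathcal U}_{L^n(w')}^{(k')}\rangle$ is, after the $\hat A_m$-conjugation, an $\hbar$-Fourier-type pairing of two $\delta$-derivatives placed at $x_{L^n(w)}$ and $x_{L^n(w')}$ and sheared by the symplectic maps $\mathcal T_{L^n(w)},\mathcal T_{L^n(w')}$; it reduces to $\int b(\xi)\,e^{\frac i\hbar(x_{L^n(w)}-x_{L^n(w')})\xi}\,\xi^{k}\bar\xi^{k'}\,d\xi$ with $b$ Schwartz (from the escape-function factor $\langle\xi\rangle^{-m}$), and $M_1$ integrations by parts in $\xi$ give the factor $(\hbar/|x_{L^n(w)}-x_{L^n(w')}|)^{M_1}$. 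Multiplying the two estimates yields (\ref{eq:Tr_proj_stat_phase_bound}).

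The main obstacle, and the step I would spend the most care on, is keeping every constant uniform in the word $w$ (and hence in $n$) while doing the integrations by parts: each IBP on the stable side produces factors involving derivatives of $1/(\zeta_w-\zeta_{w'})$, and a priori nothing prevents $\zeta_w-\zeta_{w'}$ from having derivatives large compared to its size. Here I would invoke Remark \ref{rem:min_captive}(4) and Remark \ref{rem:invariant_manifolds}: under minimal captivity $\mathcal K$ is a genuine hyperbolic set of a diffeomorphism $\tilde\phi$, the curves $x\mapsto(x,\zeta_w(x))$ are its stable manifolds, and the stable foliation of a $C^\infty$ hyperbolic diffeomorphism in dimension two has uniformly bounded geometry (uniform $C^{1+\mathrm{Lip}}$, or here $C^\infty$ with uniform bounds, regularity of the holonomy) — this is exactly the "regularity estimates on the stable foliation" the paper announces it will use here. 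From this one extracts that $\zeta_w-\zeta_{w'}$ and all its $x$-derivatives are controlled by $\delta_\zeta$ uniformly in $w,w'$, which legitimizes the $M_2$-fold IBP with an $\hbar$-independent, $w$-independent constant $C_{M_1,M_2}$. The analogous uniformity on the unstable side is easier since there the phase is linear in $\xi$ and only $b$ (depending on $m$) is differentiated. Once this uniformity is in hand the two bounds follow by the routine oscillatory-integral estimates sketched above.
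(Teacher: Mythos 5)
Your decomposition of the trace into the two inner products (stable WKB states at the starting points, unstable WKB states at the endpoints) followed by non-stationary-phase estimates on each factor is exactly the paper's proof, so the plan is correct. One point needs fixing, and it is precisely the point the paper singles out as delicate: the transverse regularity of the stable foliation. You propose ``uniform $C^{1+\mathrm{Lip}}$, or here $C^{\infty}$ with uniform bounds'', but the stable foliation of a $C^{\infty}$ two-dimensional hyperbolic diffeomorphism is generically \emph{not} $C^{\infty}$ transversally; each individual leaf $x\mapsto(x,\zeta_{w}(x))$ is smooth, but the dependence $w\mapsto\zeta_{w}$ on the base point is only H\"older $C^{2-\varepsilon}$, a consequence of the two-dimensional bunching estimate (Proposition 2.3.3 in Hasselblatt). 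The paper flags this explicitly: without that input, each integration by parts with $L=\frac{-i\hbar}{\zeta_{w}-\zeta_{w'}}\partial_{x}$ only gains a factor $\hbar^{1/2}/\delta_{\zeta}$ (writing $\delta_{\zeta}:=\min_{x\in K_{a}}|\zeta_{w}(x)-\zeta_{w'}(x)|$), because the $L^{*}$-term $\hbar\,(\zeta_{w}-\zeta_{w'})'/(\zeta_{w}-\zeta_{w'})^{2}$ is a priori only $O(\hbar/\delta_{\zeta}^{2})$; it is the quantitative estimate $|(\zeta_{w}-\zeta_{w'})^{(k)}(x)|\leq C_{k}\,\delta_{\zeta}^{2-\varepsilon}$, uniform in $w,w'$ on $K_{a}$ (for which Lipschitz transverse regularity would already suffice, as the paper remarks in a footnote), that upgrades this to the full $\hbar/\delta_{\zeta}$ per step. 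You should state this estimate explicitly rather than appeal to $C^{\infty}$ regularity, which is false and, if assumed, hides why the lemma is nontrivial. Finally, a small bookkeeping remark: the prefactor $\hbar^{k+k'+1}$ is accounted for entirely by the two Dirac-derivative norms $\hbar^{-k-1/2}\cdot\hbar^{-k'-1/2}$ from (\ref{eq:bound_Dirac_k}); there is no leftover $\hbar^{\pm 1/2}$ from Fourier normalizations.
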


}}}\end{center}
\begin{proof}
Using Dirac notation (\ref{eq:Pi_tilde_U_S}) we have
\begin{eqnarray*}
\tilde{\Pi}_{k',w',n}^{*}\tilde{\Pi}_{k,w,n} & = & |\tilde{\mathcal{S}}_{w'}^{\left(k'\right)}\rangle\langle\tilde{\mathcal{U}}_{L^{n}\left(w'\right)}^{\left(k'\right)},\tilde{\mathcal{U}}_{L^{n}\left(w\right)}^{\left(k\right)}\rangle_{L^{2}}\langle\tilde{\mathcal{S}}_{w}^{\left(k\right)}|.\rangle_{L^{2}}
\end{eqnarray*}
so
\begin{equation}
\mathrm{Tr}\left(\tilde{\Pi}_{k',w',n}^{*}\tilde{\Pi}_{k,w,n}\right)=\langle\tilde{\mathcal{U}}_{L^{n}\left(w'\right)}^{\left(k'\right)},\tilde{\mathcal{U}}_{L^{n}\left(w\right)}^{\left(k\right)}\rangle_{L^{2}}\cdot\langle\tilde{\mathcal{S}}_{w}^{\left(k\right)},\tilde{\mathcal{S}}_{w'}^{\left(k'\right)}\rangle_{L^{2}}.\label{eq:Tr_Pi_Pi}
\end{equation}
We first consider the term $\langle\tilde{\mathcal{U}}_{L^{n}\left(w'\right)}^{\left(k'\right)},\tilde{\mathcal{U}}_{L^{n}\left(w\right)}^{\left(k\right)}\rangle_{L^{2}}$.
Since the following estimates are uniform with respect to the words
$w,w'$, for simplicity we will estimate $\langle\tilde{\mathcal{U}}_{w'}^{\left(k'\right)},\tilde{\mathcal{U}}_{w}^{\left(k\right)}\rangle_{L^{2}}$
without the action of $L^{n}$. In the expression (\ref{eq:def_U_k_w})
of $|\tilde{\mathcal{U}}_{w}^{\left(k\right)}\rangle$, we have the
distribution $\hat{\chi}^{-1}\widehat{\mathcal{T}}_{w}\frac{1}{k!}\delta^{\left(k\right)}$
and the product formula for derivatives gives that
\[
\hat{\chi}^{-1}\widehat{\mathcal{T}}_{w}\frac{1}{k!}\delta^{\left(k\right)}=\sum_{l=0}^{k}c_{l}\nu^{\left(k-l\right)}\delta_{x_{w}}^{\left(l\right)}
\]
with constants $c_{l}$ independent on $w$ and $\nu$. Then 
\[
\langle\tilde{\mathcal{U}}_{w'}^{\left(k'\right)},\tilde{\mathcal{U}}_{w}^{\left(k\right)}\rangle_{L^{2}}\underset{(\ref{eq:def_U_k_w})}{=}\sum_{l'=0}^{k'}\sum_{l=0}^{k}c_{l'}c_{l}\nu^{\left(k+k'-l-l'\right)}\langle\delta_{x_{w'}}^{\left(l'\right)},\hat{A}_{m}^{2}\delta_{x_{w}}^{\left(l\right)}\rangle_{L^{2}}.
\]
We have
\[
\left|\langle\delta_{x_{w'}}^{\left(l'\right)},\hat{A}_{m}^{2}\delta_{x_{w}}^{\left(l\right)}\rangle_{L^{2}}\right|\leq\left\Vert \delta_{x_{w}}^{\left(l'\right)}\right\Vert _{H_{\nu}^{-m}}\left\Vert \delta_{x_{w}}^{\left(l\right)}\right\Vert _{H_{\nu}^{-m}}\underset{(\ref{eq:bound_Dirac_k})}{\leq}C\nu^{l+l'+1},
\]
so we have the general bound
\begin{equation}
\left|\langle\tilde{\mathcal{U}}_{w'}^{\left(k'\right)},\tilde{\mathcal{U}}_{w}^{\left(k\right)}\rangle_{L^{2}}\right|\leq C.\nu^{\left(k+k'+1\right)}\label{eq:gen_b1}
\end{equation}

Let us now suppose that $x_{w}\neq x_{w'}$. We use the non stationary
phase approximation and get that for any $M_{1}\geq0$,
\begin{eqnarray*}
\left|\langle\delta_{x_{w'}}^{\left(l'\right)},\hat{A}_{m}^{2}\delta_{x_{w}}^{\left(l\right)}\rangle_{L^{2}}\right| & \underset{(\ref{eq:def_norm_Hm})}{=} & \left|\int\langle\xi\rangle^{-2m}\mathcal{F}_{\nu}(\delta_{x_{w}}^{(l)})(\xi)\overline{\mathcal{F}_{\nu}(\delta_{x_{w}}^{(l')})(\xi)}d\xi\right|\\
 & \underset{}{=} & \frac{1}{2\pi}\nu^{1+l+l'}\left|\int e^{-i\nu\xi\left(x_{w}-x_{w'}\right)}\left\langle \xi\right\rangle ^{-2m}\xi^{l+l'}d\xi\right|\\
 & \leq & C_{M_{1}}\nu^{1+l+l'}\left(\frac{1/\nu}{\left|x_{w}-x_{w'}\right|}\right)^{M_{1}}
\end{eqnarray*}
hence
\[
\left|\langle\tilde{\mathcal{U}}_{w'}^{\left(k'\right)},\tilde{\mathcal{U}}_{w}^{\left(k\right)}\rangle_{L^{2}}\right|\leq C_{M_{1}}\nu^{\left(k+k'+1\right)}\left(\frac{1/\nu}{\left|x_{w}-x_{w'}\right|}\right)^{M_{1}}
\]
with $C_{M_{1}}$ independent on $\nu,w$. This also gives that for
any $n$:
\begin{equation}
\left|\langle\tilde{\mathcal{U}}_{L^{n}(w')}^{\left(k'\right)},\tilde{\mathcal{U}}_{L^{n}(w)}^{\left(k\right)}\rangle_{L^{2}}\right|\leq C_{M_{1}}\nu^{\left(k+k'+1\right)}\left(\frac{1/\nu}{\left|x_{L^{n}(w)}-x_{L^{n}(w')}\right|}\right)^{M_{1}}.\label{eq:gen_B1}
\end{equation}
 Let us consider now the second term $\langle\tilde{\mathcal{S}}_{w}^{\left(k\right)},\tilde{\mathcal{S}}_{w'}^{\left(k'\right)}\rangle_{L^{2}}$
in (\ref{eq:Tr_Pi_Pi}). We have $\tilde{\mathcal{S}}_{w}^{\left(k\right)}:=\hat{A}_{m}^{-1}\hat{\chi}\left(\widehat{\mathcal{T}}_{w}^{-1}\right)^{*}x^{k}$
and using the form of $\widehat{\mathcal{T}}_{w}$ given in Theorem
\ref{thm:Global-normal-form.} we can write
\[
\hat{\chi}\left(\widehat{\mathcal{T}}_{w}^{-1}\right)^{*}x^{k}=e^{i\nu\varUpsilon_{w}^{(0)}\left(x\right)}a_{w}\left(x\right)
\]
with $a_{w}\in C_{0}^{\infty}\left(\mathbb{R}\right)$ given by 
\[
a_{w}\left(x\right)=\chi\left(x\right)e^{\varUpsilon_{w}^{(1)}\left(x\right)}\frac{\left(H_{w}^{-1}\left(x-x_{w}\right)\right)^{k}}{\left|H'_{w}\left(H_{w}^{-1}\left(x-x_{w}\right)\right)\right|}.
\]
Recall that from the choice of $\chi$ in Section \ref{subsec:Extension}
we have $\mathrm{supp}\left(a_{w}\right)\subset K_{a}$. Furthermore,
from (\ref{eq:estimate_H_Upsilon}) we conclude that $a_{w}$ and
its derivatives are bounded on $K_{a}$and that these bounds are uniform
with respect to $w\in\mathcal{W}$ and $\nu$. Then
\[
\langle\tilde{\mathcal{S}}_{w}^{\left(k\right)},\tilde{\mathcal{S}}_{w'}^{\left(k'\right)}\rangle_{L^{2}}=\langle a_{w},e^{-i\nu\varUpsilon_{w}^{(0)}\left(x\right)}\hat{A}_{m}^{-2}e^{i\nu\varUpsilon_{w'}^{(0)}\left(x\right)}a_{w'}\left(x\right)\rangle
\]
 From Egorov's theorem, since $\hat{A}_{m}^{-2}=\mathrm{Op}_{\nu}\left(\left\langle \xi\right\rangle ^{2m}\right)$,
we have
\[
\hat{A}_{m}^{-2}e^{i\nu\varUpsilon_{w'}^{(0)}\left(x\right)}=e^{i\nu\varUpsilon_{w'}^{(0)}\left(x\right)}\hat{B}
\]
with $\hat{B}\in S\left(\left\langle \xi-\frac{d}{dx}\varUpsilon_{w'}^{(0)}\left(x\right)\right\rangle ^{2m}\right)$.
Thus $\hat{B}$ is a continuous operator on $\mathcal{S}(\mathbb{R})$,
thus $\tilde{a}_{w'}:=\hat{B}a_{w'}\in\mathcal{S}\left(\mathbb{R}\right)$
with all derivatives uniformly bounded with respect to $\nu,w$. This
gives
\begin{equation}
\langle\tilde{\mathcal{S}}_{w}^{\left(k\right)},\tilde{\mathcal{S}}_{w'}^{\left(k'\right)}\rangle_{L^{2}}=\langle a_{w},e^{i\nu\left(\varUpsilon_{w'}^{(0)}-\varUpsilon_{w}^{(0)}\right)\left(x\right)}\tilde{a}_{w'}\left(x\right)\rangle\label{eq:S_w_S_w}
\end{equation}
We deduce the general bound
\begin{equation}
\left|\langle\tilde{\mathcal{S}}_{w}^{\left(k\right)},\tilde{\mathcal{S}}_{w'}^{\left(k'\right)}\rangle_{L^{2}}\right|\leq C\label{eq:gen_b2}
\end{equation}
with $C$ independent of $\nu$ and $w$.

Let us now assumed $\min_{x\in K_{a}}\left(\left|\zeta_{w}\left(x\right)-\zeta_{w'}\left(x\right)\right|\right)\neq0$
then we want to bound the oscillating integral 
\[
\left|\langle\tilde{\mathcal{S}}_{w}^{\left(k\right)},\tilde{\mathcal{S}}_{w'}^{\left(k'\right)}\rangle_{L^{2}}\right|=\left|\int_{K_{a}}\;\overline{a_{w}(x)}\tilde{a}_{w'}\left(x\right)e^{i\nu\left(\varUpsilon_{w'}^{(0)}-\varUpsilon_{w}^{(0)}\right)\left(x\right)}dx\right|
\]
by partial integration. Recall that $\frac{d}{dx}\varUpsilon_{w}^{(0)}\left(x\right)=\mathcal{\zeta}_{w}(x)$,
thus the differential operator $L:=\frac{-i/\nu}{\zeta_{w}\left(x\right)-\zeta_{w'}\left(x\right)}\frac{d}{dx}$
fulfills $Le^{i\nu\left(\varUpsilon_{w'}^{(0)}-\varUpsilon_{w}^{(0)}\right)\left(x\right)}=e^{i\nu\left(\varUpsilon_{w'}^{(0)}-\varUpsilon_{w}^{(0)}\right)\left(x\right)}$
and we can insert an arbitrary power of this differential operator
in front of the oscillating phase. Note, that its $L^{2}$-dual is
given by
\[
L^{*}=\frac{\frac{i}{\nu}\left(\frac{d}{dx}\zeta_{w}\left(x\right)-\frac{d}{dx}\zeta_{w'}\left(x\right)\right)}{\left(\zeta_{w}\left(x\right)-\zeta_{w'}\left(x\right)\right)^{2}}+\frac{i/\nu}{\zeta_{w}\left(x\right)-\zeta_{w'}\left(x\right)}\frac{d}{dx}.
\]
Note that without any additional knowledge about the $\zeta_{w}$
partial integration would only allow us to obtain remainder terms
of the form 
\[
\left(\frac{\nu^{-1/2}}{\min_{x\in K_{a}}\left(\left|\zeta_{w}\left(x\right)-\zeta_{w'}\left(x\right)\right|\right)}\right)^{M_{2}},
\]
where the term $\nu^{-1/2}$ comes from a non stationary phase estimate.
We can however improve this estimate crucially if we take into account
the regularity of the invariant foliation of $\tilde{\phi}$. Let
us explain this in more detail: 

Recall that $\mathcal{\ensuremath{K}}$ was the hyperbolic set of
the map $\tilde{\phi}$ (cf. Remark \ref{rem:min_captive}(4)). Further
more this hyperbolic set has a precise description via the symbolic
dynamics, i.e. for any $(x,\xi)\in\mathcal{K}$ there is $w\in\mathcal{W}$
such that $(x,\xi)=(x_{w_{-}}$,$\zeta_{w_{+}}(x_{w_{-}})).$ Recall
furthermore (cf. Remark \ref{rem:invariant_manifolds}), that the
stable manifold through such a point $(x_{w_{-}},\zeta_{w_{+}}(x_{w_{-}}))$
is locally given by $\{(x,\zeta_{w_{+}}(x)),x\in I_{w_{-a,0}}\}$.
Now for general hyperbolic $C^{\infty}$ diffeomorphisms, the regularity
theory of the invariant manifolds implies, that the stable manifolds
are $C^{\infty}$ and that they depend Hölder continuously on the
base point w.r.t. the $C^{\infty}$-topology \cite{hirsch1970stable}.
Let us make precise what this means using our notation: If we fix
the value of $x$, i.e. if we restrict ourselves to an unstable manifold,
then the map
\[
g:\begin{cases}
\mathcal{K\cap}\{x=x_{w_{-}}\} & \to C^{\infty}(I_{w_{-a,0}},\mathbb{R})\\
(x_{w_{-},}\zeta_{w_{+}}(x_{w_{-}})) & \mapsto\zeta_{w_{+}}
\end{cases}
\]
that associates to a point in $\mathcal{K}$ the function describing
the unstable manifold, is Hölder continuous, where we put the metrizable
$C^{\infty}-$topology on the right side. As the hyperbolic map $\tilde{\phi}$
acts on a two dimensional space one even knows that the Hölder regularity
is $2-\varepsilon$ for any $\varepsilon>0$ (This is a direct consequence
of the more general regularity estimate in terms of bunching coefficients,
see \cite[Proposition 2.3.3]{hasselblatt_02b}). In particular the
map $g$ is Lipschitz\footnote{Note that we in fact only need Lipschitz continuity of the stable
foliation which might be an important observation for generalizations
to higher dimensional settings.}, thus for any $k$, there is $C_{k}>0$ such that for any $x\in I_{w_{-a,0}}$
\begin{align*}
\left|\left(\frac{d}{dx}\right)^{k}\zeta_{w_{+}}\left(x\right)-\left(\frac{d}{dx}\right)^{k}\zeta_{w'_{+}}\left(x\right)\right| & \leq C_{k}\left|\zeta_{w_{+}}\left(x_{w_{-}}\right)-\zeta_{w'_{+}}\left(x_{w_{-}}\right)\right|\\
 & \underset{(\ref{eq:estimate-1})}{\leq}C_{k}^{\prime}\min_{y\in I_{w_{-a,0}}}\left|\zeta_{w_{+}}\left(y\right)-\zeta_{w'_{+}}\left(y\right)\right|.
\end{align*}
As $K_{a}$ is a finite union of $I_{w_{-a,0}}$ we obtain 
\[
\max_{x\in K_{a}}\left|\left(\frac{d}{dx}\right)^{k}\zeta_{w_{+}}\left(x\right)-\left(\frac{d}{dx}\right)^{k}\zeta_{w'_{+}}\left(x\right)\right|\leq C_{k}\min_{x\in K{}_{a}}\left|\zeta_{w_{+}}\left(x\right)-\zeta_{w_{+}'}\left(x\right)\right|.
\]
Using this estimate, partial integration of (\ref{eq:S_w_S_w}) with
respect to $L$ yields that for any $M_{2}\geq0$,
\begin{equation}
\left|\langle\tilde{\mathcal{S}}_{w}^{\left(k\right)},\tilde{\mathcal{S}}_{w'}^{\left(k'\right)}\rangle_{L^{2}}\right|\leq C_{M_{2}}.\left(\frac{\nu^{-1}}{\min_{x\in K_{a}}\left(\left|\zeta_{w}\left(x\right)-\zeta_{w'}\left(x\right)\right|\right)}\right)^{M_{2}}\label{eq:gen_B2}
\end{equation}
with $C_{M_{2}}$ independent on $\nu,w$. Finally (\ref{eq:gen_b1}),(\ref{eq:gen_b2})
and (\ref{eq:Tr_Pi_Pi}) give (\ref{eq:general_bound}). Eq.(\ref{eq:gen_B1}),
(\ref{eq:gen_B2}) and (\ref{eq:Tr_Pi_Pi}) give (\ref{eq:Tr_proj_stat_phase_bound}).
\end{proof}
~
\begin{proof}[Proof of Proposition \ref{Prop:Separation-of-orbits}]
Let us summarize what we have obtained so far. Lemma \ref{lem:Separation-of-xw_zetaw}
gives us lower bounds
\[
|x_{L^{n}(w)}-x_{L^{n}(w')}|\geq Ce^{-J_{w_{n-n_{2}+1,n}}},\qquad\min_{x\in K_{a}}|\zeta_{w}(x)-\zeta_{w'}(x)|\geq Ce^{-J_{w_{0,n_{1}-1}}},
\]
and in (\ref{eq:Tr_proj_stat_phase_bound}) we have the terms
\[
\frac{\nu^{-1}}{\left|x_{L^{n}(w)}-x_{L^{n}(w')}\right|}\leq\frac{1}{C\nu}e^{J_{w_{n-n_{2}+1,n}}}
\]
\[
\frac{1/\nu}{\min_{x\in K_{a}}\left(\left|\zeta_{w}\left(x\right)-\zeta_{w'}\left(x\right)\right|\right)}\leq\frac{1}{C\nu}e^{J_{w_{0,n_{1}-1}}}
\]
that we would like to be ``small''. Therefore, for any $J_{c}>0$,
any $\varepsilon>0$, $\nu>0$ we take $n=\left[\frac{2}{J_{c}+\varepsilon}\log\nu\right]$
(equivalently $\nu\asymp e^{n\frac{J_{c+\varepsilon}}{2}}$). Then
the condition $n_{2}\leq N_{2}(w,n,J_{c})$ implies 
\[
\frac{1}{\nu}e^{J_{w_{n-n_{2}+1,n}}}\leq Ce^{-n\frac{J_{c+\varepsilon}}{2}}e^{n\frac{J_{c}}{2}}=e^{-\frac{\varepsilon}{2}n}
\]
and $n_{1}\leq N_{1}(w,n,J_{c})$ implies
\[
\frac{1}{\nu}e^{-J_{w_{0,n_{1}-1}}}\leq Ce^{-n\frac{J_{c+\varepsilon}}{2}}e^{n\frac{J_{c}}{2}}=e^{-\frac{\varepsilon}{2}n}.
\]
Consider a pair of words $w,w'\in\mathcal{W}$ with $w_{0,n}\neq w_{0,n}'$
with $n_{1}\leq N_{1}$ or $n_{2}\leq N_{2}$. From (\ref{eq:Tr_proj_stat_phase_bound}),
(\ref{eq:separation_in_x}) and definitions of $N_{1},N_{2}$ we get
\begin{eqnarray*}
\left|\mathrm{Tr}\left(\tilde{\Pi}_{k',w',n}^{*}\tilde{\Pi}_{k,w,n}\right)\right| & \leq & C_{M_{1},M_{2}}\nu^{\left(k+k'+1\right)}\left(\frac{1}{\nu}e^{J_{w_{n-n_{2}+1,n}}}\right)^{M_{1}}\left(\frac{1}{\nu}e^{J_{w_{0,n_{1}-1}}}\right)^{M_{2}}\\
 & \leq & C_{M}\nu^{(k+k'+1)}e^{-\frac{\varepsilon}{2}nM}
\end{eqnarray*}
where, if $n_{1}\leq N_{1}$ we have set $M_{2}=M$ and $M_{1}=0$
otherwise we have set $M_{1}=0$ and $M_{2}=M$. This finishes the
proof of Proposition \ref{Prop:Separation-of-orbits}.
\end{proof}

\section{\label{subsec:Proof-of-main_Theorem}Proof of the main Theorems \ref{thm:main_result}
and \ref{thm:main_result-resolvent}}

\subsection{Proof of Theorem \ref{thm:main_result}}

Let
\[
\gamma_{\mathrm{asympt.}}:=\limsup_{\nu\rightarrow+\infty}\left(\log\left(r_{s}\left(\mathcal{L}_{\nu,\chi}{}_{\upharpoonright H_{\nu}^{-m}}\right)\right)\right).
\]
We will proceed in few steps in order to bound from above $\gamma_{\mathrm{asympt.}}$
. The following Proposition gives an upper bound $\gamma_{\mathrm{up}}$
with a complicated expression that will be simplified later.

\begin{center}{\color{blue}\fbox{\color{black}\parbox{16cm}{
\begin{prop}
\label{prop:8.1}Under the assumption of minimal captivity (\ref{eq:hyp_minimal_captivity_epsilon})
we have 

\[
\gamma_{\mathrm{asympt.}}\leq\gamma_{\mathrm{up}}
\]
with
\begin{equation}
\gamma_{up}:=\inf_{0\leq J_{c}<2J_{min}}\left(\gamma\left(J_{c}\right)\right),\label{eq:gamma_up}
\end{equation}
\begin{equation}
\gamma\left(J_{c}\right):=\frac{J_{c}}{4}+\lim_{n\rightarrow\infty}\frac{1}{2n}\log\left(\sum_{w_{0,n}}e^{2\left(V-J\right)_{w_{0,N^{*}}}}e^{\left(V-J\right)_{w_{N^{*},n}}}\sum_{w'_{N^{*},n}}e^{\left(V-J\right)_{w'_{N^{*},n}}}\right).\label{eq:gamma_C}
\end{equation}
\[
N^{*}:=N^{*}\left(w_{0,n},J_{c}\right):=\mathrm{max}\left\{ k\leq n,\quad\mbox{s.t. }J_{w_{0,k}}<nJ_{c}\right\} .
\]
\end{prop}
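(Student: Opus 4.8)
The plan is to combine the bound on $\|P_{n}\|_{L^{2}}$ from Theorem~\ref{thm:7.1} with the elementary inequality $r_{s}\le\|P_{n}\|^{1/(2n)}$, and then optimise over the free time parameter $n$. From the commutative diagram (\ref{eq:comm_diag}) the operator $\hat{Q}=\hat{A}_{m}\mathcal{L}_{\hbar,\chi}\hat{A}_{m}^{-1}$ is conjugate to $\mathcal{L}_{\hbar,\chi}$ on $H_{\hbar}^{-m}$, so $r_{s}(\mathcal{L}_{\hbar,\chi})=r_{s}(\hat{Q})$; since $\hat{Q}$ is bounded on the Hilbert space $L^{2}(\mathbb{R})$, one has $r_{s}(\hat{Q})^{n}=r_{s}(\hat{Q}^{n})\le\|\hat{Q}^{n}\|_{L^{2}}$ for every $n\ge1$, and the $C^{*}$-identity gives $\|\hat{Q}^{n}\|_{L^{2}}^{2}=\|(\hat{Q}^{n})^{*}\hat{Q}^{n}\|_{L^{2}}=\|P_{n}\|_{L^{2}}$. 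Hence for every $n\ge1$ and $\hbar>0$, $\log r_{s}(\mathcal{L}_{\hbar,\chi})\le\frac{1}{2n}\log\|P_{n}\|_{L^{2}}$.

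I would then fix $J_{c}<2J_{min}$, choose $\epsilon\in(0,2J_{min}-J_{c})$ and $\beta\in(0,1)$ (with $\beta$ pinned down only at the end), and for small $\hbar$ take $n=n(\hbar):=\big[\frac{2}{J_{c}+\epsilon}\log\frac{1}{\hbar}\big]$, so that $n(\hbar)\to\infty$ and $\frac{1}{2n(\hbar)}\log\frac{1}{\hbar}\to\frac{J_{c}+\epsilon}{4}$ as $\hbar\to0$. Writing $\mathbb{S}_{n}$ for the double sum over words appearing in (\ref{eq:gamma_C}) (with $N^{*}=N^{*}(w_{0,n},J_{c})$), Theorem~\ref{thm:7.1} gives $\|P_{n}\|_{L^{2}}\le C\hbar^{-1}\mathbb{S}_{n}+C\beta^{n}$, and using $\log(a+b)\le\log2+\max(\log a,\log b)$,
\[
\log r_{s}(\mathcal{L}_{\hbar,\chi})\le\frac{\log(2C)}{2n}+\max\Big(\frac{1}{2n}\log\big(\hbar^{-1}\mathbb{S}_{n}\big),\ \tfrac{1}{2}\log\beta\Big).
\]
Taking $\limsup_{\hbar\to0}$ (and noting that $n(\hbar)$ eventually sweeps through every large integer) the prefactor vanishes and $\frac{1}{2n}\log(\hbar^{-1}\mathbb{S}_{n})=\frac{1}{2n}\log\frac{1}{\hbar}+\frac{1}{2n}\log\mathbb{S}_{n}\to\frac{J_{c}+\epsilon}{4}+\mathcal{S}_{\infty}$, where $\mathcal{S}_{\infty}:=\lim_{n\to\infty}\frac{1}{2n}\log\mathbb{S}_{n}$ is the limit in (\ref{eq:gamma_C}); this $\mathcal{S}_{\infty}$ is finite because $V$ and $J$ are bounded on a neighbourhood of $K$, so every Birkhoff sum over an orbit of length $\le n$ lies in $[-Cn,Cn]$ and thus $e^{-3Cn}\le\mathbb{S}_{n}\le N^{2n}e^{3Cn}$. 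Hence $\gamma_{asympt}\le\max\big(\frac{J_{c}+\epsilon}{4}+\mathcal{S}_{\infty},\ \tfrac{1}{2}\log\beta\big)$.

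Since $\frac{J_{c}+\epsilon}{4}+\mathcal{S}_{\infty}$ is a fixed finite number independent of $\beta$, I would now choose $\beta\in(0,1)$ small enough that $\tfrac{1}{2}\log\beta$ lies below it; the maximum is then realised by the first argument, so $\gamma_{asympt}\le\frac{J_{c}+\epsilon}{4}+\mathcal{S}_{\infty}$. Letting $\epsilon\downarrow0$ gives $\gamma_{asympt}\le\frac{J_{c}}{4}+\mathcal{S}_{\infty}=\gamma(J_{c})$, and taking the infimum over all $J_{c}<2J_{min}$ yields $\gamma_{asympt}\le\gamma_{up}$, as claimed.

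I do not expect a real obstacle in this proposition itself: all of the analytic substance---the diagonal approximation coming from the orbit-separation estimate under minimal captivity---has already been packaged into Theorem~\ref{thm:7.1}. What is left here is essentially bookkeeping of the three parameters $J_{c},\epsilon,\beta$, the one point worth isolating being that $\gamma(J_{c})$ is finite so that the remainder $C\beta^{n}$ can be absorbed; and, should one insist on the genuine limit asserted in (\ref{eq:gamma_C}) rather than just a $\limsup$, a routine sub-additivity argument (or the identification of $\gamma(J_{c})$ with a combination of topological pressures) showing that $\frac{1}{2n}\log\mathbb{S}_{n}$ converges.
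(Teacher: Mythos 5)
Your proof is correct and follows essentially the same route as the paper: it reduces to $r_s\le\|P_n\|^{1/(2n)}$, feeds in Theorem~\ref{thm:7.1} with the Ehrenfest-time choice $n=[\tfrac{2}{J_c+\epsilon}\log\tfrac1\hbar]$, and removes the free parameters $\epsilon,\beta$ at the end. Your explicit $\log(a+b)\le\log 2+\max(\log a,\log b)$ step and the finiteness check on $\mathcal{S}_\infty$ make precise what the paper does implicitly by writing the remainder as $\hbar\beta^n$ inside the logarithm.
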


}}}\end{center}
\begin{proof}
In order to estimate the spectral radius of the transfer operator
we use that for any $n\geq1$:
\begin{eqnarray}
r_{s}\left(\mathcal{L}_{\nu,\chi}{}_{\upharpoonright H_{\nu}^{-m}}\right) & = & r_{s}\left(\hat{Q}{}_{\upharpoonright L^{2}}\right)\leq\left\Vert \left(\hat{Q}^{n}\right)^{*}\hat{Q}^{n}\right\Vert _{L^{2}}^{1/\left(2n\right)}=\left\Vert P_{n}\right\Vert _{L^{2}}^{1/\left(2n\right)}.\label{eq:8-1}
\end{eqnarray}

Now we use Proposition \ref{thm:7.1} for some arbitrary $\varepsilon>0,$
$0<\beta<1$, $0\leq J_{c}\leq2J_{min}-\varepsilon$, $\nu>0$ and
$n=\left[\frac{2}{J_{c}+\varepsilon}\log\nu\right]$ and calculate

\begin{eqnarray*}
\log\left(r_{s}\left(\mathcal{L}_{\nu,\chi}{}_{\upharpoonright H_{\nu}^{-m}}\right)\right) & \leq & \frac{1}{2n}\log\left\Vert P_{n}\right\Vert _{L^{2}}\\
 & \underset{(\ref{eq:expansion_P_n})}{\leq} & \frac{1}{2n}\log\left(C\nu\sum_{w_{0,n}}e^{2\left(V-J\right)_{w_{0,N^{*}}}}e^{\left(V-J\right)_{w_{N^{*},n}}}\sum_{w'_{N^{*},n}}e^{\left(V-J\right)_{w'_{N^{*},n}}}+C\beta^{n}\right)\\
 & \underset{(\ref{eq:n_erhenfest})}{=} & \frac{J_{c}+\varepsilon}{4}+\frac{1}{2n}\log\left(C\right)\\
 &  & +\frac{1}{2n}\log\left(\sum_{w_{0,n}}e^{2\left(V-J\right)_{w_{0,N^{*}}}}e^{\left(V-J\right)_{w_{N^{*},n}}}\sum_{w'_{N^{*},n}}e^{\left(V-J\right)_{w'_{N^{*},n}}}+\frac{1}{\nu}\beta^{n}\right).
\end{eqnarray*}
As we can choose $\varepsilon>0$ and $0<\beta<1$ arbitrarily small
we deduce Proposition \ref{prop:8.1}. The fact that the limit exists
is explained in Remark \ref{rem:Fekete}.
\end{proof}

\subsubsection{Expression of $\gamma\left(J_{c}\right)$ in terms of topological
pressure}

We will now express $\gamma\left(J_{c}\right)$ in Eq.(\ref{eq:gamma_C})
in a concise way that will finally give the formulation in Theorem
\ref{thm:main_result}. For this we will use the topological pressure
$\mathrm{Pr}\left(.\right)$ defined in (\ref{eq:def_pressure-1}).
\begin{rem}
\label{rem:J_not_constant} Let us first remark that if the Jacobian
$J$ is equal to (or even cohomologous to) a constant $J_{0}$ , then
the expression of $\gamma\left(J_{c}\right)$ and of $\gamma_{\mathrm{up}}$
in Theorem \ref{thm:main_result} are obtained very easily: in this
case $\lim_{n\to\infty}\frac{1}{n}J_{w_{0,n}}=J_{0}$ and by choosing
$J_{c}=J_{0}$ one obtains $\gamma(J_{0})=\frac{J_{0}}{4}+\frac{1}{2}\mathrm{Pr}(2(V-J))$
which is precisely the upper bound (\ref{eq:def_gamma_up}) for $\gamma_{\mathrm{asympt.}}$
in Theorem \ref{thm:main_result}. The rest of this section will be
devoted to derive (\ref{eq:def_gamma_up}) in the case where $J$
is not cohomologous to a constant, which we will suppose from now
on.
\end{rem}

Let $0\leq J_{c}<2J_{min}$ and 
\[
\quad N_{min}:=n\frac{J_{c}}{J_{max}}=\frac{2}{J_{max}}\log\left(\nu\right)\frac{J_{c}}{J_{c}+\varepsilon},\quad N_{max}:=n\frac{J_{c}}{J_{min}}=2\frac{1}{J_{min}}\log\left(\nu\right)\frac{J_{c}}{J_{c}+\varepsilon}.
\]
Note that $N_{min}\leq N_{max}$. As $\frac{J_{c}}{J_{c}+\varepsilon}\approx1$
we can interpret them as twice the Ehrenfest time for the most expanding
and less expanding trajectory respectively. For every word $w_{0,n}$
we have $N^{*}\left(w_{0,n},J_{c}\right)\in\left[N_{min};\mathrm{min}\left(N_{max};n\right)\right]$.
Let us sort the words $w_{0,n}$ in the sum (\ref{eq:gamma_C}) according
to their values $N^{*}\left(w_{0,n},J_{c}\right)$: 
\begin{eqnarray*}
\gamma\left(J_{c}\right) & = & \frac{J_{c}}{4}+\lim_{n\rightarrow\infty}\frac{1}{2n}\log\left(\sum_{\overline{N}=N_{min}}^{\mathrm{min}\left(N_{max};n\right)}\sum_{w_{0,\overline{N}}\mbox{ s.t. }J_{w_{0,\overline{N}}}\leq nJ_{c}<J_{w_{0,\overline{N}+1}}}e^{2\left(V-J\right)_{w_{0,\overline{N}}}}e^{2\left(n-\overline{N}\right)\mathrm{Pr}\left(V-J\right)}\right.\\
 &  & \qquad\qquad\qquad\qquad\left.+\sum_{w_{0,n}\mbox{ s.t. }N^{*}=n}e^{2\left(V-J\right)_{w_{0,n}}}\right).
\end{eqnarray*}

In the above formula, $\mathrm{Pr}\left(V-J\right)$ appears by using
(\ref{eq:def_pressure-1}). This gives
\begin{equation}
\gamma\left(J_{c}\right)=\frac{J_{c}}{4}+\mathrm{max}\left(\mathcal{A};\mathcal{B}\right)\label{eq:gamma_Jc_max_AB}
\end{equation}
with
\[
\mathcal{A}:=\lim_{n\rightarrow\infty}\frac{1}{2n}\log\left(\sum_{\overline{N}=N_{min}}^{\mathrm{min}\left(N_{max};n\right)}\left(\sum_{w_{0,\overline{N}}\mbox{ s.t. }J_{w_{0,\overline{N}}}\leq nJ_{c}<J_{w_{0,\overline{N}+1}}}e^{2\left(V-J\right)_{w_{0,\overline{N}}}}\right)e^{2\left(n-\overline{N}\right)\mathrm{Pr}\left(V-J\right)}\right)
\]
\[
\mathcal{B}:=\lim_{n\rightarrow\infty}\frac{1}{2n}\log\left(\sum_{w_{0,n}\mbox{ s.t. }J_{w_{0,n}}<nJ_{c}}e^{2\left(V-J\right)_{w_{0,n}}}\right).
\]
Recall that we are interested in determining $\inf_{0\leq J_{c}<2J_{min}}\gamma(J_{c})$.
Let us first discuss what happens if $J_{c}>J_{max}.$ In this case
$\mathcal{A}=-\infty$ and $\mathcal{B}=\frac{1}{2}Pr(2(V-J))$ are
both independent of $J_{c}$. Consequently, through the additional
$\frac{J_{c}}{4}$ term in (\ref{eq:gamma_Jc_max_AB}), the quantity
$\gamma(J_{c})$ becomes monotonously increasing in this regime. Thus
in order to find $\inf_{0\leq J_{c}<2J_{min}}\gamma(J_{c})$ we can
from no on suppose, that we only consider $J_{c}\leq J_{max}$.

Next we want to give precise expressions for the terms $\mathcal{A}$
and $\mathcal{B}$ in term of the topological pressure functions.
For this purpose we use some large deviations results presented in
Appendix \ref{sec:A-formula-by}:

For the term $\mathcal{B}$ we use the formula (\ref{eq:formula_LD})
giving
\[
\mathcal{B}=\frac{1}{2}\max_{\overline{J}\in\left[J_{min};J_{c}\right]}\left(-v_{1}\left(\overline{J}\right)\right)
\]
with
\[
v_{1}\left(\overline{J}\right):=\beta\left(\overline{J}\right)\overline{J}-u\left(\beta\left(\overline{J}\right)\right)
\]
\[
u\left(\beta\right):=\mathrm{Pr}\left(2\left(V-J\right)+\beta J\right).
\]
and with $\beta\left(\overline{J}\right)$ such that
\begin{equation}
\overline{J}=u'\left(\beta\left(\overline{J}\right)\right).\label{eq:J_bar}
\end{equation}
Observe that $v_{1}\left(\overline{J}\right)$ is the Legendre transform
of the convex and increasing function $u\left(\beta\right)$. For
the term $\mathcal{A}$ we change the variable $\overline{N}$ by
$\overline{J}=\frac{nJ_{c}}{\overline{N}}$ and we also use the formula
(\ref{eq:formula_LD}) giving
\begin{eqnarray*}
\mathcal{A} & = & \lim_{n\rightarrow\infty}\frac{1}{2n}\log\left(\int_{J_{c}}^{J_{max}}d\overline{J}e^{\frac{nJ_{c}}{\overline{J}}\left(\mathrm{Pr}\left(2\left(V-J\right)+\beta\left(\overline{J}\right)J\right)-\beta\left(\overline{J}\right)\overline{J}\right)}e^{2\left(n-\frac{nJ_{c}}{\overline{J}}\right)\mathrm{Pr}\left(V-J\right)}\right)\\
 & = & \frac{1}{2}\max_{\overline{J}\in\left[J_{c};J_{max}\right]}\left(-v_{2}\left(\overline{J}\right)\right)
\end{eqnarray*}
with
\[
v_{2}\left(\overline{J}\right):=\frac{J_{c}}{\overline{J}}\left(v_{1}\left(\overline{J}\right)+2\mathrm{Pr}\left(V-J\right)\right)-2\mathrm{Pr}\left(V-J\right).
\]
In summary we have that 
\begin{equation}
\gamma\left(J_{c}\right)=\frac{J_{c}}{4}-\frac{1}{2}\min_{\overline{J}\in\left[J_{min};J_{max}\right]}v\left(\overline{J}\right)\label{eq:gamma_C_2}
\end{equation}
 with the function $v\left(\overline{J}\right)$ defined in two parts:
\begin{eqnarray}
v\left(\overline{J}\right): & = & v_{1}\left(\overline{J}\right)\mbox{ if }\overline{J}\in[J_{min};J_{c}]\label{eq:def_v}\\
 & = & v_{2}\left(\overline{J}\right)\mbox{ if }\overline{J}\in[J_{c};J_{max}].\nonumber 
\end{eqnarray}

\subsubsection{Minimization of $\gamma\left(J_{c}\right)$ to deduce $\gamma_{\mathrm{up}}$}

Considering (\ref{eq:gamma_up}) we finally want to minimize $\gamma(J_{c})$
in order to obtain the final expression (\ref{eq:def_gamma_up}) for
$\gamma_{\mathrm{up}}.$ Note that this minimization demands two steps:
first for a given $J_{c}$, $\gamma(J_{c})$ is given by (\ref{eq:gamma_C_2})
as a minimum over some parameter $\overline{J}.$ In a second step
we then have to minimize $\gamma(J_{c})$ for $J_{c}\leq2J_{min}.$ 

Let us start with the first step and fix $J_{c}<2J_{min}$ for the
moment. The function $v\left(\overline{J}\right)$ depends on the
parameter $J_{c}$ and $v_{2}\left(J_{c}\right)=v_{1}\left(J_{c}\right)$
hence $v\left(\overline{J}\right)$ is continuous on $]J_{min},J_{max}[$.
The function $v\left(\overline{J}\right)$ is depicted on Figure \ref{fig:function_v}.
Note that the function $v_{1}$ itself does not depend on $J_{c}$.
Our goal is to minimize the composite function $v(\bar{J})$ that
is piece-wise defined via the functions $v_{1},v_{2}$. However the
functions $v_{1}$ and $v_{2}$ are themselves both well defined on
the whole interval $[J_{min},J_{max}]$ and as a first step we look
for the minima of $v_{1}$ and $v_{2}$ on the whole interval $[J_{min},J_{max}]$
\begin{center}
\begin{figure}[h]
\begin{centering}
\input{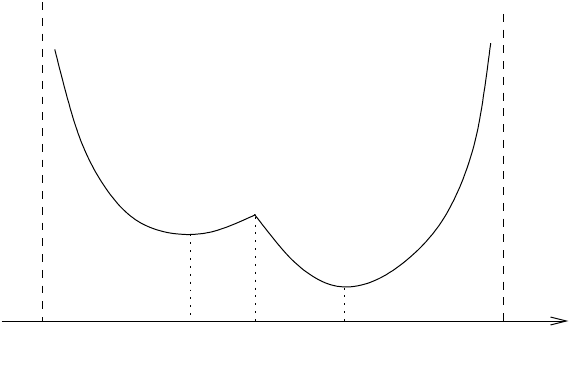tex_t}
\par\end{centering}
\caption{\label{fig:function_v}The function $v\left(\overline{J}\right)$
defined in (\ref{eq:def_v}) depends on the parameter $J_{c}$ and
is defined piece-wise. We look for its global minimum $\min_{\overline{J}\in\left[J_{min};J_{max}\right]}v\left(\overline{J}\right)$.
It will finally turn out that the optimal value of $J_{c}$ is between
the local minima $J_{1}$ and $J_{2}$ of the functions $v_{1}$ and
$v_{2}$.}
\end{figure}
\par\end{center}

Recall that by Remark \ref{rem:J_not_constant} and Proposition \ref{prop:pressure_derivatives-1},
the function $v_{1}\left(\overline{J}\right)$ is strictly convex
and we compute that $v'_{1}\left(\overline{J}\right)=\beta\left(\overline{J}\right)$
hence its minimum is for $\overline{J}=J_{1}$ such that $\beta\left(J_{1}\right)=0$
giving
\[
\min_{\overline{J}}v_{1}\left(\overline{J}\right)=v_{1}\left(J_{1}\right)=-u\left(0\right)=-\mathrm{Pr}\left(2\left(V-J\right)\right)
\]
 Notice also that
\begin{equation}
\frac{d\beta\left(\overline{J}\right)}{d\overline{J}}=\frac{d^{2}v_{1}}{d\overline{J}^{2}}>0\label{eq:dbeta_dJ}
\end{equation}
so $\beta\left(\overline{J}\right)$ is increasing.

For the function $v_{2}\left(\overline{J}\right)$ we compute its
derivative
\begin{eqnarray*}
v_{2}'\left(\overline{J}\right) & = & -\frac{J_{c}}{\overline{J}^{2}}\left(v_{1}\left(\overline{J}\right)+2\mathrm{Pr}\left(V-J\right)\right)+\frac{J_{c}}{\overline{J}}v'_{1}\left(\overline{J}\right)\\
 & = & \frac{J_{c}}{\overline{J}^{2}}\left(-v_{1}\left(\overline{J}\right)-2\mathrm{Pr}\left(V-J\right)+\overline{J}\beta\left(\overline{J}\right)\right)\\
 & = & \frac{J_{c}}{\overline{J}^{2}}\left(u\left(\beta\left(\overline{J}\right)\right)-2\mathrm{Pr}\left(V-J\right)\right).
\end{eqnarray*}
Let $J_{2}\in\left[J_{min};J_{max}\right]$ be such that $v_{2}'\left(J_{2}\right)=0$,
i.e.
\[
u\left(\beta\left(J_{2}\right)\right)=2\mathrm{Pr}\left(V-J\right).
\]
We have 
\[
u(\beta(J_{2}))=2\mathrm{Pr}\left(V-J\right)>\mathrm{Pr}\left(2\left(V-J\right)\right)=u(\beta(J_{1}))
\]
 hence $J_{2}>J_{1}$ and from (\ref{eq:dbeta_dJ}) $\beta\left(J_{2}\right)>\beta\left(J_{1}\right)=0$.
Plugging $J_{2}$ into $v_{2}$ we get the minimum
\begin{eqnarray*}
\min_{\overline{J}}v_{2}\left(\overline{J}\right) & = & v_{2}\left(J_{2}\right)=\frac{J_{c}}{J_{2}}\left(v_{1}\left(J_{2}\right)+2\mathrm{Pr}\left(V-J\right)\right)-2\mathrm{Pr}\left(V-J\right)\\
 & = & J_{c}\beta\left(J_{2}\right)-2\mathrm{Pr}\left(V-J\right).
\end{eqnarray*}
\begin{rem}
The function $v_{1}$ hence its minimum $J_{1}$ also do not depend
on $J_{c}$. The value $J_{2}$ does not depend on $J_{c}$ neither
but $v_{2}\left(J_{2}\right)$ depends on $J_{c}$.
\end{rem}

The two local minima coincide $v_{1}\left(J_{1}\right)=v_{2}\left(J_{2}\right)$
for the parameter $J_{c}=\left\langle J\right\rangle $ given by
\begin{equation}
\left\langle J\right\rangle :=\frac{2\mathrm{Pr}\left(V-J\right)-\mathrm{Pr}\left(2\left(V-J\right)\right)}{\beta\left(J_{2}\right)}.\label{eq:J_average_def}
\end{equation}
Since $u\left(\beta\right)$ is strictly convex and $\beta\left(\overline{J}\right)$
is increasing we have that 
\begin{eqnarray*}
u'\left(\beta\left(J_{1}\right)\right) & < & \frac{u\left(\beta\left(J_{2}\right)\right)-u\left(\beta\left(J_{1}\right)\right)}{\beta\left(J_{2}\right)-\beta\left(J_{1}\right)}<u'\left(\beta\left(J_{2}\right)\right)
\end{eqnarray*}
and using (\ref{eq:J_bar}) and (\ref{eq:J_average_def}) this gives
\[
J_{1}<\left\langle J\right\rangle <J_{2}.
\]
Note that up to now we considered the minima of $v_{1}$ and $v_{2}$
independently. Now let us consider the minimum of the composite function
$v$ which will give us a concrete value for $\gamma(J_{c})=\frac{J_{c}}{4}-\frac{1}{2}\inf_{\overline{J}}v\left(\overline{J}\right)$:

We finally deduce for (\ref{eq:gamma_C_2}) that
\begin{itemize}
\item If $J_{c}\leq\left\langle J\right\rangle $ then
\begin{eqnarray}
\gamma\left(J_{c}\right) & = & \frac{J_{c}}{4}-\frac{1}{2}\inf_{\overline{J}}v_{2}\left(\overline{J}\right)\nonumber \\
 & = & \frac{J_{c}}{4}-\frac{1}{2}J_{c}\beta\left(J_{2}\right)+\mathrm{Pr}\left(V-J\right).\label{eq:gamma_Jc}
\end{eqnarray}
\item If $J_{c}\geq\left\langle J\right\rangle $ then
\begin{eqnarray*}
\gamma\left(J_{c}\right) & = & \frac{J_{c}}{4}-\frac{1}{2}\inf_{\overline{J}}v_{1}\left(\overline{J}\right)\\
 & = & \frac{J_{c}}{4}+\frac{1}{2}\mathrm{Pr}\left(2\left(V-J\right)\right)
\end{eqnarray*}
that is minimal for $J_{c}=\left\langle J\right\rangle $.
\end{itemize}
From (\ref{eq:gamma_Jc}), we have $\gamma'\left(J_{c}\right)_{/J_{c}\leq\left\langle J\right\rangle }=\frac{1}{2}\left(\frac{1}{2}-\beta\left(J_{2}\right)\right)$
hence if $\beta\left(J_{2}\right)>\frac{1}{2}$ then
\begin{eqnarray*}
\gamma_{\mathrm{up}} & \underset{(\ref{eq:gamma_up})}{=} & \inf_{J_{c}\leq\left\langle J\right\rangle }\left(\gamma\left(J_{c}\right)\right)=\gamma\left(\left\langle J\right\rangle \right)\\
 & = & \frac{\left\langle J\right\rangle }{4}+\frac{1}{2}\mathrm{Pr}\left(2\left(V-J\right)\right),
\end{eqnarray*}
otherwise if $\beta\left(J_{2}\right)<\frac{1}{2}$ then
\[
\gamma_{\mathrm{up}}=\gamma\left(0\right)=\mathrm{Pr}\left(V-J\right).
\]
We have finished the proof of the main Theorem \ref{thm:main_result}.

\subsection{Proof of Theorem \ref{thm:main_result-resolvent}}

From (\ref{eq:def_Pn}) and (\ref{eq:Q_F}) we have
\[
\frac{1}{n}\log\left\Vert \mathcal{L}_{\nu,\chi}^{n}\right\Vert _{H_{\nu}^{-m}}=\frac{1}{2n}\log\left\Vert P_{n}\right\Vert _{L^{2}}.
\]
In Section \ref{subsec:Proof-of-main_Theorem} we have shown that
for $n$ related to $\nu$ by $n=\left[\frac{2}{\left\langle J\right\rangle +\varepsilon}\log\nu\right]$
we have for $\nu\rightarrow\infty$,
\[
\frac{1}{2n}\log\left\Vert P_{n}\right\Vert _{L^{2}}\leq\gamma_{\mathrm{up}}+o\left(1\right).
\]
We deduce that for any $\epsilon>0$, $\exists\nu_{0}>0$, $\forall\nu>\nu_{0}$,
\[
\frac{1}{n}\log\left\Vert \mathcal{L}_{\nu,\chi}^{n}\right\Vert _{H_{\nu}^{-m}}\leq\gamma_{\mathrm{up}}+\epsilon
\]
\[
\left\Vert \mathcal{L}_{\nu,\chi}^{n}\right\Vert _{H_{\nu}^{-m}}\leq e^{n\left(\gamma_{\mathrm{up}}+\epsilon\right)}.
\]
In \cite[thm 2.9, or proof of thm 2.11]{faure_arnoldi_tobias_13}
we have shown that for any $r$,
\[
\left\Vert \mathcal{L}_{\nu,\chi}^{r}\right\Vert _{H_{\nu}^{-m}}\leq C_{0}e^{r\left(\gamma_{\mathrm{sc}}+\epsilon\right)}.
\]
Let us suppose that $\gamma_{\mathrm{up}}<\gamma_{\mathrm{sc}}$ in
order to improve this bound (otherwise Theorem \ref{thm:main_result-resolvent}
is covered by \cite[thm 2.9]{faure_arnoldi_tobias_13}). For any $t\in\mathbb{N}$,
we write $t=Nn+r$ with $r\leq n$, $N\in\mathbb{N}$, and we have
\begin{eqnarray*}
\left\Vert \mathcal{L}_{\nu,\chi}^{t}\right\Vert _{H_{\nu}^{-m}} & \leq & \left\Vert \mathcal{L}_{\nu,\chi}^{n}\right\Vert _{H_{\nu}^{-m}}^{N}\left\Vert \mathcal{L}_{\nu,\chi}^{r}\right\Vert _{H_{\nu}^{-m}}\\
 & \leq & e^{Nn\left(\gamma_{\mathrm{up}}+\epsilon\right)}C_{0}e^{r\left(\gamma_{\mathrm{sc}}+\epsilon\right)}\\
 & \leq & C_{0}e^{t\left(\gamma_{\mathrm{up}}+\epsilon\right)}e^{r\left(\gamma_{\mathrm{sc}}-\gamma_{\mathrm{up}}\right)}\\
 & \leq & C_{0}e^{t\left(\gamma_{\mathrm{up}}+\epsilon\right)}e^{n\left(\gamma_{\mathrm{sc}}-\gamma_{\mathrm{up}}\right)}\\
 & \leq & C_{0}e^{t\left(\gamma_{\mathrm{up}}+\epsilon\right)}\nu^{\frac{2}{\left\langle J\right\rangle +\epsilon}\left(\gamma_{\mathrm{sc}}-\gamma_{\mathrm{up}}\right)}.
\end{eqnarray*}
The relation $\left(z-\mathcal{L}_{\nu,\chi}\right)^{-1}=z^{-1}\sum_{t\geq0}\left(\frac{\mathcal{L}_{h,\chi}}{z}\right)^{t}$
gives that 
\begin{eqnarray*}
\left\Vert \left(z-\mathcal{L}_{\nu,\chi}\right)^{-1}\right\Vert _{H_{\nu}^{-m}} & \leq & \left|z\right|^{-1}\sum_{t\geq0}\frac{\left\Vert \mathcal{L}_{\nu,\chi}^{t}\right\Vert _{H_{\nu}^{-m}}}{\left|z\right|^{t}}\leq\left|z\right|^{-1}C_{0}\nu^{\frac{2}{\left\langle J\right\rangle +\epsilon}\left(\gamma_{\mathrm{sc}}-\gamma_{\mathrm{up}}\right)}\sum_{t\geq0}\frac{e^{t\left(\gamma_{\mathrm{up}}+\epsilon\right)}}{\left|z\right|^{t}}\\
 & = & \frac{C_{0}\nu^{\frac{2}{\left\langle J\right\rangle +\epsilon}\left(\gamma_{\mathrm{sc}}-\gamma_{\mathrm{up}}\right)}}{\left|z\right|-e^{\left(\gamma_{\mathrm{up}}+\epsilon\right)}}\leq C_{1}\nu^{\frac{2}{\left\langle J\right\rangle +\epsilon}\left(\gamma_{\mathrm{sc}}-\gamma_{\mathrm{up}}\right)}
\end{eqnarray*}
We have finished the proof of Theorem \ref{thm:main_result-resolvent}.

\appendix
\newpage

\section{\label{sec:Examples}Examples}

In this section we compare the upper bound on the spectral gap $\gamma_{\textup{up}}$
from Theorem \ref{thm:main_result} with the so far known bounds $\gamma_{\textup{Gibbs}}=\mathrm{Pr}\left(V-J\right)$
and $\gamma_{\textup{sc}}=\mathrm{tsup}\left(V-\frac{1}{2}J\right)$
for two explicit examples of IFS according to Definition \ref{def:IFSAn-iterated-function}:
a ``two branched linear IFS'' and the ``truncated Gauss map''
which plays an important role in the study of continued fraction expansion. 

\subsection{Linear IFS}

Within the class of dynamical systems, treated in this article, the
linear IFS is perhaps the most simple, nevertheless non trivial example.

\subsubsection{Definition of the model}

See Figure \ref{fig:Linear_IFS}.

\begin{center}{\color{red}\fbox{\color{black}\parbox{16cm}{
\begin{defn}
\label{def:lin_ifs}''\textbf{Linear IFS''.} Let $0<a_{1}<b_{1}<a_{2}<b_{2}<1$.
Consider the intervals $I_{1}:=[a_{1},b_{1}]$ and $I_{2}:=[a_{2},b_{2}]$
and the adjacency matrix $A=\left(\begin{array}{cc}
1 & 1\\
1 & 1
\end{array}\right)$. The contracting maps are the linear functions:

\[
\phi_{i,j}:\begin{cases}
I_{i} & \rightarrow I_{j}\\
x & \mapsto a_{j}+\left(b_{j}-a_{j}\right)x
\end{cases}.
\]
\end{defn}

}}}\end{center}

The Jacobian function $J_{i,j}\left(x\right):=-\log\frac{d\phi_{i,j}}{dx}$
is constant on intervals $x\in I_{i}$: 
\[
J_{j}:=J_{i,j}\left(x\right)=-\log(b_{j}-a_{j})>0.
\]

\begin{figure}[h]
\centering 
\centering{}\input{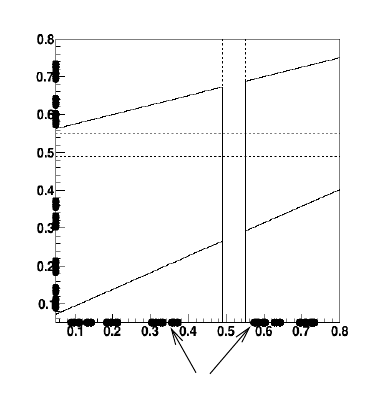tex_t}\caption{\label{fig:Linear_IFS}Graphs of $\phi_{i,j}$ a the linear IFS of
Definition \ref{def:lin_ifs} with interval $I_{1}=\left[0.05,0.49\right]$,
$I_{2}=\left[0.55,0.8\right]$ giving Jacobians $J_{1}=0.821\ldots$
and $J_{2}=1.38\ldots$. The trapped set $K$ defined in (\ref{eq:trapped_set_K_def1})
is a dyadic Cantor set of Hausdorff dimension $\delta=\dim_{H}\left(K\right)=0.643\ldots$
given by (\ref{eq:def_delta-1}).}
\end{figure}

The topological pressure takes a particularly simple form:

\begin{center}{\color{blue}\fbox{\color{black}\parbox{16cm}{
\begin{lem}
Let $\phi_{i,j}$ be a linear IFS with Jacobians $J_{1},J_{2}>0$.
Then the topological pressure function (\ref{eq:def_P_beta-1}) is
given by 
\begin{equation}
P\left(\beta\right)=\log\left(e^{-\beta J_{1}}+e^{-\beta J_{2}}\right).\label{eq:P_beta_linear}
\end{equation}
\end{lem}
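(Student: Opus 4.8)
The plan is to evaluate directly the sum over periodic orbits in the definition of the pressure, using that for a linear IFS the Jacobian function is locally constant. Recall from (\ref{eq:def_P_beta-1}) and (\ref{eq:def_Pr}) that $P\left(\beta\right)=\mathrm{Pr}\left(-\beta J\right)=\lim_{n\rightarrow\infty}\frac{1}{n}\log\left(\sum_{x=\phi^{-n}\left(x\right)}e^{-\beta J_{n}\left(x\right)}\right)$ with $J_{n}\left(x\right)=\sum_{k=0}^{n-1}J\left(\phi^{-k}\left(x\right)\right)$. The first step would be to observe that, since each $\phi_{i,j}$ is the affine map $x\mapsto a_{j}+\left(b_{j}-a_{j}\right)x$ of Definition \ref{def:lin_ifs}, its inverse has the constant slope $1/\left(b_{j}-a_{j}\right)$ on $\phi_{i,j}\left(I_{i}\right)$; hence the Jacobian function (\ref{eq:def_J}) satisfies $J\left(x\right)=-\log\left(b_{j}-a_{j}\right)=J_{j}$ for every $x\in\phi\left(I\right)\cap I_{j}$, independently of the branch $i$. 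In other words $J$ depends only on which interval $I_{j}$ the point currently occupies.

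The second step would be to parametrise the periodic points. Since the adjacency matrix of Definition \ref{def:lin_ifs} is the all-ones $2\times2$ matrix, every finite sequence of symbols is admissible, and by Proposition \ref{prop:symbolic dynamics} (the coding map $S:\mathcal{W}_{-}\rightarrow K$ is a bijection intertwining $\phi^{-1}$ with the shift) the points $x\in K$ with $\phi^{-n}\left(x\right)=x$ are in one-to-one correspondence with words $\left(w_{0},\ldots,w_{n-1}\right)\in\left\{ 1,2\right\} ^{n}$. Concretely, the point $x_{w}$ attached to such a word is the unique fixed point of the contraction obtained by cyclically composing $\phi_{w_{0},w_{1}},\ldots,\phi_{w_{n-1},w_{0}}$ (with $w_{n}:=w_{0}$), and its orbit $x_{w},\phi^{-1}\left(x_{w}\right),\ldots,\phi^{-\left(n-1\right)}\left(x_{w}\right)$ visits each of the intervals $I_{w_{0}},\ldots,I_{w_{n-1}}$ exactly once per period. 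Combining this with the first step gives $J_{n}\left(x_{w}\right)=\sum_{j=0}^{n-1}J_{w_{j}}$.

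The computation then closes at once:
\[
\sum_{x=\phi^{-n}\left(x\right)}e^{-\beta J_{n}\left(x\right)}=\sum_{\left(w_{0},\ldots,w_{n-1}\right)\in\left\{ 1,2\right\} ^{n}}\;\prod_{j=0}^{n-1}e^{-\beta J_{w_{j}}}=\left(e^{-\beta J_{1}}+e^{-\beta J_{2}}\right)^{n},
\]
so that $\frac{1}{n}\log$ of the left-hand side equals $\log\left(e^{-\beta J_{1}}+e^{-\beta J_{2}}\right)$ for every $n$, and in particular in the limit $n\rightarrow\infty$, which is (\ref{eq:P_beta_linear}).

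The argument involves no genuine difficulty; the only points that need care are the bookkeeping of the bijection between period-$n$ points and length-$n$ words — one should check that cyclic rotations of a word yield genuinely distinct periodic points lying on a common orbit, so that the sum over points counts each orbit with the right multiplicity and nothing is double counted — and the fact that the locally constant value of $J$ is collected once per interval per period. Both are immediate from the strong separation condition (\ref{eq:hyp_non_intersect}) and Proposition \ref{prop:symbolic dynamics}.
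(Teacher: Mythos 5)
Your proof is correct, and the heart of the argument coincides with the paper's: the Jacobian is locally constant and the adjacency matrix is full, so the Birkhoff sum depends only on the sequence of visited intervals and the sum factorizes into a power of $e^{-\beta J_{1}}+e^{-\beta J_{2}}$. The one difference is which definition of the pressure you feed this into. The lemma cites (\ref{eq:def_P_beta-1}), which is the word-based definition of Appendix~B (a free sum over all admissible words $w_{0,n}$); the paper's proof uses exactly that, and then the factorization is immediate (up to a harmless extra factor of $2$ for the free first letter $w_{0}$, which vanishes in the $\frac1n\log$ limit). You instead use the periodic-orbit definition (\ref{eq:def_Pr}), which requires the additional bookkeeping you correctly flag: the bijection between period-$n$ points of $\phi^{-1}$ and length-$n$ cyclic words via Proposition~\ref{prop:symbolic dynamics}. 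That extra step is harmless here but is exactly what the paper's route avoids, since for a full shift with locally constant $J$ the word sum is already a clean product. Both give the same limit, so there is no gap; yours is just slightly longer because it discharges the periodic-point parametrization rather than working with the free word sum directly.
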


}}}\end{center}
\begin{proof}
We have 
\[
P(\beta)\underset{(\ref{eq:def_P_beta-1})}{=}\mathrm{Pr}\left(-\beta J\right)=\lim_{n\to\infty}\frac{1}{n}\log\left(\sum_{w_{0,n}}e^{-\beta J_{w_{0,n}}(x_{w})}\right).
\]
The fact that $J(x)$ is constant in $I_{1}$ and $I_{2}$ gives $J_{w_{0,n}}(x)=\sum_{k=1}^{n}J_{w_{k}}$
and since the adjacency matrix $A$ is full, 
\[
\sum_{w_{0,n}}e^{-\beta J_{w_{0,n}}(x_{w})}=\sum_{w_{0,n}}e^{-\beta\sum_{k=1}^{n}J_{w_{k}}}=\bigg(e^{-\beta J_{1}}+e^{-\beta J_{2}}\bigg)^{n}.
\]
\end{proof}
In order to apply Theorem \ref{thm:main_result} we choose a roof
function $\tau:I\to\mathbb{R}$ such that the minimal captivity assumption
is fulfilled. This can be achieved by a piece-wise linear function.
\begin{center}{\color{blue}\fbox{\color{black}\parbox{16cm}{
\begin{lem}
\label{lem:lin_IFS_min_captive} Let $\phi_{i,j}$ be a linear IFS
as defined in Definition \ref{def:lin_ifs} and suppose that $0<J_{1}\leq J_{2}$.
Let 
\begin{equation}
\tau(x)=\tau_{i}\cdot x,\quad x\in I_{i},\label{eq:tau_linear_IFS}
\end{equation}
with $\tau_{1}:=0$ and $\tau_{2}:=1$. Then the minimal captivity
assumption (Assumption \ref{hyp:minimal_capt}) is fulfilled. 
\end{lem}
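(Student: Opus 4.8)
The plan is to reduce the minimal captivity assumption to a strong separation condition for an affine iterated function system on the $\xi$--line, and then to observe that this separation is forced by the ordering $0<a_1<b_1<a_2<b_2<1$ of the endpoints.

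First I would specialize the canonical map (\ref{eq:def_symplectic_map_Fij-1}) to the linear IFS. Since $\phi_{i,j}'(x)\equiv b_j-a_j=e^{-J_j}$ is constant and $\tau'\equiv\tau_j$ on $I_j$, one gets
\[
\tilde\phi_{i,j}(x,\xi)=\bigl(\phi_{i,j}(x),\ e^{J_j}\xi+\tau_j\bigr),
\]
so the $\xi$--component of $\tilde\phi_{i,j}$ depends only on the target index $j$, and $\tilde\phi_{i,j}^{-1}$ acts on $\xi'$ by $\xi'\mapsto g_j(\xi')$ with $g_1(\xi):=e^{-J_1}\xi$ and $g_2(\xi):=e^{-J_2}(\xi-1)$ (affine contractions of ratios $e^{-J_1},e^{-J_2}$, using $\tau_1=0$, $\tau_2=1$). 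Feeding this into the defining relation of the stable curves $\tilde S(w_+)$ (equivalently, into (\ref{eq:expression_zeta_w})) one sees that $\zeta_{w_+}$ is constant in $x$ and obeys $\zeta_{w_+}=g_{w_1}(\zeta_{L(w_+)})$; hence the set $Z:=\{\zeta_{w_+}:w_+\in\mathcal W_+\}$ is the attractor of the IFS $\{g_1,g_2\}$, i.e. $Z=g_1(Z)\cup g_2(Z)$. Because the adjacency matrix is full and $\zeta_{w_+}$ does not depend on $w_0$, the fibre $\{\xi:(x,\xi)\in\mathcal K,\ x\in I_j\}$ equals $Z$ for each $j$.

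Next I would invoke the reformulation of minimal captivity used in the proof of Lemma \ref{lem:Separation-of-xw_zetaw}: it is enough to exhibit $\varepsilon>0$ such that for every $i\rightsquigarrow j$ and $i\rightsquigarrow k$ with $j\ne k$ one has $\tilde\phi_{i,j}^{-1}(\mathcal K_\varepsilon\cap\pi^{-1}(I_j))\cap\tilde\phi_{i,k}^{-1}(\mathcal K_\varepsilon\cap\pi^{-1}(I_k))=\emptyset$. The $x$--components of both maps land in $I_i$, so the separation must come from the $\xi$--direction; by the previous paragraph the $\xi$--range of $\tilde\phi_{i,j}^{-1}(\mathcal K_\varepsilon\cap\pi^{-1}(I_j))$ is contained in $g_j(Z_\varepsilon)$, where $Z_\varepsilon$ is the closed $\varepsilon$--neighbourhood of $Z$. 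Thus it suffices to prove $g_1(Z_\varepsilon)\cap g_2(Z_\varepsilon)=\emptyset$ for small $\varepsilon>0$.

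To this end I would compute $\mathrm{conv}(Z)$. From (\ref{eq:expression_zeta_w}) each $\zeta_{w_+}=-\sum_{k\ge1}e^{-J_{w_{0,k}}}\tau_{w_k}\le 0$, with $0$ attained on the constant word, so $\max Z=0$; and by self-similarity $\min Z$ is the fixed point of $g_2$, namely $m=-1/(e^{J_2}-1)$, so $\mathrm{conv}(Z)=[m,0]$. Then $g_1([m,0])=[e^{-J_1}m,0]$ and $g_2([m,0])=[e^{-J_2}(m-1),-e^{-J_2}]$, and a one-line computation shows these closed intervals are disjoint exactly when $-e^{-J_2}<e^{-J_1}m$, i.e. when $e^{-J_1}+e^{-J_2}<1$. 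This inequality is automatic: $e^{-J_j}=b_j-a_j=|I_j|$, and $0<a_1<b_1<a_2<b_2<1$ gives $|I_1|+|I_2|<b_2-a_1<1$. Hence $g_1(\mathrm{conv}\, Z)$ and $g_2(\mathrm{conv}\, Z)$ are separated by a gap $d>0$; since each $g_j$ is a contraction, $g_j(Z_\varepsilon)\subset (g_j(\mathrm{conv}\, Z))_\varepsilon$, so any $0<\varepsilon<d/2$ makes $g_1(Z_\varepsilon)$ and $g_2(Z_\varepsilon)$ disjoint, which is the desired minimal captivity. The one place needing care is the bookkeeping in the first two steps — identifying the $\xi$--fibres of $\mathcal K$ with the attractor $Z$ and tracking the $\varepsilon$--neighbourhoods through the affine maps; the quantitative input, $|I_1|+|I_2|<1$, is immediate from the definition of a linear IFS.
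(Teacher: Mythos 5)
Your argument is correct but genuinely different from the paper's. The paper works forward: it first confines the trapped set $\mathcal{K}$ to the $\xi$--strip $[-R,0]$ with $R=1/(e^{J_2}-1)$, then introduces the intermediate threshold $r=e^{-J_2}$ and checks directly that $\tilde\phi_{i,2}$ ejects any $\xi\in(-r,0]$ (to $\xi'_2>0$) while $\tilde\phi_{i,1}$ ejects any $\xi\in[-R,-r]$ (to $\xi'_1<-R$), the second case resting on $e^{-J_1}+e^{-J_2}<1$. You instead go backward: you invoke the preimage reformulation of minimal captivity recorded in the proof of Lemma \ref{lem:Separation-of-xw_zetaw}, observe that the inverse branches act on the $\xi$--fibre as the affine contractions $g_1(\xi)=e^{-J_1}\xi$ and $g_2(\xi)=e^{-J_2}(\xi-1)$, identify the fibre of $\mathcal{K}$ with the attractor $Z$ of $\{g_1,g_2\}$ (using that $\zeta_{w_+}$ is constant in $x$ and depends only on $w_{+}$, plus fullness of the adjacency matrix), and then reduce minimal captivity to the strong separation condition $g_1(\mathrm{conv}\,Z)\cap g_2(\mathrm{conv}\,Z)=\emptyset$ with $\mathrm{conv}\,Z=[-R,0]$. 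Both routes come down to the same inequality $e^{-J_1}+e^{-J_2}<1=|I_1|+|I_2|<1$; in fact your interval $g_2([-R,0])=[-R,-r]$ and $g_1([-R,0])=[-e^{-J_1}R,0]$ are precisely the two regions the paper's case split is designed to eject. What your approach buys is conceptual clarity --- it makes visible that minimal captivity for a linear IFS is itself a strong separation condition for a companion IFS in the cotangent fibre --- and you also track the $\varepsilon$--neighbourhoods explicitly (via $g_j(Z_\varepsilon)\subset(g_j(\mathrm{conv}\,Z))_\varepsilon$ and the gap $d$), whereas the paper's computation is carried out on $\mathcal{K}$ itself with the extension to $\mathcal{K}_\varepsilon$ left to the strictness of the inequalities. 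The paper's version is shorter and needs no appeal to the attractor structure, but your version generalizes more transparently to other piece-wise affine roof functions.
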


}}}\end{center}

\begin{figure}[h]
\centering 
\begin{centering}
\input{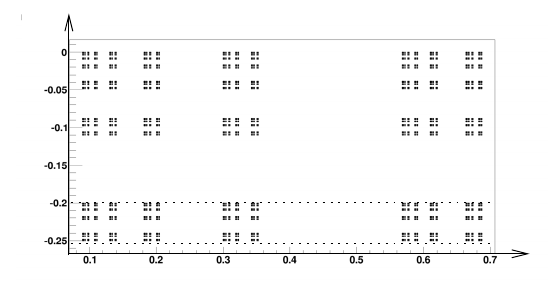tex_t}
\par\end{centering}
\caption{\label{fig:lin_IFS_trappes_set}Trapped set $\mathcal{K}$ in phase
space $T^{*}\mathbb{R}$ for the linear IFS of Figure \ref{fig:Linear_IFS}
constructed with a linear function $\tau$ given in (\ref{eq:tau_linear_IFS}).
One clearly sees its Cantor set nature. The dashed lines indicate
the values $-R,-r$ that appear in the proof of Lemma~\ref{lem:lin_IFS_min_captive}. }
\end{figure}
\begin{proof}
With the above definitions, the canonical map takes a particularly
simple form 
\[
(x',\xi')=\tilde{\phi}_{i,j}(x,\xi)\underset{(\ref{eq:def_symplectic_map_Fij-1})}{=}(\phi_{i,j}(x),e^{J_{j}}\xi+\tau_{j}).
\]
Let $x\in I$, $\xi>0$. We have $\xi'\geq e^{J_{1}}\xi$ hence any
trajectory starting from positive $\xi$ escapes to infinity. This
implies that the trapped set is $\mathcal{K}\subset I\times[-\infty,0]$.

Let $R:=\frac{1}{e^{J_{2}}-1}>0$. For $j=1$ we get $\xi'=e^{J_{1}}\xi$
and for $j=2$ we get
\[
\left(\xi'+R\right)=\left(e^{J_{2}}\xi+1+\frac{1}{e^{J_{2}}-1}\right)=e^{J_{2}}\left(\xi+R\right).
\]
This implies that all trajectories starting with $\xi<-R$ escape
towards minus infinity and $\mathcal{K}\subset I\times[-R,0]$.

Let $(x,\xi)\in\mathcal{K}$ and $i$ such that $x\in I_{i}$. In
order to prove minimal captivity we have to show that either $(x_{1}',\xi_{1}')=\tilde{\phi}_{i,1}(x,\xi)\notin\mathcal{K}$
or $(x_{2}',\xi_{2}')=\tilde{\phi}_{i,2}(x,\xi)\notin\mathcal{K}$.
Let $r:=e^{-J_{2}}<R$. If $-r<\xi\leq0$ then $\xi'_{2}=e^{J_{2}}\xi+1>0$
which implies $(x_{2}',\xi_{2}')\notin\mathcal{K}$. If $-R\leq\xi\leq-r$
then $\xi'_{1}=e^{J_{1}}\xi\leq-e^{J_{1}-J_{2}}$. We use the constraint\footnote{Obviously this constraint is not necessary for the proof but simplifies
the choice of the constants.} $I_{1}\cup I_{2}\subset[0,1]$ that gives 
\begin{equation}
e^{-J_{1}}+e^{-J_{2}}<1.\label{eq:ineq}
\end{equation}
We deduce $\xi'_{1}<-R$ and $(x_{1}',\xi_{1}')\notin\mathcal{K}$.
\end{proof}

\subsubsection{Estimates for the asymptotic spectral gap $\gamma_{\mathrm{asympt.}}$ }

Let us now consider the asymptotic spectral radius of the family of
transfer operators $\mathcal{L}_{\nu}$ for a linear IFS with unstable
Jacobians $0<J_{1}\leq J_{2}$ and $\tau$ as in Lemma \ref{lem:lin_IFS_min_captive}
and with a potential function of the form 
\begin{equation}
V(x)=\left(1-a\right)J\left(x\right),\quad a\in\mathbb{R}.\label{eq:V_a}
\end{equation}
We recall that the value $a=0$, giving $V=J$ is interesting for
counting orbits (\ref{eq:h_top}) and that $a=1/2$ is the ``quantum
case''\cite{faure-tsujii_prequantum_maps_12}. The different upper
bound estimates for the asymptotic spectral gap $\gamma_{\mathrm{asympt.}}$
can be expressed very explicitly as follows. 
\begin{align}
\gamma_{\textup{Gibbs}} & \underset{(\ref{eq:def_gamma_Gibbs})}{:=}\mathrm{Pr}\left(V-J\right)=P\left(a\right)=\log\left(e^{-aJ_{1}}+e^{-aJ_{2}}\right)\label{eq:gamma_linear_case}\\
\gamma_{\textup{sc}} & \underset{(\ref{eq:def_gamma_sc})}{:=}\mathrm{tsup}\left(V-\frac{1}{2}J\right)=\left(\frac{1}{2}-a\right)J_{1}\mbox{ if }a\geq\frac{1}{2},\quad\left(\frac{1}{2}-a\right)J_{2}\mbox{ if }a\leq\frac{1}{2}.\nonumber \\
\gamma_{\textup{conj}} & \underset{(\ref{eq:def_gamma_conj})}{:=}\frac{1}{2}\mathrm{Pr}\left(2\left(V-J\right)\right)=\frac{1}{2}P\left(2a\right)\nonumber \\
\gamma_{\textup{up}} & \underset{(\ref{eq:def_gamma_up})}{:=}\gamma_{\textup{conj}}+\frac{1}{4}\left\langle J\right\rangle ,\quad\left\langle J\right\rangle :=\frac{2}{\beta_{0}}\left(\gamma_{\mathrm{Gibbs}}-\gamma_{\textup{conj}}\right)\nonumber 
\end{align}
where $\beta_{0}$ solves the equation 
\begin{equation}
P\left(2a-\beta_{0}\right)=2P\left(a\right).\label{eq:def_beta0}
\end{equation}

Let us introduce 
\[
\omega:=\exp\left(J_{1}-J_{2}\right)\in]0,1]
\]
 that measures the non homogeneity of the Jacobian. Note that up to
dynamical equivalence the linear IFS is uniquely determined by $\delta$
and $\omega$ where $\delta$ is the Hausdorff dimension of the trapped
set $K$ defined in (\ref{eq:def_delta-1}). Thus given a fixed potential
and a set of parameters $\left(\delta,\omega\right)$ we can ask the
question which of the known estimates $\gamma_{\mathrm{sc}},\gamma_{\mathrm{Gibbs}}$
and $\gamma_{\mathrm{up}}$ is the best i.e. lowest one. This leads
to a partition of the $\delta,\omega$ parameter space which is shown
in Figure \ref{fig:phases_Linear_IFS} for three different choices
of $V$. One observes that $\gamma_{\mathrm{up}}$ obtained in this
paper gives the best (lowest) result (in grey domain) for intermediate
values of $\delta$ and for $a\neq1/2$. $\gamma_{\mathrm{Gibbs}}$
is better for small values of $\delta$ (i.e. very open system) or
very small values of $\omega$ (i.e. very in-homogeneous Jacobian)
whereas $\gamma_{\mathrm{sc}}$ is better for large values of $\delta$
and $\omega$ (i.e. closed system with homogeneous Jacobian).

Figure \ref{fig:gamma_Linear_IFS} is a plot of $\gamma_{\mathrm{sc}},\gamma_{\mathrm{Gibbs}},\gamma_{\mathrm{up}},\gamma_{conj}$
as functions of $\delta=\dim_{H}\left(K\right)\in]0,1[$ and for $\omega=0.5$.

\begin{figure}[h]
\begin{centering}
\scalebox{0.9}[0.9]{\input{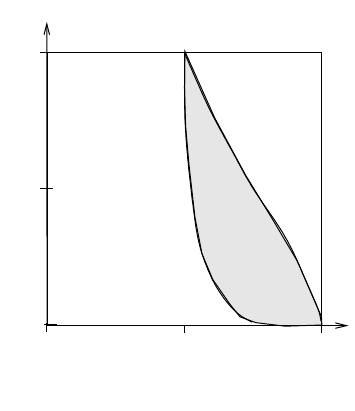tex_t}}\scalebox{0.9}[0.9]{\input{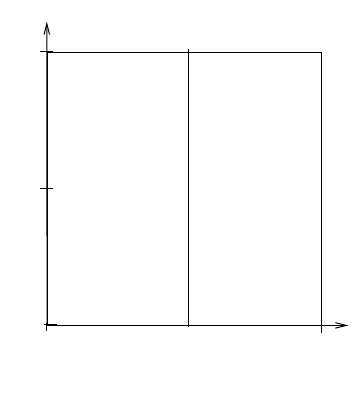tex_t}}\scalebox{0.9}[0.9]{\input{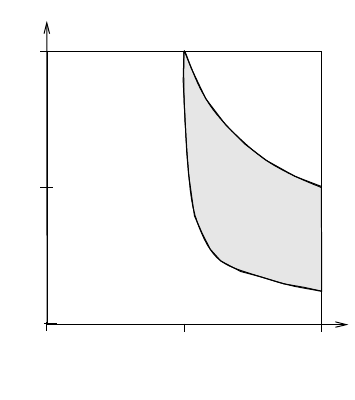tex_t}}
\par\end{centering}
\caption{\label{fig:phases_Linear_IFS}''\textbf{Phase diagram}'' in the
domain $\left(\delta,\omega\right)\in]0,1[^{2}$ with $\delta=\dim_{H}\left(K\right)\in]0,1[$
being the Hausdorff dimension of the trapped set $K$ given by (\ref{eq:def_delta-1})
and $\omega:=\exp\left(J_{1}-J_{2}\right)\in]0,1]$ that measures
the homogeneity of the Jacobian (we have $\omega=1$ if $J_{1}=J_{2}$).
The three different plots correspond to three different potential
functions $V.$ For each of the three potentials and each value $\left(\delta,\omega\right)$
and $a$, we indicate by a numerical calculation, which value $\gamma_{\mathrm{Gibbs}},\gamma_{\mathrm{sc}},\gamma_{\mathrm{up}}$
is the lowest. }
\end{figure}

\begin{figure}[h]
\begin{centering}
\scalebox{0.9}[0.9]{\input{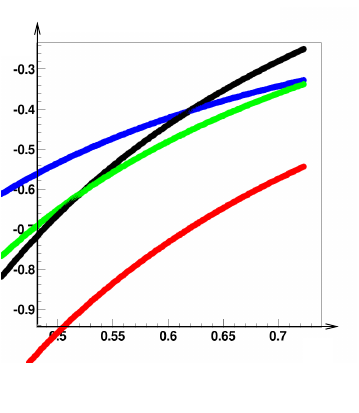tex_t}}\scalebox{0.9}[0.9]{\input{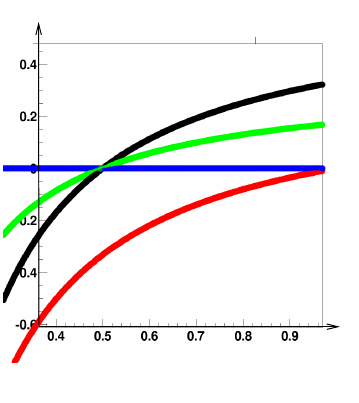tex_t}}\scalebox{0.9}[0.9]{\input{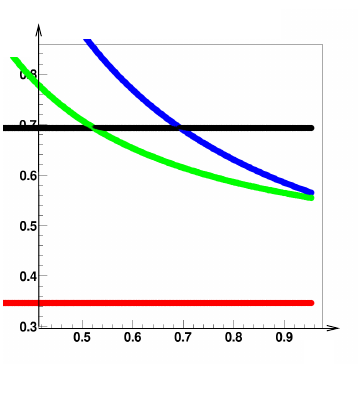tex_t}}
\par\end{centering}
\caption{\label{fig:gamma_Linear_IFS}Plot of various estimates $\gamma_{\mathrm{sc}},\gamma_{\mathrm{Gibbs}},\gamma_{\mathrm{up}},\gamma_{conj}$
defined in Section (\ref{sec:The-main-result}) for the linear IFS
model with $\omega=0.5$, as a function of $\delta=\dim_{H}\left(K\right)\in]0,1[$
.}
\end{figure}

\begin{center}{\color{blue}\fbox{\color{black}\parbox{16cm}{
\begin{prop}
For a linear IFS with roof function $\tau$ given in (\ref{eq:tau_linear_IFS})
we have the following three properties that appear on Figure \ref{fig:phases_Linear_IFS}:

\begin{enumerate}
\item For any $a\in\mathbb{R}$, potential $V=\left(1-a\right)J$, and $\omega=1$
(i.e. homogeneous case $J=J_{1}=J_{2}$) we have $\gamma_{\mathrm{Gibbs}}<\min(\gamma_{\mathrm{sc}},\gamma_{\mathrm{up}})$
if $\delta<0.5$ and $\gamma_{\mathrm{sc}}<\min\left(\gamma_{\mathrm{Gibbs}},\gamma_{\mathrm{up}}\right)$
if $\delta>0.5$. For $\delta=0.5$ we have $\gamma_{\mathrm{sc}}=\gamma_{\mathrm{Gibbs}}=\gamma_{\mathrm{up}}$.
\item For the potential $V\left(x\right)=0$, for any $\omega\in]0,1[$
there exists $\delta=\delta\left(\omega\right)$ such that $\gamma_{\mathrm{up}}<\gamma_{Gibss}=\gamma_{\mathrm{sc}}$.
\item For the potential $V\left(x\right)=\frac{1}{2}J\left(x\right)$, for
any $\omega\in]0,1[$, we have $\gamma_{\mathrm{Gibbs}}<\min(\gamma_{\mathrm{sc}},\gamma_{\mathrm{up}})$
if $\delta<0.5$ and $\gamma_{\mathrm{sc}}<\min\left(\gamma_{\mathrm{Gibbs}},\gamma_{\mathrm{up}}\right)$
if $\delta>0.5$.
\end{enumerate}
\end{prop}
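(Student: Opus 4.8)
The plan is to make everything completely explicit in terms of $\delta=\dim_{H}(K)$ and $\omega=e^{J_{1}-J_{2}}\in(0,1]$. First I would combine the pressure formula (\ref{eq:P_beta_linear}) with the Bowen-type dimension equation $e^{-\delta J_{1}}+e^{-\delta J_{2}}=1$: writing $g(u):=\log(1+\omega^{u})$ one obtains the closed form
\[
P(\beta)=\mathrm{Pr}\left(-\beta J\right)=g(\beta)-\tfrac{\beta}{\delta}\,g(\delta),\qquad J_{1}=\tfrac{1}{\delta}g(\delta)=J_{\min},\quad J_{2}=J_{1}-\log\omega=J_{\max}.
\]
This $P$ is strictly decreasing in $\beta$, strictly convex when $\omega<1$ and affine when $\omega=1$, with $P(\delta)=0$ and $P(0)=\log2$; moreover the constraint $I_{1}\cup I_{2}\subset[0,1]$ used in (\ref{eq:ineq}) forces $\delta<1$, hence $P(1)<0$. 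Substituting $V=(1-a)J$ into (\ref{eq:gamma_linear_case}) turns $\gamma_{\textup{Gibbs}}=P(a)$, $\gamma_{\textup{conj}}=\tfrac12P(2a)$ and $\gamma_{\textup{sc}}$ into elementary expressions, the only implicitly defined quantity being $\beta_{0}$, the solution of $P(2a-\beta_{0})=2P(a)$ from (\ref{eq:def_beta0}). I would also record once and for all the algebraic identity, valid for any IFS, obtained from $\gamma_{\textup{up}}=\gamma_{\textup{conj}}+\tfrac14\langle J\rangle$ and $\langle J\rangle=\tfrac{2}{\beta_{0}}(\gamma_{\textup{Gibbs}}-\gamma_{\textup{conj}})$,
\[
\gamma_{\textup{up}}-\gamma_{\textup{Gibbs}}=\tfrac{1}{4}\langle J\rangle\,(1-2\beta_{0}),
\]
so that, since $\langle J\rangle\in[J_{\min},J_{\max}]$ is strictly positive by Theorem~\ref{thm:main_result}, the sign of $\gamma_{\textup{up}}-\gamma_{\textup{Gibbs}}$ equals the sign of $\tfrac12-\beta_{0}$.

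For (1) the homogeneous case is fully explicit: $P(\beta)=\log2\,(1-\beta/\delta)$, $J:=J_{1}=J_{2}=\tfrac{\log2}{\delta}$, $\beta_{0}=\delta$, $\langle J\rangle=J$, whence $\gamma_{\textup{Gibbs}}=\log2-aJ$, $\gamma_{\textup{sc}}=(\tfrac12-a)J$ and $\gamma_{\textup{up}}=\tfrac12\log2-aJ+\tfrac14J$. After subtracting the common term $-aJ$ and dividing by $\log2$, the three numbers become $1$, $\tfrac{1}{2\delta}$ and $\tfrac12+\tfrac{1}{4\delta}$; comparing them shows that for $\delta<\tfrac12$ the first is strictly smallest, for $\delta>\tfrac12$ the second is strictly smallest, and at $\delta=\tfrac12$ all three equal $1$, which is precisely the assertion of (1).

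For (2), with $a=1$ one has $\gamma_{\textup{Gibbs}}=g(1)-J_{1}$ and $\gamma_{\textup{sc}}=-\tfrac12J_{1}$, so $\gamma_{\textup{Gibbs}}=\gamma_{\textup{sc}}$ is equivalent to $J_{1}=2g(1)$, i.e.\ to $F(\delta):=\log(1+\omega^{\delta})-2\delta\log(1+\omega)=0$. Since $F(0)=\log2>0$ and $F(1)=-\log(1+\omega)<0$, the intermediate value theorem provides $\delta(\omega)\in(0,1)$. At that value $P(1)=-\log(1+\omega)$ and $P(\tfrac32)=\log(1+\omega^{3/2})-3\log(1+\omega)$, so, using strict monotonicity of $P$, $\beta_{0}>\tfrac12\iff2P(1)>P(\tfrac32)\iff\log(1+\omega)>\log(1+\omega^{3/2})\iff\omega<1$, which holds; the identity above then gives $\gamma_{\textup{up}}<\gamma_{\textup{Gibbs}}=\gamma_{\textup{sc}}$, proving (2).

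For (3), $a=\tfrac12$ gives $\gamma_{\textup{sc}}=0$, $\gamma_{\textup{Gibbs}}=P(\tfrac12)$, and $\beta_{0}$ solves $P(1-\beta_{0})=2P(\tfrac12)$; strict monotonicity of $P$ with $P(\delta)=0$ yields $\gamma_{\textup{Gibbs}}>0\iff\delta>\tfrac12$ and $\beta_{0}<\tfrac12\iff\delta<\tfrac12$. If $\delta<\tfrac12$ then $\gamma_{\textup{Gibbs}}<0=\gamma_{\textup{sc}}$ and the identity gives $\gamma_{\textup{up}}>\gamma_{\textup{Gibbs}}$, so $\gamma_{\textup{Gibbs}}$ is the strict minimum. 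If $\delta>\tfrac12$ then $\gamma_{\textup{Gibbs}}>0=\gamma_{\textup{sc}}$, and the one genuinely non-routine point is to show $\gamma_{\textup{up}}>0$. Setting $t:=1-\beta_{0}$, one has $P(t)=2P(\tfrac12)>P(\tfrac12)>0$, hence $t<\tfrac12<\delta$, so $P(t)>0>P(1)$ and $1-2t>0$, and a short computation gives $\gamma_{\textup{up}}=\bigl((1-2t)P(1)+P(t)\bigr)/(4\beta_{0})$. I would then apply convexity of $P$ to the representation $\tfrac12=\mu t+(1-\mu)\cdot1$ with $\mu=\tfrac{1/2}{1-t}$; using $2P(\tfrac12)=P(t)$ and clearing the positive factor $1-t$, this reduces to $-t\,P(t)\le(1-2t)P(1)$, whose right-hand side is negative, which is incompatible with $t\le0$ (that would make the left-hand side nonnegative), forcing $0<t<\tfrac12$; dividing by $-t$ and using $1/t>1$ together with $P(t)>0>P(1)$ then gives $P(t)>-(1-2t)P(1)$, so the numerator of $\gamma_{\textup{up}}$ is positive and $\gamma_{\textup{up}}>0=\gamma_{\textup{sc}}$. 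The main obstacle is exactly this last positivity: the crude bound $\langle J\rangle\ge J_{\min}$ is too weak, and one must genuinely exploit the strict convexity of the topological pressure together with the precise relation $P(1-\beta_{0})=2P(\tfrac12)$; everything else is elementary manipulation of the explicit formula for $P$.
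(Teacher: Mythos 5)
Your proof is correct, and it is worth separating the three cases when comparing it with the paper's own argument.

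For cases (1) and (2) you follow essentially the same route as the paper. Case (1) is a straightforward explicit computation in the homogeneous case $\omega=1$; the paper and you arrive at the same three numbers $1$, $\tfrac{1}{2\delta}$, $\tfrac12+\tfrac{1}{4\delta}$ (after normalization). For case (2) the paper picks $J_{1}$ directly via $e^{J_{1}/2}=1+\omega$ rather than invoking the intermediate value theorem as you do, but the two constructions produce the same IFS, and the decisive computation $P(3/2)-2P(1)<0$ is identical. One small improvement in your write-up is to isolate the identity
\[
\gamma_{up}-\gamma_{Gibbs}=\tfrac14\langle J\rangle\left(1-2\beta_{0}\right),
\]
which the paper uses only implicitly when it asserts ``$\beta_{0}>1/2$ implies $\gamma_{up}<\gamma_{Gibbs}$''; making it explicit lets you reuse it cleanly in cases (2) and (3).

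The genuinely new content is case (3): the paper's published proof stops after case (2) and never supplies an argument for the potential $V=\tfrac12 J$. Your argument for it is sound. The sub-case $\delta<\tfrac12$ is immediate from $P(1/2)<0$ together with $\beta_{0}<\tfrac12$ and the identity above. The sub-case $\delta>\tfrac12$ is the only nontrivial step: $\gamma_{sc}=0<\gamma_{Gibbs}$ is clear, but $\gamma_{up}>0$ requires more than the crude bound $\langle J\rangle\ge J_{\min}$. Your rewriting $\gamma_{up}=\bigl((1-2t)P(1)+P(t)\bigr)/(4\beta_{0})$ with $t=1-\beta_{0}$, followed by applying convexity of $P$ to the representation of $\tfrac12$ as a convex combination of $t$ and $1$ and then dividing by $-t$, is exactly the kind of structural use of the defining relation $P(1-\beta_{0})=2P(1/2)$ that is needed; I checked that the intermediate step $-tP(t)\le(1-2t)P(1)$ (valid for all $t<\tfrac12$ by convexity, hence also used to force $t>0$) and the final inequality $P(t)>-(1-2t)P(1)$ are both correct, the last one being strict because $1/t>1$ multiplies a strictly positive quantity. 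So your proposal not only reproduces the paper's argument for (1) and (2) but also fills a gap left in the paper for (3).
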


}}}\end{center}
\begin{proof}
Proof of case (1). Suppose $J=J_{1}=J_{2}$. This is the case $\omega:=\exp\left(J_{1}-J_{2}\right)=1$
on Figure \ref{fig:phases_Linear_IFS}. We have 
\[
P\left(\beta\right)\underset{(\ref{eq:P_beta_linear})}{=}\log\left(2e^{-\beta J}\right)=h_{top}-\beta J
\]
 with $h_{top}\underset{(\ref{eq:def_htop-1})}{=}P\left(0\right)=\log2$
being the topological entropy. Then the Hausdorff dimension $\delta=\dim_{H}\left(K\right)$
of the trapped set $K$, given by $P\left(\delta\right)\underset{(\ref{eq:def_delta-1})}{=}0$,
is $\delta=\frac{h_{top}}{J}$. We get 
\begin{alignat*}{1}
\gamma_{\textup{Gibbs}} & =h_{top}\left(1-\frac{a}{\delta}\right)\\
\gamma_{\textup{sc}} & =h_{top}\left(\frac{1}{2}-a\right)\cdot\frac{1}{\delta}\\
\gamma_{\textup{up}} & =h_{top}\left(\frac{1}{2}+\left(\frac{1}{4}-a\right)\frac{1}{\delta}\right),\qquad\beta_{0}\underset{(\ref{eq:def_beta0})}{=}\delta,\quad\left\langle J\right\rangle =J\\
\gamma_{\textup{conj}} & =h_{top}\left(\frac{1}{2}-\frac{a}{\delta}\right)
\end{alignat*}
We deduce that for $\delta<\frac{1}{2}$ then $\gamma_{\mathrm{Gibbs}}<\min(\gamma_{\mathrm{sc}},\gamma_{\mathrm{up}})$,
for $\delta>\frac{1}{2}$ then $\gamma_{\mathrm{sc}}<\min(\gamma_{\mathrm{Gibbs}},\gamma_{\mathrm{up}})$
and for $\delta=\frac{1}{2}$ then $\gamma_{\mathrm{sc}}=\gamma_{\mathrm{Gibbs}}=\gamma_{\mathrm{up}}$.

Proof of case (2). Suppose $V=0$. For a given $0<\omega<1$ we choose
$0<J_{1}<2h_{top}=2\log2$ such that 
\begin{equation}
e^{J_{1}/2}=1+\omega.\label{eq:ref_alpha}
\end{equation}
From $\omega=\exp\left(J_{1}-J_{2}\right)$ this gives a value of
$J_{2}$ and $\delta$. Eq.(\ref{eq:gamma_linear_case}) gives $\gamma_{\textup{Gibbs}}=\gamma_{\textup{sc}}$.
According to the statement of Theorem\emph{\ref{thm:main_result},
}if we show that $\beta_{0}>1/2$, this implies that $\gamma_{\textup{up}}<\gamma_{\textup{Gibbs}}$.
From (\ref{eq:def_beta0}) the condition for $\beta_{0}$ is 
\[
P(2-\beta_{0})-2P(1)=0.
\]
As $P(2-\beta)-2P(1)$ is strictly increasing in $\beta$ it is sufficient
to show that $P\left(2-\frac{1}{2}\right)-2P(1)<0.$ Using that $P\left(1\right)=-\frac{1}{2}J_{1}$
from our choice $\gamma_{\textup{Gibbs}}=\gamma_{\textup{sc}}$, we
compute 
\begin{eqnarray*}
P(3/2)-2P(1) & \underset{(\ref{eq:P_beta_linear})}{=} & \log\left(e^{-3/2J_{1}}(1+e^{-3/2\left(J_{2}-J_{1}\right)})\right)+J_{1}\\
 & = & \log\left(e^{-1/2J_{1}}(1+e^{-3/2\left(J_{2}-J_{1}\right)})\right)\\
 & < & \log\left(e^{-1/2J_{1}}(1+\omega)\right)\\
 & \underset{(\ref{eq:ref_alpha})}{=} & 0.
\end{eqnarray*}
\end{proof}

\subsubsection{\label{subsec:Numerical-observations-for_Linear}Numerical observations
for the Ruelle-Pollicott resonances and $\gamma_{\mathrm{asympt.}}$
and discussion}

As the linear branches of the IFS can be extended from the intervals
$I_{i}$ to disks in the complex plane, the linear IFS can also be
considered as a holomorphic IFS and its Ruelle-Pollicott spectrum
can be calculated using a dynamical zeta function approach, introduced
by Jenkinson and Pollicott \cite{Jenkinson_Pollicott_02} (see also
\cite{zworski_lin_guillope_02,Borthwick_14,Borthwick_Weich_14,wei14,faure_weich_barkhofen_2014}
for applications and further details). 

Figure \ref{fig:spec_105}(a) shows the Ruelle-Pollicott spectrum
of $\mathcal{L}_{\nu}$ for a given value of $\nu$. Figure \ref{fig:spec_105}(b)
shows the value $\log\left(r_{s}\left(\mathcal{L}_{\nu}\right)\right)=\max_{j}\left(\mathrm{Re}\left(\log\left(\lambda_{j}\right)\right)\right)$
as a function of $\nu$, that we want to bound for $\nu\rightarrow\infty$.
It can be observed, that $\log\left(r_{s}\left(\mathcal{L}_{\nu}\right)\right)$
decays rather quickly starting from $\gamma_{\textup{Gibbs}}$ and
then oscillates in a wide range. Each ``bump'' is due to an individual
eigenvalue. The numerical results indicate that \textbf{the new bound
$\gamma_{\mathrm{up}}$ is not an optimal bound} of $\log\left(r_{s}\left(\mathcal{L}_{\nu}\right)\right)$.
Furthermore the conjecture $\gamma_{conj}=\frac{1}{2}\mathrm{Pr}\left(2\left(V-J\right)\right)$
proposed (\ref{eq:def_gamma_conj}) is not observed to be an upper
bound in this range of $\nu$. However the value of $\log\left(r_{s}\left(\mathcal{L}_{\nu}\right)\right)$
performs ``large fluctuations'' touching the value of $\gamma_{conj}$
several times. A similar phenomenon has been observed for the related
question of the asymptotic spectral gap for the Laplacian on Schottky
surfaces (see \cite[Figure 13]{Borthwick_Weich_14}). The conjecture
$\gamma_{\mathrm{asympt.}}=\gamma_{conj}$ could thus hold if one
suspects, that the ``large fluctuations'' of $\log\left(r_{s}\left(\mathcal{L}_{\nu}\right)\right)$
die out in the semiclassical limit.

\-
\begin{figure}[H]
\centering{}\input{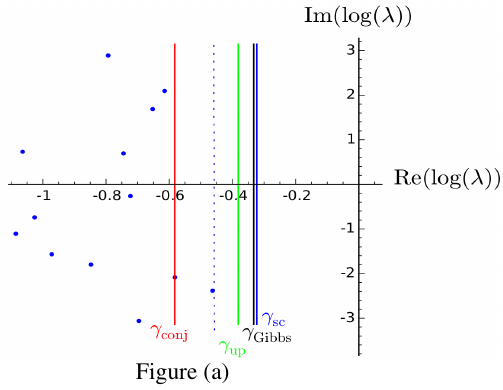tex_t}\input{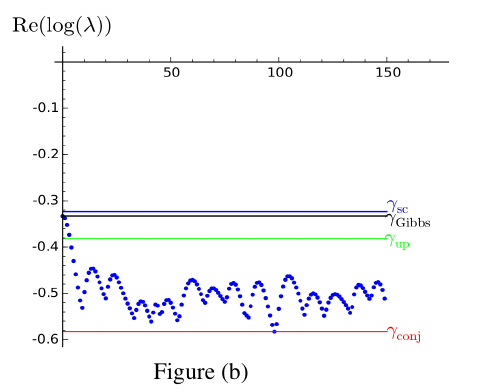tex_t}\caption{\label{fig:spec_105}Model of linear IFS. \textbf{Figure (a) }shows
the Ruelle-Pollicott eigenvalues $\lambda_{j}\in\mathbb{C}$ (blue
points) of the operator $\mathcal{L}_{\nu}$ for parameters $\nu=105$,
$V=0$, $J_{2}=J_{1}+1$ and $\delta=0.65$. Vertical lines show $\gamma_{conj},\gamma_{\mathrm{up}},\gamma_{\mathrm{Gibbs}},\gamma_{\mathrm{sc}}$
and $\log\left(r_{s}\left(\mathcal{L}_{\nu}\right)\right)=\max_{j}\left(\mathrm{Re}\left(\log\left(\lambda_{j}\right)\right)\right)$
in dotted line. \textbf{Figure (b) }shows $\log\left(r_{s}\left(\mathcal{L}_{\nu}\right)\right)$
with blue points, as a function of $\nu$.}
\end{figure}

\subsection{\label{subsec:Truncated-Gauss-map}Truncated Gauss map}

The model of transfer operators considered here is constructed from
the Gauss map and has simple expressions. The Gauss map is important
in number theory in relation with continued fractions. The Gauss map
is defined by
\begin{equation}
G:\begin{cases}
\left]0,1\right] & \rightarrow\left]0,1\right[\\
y & \rightarrow\frac{1}{y}\mbox{ mod }1
\end{cases}.\label{eq:Gauss_map}
\end{equation}
As this map has an infinite number of branches it does not fit into
the Definition \ref{def:IFSAn-iterated-function} of an IFS. However
if we restrict ourselves to a finite number of branches we get a well
defined IFS. For more details on this construction we refer to \cite[Section 2.1 and 7.1]{faure_arnoldi_tobias_13}.

\begin{center}{\color{red}\fbox{\color{black}\parbox{16cm}{
\begin{defn}
Let $N\geq1$. We consider the finite number of inverse branches of
the Gauss map given for $1\leq j\leq N$ by $(G^{-1})_{j}(x):=1/(x+j)$.
Now for $1\leq i\leq N$ let $a_{i}=1+i$ and $b_{i}$ such that $(G^{-1})_{i}(\frac{1}{N+1})<b_{i}<\frac{1}{i}$.
Then we set the intervals of the truncated Gauss IFS to be $I_{i}=[a_{i},b_{i}]$.
We take the full $N\times N$ matrix as adjacency matrix and define
the maps 
\[
\phi_{i,j}(x):=(G^{-1})_{j}(x):=\frac{1}{x+j},~1\leq i,j\leq N.
\]
See Figure \ref{fig:Gauss_Cutter}.
\end{defn}

}}}\end{center}

The dynamical properties of such truncated Gauss IFS play an important
role in the study of continued fraction expansions (see e.g. \cite{Hen92,MU99}).
In \cite[Prop.7.1]{faure_arnoldi_tobias_13} it has been shown, that
the minimal captivity assumption is fulfilled for roof function $\tau(x)=-J(x)$.
So Theorem \ref{thm:main_result} can be applied.

\begin{figure}[h]
\begin{centering}
\input{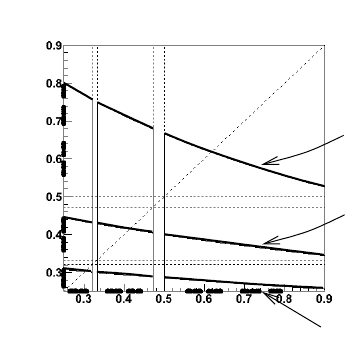tex_t}
\par\end{centering}
\caption{\label{fig:Gauss_Cutter}The iterated functions system (IFS) defined
from the truncated Gauss map (\ref{eq:Gauss_map}). Here we have $N=3$
branches. The maps $\phi$: $\phi_{i,j}:I_{i}\rightarrow I_{j}$,
$i,j=1\ldots N$ are contracting and given by $\phi_{i,j}\left(x\right)=\frac{1}{x+j}$.
The trapped set $K$ defined in (\ref{eq:trapped_set_K_def1}) is
a $N$-adic Cantor set.}
\end{figure}

\begin{figure}[h]
\begin{centering}
\input{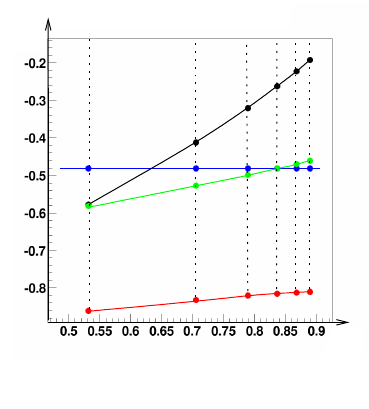tex_t}$\qquad$\input{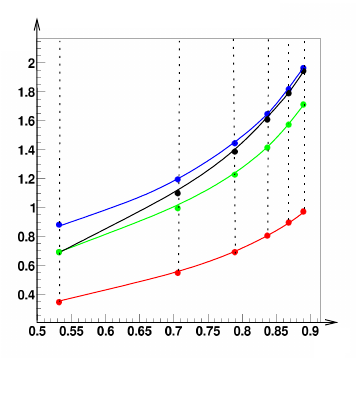tex_t}
\par\end{centering}
\caption{\label{fig:gamma_Gauss}Plot of various estimates $\gamma_{\mathrm{sc}},\gamma_{\mathrm{Gibbs}},\gamma_{\mathrm{up}},\gamma_{conj}$
defined in Section \ref{subsec:Theorems} for the truncated Gauss
map as a function of $\delta=\dim_{H}\left(K\right)\in]0,1[$ and
for $N=2,3\ldots7$ branches. We put some tiny lines in color between
the dots to help the reading.}
\end{figure}

Figure \ref{fig:gamma_Gauss} shows $\gamma_{\mathrm{sc}},\gamma_{\mathrm{Gibbs}},\gamma_{\mathrm{up}},\gamma_{conj}$
as a function of $\delta$ for $V=0$ and $V=J$. Figure \ref{fig:Gauss_Numerical}
shows numerical results for $\log\left(r_{s}\left(\mathcal{L}_{\nu}\right)\right)$.
We can make the same observations and comments as in Section \ref{subsec:Numerical-observations-for_Linear}.

\begin{figure}[H]
\centering{}\input{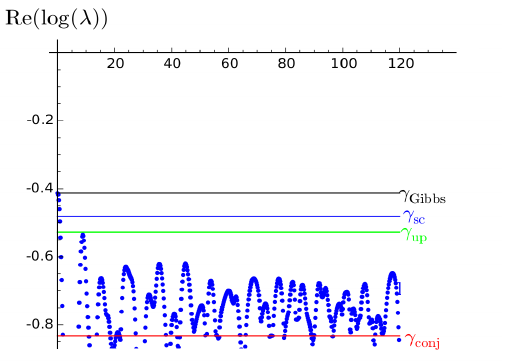tex_t}\caption{\label{fig:Gauss_Numerical}Numerical values of $\log\left(r_{s}\left(\mathcal{L}_{\nu}\right)\right)$
(blue points) as a function of $\nu$ for the truncated Gauss map
model with $N=3$ branches and $V=0$. }
\end{figure}

\section{\label{sec:A-formula-by}Topological pressure}

\subsection{Definition and basic properties}

We use the notations introduced in Section \ref{subsec:Symbolic-dynamics}.
For a given admissible word $w_{0,n}$ of length $n+1$, let $w\in\mathcal{W}$
be an arbitrary extension of $w_{0,n}$. Let $x_{w}:=S\left(w_{-}\right)\in K$
according to Definition \ref{def:The-symbolic-coding_S}. For a function
$g\in C\left(I;\mathbb{R}\right)$, we define $g_{w_{0,n}}\left(x_{w}\right):=\sum_{k=1}^{n}g\left(\phi_{w_{0,k}}\left(x_{w}\right)\right)$
its Birkhoff sum. Note that $g_{w_{0,n}}\left(x_{w}\right)$ is not
completely determined by $w_{0,n}$ but depends also on its extension,
to a bi-infinite word $w\in\mathcal{W}$. However this dependence
is well controllable for Lipschitz functions:

\noindent\fcolorbox{blue}{white}{\begin{minipage}[t]{1\columnwidth - 2\fboxsep - 2\fboxrule}%
\begin{lem}
\label{lem:bounded_variation}If $g\in C\left(I;\mathbb{R}\right)$
is Lipschitz, then there is a constant $C$ such that for any $n\in\mathbb{N}$
any $w_{0,n}$ and two arbitrary points $x,y\in I_{w_{0}}$ we have
\[
\left|g_{w_{0,n}}\left(x\right)-g_{w_{0,n}}\left(y\right)\right|\leq C.
\]
In particular for two arbitrary extensions $w,w'\in\mathcal{W}$ of
$w_{0,n}$ we have 
\[
\left|g_{w_{0,n}}\left(x_{w}\right)-g_{w_{0,n}}\left(x_{w'}\right)\right|\leq C.
\]
\end{lem}

\end{minipage}}
\begin{proof}
The statement follows directly from a geometric series argument using
the fact that $\phi_{i,j}$ are uniformly contracting and that $g$
is Lipschitz (see e.g. \cite[Proposition 4.1]{Falconer_97} )
\end{proof}
\begin{center}{\color{red}\fbox{\color{black}\parbox{16cm}{
\begin{defn}
The \textbf{topological pressure} of a function $g\in C\left(I;\mathbb{R}\right)$
which is Lipschitz continuous is defined as
\begin{equation}
\mathrm{Pr}\left(g\right):=\lim_{n\rightarrow\infty}\frac{1}{n}\log\left(\sum_{w_{0,n}}e^{g_{w_{0,n}}\left(x_{w}\right)}\right).\label{eq:def_pressure-1}
\end{equation}
Equivalently
\begin{equation}
\sum_{w_{0,n}}e^{g_{w_{0,n}\left(x_{w}\right)}}=e^{n\left(\mathrm{Pr}\left(g\right)+R\left(n\right)\right)},\quad R\left(n\right)\underset{n\rightarrow\infty}{\longrightarrow0}.\label{eq:sum_with_pressure-1}
\end{equation}
\end{defn}

}}}\end{center}
\begin{rem}
\label{rem:Fekete}Lemma \ref{lem:bounded_variation} assures that
$\mathrm{Pr}\left(g\right)$ is independent on the extensions of the
words. The fact, that the limit $n\to\infty$ exists can be seen as
follows: If we set $a_{n}:=\log\left(\sum_{w_{0,n}}e^{g_{w_{0,n}}\left(x_{w}\right)}\right)$,
then using Lemma \ref{lem:bounded_variation} we deduce that there
is a constant $c>0$ such that $a_{k+m}\geq a_{k}+a_{m}-c$. Consequently
$\tilde{a}_{k}=a_{k}-c$ is a superadditive sequence (i.e. $\tilde{a}_{k+m}\geq\tilde{a}_{k}+\tilde{a}_{m}$)
thus the limit $\lim_{n\to\infty}\frac{\tilde{a}_{n}}{n}=\lim_{n\to\infty}\frac{a_{n}}{n}$
exists in $\mathbb{R}\cup\{\infty\}$ from Fekete's Lemma. The fact
that the limit is finite is deduced from the crude bound $\sum_{w_{0,n}}e^{g_{w_{0,n}}\left(x_{w}\right)}\leq N^{n}e^{n\sup_{I}g}$.
\end{rem}

~
\begin{rem}
The expression of $\mathrm{Pr}\left(g\right)$, Eq.(\ref{eq:def_pressure-1})
is similar to the Helmholtz free energy in \href{https://en.wikipedia.org/wiki/Statistical_mechanics}{statistical physics}.
\end{rem}

A particular useful example of a topological pressure is with the
choice of function $g=-\beta J$ where $\beta\in\mathbb{R}$ and $J$
is the unstable Jacobian (\ref{eq:def_J}):

\begin{equation}
P(\beta):=\mathrm{Pr}\left(-\beta J\right)=\lim_{n\to\infty}\frac{1}{n}\log\left(\sum_{w_{0,n}}e^{-\beta J_{w_{0,n}}\left(x_{w}\right)}\right).\label{eq:def_P_beta-1}
\end{equation}
The Bowen formula \cite[p.77]{Falconer_97} gives the \textbf{Hausdorff
dimension} $\delta=\dim_{H}K\in\left[0,1\right]$ of the trapped set
$K$ , (\ref{eq:trapped_set_K_def1}), as the unique solution of
\begin{equation}
P\left(\delta\right)=0.\label{eq:def_delta-1}
\end{equation}
The \textbf{topological entropy }counts the exponential rate of number
of trajectories with respect to time $n$:
\begin{equation}
h_{top}:=P\left(0\right)=\lim_{n\to\infty}\frac{1}{n}\log\left(\sharp\left\{ w_{0,n}\mbox{ admissible}\right\} \right)\label{eq:def_htop-1}
\end{equation}

\subsection{Distribution of time averages of $f$ weighted by $g$}

The theory of large deviations has originally been developed in the
context of stochastic processes and has later been adapted for hyperbolic
dynamical systems (see e.g. \cite{young1990large,kifer1992averaging,kifer1994large}).\emph{
}In this section we will shortly collect a few of these results in
the context of our systems and give self contained proofs for the
sake of completeness.

Let $f,g\in C\left(I,\mathbb{R}\right)$ be two functions. For a given
$n\geq1$, we use the function $g$ to define a probability measure
$p_{g}$ on the set of admissible words (or trajectories) $w_{0,n}$
with a given length $n+1$: 
\begin{equation}
p_{g}\left(w_{0,n}\right):=\frac{1}{Z_{n}\left(g\right)}e^{g_{w_{0,n}}\left(x_{w}\right)}\label{eq:def_p_g_n}
\end{equation}
 where $Z_{n}\left(g\right):=\sum_{w_{0,n}}e^{g_{w_{0,n}}\left(x_{w}\right)}$
is the normalization factor (called ``partition function'' in physics).
We are interested in the distribution of time Birkhoff averages of
the function $f$ for large time $n$, namely the values $\left(\frac{1}{n}f_{w_{0,n}}\left(x_{w}\right)\right)_{w_{0,n}}$
where each value $\frac{1}{n}f_{w_{0,n}}\left(x_{w}\right)$ is weighted
by the probability $p_{g}\left(w_{0,n}\right)$. Let
\[
f_{min}:=\lim_{n\to\infty}\inf_{w_{0,n}}\left(\frac{1}{n}f_{w_{0,n}}\left(x_{w}\right)\right),\quad\quad f_{max}:=\lim_{n\to\infty}\sup_{w_{0,n}}\left(\frac{1}{n}f_{w_{0,n}}\left(x_{w}\right)\right)
\]
be the limit values of the distribution. The average of this distribution
is
\[
\left\langle f\right\rangle _{n,g}:=\sum_{w_{0,n}}p_{g}\left(w_{0,n}\right)\left(\frac{1}{n}f_{w_{0,n}}\left(x_{w}\right)\right),
\]
and its variance is
\begin{eqnarray*}
\mathrm{Var}_{n,g}\left(f\right) & := & \sum_{w_{0,n}}p_{g}\left(w_{0,n}\right)\left(\left(\frac{1}{n}f_{w_{0,n}}\left(x_{w}\right)\right)-\left\langle f\right\rangle _{n,g}\right)^{2}.
\end{eqnarray*}
To express some results concerning this distribution, let us introduce
the function
\begin{equation}
u:\beta\in\mathbb{R}\rightarrow u\left(\beta\right):=\mathrm{Pr}\left(g+\beta f\right)\in\mathbb{R}\label{eq:def_u}
\end{equation}

\begin{center}{\color{blue}\fbox{\color{black}\parbox{16cm}{
\begin{prop}
\label{prop:pressure_derivatives-1}The function $u$ is convex. We
have
\[
\lim_{n\rightarrow\infty}\left\langle f\right\rangle _{n,g}=\left(\frac{du}{d\beta}\right)\left(0\right),\qquad\lim_{n\rightarrow\infty}n\mathrm{Var}_{n,g}\left(f\right)=\left(\frac{d^{2}u}{d\beta^{2}}\right)\left(0\right).
\]
We also have
\[
f_{min}=\lim_{\beta\to-\infty}\left(\frac{du}{d\beta}\right)\left(\beta\right),\qquad f_{max}=\lim_{\beta\to+\infty}\left(\frac{du}{d\beta}\right)\left(\beta\right).
\]
\end{prop}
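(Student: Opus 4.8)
The plan is to reduce the whole statement to elementary properties of a tilted partition function together with the standard thermodynamic formalism for the subshift of finite type underlying the IFS. For $\beta\in\mathbb{R}$ set $Z_{n}(\beta):=\sum_{w_{0,n}}e^{g_{w_{0,n}}(x_{w})+\beta f_{w_{0,n}}(x_{w})}$ and $h_{n}(\beta):=\frac{1}{n}\log Z_{n}(\beta)$, so that by the definition (\ref{eq:def_pressure-1}) of the topological pressure and (\ref{eq:def_u}) one has $h_{n}(\beta)\to u(\beta)=\mathrm{Pr}(g+\beta f)$ pointwise in $\beta$. First I would note that each $h_{n}$ is convex in $\beta$ (H\"older's inequality applied to the finite sum $\sum_{w}e^{a_{w}+\beta b_{w}}$), hence the pointwise limit $u$ is convex, which gives the convexity assertion. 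Next, differentiating under the finite sum gives the exact finite-$n$ identities $h_{n}'(0)=\langle f\rangle_{n,g}$ and $h_{n}''(0)=n\,\mathrm{Var}_{n,g}(f)$, since these are precisely the first two cumulants of $\frac{1}{n}f_{w_{0,n}}$ under the probability measure $p_{g}$ of (\ref{eq:def_p_g_n}). So the first two claimed limits amount to $h_{n}'(0)\to u'(0)$ and $h_{n}''(0)\to u''(0)$.

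For the first derivative this is soft once we know $u$ is differentiable at $0$: a sequence of convex functions converging pointwise converges together with its derivatives at every point of differentiability of the limit. To get differentiability (in fact real-analyticity) of $u$ I would invoke the Ruelle--Perron--Frobenius theory: by bounded distortion the quantities $g_{w_{0,n}}(x_{w})$ and $f_{w_{0,n}}(x_{w})$ differ from genuine Birkhoff sums of H\"older functions on the symbol space by a uniformly bounded amount, so $\mathrm{Pr}(\cdot)$ coincides with the classical pressure and $\beta\mapsto u(\beta)=\log\lambda(\beta)$, with $\lambda(\beta)$ the leading (simple, isolated) eigenvalue of the transfer operator $\mathcal{L}_{g+\beta f}$ on a H\"older space, is real-analytic by analytic perturbation of an isolated simple eigenvalue. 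Hence $h_{n}'(0)\to u'(0)$ and the first identity follows.

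The step I expect to be the main obstacle is the variance identity, because pointwise convergence of convex functions does not propagate to second derivatives. The clean way around this is to upgrade $h_{n}\to u$ to convergence that is uniform with an $O(1/n)$ rate on a complex disc around $0$: the RPF theorem gives $Z_{n}(\beta)=c(\beta)\,\lambda(\beta)^{n}\bigl(1+O(\rho^{n})\bigr)$ with $0<\rho<1$ and $c(\beta)$ bounded and bounded away from $0$, all locally uniformly in complex $\beta$ (the $x_{w}$-normalisation only costs a further bounded distortion factor). Taking logarithms, $h_{n}(\beta)=u(\beta)+O(1/n)$ uniformly on a fixed disc, and since both sides are holomorphic there, Cauchy's estimates give $h_{n}^{(k)}(0)\to u^{(k)}(0)$ for every $k$, in particular $n\,\mathrm{Var}_{n,g}(f)=h_{n}''(0)\to u''(0)$. (Equivalently, $u''(0)$ can be computed directly as the asymptotic variance of $f$ under the equilibrium state of $g$ from second-order perturbation of $\lambda(\beta)$.) Finally, for the extreme values I would use that convexity of $u$ forces $u'(\beta)\to\lim_{\beta\to+\infty}u(\beta)/\beta$, and from $u(\beta)=\lim_{n}\frac{1}{n}\log\sum_{w_{0,n}}e^{g_{w_{0,n}}+\beta f_{w_{0,n}}}$, dividing by $\beta$ and using $\frac{1}{n}|g_{w_{0,n}}|=O(1)$ and $\frac{1}{n}\log\#\{w_{0,n}\}\le h_{top}<\infty$ (cf.\ (\ref{eq:def_htop-1})), one reads off $\lim_{\beta\to+\infty}u(\beta)/\beta=f_{max}$; the $\beta\to-\infty$ case giving $f_{min}$ is symmetric. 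The only genuinely non-elementary input throughout is the RPF / analytic-perturbation package for transitive subshifts of finite type with H\"older potentials, which is classical.
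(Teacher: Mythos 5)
Your proposal is correct, but it takes a more thorough route than what the paper actually writes out, and the comparison is instructive. The paper's proof of this proposition is very terse: it introduces $S_n(\beta)$ and $u_n(\beta)=\tfrac{1}{n}\log S_n(\beta)$, computes the exact finite-$n$ identities $u_n'(0)=\langle f\rangle_{n,g}$ and $u_n''(0)=n\,\mathrm{Var}_{n,g}(f)$ (the same two cumulant identities you derive), and then simply ``deduces'' $u''(0)\ge 0$ and hence convexity of $u$ after shifting $g\mapsto g+\beta f$. In other words, the paper silently assumes the convergences $u_n'(0)\to u'(0)$ and $u_n''(0)\to u''(0)$, uses the second to infer nonnegativity of $u''$, and does not touch the $f_{\min},f_{\max}$ assertions at all.

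You do three things better. First, you get convexity of $u$ \emph{before} and independently of any derivative convergence, by H\"older's inequality applied to each finite sum $Z_n$ (so each $u_n$ is convex) and then passing to the pointwise limit; this avoids the mild circularity in the paper's reasoning, which needs the hard second-derivative convergence to establish the ``easy'' convexity. Second, you actually justify the limit interchange: pointwise convergence of convex functions gives $u_n'(0)\to u'(0)$ once $u$ is differentiable at $0$, and you correctly flag that the second-derivative convergence does \emph{not} follow from convexity alone and requires a quantitative input. Your fix via RPF (writing $Z_n(\beta)=c(\beta)\lambda(\beta)^n(1+O(\rho^n))$ locally uniformly in complex $\beta$, taking $\log$, and applying Cauchy's estimates) is the clean way to get $u_n^{(k)}(0)\to u^{(k)}(0)$ for all $k$; the bounded-distortion remark reconciling the periodic-orbit pressure with the RPF eigenvalue is the right thing to check. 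Third, you supply the proof of the $f_{\min},f_{\max}$ limits (convexity forces $u'(\beta)\to u(\beta)/\beta$ as $\beta\to\pm\infty$, and the pressure grows like $\beta f_{\max}$ up to a bounded term since $\tfrac{1}{n}|g_{w_{0,n}}|$ and $\tfrac{1}{n}\log\#\{w_{0,n}\}$ are $O(1)$), which the paper omits entirely. The price you pay is one genuinely nonelementary import --- the RPF / analytic-perturbation package for H\"older potentials on a transitive subshift of finite type --- which the paper never invokes explicitly in this appendix; that package is, however, entirely standard for this class of IFS, so this is a fair trade for a complete argument.
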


}}}\end{center}
\begin{proof}
Write $S_{n}\left(\beta\right):=\sum_{w_{0,n}}e^{g_{w_{0,n}}\left(x_{w}\right)+\beta f_{w_{0,n}}\left(x_{w}\right)}$
and $u_{n}\left(\beta\right):=\frac{1}{n}\log S_{n}\left(\beta\right)$.
We have $\left(\frac{du_{n}}{d\beta}\right)\left(0\right)=\frac{1}{n}\frac{S'_{n}\left(0\right)}{S_{n}\left(0\right)}=\left\langle f\right\rangle _{n,g}$
and $\left(\frac{d^{2}u_{n}}{d\beta^{2}}\right)\left(0\right)=\frac{1}{n}\left(\frac{S_{n}^{\prime\prime}\left(0\right)}{S_{n}\left(0\right)}-\left(\frac{S'_{n}\left(0\right)}{S_{n}\left(0\right)}\right)^{2}\right)=n\mathrm{Var}_{n,g}\left(f\right)$.
We deduce that $\left(\frac{d^{2}u}{d\beta^{2}}\right)\left(0\right)\geq0$.
We can replace $g$ by $g+\beta f$ and deduce that $\left(\frac{d^{2}u}{d\beta^{2}}\right)\left(\beta\right)\geq0$.
So $u$ is convex.
\end{proof}
We will now consider ``large deviations'' of the distribution. Note
that the variance of the distribution is of order $1/n$ so being
at a distance $\asymp1$ from the expectation value is already a ``large
deviation''. Thus we consider for an interval $\mathcal{I}\subset]f_{min},f_{max}[$
the quantity

\[
P\left(n,\mathcal{I}\right):=\sum_{w_{0,n}\mbox{ s.t. }\frac{1}{n}f_{w_{0,n}}\in\mathcal{I}}p_{g}\left(w_{0,n}\right)
\]
which represents the probability that $\left(\frac{1}{n}f_{w_{0,n}}\right)\in\mathcal{I}$.
In particular for $f_{min}<t<f_{max}$ let 
\begin{eqnarray}
\Omega\left(t\right): & = & \lim_{\epsilon\rightarrow0}\lim_{n\to\infty}\frac{1}{n}\log\left(P\left(n,[t-\varepsilon,t+\varepsilon[\right)\right)\quad\in\mathbb{R}\cup\left\{ -\infty\right\} .\label{eq:def_Omega}\\
 & \underset{(\ref{eq:def_p_g_n})}{=} & -\mathrm{Pr}\left(g\right)+\lim_{\epsilon\rightarrow0}\lim_{n\to\infty}\frac{1}{n}\log\left(\sum_{w_{0,n}\mbox{ s.t. }\frac{1}{n}f_{w_{0,n}}\in[t-\varepsilon,t+\varepsilon[}e^{g_{w_{0,n}}\left(x_{w}\right)}\right)\nonumber 
\end{eqnarray}
be the exponential rate of the probability as $n\rightarrow\infty$
for a small interval around $t$. In the last expression, the limit
$n\to\infty$ exists from a superadditivity argument analogous to
the argument given in Remark \ref{rem:Fekete} above. The limit $\varepsilon\to0$
exists because one obtains a monotonously decreasing sequence. 

Note that if $f$ is cohomologous to a constant $c$ (i.e. $f=c+\eta-\eta\circ\phi^{-1}$
with some function $\eta$), then there is another constant $C$ such
that for any word $w_{0,n}$ we have $\left|f_{w_{0,n}}-nc\right|\leq C$.
In particular the complete distribution of $f_{w_{0,n}}$ is contained
in the interval $[c-C/n,c+C/n]$, so the question of studying large
deviations becomes trivial in this case. We therefore assume from
now on, that $f$ is not cohomologous to a constant, which implies,
that the pressure function $u(\beta)$ is strictly convex.

\begin{center}{\color{blue}\fbox{\color{black}\parbox{16cm}{
\begin{prop}
\label{prop:LD}\textbf{''Large deviations''}. Let $t\in\mathbb{R}$
and $\beta\left(t\right)$ be defined by
\[
t=\frac{d}{d\beta}\mathrm{Pr}\left(g+\beta f\right)_{/\beta=\beta\left(t\right)}=\frac{du}{d\beta}{}_{/\beta=\beta\left(t\right)},
\]
and 
\[
v\left(t\right):=\beta\left(t\right)\cdot t-\mathrm{Pr}\left(g+\beta\left(t\right)f\right)
\]
be the Legendre transform \cite[p.61]{arnold-mmmc} of the function
$u$, Eq.(\ref{eq:def_u}). Then for $f_{min}<t<f_{max}$ we have
\begin{equation}
\Omega\left(t\right)=-\mathrm{Pr}\left(g\right)-v\left(t\right).\label{eq:Omega}
\end{equation}
\end{prop}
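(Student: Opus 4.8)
The plan is to establish the two matching inequalities $\Omega(t)\le -\mathrm{Pr}(g)-v(t)$ and $\Omega(t)\ge -\mathrm{Pr}(g)-v(t)$, working from the reformulation (\ref{eq:def_Omega}). This reduces the statement to computing
\[
\Lambda(t):=\lim_{\epsilon\to0}\lim_{n\to\infty}\frac1n\log\Big(\sum_{w_{0,n}\ :\ \frac1n f_{w_{0,n}}\in[t-\varepsilon,t+\varepsilon[} e^{g_{w_{0,n}}(x_w)}\Big)
\]
and showing $\Lambda(t)=-v(t)=\mathrm{Pr}(g+\beta(t)f)-\beta(t)t$. This is a Gärtner--Ellis / Pollicott--Sharp style computation; the only real inputs are the pressure asymptotics (\ref{eq:sum_with_pressure-1}) and the first and second moment formulas of Proposition \ref{prop:pressure_derivatives-1} (the result can also be quoted directly from the large deviation theory for the topological pressure).

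For the upper bound I would use the exponential Chebyshev trick. Fix any $\beta\in\mathbb{R}$. On the event $\frac1n f_{w_{0,n}}\in[t-\varepsilon,t+\varepsilon[$ one has $-\beta f_{w_{0,n}}\le -\beta n t+|\beta|n\varepsilon$, hence
\[
\sum_{w_{0,n}\ :\ \frac1n f_{w_{0,n}}\in[t-\varepsilon,t+\varepsilon[} e^{g_{w_{0,n}}}\ \le\ e^{(-\beta t+|\beta|\varepsilon)n}\sum_{w_{0,n}}e^{g_{w_{0,n}}+\beta f_{w_{0,n}}}\ =\ e^{n(-\beta t+|\beta|\varepsilon+\mathrm{Pr}(g+\beta f)+R(n))}
\]
by (\ref{eq:sum_with_pressure-1}). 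Taking $\frac1n\log$, then $n\to\infty$, then $\varepsilon\to0$ gives $\Lambda(t)\le-\beta t+\mathrm{Pr}(g+\beta f)$ for every $\beta$; optimizing over $\beta$ (using that $u(\beta)=\mathrm{Pr}(g+\beta f)$ is convex by Proposition \ref{prop:pressure_derivatives-1}, so the infimum is attained at $\beta=\beta(t)$ with $u'(\beta(t))=t$) yields $\Lambda(t)\le-v(t)$.

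For the lower bound I would change measure by tilting with $\beta(t)$. Writing $e^{g_{w_{0,n}}}=e^{g_{w_{0,n}}+\beta(t)f_{w_{0,n}}}\,e^{-\beta(t)f_{w_{0,n}}}$ and using $-\beta(t)f_{w_{0,n}}\ge-\beta(t)nt-|\beta(t)|n\varepsilon$ on the same event,
\[
\sum_{w_{0,n}\ :\ \frac1n f_{w_{0,n}}\in[t-\varepsilon,t+\varepsilon[} e^{g_{w_{0,n}}}\ \ge\ e^{(-\beta(t)t-|\beta(t)|\varepsilon)n}\ \widetilde P_n(\varepsilon)\ Z_n\big(g+\beta(t)f\big),
\]
where $\widetilde P_n(\varepsilon)$ denotes the analogue of $P(n,[t-\varepsilon,t+\varepsilon[)$ in (\ref{eq:def_p_g_n}) but with $g$ replaced by $g+\beta(t)f$, and $Z_n(g+\beta(t)f)=e^{n(\mathrm{Pr}(g+\beta(t)f)+R(n))}$. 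Thus it suffices to show $\frac1n\log\widetilde P_n(\varepsilon)\to0$ for each fixed $\varepsilon>0$; feeding this back and letting $\varepsilon\to0$ gives $\Lambda(t)\ge-\beta(t)t+\mathrm{Pr}(g+\beta(t)f)=-v(t)$, and adding $-\mathrm{Pr}(g)$ as in (\ref{eq:def_Omega}) produces (\ref{eq:Omega}).

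The main (and essentially only) obstacle is therefore this concentration statement for the tilted measure, and it is handled by a second-moment estimate. Apply Proposition \ref{prop:pressure_derivatives-1} with $g$ replaced by $g+\beta(t)f$: the mean satisfies $\langle f\rangle_{n,\,g+\beta(t)f}\to u'(\beta(t))=t$ and $n\,\mathrm{Var}_{n,\,g+\beta(t)f}(f)\to u''(\beta(t))<\infty$, so $\mathrm{Var}_{n,\,g+\beta(t)f}(f)=O(1/n)$. For $n$ large the mean lies within $\varepsilon/2$ of $t$, and Chebyshev's inequality gives
\[
1-\widetilde P_n(\varepsilon)\ \le\ \frac{\mathrm{Var}_{n,\,g+\beta(t)f}(f)}{(\varepsilon/2)^2}\ \xrightarrow[n\to\infty]{}\ 0,
\]
so in fact $\widetilde P_n(\varepsilon)\to1$, which is far stronger than the required non-exponential decay. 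Note that the hypothesis $f_{min}<t<f_{max}$ is exactly what guarantees, via the last part of Proposition \ref{prop:pressure_derivatives-1} and the monotonicity of $u'$, that a finite $\beta(t)$ with $u'(\beta(t))=t$ exists; nothing else is needed, since the argument uses only finiteness (not positivity) of $u''(\beta(t))$.
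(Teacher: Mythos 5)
Your proof is correct, and it follows a genuinely different route from the paper's. You prove the matching bounds directly in G\"artner--Ellis style: the upper bound $\Omega(t)\le -\mathrm{Pr}(g)-v(t)$ via the exponential Chebyshev inequality combined with convexity of $u$, and the lower bound via tilting by $\beta(t)$ together with a second-moment concentration estimate drawn from Proposition \ref{prop:pressure_derivatives-1}. The paper instead takes a duality route: it shows the rate function $K_g(t)$ (your $\Lambda(t)$) is upper semi-continuous and concave --- the concavity via a midpoint-concavity argument that concatenates trajectories (and which is where the transitivity of the adjacency matrix enters) --- and separately that $\sup_t(\beta t + K_g(t)) = u(\beta)$; it then concludes $K_g=-v$ by the Fenchel--Moreau theorem. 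Your approach is more hands-on and does not invoke convex duality, but it pays by needing the quantitative second-moment estimate (the $O(1/n)$ variance bound) to control the tilted probability $\widetilde P_n(\varepsilon)$; the paper avoids that but must instead establish concavity of $K_g$ and appeal to Fenchel--Moreau. Both proofs rely on Proposition \ref{prop:pressure_derivatives-1} --- you use all three parts (first derivative, second derivative, and the $f_{\min},f_{\max}$ limits), while the paper mainly needs the fact that $u$ is convex and differentiable to make sense of the Legendre transform. Your remark that only finiteness (not positivity) of $u''(\beta(t))$ is needed, and that $f_{\min}<t<f_{\max}$ is exactly what guarantees a finite $\beta(t)$ exists, is accurate and a useful observation.
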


}}}\end{center}
\begin{rem}
We have 
\begin{equation}
\left(\frac{dv}{dt}\right)\left(t\right)=\beta\left(t\right).\label{eq:rem_dvdt}
\end{equation}

The functions $u$ and $v$ are convex. We deduce:
\end{rem}

\begin{center}{\color{blue}\fbox{\color{black}\parbox{16cm}{
\begin{cor}
\label{cor:LD_2}Let $t_{0}\in]f_{min},f_{max}[$ such that $\left(\frac{dv}{dt}\right)\left(t_{0}\right)=\beta\left(t_{0}\right)=0.$
For any interval $\mathcal{I}=\left[t_{a},t_{b}\right]$ with $f_{min}<t_{a}<t_{b}<f_{max}$
we have
\begin{eqnarray*}
\lim_{n\rightarrow\infty}\frac{1}{n}\log\left(P\left(n,\mathcal{I}\right)\right) & = & -\mathrm{Pr}\left(g\right)+\sup_{t\in\mathcal{I}}\left(-v\left(t\right)\right)
\end{eqnarray*}
equivalently
\begin{eqnarray}
\lim_{n\rightarrow\infty}\frac{1}{n}\log\left(\sum_{w_{0,n}\mbox{ s.t. }\frac{1}{n}f_{w_{0,n}}\in\mathcal{I}}e^{g_{w_{0,n}}\left(x_{w}\right)}\right) & = & \sup_{t\in\mathcal{I}}\left(-v\left(t\right)\right)\label{eq:formula_LD}
\end{eqnarray}
with
\begin{eqnarray}
\sup_{t\in\mathcal{I}}\left(-v\left(t\right)\right) & =\begin{cases}
-v\left(t_{0}\right) & \mbox{ if }t_{0}\in\left[t_{a},t_{b}\right]\\
-v\left(t_{b}\right) & \mbox{ if }t_{0}\geq t_{b}\\
-v\left(t_{a}\right) & \mbox{ if }t_{0}\leq t_{a}
\end{cases}\label{eq:SP-1-1}
\end{eqnarray}
\end{cor}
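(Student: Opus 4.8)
The final statement to prove is Corollary \ref{cor:LD_2}, which is a direct consequence of Proposition \ref{prop:LD} on large deviations together with the convexity established in Proposition \ref{prop:pressure_derivatives-1}.

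\textbf{Plan of proof.} The strategy is to obtain the rate of $P(n,\mathcal{I})$ by integrating the pointwise large-deviation rate $\Omega(t)$ from Proposition \ref{prop:LD} over the interval $\mathcal{I}$, and then to evaluate the resulting variational problem $\sup_{t\in\mathcal{I}}(-v(t))$ using the convexity of $v$. First I would cover $\mathcal{I}=[t_a,t_b]$ by finitely many small subintervals $[t_j-\varepsilon,t_j+\varepsilon[$ centered at points $t_j$ with spacing $2\varepsilon$. Since $\mathcal{I}$ is compact and contained in $]f_{min},f_{max}[$, finitely many such pieces suffice, and one has the elementary bound
\[
\max_j P\big(n,[t_j-\varepsilon,t_j+\varepsilon[\big)\;\leq\;P(n,\mathcal{I})\;\leq\;\sum_j P\big(n,[t_j-\varepsilon,t_j+\varepsilon[\big).
\]
Taking $\frac{1}{n}\log(\cdot)$, letting $n\to\infty$, and using that the number of subintervals is fixed (independent of $n$), the $\limsup$ and $\liminf$ are both squeezed between $\max_j \tfrac1n\log P(n,[t_j-\varepsilon,t_j+\varepsilon[)$ and the same quantity plus $o(1)$. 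Then letting $\varepsilon\to0$ and invoking Proposition \ref{prop:LD} (which gives $\lim_{\varepsilon\to0}\lim_{n}\tfrac1n\log P(n,[t-\varepsilon,t+\varepsilon[)=\Omega(t)=-\mathrm{Pr}(g)-v(t)$ for each interior $t$, together with upper semicontinuity of $v$ so that the endpoints cause no loss) yields
\[
\lim_{n\to\infty}\frac{1}{n}\log P(n,\mathcal{I})=\sup_{t\in\mathcal{I}}\big(-\mathrm{Pr}(g)-v(t)\big)=-\mathrm{Pr}(g)+\sup_{t\in\mathcal{I}}(-v(t)).
\]
The equivalent formula (\ref{eq:formula_LD}) follows immediately from the definition (\ref{eq:def_p_g_n}) of $p_g$ and (\ref{eq:def_pressure-1}), since $P(n,\mathcal{I})=\frac{1}{Z_n(g)}\sum_{w_{0,n}:\,\frac1n f_{w_{0,n}}\in\mathcal{I}}e^{g_{w_{0,n}}(x_w)}$ and $\frac1n\log Z_n(g)\to\mathrm{Pr}(g)$.

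\textbf{Evaluation of the variational problem.} For the second part, (\ref{eq:SP-1-1}), I would use that $v$ is convex (being a Legendre transform of the convex function $u$ from Proposition \ref{prop:pressure_derivatives-1}) with derivative $v'(t)=\beta(t)$ by (\ref{eq:rem_dvdt}), and that $\beta(t)$ is non-decreasing. Hence $-v$ is concave, so $-v$ is non-decreasing on $\{t:\beta(t)\leq 0\}=\{t\leq t_0\}$ and non-increasing on $\{t\geq t_0\}$. Therefore on $\mathcal{I}=[t_a,t_b]$: if $t_0\in[t_a,t_b]$ the maximum of $-v$ is attained at $t_0$; if $t_0\geq t_b$ then $-v$ is non-decreasing on all of $\mathcal{I}$ so the max is at $t_b$; and if $t_0\leq t_a$ then $-v$ is non-increasing on $\mathcal{I}$ so the max is at $t_a$. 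This gives exactly the three cases of (\ref{eq:SP-1-1}).

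\textbf{Main obstacle.} The routine part is the subinterval covering and the squeezing argument; the one delicate point is justifying that restricting the pointwise rate to the closed interval $[t_a,t_b]$ (rather than an open one) does not lose anything — i.e.\ that $\sup_{t\in[t_a,t_b]}(-v(t))$ coincides with the limit coming from small open balls around interior points. This is handled by the continuity of $v$ on $]f_{min},f_{max}[$ (convex functions are continuous on the interior of their domain), so the supremum over the closed interval is approached from the interior. A secondary subtlety is that Proposition \ref{prop:LD} is stated for $f_{min}<t<f_{max}$, so one must keep $t_a,t_b$ strictly inside $]f_{min},f_{max}[$, which is exactly the hypothesis of the corollary. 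Hence no real obstacle remains beyond bookkeeping once Proposition \ref{prop:LD} and the convexity of $u,v$ are in hand.
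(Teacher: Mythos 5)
Your proof is correct and fills in exactly the argument the paper leaves implicit (the paper states Corollary \ref{cor:LD_2} after Proposition \ref{prop:LD} and the convexity discussion with only a ``We deduce,'' giving no separate proof). The covering/squeezing derivation of the rate for $P(n,\mathcal{I})$ from the pointwise rate $\Omega(t)$, followed by the evaluation of $\sup_{t\in\mathcal{I}}(-v(t))$ via monotonicity of $v'=\beta$, is the natural route and matches the ingredients the paper has set up. One small imprecision: in the upper-bound step you invoke ``upper semicontinuity of $v$''; what is actually upper semicontinuous (as proved in the paper's proof of Proposition \ref{prop:LD}) is $K_g=-v$, so $v$ is lower semicontinuous. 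For the argument to close cleanly one uses upper semicontinuity of $K_g$ to pick, for each $\delta>0$ and each $t\in\mathcal{I}$, an $\varepsilon(t)$ with $K_{g,\varepsilon(t)}(t)\leq K_g(t)+\delta$, and then compactness to extract a finite subcover before sending $n\to\infty$. Your later remark that $v$ is continuous on $]f_{min},f_{max}[$ by convexity is what rescues the statement and is indeed all that is needed, so the proof stands once that wording is tightened.
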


}}}\end{center}
\begin{proof}[Proof of Proposition \ref{prop:LD}]
We are grateful to Mark Pollicott and Richard Sharp for explaining
Proposition \ref{prop:LD} and Corollary \ref{cor:LD_2} to us. Based
on ideas from Kifer \cite{kifer1992averaging,kifer1994large} these
kind of formulas can be derived from the work of Pollicott and Sharp
\cite{PS96,Pol95,Sha92} using the variational approach to the pressure
function. In the sequel we provide a self-contained proof, which fits
into the periodic orbit definition of the topological pressure which
we use in this article.

For any two functions $f,g\in C\left(I,\mathbb{R}\right)$, for any
$t\in\mathbb{R}$ and any $\varepsilon>0$ let us define the following
quantity
\[
K_{g,\varepsilon}(t):=\lim_{n\to\infty}\frac{1}{n}\log\left(\sum_{w_{0,n}\mbox{ s.t. }\frac{1}{n}f_{w_{0,n}}\in[t-\varepsilon,t+\varepsilon[}e^{g_{w_{0,n}}\left(x_{w}\right)}\right)
\]
Recall that we denoted by $N$ the number of letters, so we get the
very crude estimate $-\infty\leq K_{g,\varepsilon}(t)\leq\log N+\cdot\max_{x\in I}g$.
We also deduce from that fact, that $e^{g_{w_{0,n}}}>0$ and the monotonicity
of the logarithm, that for any fixed $t\in\mathbb{R}$ and for $\varepsilon\to0$
the expression $K_{g,\varepsilon}(t)$ is monotonously decreasing.
Thus we can define
\[
K_{g}(t):=\lim_{\varepsilon\to0}K_{g,\varepsilon}(t)\in\mathbb{R}\cup{-\infty}.
\]
Notice that $\Omega\left(t\right)\underset{(\ref{eq:def_Omega})}{=}-\mathrm{Pr}\left(g\right)+K_{g}\left(t\right)$.
In a first step let us show the following Lemma

\noindent\fcolorbox{blue}{white}{\begin{minipage}[t]{1\columnwidth - 2\fboxsep - 2\fboxrule}%
\begin{lem}
The function $t\rightarrow K_{g}(t)$ is an upper semi-continuous
concave function.
\end{lem}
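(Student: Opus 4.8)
The plan is to show that $K_g(t)$ is concave and upper semi-continuous directly from its definition as a limit of partition-function logarithms, exploiting the multiplicativity of Birkhoff sums along concatenated words and the transitivity of the adjacency matrix.

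First I would establish concavity via a sub-additivity/convexity argument on the exponential counting sums. Fix $t_1,t_2 \in\, ]f_{min},f_{max}[$, a weight $\lambda\in[0,1]$, and $\varepsilon>0$. For lengths $n_1,n_2$ with $n_1/(n_1+n_2)\approx\lambda$, I would concatenate a word $w_{0,n_1}$ with $\frac{1}{n_1}f_{w_{0,n_1}}\in[t_1-\varepsilon,t_1+\varepsilon[$ and a word $w_{0,n_2}$ with $\frac{1}{n_2}f_{w_{0,n_2}}\in[t_2-\varepsilon,t_2+\varepsilon[$. Because the adjacency matrix is transitive, one can bridge the two words with a connecting string of bounded length $T$ (as in the footnote on rearranging words in the proof of Theorem \ref{thm:7.1}), at the cost of a multiplicative constant $e^{\pm T\max|g|}$ and a negligible $O(T/n)$ error in the Birkhoff averages. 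The resulting word of length $\approx n_1+n_2$ has average $\frac{1}{n_1+n_2}f \approx \lambda t_1 + (1-\lambda) t_2$ up to $O(\varepsilon)$, and its $g$-weight factorizes as $e^{g_{w_{0,n_1}}}e^{g_{w_{0,n_2}}}\cdot e^{O(T)}$. Summing over all such pairs and taking $\frac{1}{n_1+n_2}\log$ then letting $n_1,n_2\to\infty$ and $\varepsilon\to0$ yields
\[
K_g(\lambda t_1 + (1-\lambda) t_2) \geq \lambda K_g(t_1) + (1-\lambda) K_g(t_2),
\]
which is concavity. One caveat: the map $n\mapsto K_{g,\varepsilon}$-type quantity requires the limit in $n$ to exist along the relevant subsequences; I would handle this with a standard Fekete-type super-multiplicativity argument (or simply work with $\liminf$/$\limsup$ and show they agree in the limit $\varepsilon\to0$), observing that the bridged-concatenation inequality gives exactly the super-multiplicativity needed.

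Next I would treat upper semi-continuity. Here the key observation, already recorded in the excerpt, is that for fixed $t$ the quantity $K_{g,\varepsilon}(t)$ is monotonically \emph{decreasing} as $\varepsilon\to 0$, so $K_g(t)=\inf_{\varepsilon>0}K_{g,\varepsilon}(t)$ is an infimum of a decreasing family. For each fixed $\varepsilon$, the function $t\mapsto K_{g,\varepsilon}(t)$ is itself upper semi-continuous — indeed it is essentially continuous in $t$ on the relevant range because shifting $t$ by $\delta<\varepsilon$ only changes the constraint set $\{\frac{1}{n}f_{w_{0,n}}\in[t-\varepsilon,t+\varepsilon[\}$ by words whose contribution is absorbed, and more simply, $[t'-\varepsilon,t'+\varepsilon[\subset[t-2\varepsilon,t+2\varepsilon[$ for $|t-t'|\le\varepsilon$, giving $K_{g,\varepsilon}(t')\le K_{g,2\varepsilon}(t)$; letting $\varepsilon\to 0$ after taking $\limsup_{t'\to t}$ gives $\limsup_{t'\to t}K_g(t')\le K_g(t)$. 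Alternatively, and perhaps cleanest: an infimum of any family of upper semi-continuous functions is upper semi-continuous, and each $K_{g,\varepsilon}$ is u.s.c. (in fact one can show it is continuous where finite by the concavity argument above applied at fixed $\varepsilon$), so the infimum $K_g$ is u.s.c.

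The main obstacle I anticipate is the careful bookkeeping in the concatenation step: ensuring that (i) the bridging string of bounded length genuinely exists uniformly (transitivity gives $T$ with $(A^T)_{i,j}>0$ for all $i,j$, so this is fine), (ii) the Birkhoff-sum error from the bridge and from the mismatch $n_1/(n_1+n_2)\ne\lambda$ exactly vanishes in the limit, which forces one to interleave the limits $n\to\infty$, then $\varepsilon\to0$, in the right order, and (iii) the existence of the defining limit in $n$ — this is the one genuinely delicate point, resolved by the super-multiplicativity (Fekete) structure that the concatenation inequality provides. Once these are in place, concavity and upper semi-continuity both follow, and (combined with Proposition \ref{prop:pressure_derivatives-1} and the Legendre-duality computation) this identifies $K_g(t)$ as $\mathrm{Pr}(g) - v(t)$, giving Proposition \ref{prop:LD}.
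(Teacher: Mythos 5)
Your proposal is correct in substance, and your treatment of upper semi-continuity is essentially identical to the paper's: both arguments rest on the fact that $K_{g}(t)=\inf_{\varepsilon>0}K_{g,\varepsilon}(t)$ is a decreasing limit, and that $[t'-\eta,t'+\eta[\subset[t_{0}-\delta,t_{0}+\delta[$ for nearby $t'$ and small $\eta$ forces $K_{g}(t')\le K_{g,\delta}(t_{0})$.

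Where you diverge from the paper is in the concavity step, and the comparison is worth making explicit. The paper concatenates two words of \emph{equal} length $n$ to obtain a word of length $2n$, proving only \emph{midpoint} concavity of $K_{g,\varepsilon}$ (and hence of $K_{g}$ after $\varepsilon\to0$). It then upgrades midpoint concavity to full concavity by citing the classical fact that a midpoint-concave function that is Lebesgue measurable on an interval is concave, with measurability supplied by upper semi-continuity. This is why the lemma bundles u.s.c.\ and concavity together: u.s.c.\ is not incidental, it is the lever that turns midpoint concavity into concavity. You instead concatenate words of \emph{variable} lengths $n_{1},n_{2}$ with $n_{1}/(n_{1}+n_{2})\approx\lambda$, bridged by a bounded string supplied by transitivity, and conclude full concavity directly for arbitrary $\lambda\in[0,1]$. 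That buys you independence from the Sierpi\'nski-type theorem, at the cost of heavier bookkeeping (mismatched lengths, bounded-distortion corrections to the Birkhoff sums at the gluing point, the $O(T/n)$ perturbation of the averages). The paper's equal-length version is combinatorially cleaner but relies on a nontrivial convexity upgrade that your route avoids.

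Two further points of contact are worth noting. First, you explicitly flag the need for the bridging string of bounded length $T$ from transitivity of $A$; the paper's factorization of the double sum into a product of two sums in equation (\ref{eq:LD_product}) quietly assumes this (or a full adjacency matrix), and the remark following the proof concedes that transitivity is what makes it work. Your version is the more honest one here. Second, you correctly raise the issue that the limit in $n$ defining $K_{g,\varepsilon}$ is not guaranteed to exist a priori; the paper simply writes $\lim_{n\to\infty}$ without comment. Your proposed remedy (Fekete via the super-multiplicative structure that the concatenation inequality supplies, or working with $\liminf$/$\limsup$) is the standard and correct fix and genuinely tightens the argument.
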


\end{minipage}}
\begin{proof}
The upper semi-continuity follows easily from the definition of $K_{g}$:
for a given $t_{0}$ and $\varepsilon>0$ take $\delta>0$ such that
$0\leq K_{g,\delta}(t_{0})-K_{g}(t_{0})\leq\varepsilon$. Then for
any $t$ such that $\left|t-t_{0}\right|<\delta$ we get, that $K_{g}(t)\leq K_{g,\delta}(t_{0})\leq K_{g}(t_{0})+\varepsilon$.
For every $\varepsilon>0$, $K_{g,\varepsilon}$ is midpoint concave
because for any $t_{1},t_{2}\in[f_{min},f_{max}]$ we have
\begin{eqnarray}
K_{g,\varepsilon}\left(\frac{t_{1}+t_{2}}{2}\right): & = & \lim_{n\to\infty}\frac{1}{2n}\log\left(\sum_{w_{0,2n}\mbox{ s.t. }\frac{1}{2n}f_{w_{0,2n}}\in[\frac{t_{1}+t_{2}}{2}-\varepsilon,\frac{t_{1}+t_{2}}{2}+\varepsilon[}e^{g_{w_{0,2n}}}\right)\nonumber \\
 & \geq & \lim_{n\to\infty}\frac{1}{2n}\log\left(\sum_{w_{0,2n}\mbox{ s.t. }\frac{1}{n}f_{w_{0,n}}\in[t_{1}-\varepsilon,t_{1}+\varepsilon[\textup{ and }\frac{1}{n}f_{w_{n,2n}}\in[t_{2}-\varepsilon,t_{2}+\varepsilon[}e^{g_{w_{0,n}}+g_{w_{n,2n}}}\right)\nonumber \\
 & = & \lim_{n\to\infty}\frac{1}{2n}\log\left(\left(\sum_{w_{0,n}\mbox{ s.t. }\frac{1}{n}f_{w_{0,n}}\in[t_{1}-\varepsilon,t_{1}+\varepsilon[}e^{g_{w_{0,n}}}\right)\right.\label{eq:LD_product}\\
 &  & \qquad\qquad\left.\quad\quad\left(\sum_{w_{0,n}\mbox{ s.t. }\frac{1}{n}f_{w_{0,n}}\in[t_{2}-\varepsilon,t_{2}+\varepsilon[}e^{g_{w_{0,n}}}\right)\right)\nonumber \\
 & = & \frac{1}{2}\left(K_{g,\varepsilon}(t_{1})+K_{g,\varepsilon}(t_{2})\right)\nonumber 
\end{eqnarray}
Taking the limit $\varepsilon\to0$ we deduce that $K_{g}$ is midpoint
concave. As upper semi-continuity implies Lebesgue measurable we deduce
that $K_{g}$ is concave.
\end{proof}
\begin{rem}
Note that in (\ref{eq:LD_product}) we crucially use the transitivity
of the adjacency matrix as assumed in Definition \ref{def:IFSAn-iterated-function}.
Without this assumption the statement that $K_{g}(t)$ is concave
becomes obviously false: Assume for example the case with $N=2$ intervals
and the non-transitive adjacency matrix $A=\left(\begin{array}{ll}
1 & 0\\
0 & 1
\end{array}\right)$. If now $f$ is piece-wise constant, with $f_{/I_{1}}\neq f_{/I_{2}}$then
for any word $w_{0,n}$, either $f_{w_{0,n}}=nf_{/I_{1}}$ or $f_{w_{0,n}}=nf_{/I_{2}}$
and consequently $K_{g}(t)\geq0$ if $t=f_{/I_{1}}$or $t=f_{/I_{2}}$
and $K_{g}(t)=-\infty$ else. 
\end{rem}

We continue the proof of Proposition \ref{prop:LD}. Let us now show
that $K_{g}(t)=-v\left(t\right)$. Recall from (\ref{eq:rem_dvdt})
that $\frac{d}{dt}v(t)=\beta(t)$ hence for any $t_{0}$ we have $\frac{d}{dt}\left(\beta\left(t_{0}\right)t-v\left(t\right)\right)_{|t=t_{0}}=0$
or in other words $\beta\left(t_{0}\right)t-v\left(t\right)$ has
a maximum at $t=t_{0}$ given by
\begin{equation}
\max_{t}\left(\beta\left(t_{0}\right)t-v\left(t\right)\right)=\beta\left(t_{0}\right)t_{0}-v\left(t_{0}\right)=\mathrm{Pr}\left(g+\beta\left(t_{0}\right)f\right)=u\left(\beta\left(t_{0}\right)\right).\label{eq:max_K_tilde}
\end{equation}
Recall the definition (\ref{eq:def_pressure-1}) of the topological
pressure. For $K\in\mathbb{N}$ and $\Delta_{K}:=\frac{f_{max}-f_{min}}{K}$
we write 
\begin{eqnarray*}
\mathrm{Pr}\left(g+\beta f\right) & = & \lim_{n\rightarrow\infty}\frac{1}{n}\log\left(\sum_{k=0}^{K-1}\left(\sum_{w_{0,n\mbox{ s.t. }\frac{1}{n}f_{w_{0,n}}\in[f_{min}+k\Delta_{K},f_{min}+(k+1)\Delta_{K}[}}e^{g_{w_{0,n}}+\beta f_{w_{0,n}}}\right)\right)\\
 & = & \max_{k=0,\ldots K-1}\lim_{n\rightarrow\infty}\frac{1}{n}\log\left(\sum_{w_{0,n}\mbox{ s.t. }\frac{1}{n}f_{w_{0,n}}\in[f_{min}+k\Delta_{K},f_{min}+(k+1)\Delta_{K}[}e^{g_{w_{0,n}}+\beta f_{w_{0,n}}}\right)
\end{eqnarray*}
Recalling the definition of $K_{g,\varepsilon}(t)$ above we get for
any $k=0,\ldots,K-1$
\begin{align*}
\left|\lim_{n\rightarrow\infty}\frac{1}{n}\log\left(\sum_{w_{0,n}\mbox{s.t. }\frac{1}{n}f_{w_{0,n}}\in[f_{min}+k\Delta_{K},f_{min}+(k+1)\Delta_{K}[}e^{g_{w_{0,n}}+\beta f_{w_{0,n}}}\right)-\left(K_{g,\Delta_{K}/2}(f_{k})+\beta f_{k}\right)\right|\\
\leq\beta\Delta_{K}
\end{align*}
where $f_{k}=f_{min}+(k+\frac{1}{2})\Delta_{K}$. Taking the limit
$K\to\infty$, we get 
\[
u\left(\beta\right)=\mathrm{Pr}\left(g+\beta f\right)=\max_{t}\left(\beta t+K_{g}(t)\right)
\]
which is the same expression that we have obtained for $-v\left(t\right)$
in (\ref{eq:max_K_tilde}). As we have shown that $K_{g}$ is upper
semi continuous and concave, the Fenchel-Moreau theorem implies that
$K_{g}=-v$. We have finished the proof of Proposition \ref{prop:LD}.
\end{proof}

\section{\label{sec:Discussion-about-}Discussion about $\gamma_{\mathrm{asympt.}}$
in hyperbolic dynamics}

\subsection{Motivation to study $\gamma_{\mathrm{asympt.}}$ }

Let us consider the case of an Anosov flow $\phi^{t}=e^{tX}$, $t\in\mathbb{R}$
(also called uniformly hyperbolic flow) generated by a \textbf{Anosov
vector field} $X$ on a closed manifold $M$. A typical example is
the geodesic vector field $X$ associated to a Riemannian manifold
$\left(\mathcal{M},g\right)$ with (variable) negative curvature:
$X$ is the Hamiltonian vector field on $M=T_{1}^{*}\mathcal{M}$
(the unit cotangent bundle). This example is special because the flow
preserves the canonical Liouville contact one form $\alpha$ on $M$.
More generally Anosov flows that preserve a contact one form are called
``contact Anosov flows''. We introduce an arbitrary smooth function
$V\in C^{\infty}\left(M;\mathbb{R}\right)$ called the \textbf{potential
function} and consider the operator
\[
A:=-X+V.
\]
$A$ has intrinsic discrete spectrum (of finite multiplicity) in certain
anisotropic Sobolev spaces $\mathcal{H}\left(M\right)$ \cite{liverani_butterley_07,fred_flow_09,faure_tsujii_Ruelle_resonances_density_2016}
and the set of eigenvalues $\left(z_{j}\right)_{j}\subset\mathbb{C}$
of $A$ are called the\textbf{ Ruelle-Pollicott resonances of $A$
for positive time $t\geq0$. }The operator $A$ is the generator of
$\mathcal{L}^{t}:=e^{tA},t\geq0,$ called the transfer operator giving
transport of functions $u\in C^{\infty}\left(M\right)$ by 
\begin{equation}
\mathcal{L}^{t}u=e^{tA}u=e^{V_{\left[-t,0\right]}}\cdot\left(u\circ\phi_{-t}\right)\label{eq:transfer_op}
\end{equation}
with $V_{\left[-t,0\right]}:=\int_{-t}^{0}V\circ\phi^{s}ds$.

We define

\begin{equation}
\gamma_{\mathrm{asympt.}}:=\limsup_{\nu\rightarrow\infty}\sup_{j}\left\{ \mathrm{Re}\left(z_{j}\right),\mbox{ s.t.}\left|\mathrm{Im}\left(z_{j}\right)\right|\geq\nu\right\} \label{eq:def_gamma_asympt}
\end{equation}
i.e. $\gamma_{\mathrm{asympt.}}$ is such that for any $\epsilon>0$
there are only finitely many Ruelle-Pollicott resonances on the right
of the line $\mathrm{Re}\left(z\right)=\gamma_{\mathrm{asympt.}}+\epsilon$.
To express the importance of the quantity $\gamma_{\mathrm{asympt.}}$,
we will assume the following two properties about the spectrum of
$A$. We will see many examples in Section \ref{subsec:Known-results-about}
where these assumptions are satisfied.

\noindent\fcolorbox{red}{white}{\begin{minipage}[t]{1\columnwidth - 2\fboxsep - 2\fboxrule}%
\begin{assumption}
\label{assu:We-will-assume}We will assume
\begin{enumerate}
\item The Ruelle-Pollicott spectrum of the operator $A$ has a single and
simple dominant real eigenvalue $\gamma_{\mathrm{Gibbs}}$\textbf{\footnotemark}.
\item ``Uniform control of the norm of the resolvent'': there exists $\epsilon>0$,
$\nu>0$, $C>0$ such that $\gamma_{\mathrm{asympt.}}+\epsilon<\gamma_{\mathrm{Gibbs}}$
and
\begin{equation}
\forall z\in\mathbb{C}\text{ s.t. }\mathrm{Re}\left(z\right)\geq\gamma_{\mathrm{asympt.}}+\epsilon,\left|\mathrm{Im}\left(z\right)\right|\geq\nu,\quad\left\Vert \left(z-A\right)^{-1}\right\Vert _{\mathcal{H}\left(M\right)}\leq C.\label{eq:hyp_resolvent}
\end{equation}
\end{enumerate}
\end{assumption}

\end{minipage}}

\footnotetext{i.e. other eigenvalues $z_{j}\in \mathbb{C}$ satisfy $\mathrm{Re} \left(z_{j} \right)< \gamma_{\mathrm{Gibbs}}$.}

Equivalently to (\ref{eq:hyp_resolvent}) one has \cite{engel_1999}

\begin{equation}
\exists C,\forall t\geq0,\quad\left\Vert \mathcal{L}^{t}-\sum_{j\mbox{ s.t }\mathrm{Re}\left(z_{j}\right)\geq\gamma_{\mathrm{asympt.}}+\epsilon}\mathcal{L}^{t}\Pi_{j}\right\Vert _{\mathcal{H}\left(M\right)}\leq Ce^{\left(\gamma_{\mathrm{asympt.}}+\epsilon\right)t}\label{eq:decay}
\end{equation}
where $\Pi_{j}$ is the spectral projector of finite rank associated
to $z_{j}$. If the eigenvalue $z_{j}$ is simple then $\mathcal{L}^{t}\Pi_{j}=e^{z_{j}t}\Pi_{j}$.
The sum in (\ref{eq:decay}) is finite. In particular

\begin{equation}
\exists\epsilon>0,\exists C,\forall t\geq0,\quad\left\Vert \mathcal{L}^{t}-e^{\gamma_{\mathrm{Gibbs}}t}\Pi_{\mathrm{Gibbs}}\right\Vert _{\mathcal{H}\left(M\right)}\leq Ce^{\left(\gamma_{\mathrm{Gibbs}}-\epsilon\right)t}\label{eq:decay-1}
\end{equation}

From the construction of $\mathcal{H}\left(M\right)$, one has $C^{\infty}\left(M\right)\subset\mathcal{H}\left(M\right)\subset\mathcal{D}'\left(M\right)$.
We define the dual space $\mathcal{H}'\left(M\right)$ by 
\[
\mathcal{H}'\left(M\right):=\left\{ u\in\mathcal{D}'\left(M\right),\text{ s.t. }v\in\mathcal{H}\left(M\right)\rightarrow\left\langle u,v\right\rangle _{L^{2}\left(M;dx\right)}\in\mathbb{C}\text{ is bounded}\right\} ,
\]
where $dx$ is an arbitrary smooth volume on $M$ (in case of contact
Anosov flow $dx$ is inherited from the contact structure). Eq.(\ref{eq:decay})
implies some expansions of time correlation of functions (as in \cite[Corollary 1.2]{tsujii_08}\cite[Corollary 5]{nonenmacher_zworski_2013}):

\begin{equation}
\forall u\in\mathcal{H}'\left(M\right),v\in\mathcal{H}\left(M\right),\quad\langle u,\mathcal{L}^{t}v\rangle_{L^{2}\left(M,dx\right)}=\sum_{j\,\mathrm{s.t.}\mathrm{Re}\left(z_{j}\right)\geq\gamma_{\mathrm{asympt.}}+\epsilon}\langle u,\mathcal{L}^{t}\Pi_{j}v\rangle_{L^{2}}+O\left(e^{\left(\gamma_{\mathrm{asympt.}}+\epsilon\right)t}\right).\label{eq:expansion_L^t}
\end{equation}

\subsubsection{Gibbs measure}
\begin{rem}
Eq.(\ref{eq:expansion_L^t}) shows that Ruelle-Pollicott resonances
describe the correlation functions w.r.t. Lebesgue measure $dx$ for
the dynamics weighted with the potential $V$. We will see later in
Corollary \ref{cor:Decay-of-correlations} that the same spectrum
describes the correlation functions w.r.t. a Gibbs measure defined
from $V$ but for the pure flow dynamics, i.e. without potential $V$.
\end{rem}

The Atiyah-Bott flat trace formula \cite{atiyah_67}\cite{guillemin_1977_lectures}
gives that

\[
\gamma_{\mathrm{Gibbs}}=\mathrm{Pr}\left(V-J\right)\in\mathbb{R}
\]
where\footnote{The choice $A=X+V$ would have give instead $\gamma_{\mathrm{Gibbs}}=\mathrm{Pr}\left(V-\mathrm{div}X/_{E_{u}}\right)$.}
\[
J:=-\mathrm{div}X/_{E_{s}}>0.
\]
$\mathrm{div}X/_{E_{s}}<0$ is the expansion rate along the stable
direction $E_{u}$. We have $\mathrm{div}X/_{E_{s}}+\mathrm{div}X/_{E_{u}}=\mathrm{div}X$
hence for a volume preserving flow, $\mathrm{div}X=0$, one has $J:=-\mathrm{div}X/_{E_{s}}=\mathrm{div}X/_{E_{u}}$.
$\mathrm{Pr}\left(\varphi\right)$ is the topological pressure of
a function $\varphi\in C\left(M\right)$ and is defined for flows
using a sum over periodic orbits $\gamma$ as follows:
\begin{equation}
\mathrm{Pr}\left(\varphi\right):=\lim_{t\rightarrow\infty}\frac{1}{t}\log\left(\sum_{p.o.\gamma\mbox{ s.t. }\left|\gamma\right|\in\left[t,t+1\right]}e^{\int_{\gamma}\varphi}\right).\label{eq:def_Pressure}
\end{equation}
We denote $\Pi_{\mathrm{Gibbs}}$ the rank one spectral projector
associated to the eigenvalue $\gamma_{\mathrm{Gibbs}}$. It defines
the so called ``\textbf{Gibbs equilibrium measure}''\footnote{$\mu_{\mathrm{Gibbs}}$ is a \textbf{positive measure} (i.e. distribution
of order $0$) because of the following argument. Let $\varphi\in C^{\infty}\left(M;\mathbb{R}^{+}\right)$.
Let us denote $\delta_{x}$ the Dirac measure at $x\in M$. The Atiyah-Bott
flat trace of an operator $A$ is $\mathrm{Tr}^{\flat}\left(A\right):=\int_{M}\langle\delta_{x},A\delta_{x}\rangle dx$,
see \cite{guillemin_1977_lectures}. The Schwartz kernel of the operator
$\mathcal{L}^{t}$ is positive hence for any $t\geq0$,
\[
\mathrm{Tr}^{\flat}\left(\mathcal{M}_{\varphi}e^{-\gamma_{\mathrm{Gibbs}}t}\mathcal{L}^{t}\right):=\int_{M}\varphi\left(x\right)\langle\delta_{x},e^{-\gamma_{\mathrm{Gibbs}}t}\mathcal{L}^{t}\delta_{x}\rangle dx\leq\left|\varphi\right|_{C^{0}}\int_{M}\langle\delta_{x},e^{-\gamma_{\mathrm{Gibbs}}t}\mathcal{L}^{t}\delta_{x}\rangle dx=\left|\varphi\right|_{C^{0}}\mathrm{Tr}^{\flat}\left(e^{-\gamma_{\mathrm{Gibbs}}t}\mathcal{L}^{t}\right).
\]
We make $t\rightarrow+\infty$. Using (\ref{eq:decay-1}) and additional
arguments that can be found in \cite[Appendix B]{faure-tsujii_anosov_flows_13}
one obtains
\[
\mu_{\mathrm{Gibbs}}\left(\varphi\right)\leq\left|\varphi\right|_{C^{0}}\mathrm{Tr}\left(\Pi_{\mathrm{Gibbs}}\right)=\left|\varphi\right|_{C^{0}}.
\]
} associated to the potential $V$ by 
\[
\mu_{\mathrm{Gibbs}}:\varphi\in C^{\infty}\left(M\right)\rightarrow\mathrm{Tr}\left(\mathcal{M}_{\varphi}\Pi_{\mathrm{Gibbs}}\right)\in\mathbb{R}
\]
where $\mathcal{M}_{\varphi}u=\varphi u$ denotes the multiplication
operator by a function $\varphi\in C^{\infty}\left(M\right)$. $\mathcal{M}_{\varphi}:\mathcal{H}\left(M\right)\rightarrow\mathcal{H}\left(M\right)$
is a bounded operator. The operator $\mathcal{M}_{\varphi}\Pi_{\mathrm{Gibbs}}$
is finite rank hence trace class in $\mathcal{H}\left(M\right)$.
The Gibbs measure $\mu_{\mathrm{Gibbs}}$ has the following properties:

\noindent\fcolorbox{blue}{white}{\begin{minipage}[t]{1\columnwidth - 2\fboxsep - 2\fboxrule}%
\begin{lem}
\label{lem:Invariance-of-Gibbs}``\textbf{Invariance of Gibbs measure
under the flow}''. We have 
\[
\forall\varphi\in C^{\infty}\left(M\right),\forall t\geq0,\quad\mu_{\mathrm{Gibbs}}\left(\varphi\circ\phi^{-t}\right)=\mu_{\mathrm{Gibbs}}\left(\varphi\right).
\]
\end{lem}

\end{minipage}}
\begin{proof}
Let us write $\mathcal{L}_{0}^{t}=e^{-tX}$ and $\mathcal{M}_{\varphi}u=\varphi u$
the multiplication operator by $\varphi$. We have the relations $\mathcal{L}^{t}\Pi_{\mathrm{Gibbs}}=\Pi_{\mathrm{Gibbs}}\mathcal{L}^{t}=e^{t\gamma_{\mathrm{Gibbs}}}\Pi_{\mathrm{Gibbs}}$,
$\mathcal{L}^{t}=\mathcal{M}_{e^{V_{\left[-t,0\right]}}}\mathcal{L}_{0}^{t}$,
$\mathcal{M}_{u}\mathcal{M}_{v}=\mathcal{M}_{v}\mathcal{M}_{u}$,
$\mathcal{L}^{t}\mathcal{M}_{u}=\mathcal{M}_{\mathcal{L}_{0}^{t}u}\mathcal{L}^{t}$
and circularity of $\mathrm{Tr}\left(.\right)$ and deduce
\begin{align*}
\mu_{\mathrm{Gibbs}}\left(\varphi\circ\phi^{-t}\right) & =\mathrm{Tr}\left(\mathcal{M}_{\mathcal{L}_{0}^{t}\varphi}\Pi_{\mathrm{Gibbs}}\right)=e^{-t\gamma_{\mathrm{Gibbs}}}\mathrm{Tr}\left(\mathcal{M}_{\mathcal{L}_{0}^{t}\varphi}\mathcal{L}^{t}\Pi_{\mathrm{Gibbs}}\right)\\
 & =e^{-t\gamma_{\mathrm{Gibbs}}}\mathrm{Tr}\left(\mathcal{L}^{t}\mathcal{M}_{\varphi}\Pi_{\mathrm{Gibbs}}\right)=e^{-t\gamma_{\mathrm{Gibbs}}}\mathrm{Tr}\left(\mathcal{M}_{\varphi}\Pi_{\mathrm{Gibbs}}\mathcal{L}^{t}\right)=\mathrm{Tr}\left(\mathcal{M}_{\varphi}\Pi_{\mathrm{Gibbs}}\right)\\
 & =\mu_{\mathrm{Gibbs}}\left(\varphi\right).
\end{align*}
\end{proof}
The expansion (\ref{eq:expansion_L^t}) implies some expansion for
correlation functions expressed with the Gibbs measure (that is more
usual in dynamical systems theory) as follows.

\noindent\fcolorbox{blue}{white}{\begin{minipage}[t]{1\columnwidth - 2\fboxsep - 2\fboxrule}%
\begin{cor}
\label{cor:Decay-of-correlations}``\textbf{Decay of correlations
for the Gibbs measure}''. Under assumption \ref{assu:We-will-assume},
we have 
\begin{align*}
\forall\varphi_{1},\varphi_{2}\in C^{\infty}\left(M\right),\forall t\geq0,\quad\mu_{\mathrm{Gibbs}}\left(\left(\varphi_{1}\circ\phi^{-t}\right).\varphi_{2}\right)= & \mu_{\mathrm{Gibbs}}\left(\varphi_{1}\right)\mu_{\mathrm{Gibbs}}\left(\varphi_{2}\right)\\
+\sum_{j\mbox{ s.t }\mathrm{Re}\left(z_{j}\right)>\gamma_{\mathrm{asympt.}}+\epsilon,\quad z_{j}\neq\gamma_{\mathrm{Gibbs}}} & e^{-t\gamma_{\mathrm{Gibbs}}}\mathrm{Tr}\left(\mathcal{M}_{\varphi_{2}}\left(\mathcal{L}^{t}\Pi_{j}\right)\mathcal{M}_{\varphi_{1}}\Pi_{\mathrm{Gibbs}}\right)\\
+O\left(e^{-\left(\gamma_{\mathrm{Gibbs}}-\left(\gamma_{\mathrm{asympt.}}+\epsilon\right)\right)t}\right)
\end{align*}
\end{cor}

\end{minipage}}
\begin{rem}
~
\begin{enumerate}
\item The quantity $\left(\gamma_{\mathrm{Gibbs}}-\left(\gamma_{\mathrm{asympt.}}+\epsilon\right)\right)$
that governs the exponential decay of the remainder is called the
``\textbf{asymptotic spectral gap}''.
\item We have assumed that $\gamma_{\mathrm{Gibbs}}$ is dominant eigenvalue
i.e. $\gamma_{\mathrm{Gibbs}}>\mathrm{Re}\left(z_{j}\right)$ for
$z_{j}\neq\gamma_{\mathrm{Gibbs}}$. Then the second line decays and
one gets \textbf{exponential mixing} property for the Gibbs measure:
\[
\mu_{\mathrm{Gibbs}}\left(\left(\varphi_{1}\circ\phi^{-t}\right).\varphi_{2}\right)=\mu_{\mathrm{Gibbs}}\left(\varphi_{1}\right)\mu_{\mathrm{Gibbs}}\left(\varphi_{2}\right)+O\left(e^{-\left(\gamma_{\mathrm{Gibbs}}-\max_{j}\mathrm{Re}\left(z_{j}\right)\right)t}\right).
\]
\item If eigenvalues $z_{j}$ are simple then $\mathcal{L}^{t}\Pi_{j}=e^{z_{j}t}\Pi_{j}$
and each term on the second line writes as
\[
e^{-t\gamma_{\mathrm{Gibbs}}}\mathrm{Tr}\left(\mathcal{M}_{\varphi_{2}}\left(\mathcal{L}^{t}\Pi_{j}\right)\mathcal{M}_{\varphi_{1}}\Pi_{\mathrm{Gibbs}}\right)=e^{-t\left(\gamma_{\mathrm{Gibbs}}-z_{j}\right)}\mathrm{Tr}\left(\mathcal{M}_{\varphi_{2}}\Pi_{j}\mathcal{M}_{\varphi_{1}}\Pi_{\mathrm{Gibbs}}\right).
\]
\end{enumerate}
\end{rem}

\begin{proof}
Recall the relations at the beginning of proof of Lemma \ref{lem:Invariance-of-Gibbs}.
We have
\begin{align*}
\mu_{\mathrm{Gibbs}}\left(\left(\varphi_{1}\circ\phi^{-t}\right).\varphi_{2}\right) & =\mathrm{Tr}\left(\mathcal{M}_{\mathcal{L}_{0}^{t}\varphi_{1}}\mathcal{M}_{\varphi_{2}}\Pi_{\mathrm{Gibbs}}\right)=e^{-t\gamma_{\mathrm{Gibbs}}}\mathrm{Tr}\left(\mathcal{M}_{\mathcal{L}_{0}^{t}\varphi_{1}}\mathcal{M}_{\varphi_{2}}\mathcal{L}^{t}\Pi_{\mathrm{Gibbs}}\right)\\
 & =e^{-t\gamma_{\mathrm{Gibbs}}}\mathrm{Tr}\left(\mathcal{M}_{\varphi_{2}}\mathcal{L}^{t}\mathcal{M}_{\varphi_{1}}\Pi_{\mathrm{Gibbs}}\right)\\
 & =e^{-t\gamma_{\mathrm{Gibbs}}}\sum_{j\mbox{ s.t }\mathrm{Re}\left(z_{j}\right)>\gamma_{\mathrm{asympt.}}+\epsilon}\mathrm{Tr}\left(\mathcal{M}_{\varphi_{2}}\left(\mathcal{L}^{t}\Pi_{j}\right)\mathcal{M}_{\varphi_{1}}\Pi_{\mathrm{Gibbs}}\right)\\
 & \qquad+O\left(e^{-\left(\gamma_{\mathrm{Gibbs}}-\left(\gamma_{\mathrm{asympt.}}+\epsilon\right)\right)t}\right).
\end{align*}
From the fact that $\Pi_{\mathrm{Gibbs}}$ is a rank one projector
we deduce that the first term of the sum is 
\[
e^{-t\gamma_{\mathrm{Gibbs}}}\mathrm{Tr}\left(\mathcal{M}_{\varphi_{2}}\left(\mathcal{L}^{t}\Pi_{\mathrm{Gibbs}}\right)\mathcal{M}_{\varphi_{1}}\Pi_{\mathrm{Gibbs}}\right)=\mathrm{Tr}\left(\mathcal{M}_{\varphi_{2}}\Pi_{\mathrm{Gibbs}}\mathcal{M}_{\varphi_{1}}\Pi_{\mathrm{Gibbs}}\right)=\mu_{\mathrm{Gibbs}}\left(\varphi_{1}\right)\mu_{\mathrm{Gibbs}}\left(\varphi_{2}\right).
\]
\end{proof}

\subsubsection{Special choice $V=J=-\mathrm{div}X/_{E_{s}}$}

In particular, by choosing\footnote{In general $J$ is only Hölder continuous so it requires some special
arguments, namely considering the extension of the transfer operator
on a Grassmanian bundle \cite{faure-tsujii_prequantum_maps_12,faure-tsujii_anosov_flows_13}.} the potential $V=J$ we get the \textbf{topological entropy} $h_{top}$:
\begin{equation}
\gamma_{\mathrm{Gibbs}}=\mathrm{Pr}\left(V-J\right)=\mathrm{Pr}\left(0\right)=\lim_{t\rightarrow\infty}\frac{1}{t}\log\mathcal{N}\left(t\right)=:h_{top}\label{eq:h_top}
\end{equation}
where $\mathcal{N}\left(t\right):=\sharp\left\{ \gamma\mbox{ periodic orbit s.t. }\left|\gamma\right|\leq t\right\} $
counts the periodic orbits of the flow of period less than $t$. In
this case the Gibbs measure is called ``\textbf{the Bowen Margulis
measure of maximal entropy}'' $\mu_{\mathrm{Gibbs}}=:\mu_{B.M.}$.

\subsubsection{Special choice $V=0$}

Let $\boldsymbol{1}\left(x\right)=1$ be the constant function $1$
on $M$. One has $X\left(\boldsymbol{1}\right)=0$. If we choose potential
$V=0$ then $A=-X$, $A\left(\boldsymbol{1}\right)=0$, $\gamma_{\mathrm{Gibbs}}=0$
and
\[
\Pi_{\mathrm{Gibbs}}=\boldsymbol{1}\langle\mu|.\rangle_{L^{2}},
\]
with $\mu\in\mathcal{H}'\left(M\right)$. The Gibbs measure is $\mu_{\mathrm{Gibbs}}=\mu dx=:\mu_{S.R.B.}$
and is called ``\textbf{the Sinai-Ruelle-Bowen measure}''. If the
flow is volume preserving, i.e. $\mathrm{div}_{dx}X=0$ then $\mu_{S.R.B.}=dx$.

\subsection{\label{subsec:Known-results-about}Known results about $\gamma_{\mathrm{asympt.}}$}

\subsubsection{Contact Anosov flows}

For contact Anosov flows, it has been shown by C. Liverani \cite{liverani_contact_04}
that in the case $V=0$,
\begin{equation}
\exists\epsilon>0,\quad\gamma_{\mathrm{asympt.}}<\gamma_{\mathrm{Gibbs}}-\epsilon.\label{eq:result_Liverani}
\end{equation}
M. Tsujii \cite{tsujii_FBI_10} (and \cite{nonenmacher_zworski_2013}
for a generalization to other semiclassical operators), has shown
an explicit upper bound for $\gamma_{\mathrm{asympt.}}$ (his method
works for any smooth potential $V$):
\begin{equation}
\gamma_{\mathrm{asympt.}}\leq\gamma_{\mathrm{sc}}:=\mathrm{tsup}\left(D\right):=\lim_{t\rightarrow\infty}\sup_{x\in M}\left(\frac{1}{t}\int_{0}^{t}D\circ\phi^{-s}\left(x\right)ds\right)\label{eq:gamm_asympt}
\end{equation}
with the so called damping function $D:=V-\frac{1}{2}J$ and where
the linear functional $\mathrm{tsup}()$ called ``time-averaged-sup''
is defined from the last expression. The proof uses semiclassical
analysis with $\nu:=\left|\mathrm{Im}\left(z\right)\right|\rightarrow\infty$
being the frequency in the neutral direction. For this we consider
the flow $\phi^{t}$ lifted on the cotangent bundle $\tilde{\phi}_{t}:T^{*}M\rightarrow T^{*}M$.
Since $\phi^{t}$ preserves the contact one form $\alpha$, the trapped
set $\mathcal{K}$ (i.e. non wandering set) for the lifted flow $\tilde{\phi}_{t}$
is the line bundle $\mathcal{K}=\mathbb{R}\alpha\subset T^{*}M$.
A crucial property is that $\mathcal{K}\backslash\left\{ 0\right\} $
is a smooth symplectic submanifold of $T^{*}M$ and transversally
the dynamics of $\tilde{\phi}_{t}$ is hyperbolic. From this and using
semiclassical techniques, one deduces (\ref{eq:gamm_asympt}) and
also a band structure of the spectrum \cite{faure_tsujii_band_CRAS_2013}.

In particular for the special choice $V=\frac{1}{2}J=\frac{1}{2}\mathrm{div}X/_{E_{u}}$
called ``\textbf{semi-classical potential}'' it is shown in \cite{faure-tsujii_anosov_flows_13}
that
\[
\gamma{}_{asympt}=\gamma_{\mathrm{sc}}=0.
\]
i.e. there is an accumulation of Ruelle resonances on the imaginary
axis. In that case $\gamma_{\mathrm{Gibbs}}>0$.

\subsubsection{Anosov flows in dimension 3}

M. Tsujii has shown in \cite{tsujii_2016_exponential_mixing} that
for generic volume preserving Anosov flow in dimension 3 and $V=0$,
there exists $\gamma_{Tsujii}<0$ such that
\[
\gamma_{\mathrm{asympt.}}\leq\gamma_{Tsujii}<\gamma_{\mathrm{Gibbs}}=0
\]
and one has a uniform control of the resolvent on $\mathrm{Re}\left(z\right)\geq\gamma_{Tsujii}$
that gives decay of correlations.

M. Tsujii considers in \cite{15_tsujii} the case $V=J$ for an expanding
semi-flow and gives a bound for $\gamma_{\mathrm{asympt.}}$ that
improves previous known results.

\subsubsection{Open hyperbolic dynamics}

To the authors best knowledge an analog of (\ref{eq:gamm_asympt})
for open hyperbolic flows (i.e. Axiom A flow) is not known. However
in \cite{faure_arnoldi_tobias_13} the authors proved an analog of
(\ref{eq:gamm_asympt}) together with resolvent estimates for the
$\mathbb{R}$-extensions of IFS which can be considered as a toy model
of an Axiom A flow. 

The present article concerns open dynamics. Its purpose is to improve
the established bounds $\gamma_{\mathrm{sc}}$ and $\gamma_{\mathrm{Gibbs}}$.
We directly work on a model for open dynamical systems such that there
is hope, that our methods and results can be useful for the study
of Ruelle-Pollicott resonances of open hyperbolic flows, such as Axiom
A flows.

\subsubsection{\label{subsec:Quantum-hyperbolic-dynamics}Quantum hyperbolic dynamics}

In quantum mechanics similar questions concerning the asymptotic spectral
gap of an operator arises as follows. On a negative curvature smoothed
closed manifold $\left(\mathcal{M},g\right)$, consider the operator\footnote{If we put $\Phi=\left(\psi,\varphi\right)\in L^{2}\left(\mathcal{M}\right)\oplus L^{2}\left(\mathcal{M}\right)$
then the Schrodinger equation $i\partial_{t}\Phi=P\Phi$ is equivalent
to the ``damped wave equation'' $\partial_{t}^{2}\psi=\Delta\psi-2D\partial_{t}\psi$
with $\varphi=i\partial_{t}\psi$.}
\[
P:=\left(\begin{array}{cc}
0 & \mathrm{Id}\\
-\Delta & 2iD
\end{array}\right)
\]
on $H^{1}\left(\mathcal{M}\right)\oplus L^{2}\left(\mathcal{M}\right)$
where $\Delta$ is the Laplace Beltrami operator and $D\in C^{\infty}\left(\mathcal{M};\mathbb{R}\right)$
is a smooth function \cite{sjostrand_2000}. $P$ has discrete spectrum
$\left(z_{j}\right)_{j}\subset\mathbb{C}$ that belongs to the band
$\mathrm{Im}\left(z_{j}\right)\in\left[\inf D,\sup D\right]$ (for
$\mathrm{Re}z_{j}>0$). One defines
\begin{equation}
\gamma_{\mathrm{asympt.}}:=\limsup_{\nu\rightarrow+\infty}\sup_{j}\left\{ \mathrm{Im}\left(z_{j}\right),\mbox{ s.t.}\mathrm{Re}\left(z_{j}\right)\geq\nu\right\} .\label{eq:def_gamma_asympt_quantum}
\end{equation}
G. Lebeau \cite{lebeau1996equation} has shown that 
\begin{equation}
\gamma_{\mathrm{asympt.}}\leq\gamma_{\mathrm{sc}}:=\mathrm{tsup}\left(D\right):=\lim_{t\rightarrow\infty}\sup_{\left(x,\xi\right)\in T_{1}^{*}\mathcal{M}}\left(\frac{1}{t}\int_{0}^{t}D\circ\phi^{-s}\left(x,\xi\right)ds\right)\label{eq:gamma_sc_quantum}
\end{equation}
where $\phi^{s}:T_{1}^{*}\mathcal{M}\rightarrow T_{1}^{*}\mathcal{M}$
is the geodesic flow and $D$ is trivially extended to $M=T_{1}^{*}\mathcal{M}$
by $D\left(x,\xi\right):=D\left(x\right)$. The bound (\ref{eq:gamma_sc_quantum})
is similar to the bound (\ref{eq:gamm_asympt}).

For ``open quantum dynamics'' Dyatlov and Zahl have recently established
\cite{dyatlov_zahl_15} a new bound for the asymptotic spectral gap
for resonances of the Laplacian on convex co-compact manifolds of
constant negative curvature. Although their model is different, it
would be interesting to compare their results methods and concepts
with ours.

In particular it has been shown recently that there is an exact relation
between Ruelle Pollicott resonances and quantum resonances on convex
co-compact hyperbolic surfaces \cite{guillarmou_weich_resonances_16}.
For these models it has also been shown that $\gamma_{\mathrm{asympt}}<\gamma_{\mathrm{sc}}$
in \cite{dyatlov_bourgain_2016}.

\subsection{\label{subsec:Conjecture-for}Conjecture for $\gamma_{\mathrm{asympt.}}$}

In this Section we discuss a conjecture for the asymptotic spectral
gap $\gamma_{\mathrm{asympt.}}$.

This conjecture is motivated from the expression (\ref{eq:bound})
that appears in the sketch of proof of Theorem \ref{thm:main_result}
and that leads us to our result $\gamma_{\mathrm{asympt.}}\leq\gamma_{\mathrm{up}}$.

In (\ref{eq:bound}) we have a sum of complex numbers over pairs of
orbits $w,w'$. This sum has the form $\sum_{w,w'}e^{\left(V-J\right)_{w}+\left(V-J\right)_{w'}+i\nu\left(\tau_{w}-\tau_{w'}\right)}T_{w',w}$.
In this double sum, we are not able to control the phases $e^{i\nu\left(\tau_{w}-\tau_{w'}\right)}$
of non diagonal terms so we have considered a time $n\sim2\frac{\log\nu}{\left\langle J\right\rangle }$
for which these non diagonal terms vanish (because $T_{w',w}\sim0$).
Then the last term in (\ref{eq:final_step}) gives the remainder $\left\langle J\right\rangle /4$
in our result (\ref{eq:def_gamma_up}).

If we bound all the phases by $1$ and consider the limit $n\rightarrow\infty$
one obtains the bound $\gamma_{\mathrm{asympt.}}\leq\gamma_{\mathrm{Gibbs}}$.
However if one were able to show that phases behave as ``random phases''
(this could hold generically), then the non diagonal terms in (\ref{eq:bound})
become negligible. Consequently we can make the diagonal approximation
for arbitrary long time and if we take time $n=A\frac{\log\left(\nu\right)}{\left\langle J\right\rangle }$
with $A\gg1$ arbitrary large then the last term in (\ref{eq:final_step})
becomes $\frac{1}{2A}\left\langle J\right\rangle \ll1$ and is negligible.
One obtains the conjecture that:

\noindent\fcolorbox{red}{white}{\begin{minipage}[t]{1\columnwidth - 2\fboxsep - 2\fboxrule}%
\begin{conjecture}
For a generic system,
\begin{equation}
\gamma_{\mathrm{asympt.}}=\gamma_{conj}:=\frac{1}{2}\mathrm{Pr}\left(2\left(V-J\right)\right).\label{eq:def_gamma_conj}
\end{equation}
\end{conjecture}

\end{minipage}}

This conjecture can been found in \cite[p.9]{dolgopyat_addendum_98}.
It makes sense for a general hyperbolic dynamics (Anosov flow, Axiom
A flow, ...), even for \emph{quantum} systems as those discussed in
Section \ref{subsec:Quantum-hyperbolic-dynamics} for which the conjecture
is\footnote{\label{fn:as-explained-in}as explained in \cite{faure-tsujii_prequantum_maps_12},\emph{
quantum} hyperbolic system with damping $D$ can be though as a classical
system with the potential written as $V=D+\frac{1}{2}J$ where $D$
is called the effective damping function.} 
\[
\gamma_{\mathrm{asympt.}}^{(quantum)}=\gamma_{conj}^{(quantum)}:=\frac{1}{2}\mathrm{Pr}\left(2\left(D-\frac{1}{2}J\right)\right)=\frac{1}{2}\mathrm{Pr}\left(2D-J\right)
\]
\begin{itemize}
\item In particular if we choose the potential $V=J$ (this choice is used
for counting periodic orbits) the conjecture is
\[
\gamma_{\mathrm{asympt.}}=\gamma_{conj}=\frac{1}{2}\mathrm{Pr}\left(0\right)=\frac{1}{2}h_{top},
\]
where $h_{top}=\mathrm{Pr}\left(0\right)$ is the topological entropy.
\item In particular for $D=0$ we have $\gamma_{conj}^{(quantum)}=\frac{1}{2}\mathrm{Pr}\left(-J\right)$
and for hyperbolic surfaces this gives $\gamma_{conj}^{(quantum)}=\frac{\delta-1}{2}$
where $\delta$ is the Hausdorff dimension of the limit set \cite{bortwick_book_07}.
This conjecture has been made in \cite{naud_jakobson2012} for convex
co-compact hyperbolic surfaces.
\end{itemize}
Some numerical observations are in favor of this conjecture, e.g.
Figure \ref{fig:spec_105} and Figure \ref{fig:Gauss_Numerical}.
With some other numerical observations the value $\frac{1}{2}\mathrm{Pr}\left(2\left(V-J\right)\right)$
describes rather the maximum of the distribution of concentration
of eigenvalues $\gamma_{max}$ and not $\gamma_{\mathrm{asympt.}}$
\cite[figure 2]{Lu_Sridhar_Zworski_03},\cite[figure 4]{Barkhofen_Weich_PSKZ13},\cite[figure 27]{Borthwick_14},\cite[Section 5.3]{Borthwick_Weich_14}.
One could conjecture that both coincide in the semiclassical limit
$\nu\rightarrow+\infty$ (and for generic hyperbolic systems), i.e.
that $\gamma_{max}=\gamma_{\mathrm{asympt.}}=\gamma_{conj}$.
\begin{verse}

\appendix
\bibliographystyle{alpha}
\bibliography{../../articles}
\end{verse}

\end{document}